\documentclass[reqno, 11pt]{amsart}
\usepackage[utf8]{inputenc}
\usepackage{amssymb,amsxtra,latexsym}
\usepackage{ragged2e}\justifying\let\raggedright\justifying% ????
\usepackage{mathrsfs}
\usepackage{hyperref}
\usepackage{textcomp}
\usepackage[scr=rsfs]{mathalfa}
\usepackage{epstopdf}
\usepackage{tikz}
\usepackage{pgfplots}
\usepackage{tikz-cd}
\usepackage[text={145mm,235mm},centering]{geometry}
\usepackage[mathscr]{eucal}
\usepackage[all]{xy}
\usepackage{mathrsfs}
\usepackage{xypic}
\usepackage{amsfonts}
\usepackage{amsmath}
\usepackage{amsthm,bm}
\usepackage{amssymb,tikz-cd}
\usepackage{latexsym}
\usepackage{tabularx}
\usepackage{graphicx}
\usepackage{pict2e}
\usepackage{tikz}
\usepackage{textcomp}
\usepackage[scr=rsfs]{mathalfa}
\usepackage[active]{srcltx}
\newtheorem{thm}{Theorem}[section]
\newtheorem{cor}[thm]{Corollary}
\newtheorem{lem}[thm]{Lemma}
\newtheorem{prop}[thm]{Proposition}

\newtheorem{prob}[thm]{Problem}
\theoremstyle{definition}
\newtheorem{defn}[thm]{Definition}
\theoremstyle{property}
\newtheorem{prope}[thm]{Property}
\theoremstyle{remark}
\newtheorem{rem}[thm]{Remark}

\numberwithin{equation}{section}
\definecolor{ceruleanblue}{rgb}{0.16, 0.32, 0.75}
\hypersetup{colorlinks=true,allcolors=ceruleanblue}
\allowdisplaybreaks
\numberwithin{equation}{section}
\makeatletter
\@namedef{subjclassname@2020}{%
  \textup{2020} Mathematics Subject Classification}
\makeatother

\makeatletter
\def\@tocline#1#2#3#4#5#6#7{\relax
  \ifnum #1>\c@tocdepth % then omit
  \else
    \par \addpenalty\@secpenalty\addvspace{#2}%
    \begingroup \hyphenpenalty\@M
    \@ifempty{#4}{%
      \@tempdima\csname r@tocindent\number#1\endcsname\relax
    }{%
      \@tempdima#4\relax
    }%
    \parindent\z@ \leftskip#3\relax \advance\leftskip\@tempdima\relax
    \rightskip\@pnumwidth plus4em \parfillskip-\@pnumwidth
    #5\leavevmode\hskip-\@tempdima
      \ifcase #1
       \or\or \hskip 1em \or \hskip 2em \else \hskip 3em \fi%
      #6\nobreak\relax
    \dotfill\hbox to\@pnumwidth{\@tocpagenum{#7}}\par
    \nobreak
    \endgroup
  \fi}
\makeatother

\usepackage[inline]{enumitem}
\usepackage{perpage} % Needed for \MakePerPage{footnote}  % Make the footnote recount per page.
\MakePerPage{footnote}  % Make the footnote recount per page.

\begin{document}

\title[Comparison of K\"{a}hler quotients of torus actions]
{Comparison of K\"{a}hler quotients of torus actions}

\author{Xiangsheng Wang}
\address{School of Mathematics, Shandong University, Jinan 250100, China}
\email{xiangsheng@sdu.edu.cn}
\author{Xiangdong Yang}
\address{Department of Mathematics, Lanzhou University, Lanzhou 730000, China}
\email{yangxd@lzu.edu.cn}

\subjclass[2010]{Primary 32S45; Secondary 14E05, 18G40, 14D07}
\keywords{K\"{a}hler reduction; modification; K\"{a}hler structure}

\date{\today}

% -----------------------------------------------------------

\begin{abstract}
  Let $T$ be a torus with the complexification $T^{\mathbb{C}}$ and $(X, ds^{2})$ a compact K\"{a}hler Hamiltonian $T$-manifold with the moment map $\Phi$ such that $T^{\mathbb{C}}$ acts on $X$ holomorphically.
  For each $\alpha$ in the moment body $\Phi(X)$, the K\"{a}hler quotient $X_{\alpha}=\Phi^{-1}(\alpha)/T$ is a reduced normal complex analytic
  space admitting a unique K\"{a}hler structure $\kappa_{\alpha}$ induced from $ds^{2}$.
  Inspired by the theory of variation of Geometric Invariant Theory, when $\alpha$ moves from a subpolytope (a connected component of the set of regular values of $\Phi$) to another one in the interior of $\Phi(X)$, we show that the quotient $X_{\alpha}$ undergoes a bimeromorphic transformation, and this enables us to compare the K\"{a}hler classes of the different quotients.
  In particular, as applications, we prove that each nondegenerate singular K\"{a}hler quotient has a partial and rational desingularisation which is obtained by shifting the moment map; moreover, we obtain a formula on the Riemann--Roch numbers of singular K\"{a}hler quotients.
\end{abstract}

% -----------------------------------------------------------
\maketitle
% -----------------------------------------------------------

\tableofcontents
%==================================
\section{Introduction}
\subsection{Background}
Let $(X, ds^{2})$ be a compact K\"{a}hler manifold and $T$ a torus with the Lie algebra $\mathfrak{t}$.
Suppose the complexification of the torus $T^{\mathbb{C}}$ acts on $X$ holomorphically and the K\"{a}hler metric $ds^{2}$ is $T$-invariant.
The K\"{a}hler form $\omega=-\mathrm{Im}\,ds^{2}$ gives rise to a symplectic structure on $X$.
If the $T$-action on $(X, \omega)$ is Hamiltonian, then we have the equivariant moment map $\Phi:X\rightarrow\mathfrak{t}^{\ast}$ satisfies the property
$$
d\Phi^{\xi}=\iota_{\xi_{X}}\omega
$$
for all $\xi\in\mathfrak{t}$.
Here $\Phi^{\xi}$ is the $\xi^{th}$-component of $\Phi$ defined by $\Phi^{\xi}(x)=\langle\Phi(x), \xi\rangle$ and $\xi_{X}$ is the vector field on $X$ induced by $\xi$.
Then the abelian convexity theorems \cite{At82, GS82} say that the moment body $\Delta=\Phi(X)$ is a convex polytope which is a union of convex subpolytopes of the same dimension.
To be more specific, the interior of the subpolytopes are disjoint and constitute the regular values of $\Phi$, and the boundary of the subpolytopes consist of the critical values of $\Phi$.

For any point $\alpha$ of $\Delta$, put $Z_{\alpha}=\Phi^{-1}(\alpha)$ and $X_{\alpha}=Z_{\alpha}/T$.
Then there is a natural commutative square of continuous maps
\begin{equation*}
\vcenter{
\xymatrix@C=1cm{
  Z_{\alpha} \,\ar[d]_{\pi_{\alpha}} \ar[r]^{\imath_{\alpha}} & X \ar[d]^{\pi} \\
  X_{\alpha}\, \ar[r]^{} & X/T}
  }
\end{equation*}
with respect to the induced sub-topology on $Z_{\alpha}$ and quotient topologies on $X/T$ and $X_{\alpha}$ respectively.
According to the Marsden--Weinstein reduction theorem \cite{MW74}, if $\alpha$ is a regular value of $\Phi$, i.e., an interior point of a subpolytope, then $\omega$ determines a unique orbifold symplectic form $\omega_{\alpha}$ on $X_{\alpha}$ such that $\pi_{\alpha}^{\ast}\omega_{\alpha}=\imath_{\alpha}^{\ast}\omega$.
In general, the level set $Z_{\alpha}$ has quadratic singularities and the reduced space $X_{\alpha}$ is seriously  singular.
Starting from the fundamental paper \cite{MW74}, the problem of understanding the structure of $X_{\alpha}$ when it acquires singularities, and how the geometry and topology of $X_{\alpha}$ varies as $\alpha$ varies has been one of central problems in symplectic geometry.

With different emphasises, extensive works on the structure of singular symplectic reduced spaces have been done by many symplectic and algebraic geometers, including \cite{ACG89,AJG90,SL91,Pf01}.
The first result on the variation of symplectic quotients can be traced back to
Duistermaat--Heckman \cite{DH82}.
They proved that if $\alpha$ varies in the interior of a subpolytope the diffeomorphism type of the regular symplectic quotient $X_{\alpha}$ is constant and the cohomology class $[\omega_{\alpha}]$ of the induced orbifold symplectic form on $X_{\alpha}$ varies linearly with $\alpha$.
Subsequently, Guillemin--Sternberg \cite{GS89} considered the \emph{wall-crossing problem} of $X_{\alpha}$ for the quasi-free Hamiltonian $T$-action.
More precisely, they showed that as $\alpha$ crosses a boundary (wall) separating two subpolytopes the diffeomorphism type of $X_{\alpha}$ undergoes a composition of a symplectic blow-up and a symplectic blow-down, and explained how the cohomology class of $[\omega_{\alpha}]$ changes.
In the paper \cite{Go01}, Godinho generalized Guillemin--Sternberg's wall-crossing theorem to general Hamiltonian $T$-action.
Recently, using the constructible property of the moment map, Mol \cite{Mol25} extended the linear variation theorem of Duistermaat--Heckman to singular symplectic reduced spaces.

From the viewpoint of complex geometry, for each $\alpha\in\Delta$, there is a canonical way to construct a pure dimensional, reduced and normal complex analytic space $X^{ss}(\Phi_{\alpha})/\!\!/T^{\mathbb{C}}$ called the K\"{a}hler quotient, which is canonically homeomorphic to the symplectic reduced space $X_{\alpha}$, see for example \cite{HHL94, HL94, Sj95, HS20}.
In particular, the usual K\"{a}hler metric $ds^{2}$ on $X$ induces a unique singular K\"{a}hler structure $\kappa_{\alpha}$ on $X^{ss}(\Phi_{\alpha})/\!\!/T^{\mathbb{C}}$.
Observe that the construction of the K\"{a}hler quotients $X^{ss}(\Phi_{\alpha})/\!\!/T^{\mathbb{C}}$ also depends on the choice of $\alpha\in\Delta$.
Hence a natural problem that arises now is as follows:
\begin{prob}\label{prob}
How the K\"{a}hler quotient $X^{ss}(\Phi_{\alpha})/\!\!/T^{\mathbb{C}}$, as a reduced normal K\"{a}hler space, varies as $\alpha$ varies in $\Delta$; especially, how to describe explicitly the degeneration from a regular K\"{a}hler quotient (as a K\"{a}hler orbifold) to a singular K\"{a}hler quotient?
\end{prob}

The K\"{a}hler quotient is a crossing-point of symplectic geometry and algebraic geometry, and the above problem is closely related to the one in the Geometric Invariant Theory (GIT): \emph{how the GIT-quotients vary with the linearisation of the group action.}
In 1986, Goresky--MacPherson \cite{GM86} first addressed the fundamental comparison problem of GIT-quotients and pioneered the natural morphisms among different quotients.
Suppose $X$ is a projective variety over an algebraically closed field on which the algebraic torus $(\mathbb{C}^{\ast})^{n}$ acts, and $X$ is equipped with an ample $(\mathbb{C}^{\ast})^{n}$-linear line bundle, Brion--Procesi \cite{BP90} studied the relationships among GIT-quotients of $X$.

If $X\subset\mathbb{CP}^{N}$ is a projective manifold and the action of the algebraic torus $(\mathbb{C}^{\ast})^{n}$ on $X$ can extends to a linear action on $\mathbb{CP}^{N}$.
Along the same line in \cite{GM86}, Hu \cite{Hu92} studied the geometry and topology of the (semi-)geometric quotients of torus actions, and obtained inductive formulae for the dimensions of intersection homology groups of the quotient.
In general, for the actions of any reductive algebraic groups on nonsingular complex projective varieties, using different methods,
Dolgachev--Hu \cite{DH98} and Thaddeus \cite{Th96} independently proved that the space of all possible linearizations is divided into finitely many chambers within which the quotient is constant, and when the linearization moves from one chamber to another by crossing a wall the corresponding quotient undergoes a birational transformation similar to Mori's flip.
Using equivariant cohomology theory, Fujiki \cite{Fu96} generalized some results of \cite{DH98} and \cite{Th96} to the general K\"{a}hler quotients.

\subsection{Summary of the results}
In general, let $K$ be a compact connected Lie group and $G=K^{\mathbb{C}}$ its complexification.
Let $G=K^{\mathbb{C}}$ act holomorphically on a possibly non-projective K\"{a}hler manifold $(X, ds^{2})$, and assume that the $K$-action on $(X, \omega)$ is Hamiltonian, then we get a K\"{a}hler Hamiltonian $K$-manifold.
Owing to the Holomorphic Slice Theorem (cf. \cite[Theorem 1.12]{Sj95} or \cite[(2.7) Theorem]{HL94}), the holomorphic $G$-action on $X$ is locally algebraic.
This implies that many results on the action of a reductive algebraic group on a projective algebraic variety have analytic counterparts for K\"{a}hler Hamiltonian $K$-manifolds in the analytic category.

This work is motivated by Problem \ref{prob}.
Consider a compact K\"{a}hler Hamiltonian $T$-manifold $(X,ds^{2},T^{\mathbb{C}},\Phi)$.
For simplicity, we fix some notations for later use:
\begin{itemize}
\item[--] $\mathrm{int}\,\Delta$= the interior of $\Delta=\Phi(X)$;

\item[--] $\partial\,\Delta$= the boundary of $\Delta$;

\item[--] $\mathrm{int}\,\Delta_{i}$= the interior of a subpolytope $\Delta_{i}$ of $\Delta$, which is a connected component of the set of regular values of $\Phi$;
\item[--] $\partial\,\Delta_{i}$= the boundary of a subpolytope $\Delta_{i}$ of
          $\Delta$, which consists of critical values of $\Phi$;
\item[--] $\mathrm{int}\,F$= the relative interior of a wall $F$ of $\Delta$, where a wall is a face of a subpolytope in $\Delta$\footnote{In the terminology of \cite{DH98}, the interior of a wall is a cell.
    Be careful that our definition of walls does not coincide with that defined in \cite{DH98}.
    In general, a wall as defined in \cite{DH98} consists of several walls in our sense.}.
\end{itemize}
It is well known to experts that $\Delta$ has a decomposition of subpolytope $\Delta_i$, see Proposition~\ref{prop:decomp} for a proof.

Inspired by the theory of Variation of Geometric Invariant Theory (VGIT) for projective varieties, we show the following:
\begin{thm}[=Theorem \ref{m1}]\label{main-thm-1}
Let $\Delta_{i}$ be a subpolytope of $\Delta=\Phi(X)$ and $\epsilon$ be a critical value of $\Phi$ lying in the intersection $\mathrm{int}\,\Delta\cap\partial\,\Delta_{i}$.
Then for any $\xi$ in $\mathrm{int}\,\Delta_{i}$, there exists a natural proper modification from the regular K\"{a}hler quotient $X^{s}(\Phi_{\xi})/\!\!/T^{\mathbb{C}}$ to the singular K\"{a}hler quotient $X^{ss}(\Phi_{\epsilon})/\!\!/T^{\mathbb{C}}$.
\end{thm}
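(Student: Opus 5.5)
The plan is to imitate the VGIT argument of Thaddeus and Dolgachev--Hu in the analytic setting. The first step is to prove an inclusion of semistable sets, $X^{ss}(\Phi_{\xi})\subset X^{ss}(\Phi_{\epsilon})$ for every $\xi\in\mathrm{int}\,\Delta_{i}$ and every $\epsilon\in\partial\,\Delta_{i}\cap\mathrm{int}\,\Delta$. Here $X^{ss}(\Phi_{\alpha})=\{x\in X:\overline{T^{\mathbb{C}}x}\cap\Phi^{-1}(\alpha)\neq\emptyset\}$.

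To get the inclusion, we use the torus version of the Atiyah/Guillemin--Sternberg convexity theorem for orbit closures: for every $x\in X$, the set $\Phi(\overline{T^{\mathbb{C}}x})$ is the convex polytope spanned by the weights $\Phi(X^{T}\cap\overline{T^{\mathbb{C}}x})$. Then $x\in X^{ss}(\Phi_{\alpha})$ if and only if $\alpha\in\Phi(\overline{T^{\mathbb{C}}x})$. Such orbit polytopes are unions of closed subpolytopes $\Delta_{j}$, because their faces lie in the walls (images of fixed-point sets of subtori). Hence if $\xi\in\mathrm{int}\,\Delta_{i}$ lies in $\Phi(\overline{T^{\mathbb{C}}x})$, the whole of $\Delta_{i}$ does, and so does $\epsilon\in\partial\Delta_{i}$.

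Since $\xi$ is a regular value, $X^{ss}(\Phi_{\xi})=X^{s}(\Phi_{\xi})$. By the analytic Hilbert quotient theory (Heinzner--Loose, Sjamaar) we have quotients $p_{\xi}:X^{s}(\Phi_{\xi})\to X^{s}(\Phi_{\xi})/\!\!/T^{\mathbb{C}}$ and $p_{\epsilon}:X^{ss}(\Phi_{\epsilon})\to X^{ss}(\Phi_{\epsilon})/\!\!/T^{\mathbb{C}}$. The composite $p_{\epsilon}\circ\iota$ is $T^{\mathbb{C}}$-invariant and holomorphic, so the universal property of categorical quotients gives a holomorphic map
\[
f:X^{s}(\Phi_{\xi})/\!\!/T^{\mathbb{C}}\longrightarrow X^{ss}(\Phi_{\epsilon})/\!\!/T^{\mathbb{C}}.
\]
Properness of $f$ follows from compactness: both quotients are homeomorphic to $\Phi^{-1}(\xi)/T$ and $\Phi^{-1}(\epsilon)/T$, which are compact. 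So $f$ is a holomorphic map from a compact space to a Hausdorff space, hence proper. Surjectivity also needs checking. Each fibre of $p_{\epsilon}$ contains a unique closed orbit $T^{\mathbb{C}}y$ with $\Phi(y)=\epsilon$, and we must find a $\xi$-stable point whose orbit closure contains $y$. I would do this locally via the holomorphic slice theorem at $y$: near $y$ the action is modelled on $T^{\mathbb{C}}\times_{H}V$, with $H$ the stabiliser. Since $\epsilon\in\mathrm{int}\,\Delta$ and $\xi$ is close to $\epsilon$ in the direction of $\Delta_{i}$, the local moment image near $y$ covers a neighbourhood of $\epsilon$ in the cone spanned by the $H$-weights on $V$. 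That cone contains $\xi$ after a translation argument (Duistermaat--Heckman linearity of the image of $\Phi$ near $\epsilon$). So there are points of $V$ that are $\xi$-stable and have $y$ in their closure.

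The remaining step, which I expect to be the main obstacle, is to show that $f$ is a proper modification. This means finding a nowhere dense analytic subset $E\subset X^{ss}(\Phi_{\epsilon})/\!\!/T^{\mathbb{C}}$ such that $f$ is biholomorphic off $f^{-1}(E)$ and $f^{-1}(E)$ is nowhere dense. Take $E$ to be the image of the strictly semistable locus, i.e.\ the points whose closed orbit has positive-dimensional stabiliser (equivalently, meets the critical set $\Phi^{-1}(\epsilon)\cap X_{\mathfrak{h}}$ for a nontrivial subtorus). Over the complement, $\epsilon$-stable points are $\xi$-stable with the same closed orbits, by openness of stability near $\epsilon$, so $f$ is bijective there. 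Since both sides are normal, Zariski's main theorem in the analytic form upgrades the bijection to a biholomorphism.

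Two points about $E$ remain. It is analytic, because the stabiliser-type strata are analytic. Its preimage is nowhere dense, and this is where we use $\epsilon\in\mathrm{int}\,\Delta$: the principal orbit type has finite stabilisers, and its $\epsilon$-level set is nonempty and dense in $\Phi^{-1}(\epsilon)$ because $\epsilon$ is not on the outer boundary. Establishing this density (so that the singular quotient is not entirely made of strictly semistable points) is the delicate point. I would handle it by a dimension count in the local slice model, showing that the strictly semistable locus has dimension strictly smaller than $\dim X-2\dim T$.
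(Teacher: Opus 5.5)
Most of your outline is sound and close to the paper. This covers the inclusion $X^{s}(\Phi_{\xi})\subset X^{ss}(\Phi_{\epsilon})$ via orbit polytopes; the paper instead uses continuity of $f^{HM}_{x}$ together with constancy of $X^{s}$ on chambers. It also covers the induced map, properness by compactness, and bijectivity over $X^{s}_{\epsilon}/\!\!/G$ upgraded by normality; the paper writes the inverse directly from $X^{s}_{\epsilon}\hookrightarrow X^{s}_{\xi}$. Either way you also need that $X^{sss}_{\epsilon}$ is $\Pi_{\epsilon}$-saturated (Lemma \ref{B-dense}). The genuine gap is the step you flag as delicate, which is the heart of the theorem: you never show $X^{s}(\Phi_{\epsilon})\neq\emptyset$. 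Without it, $E$ could be all of $X^{ss}_{\epsilon}/\!\!/G$ and there is no modification. Indeed, for $\epsilon\in\partial\,\Delta$ one has $X^{s}(\Phi_{\epsilon})=\emptyset$ (Lemma \ref{sta}). So interiority must enter through a concrete mechanism, and ``$\epsilon$ is not on the outer boundary'' only restates the claim. A dimension count of the positive-dimensional-stabiliser strata does not supply it either. Bounding those strata from above produces no point with finite stabiliser unless you also know the level set has a piece of the expected dimension $\dim X-2\dim T$, which is the same missing fact.

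The paper's mechanism (Lemma \ref{ep-in}) is as follows. Write $\epsilon$ as an interior point of a segment $[\xi_{l},\xi_{m}]$ with $\xi_{l}\in\mathrm{int}\,\Delta_{l}$ and $\xi_{m}\in\mathrm{int}\,\Delta_{m}$. Convexity of $f^{HM}_{x}$ then gives $X^{s}(\Phi_{\xi_{l}})\cap X^{s}(\Phi_{\xi_{m}})\subset X^{s}(\Phi_{\epsilon})$, and the left side is an intersection of two dense open sets, hence nonempty. There is also a local substitute that fits your slice picture. At $y\in\Phi^{-1}(\epsilon)$, the $T_{y}$-weights on the symplectic slice must positively span $\mathfrak{t}_{y}^{\ast}$. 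Otherwise some $\Phi^{v}$ with $v\in\mathfrak{t}_{y}$ has a local minimum at $y$, which by Atiyah is a global minimum, forcing $\epsilon\in\partial\,\Delta$. A strictly positive relation among the weights then gives finite-stabiliser points of $\Phi^{-1}(\epsilon)$ arbitrarily close to $y$. Separately, nowhere density of $f^{-1}(E)$ in $X^{s}_{\xi}/\!\!/G$ is not the same as density inside $\Phi^{-1}(\epsilon)$. It follows from irreducibility of the regular quotient (Proposition \ref{irreducible}) once $f^{-1}(E)\neq X^{s}_{\xi}/\!\!/G$.

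Your surjectivity argument is also not justified as written. Duistermaat--Heckman linearity says nothing about which directions the local cone at $y$ contains. Even when the cone contains the direction of $\xi$, a slice vector $v$ with the right moment image need not satisfy $0\in\overline{H\cdot v}$. For that, its support weights must lie in an open half-space, for example a simplicial subcone chosen via Carath\'eodory and the genericity of $\xi$. But you do not need this step at all. Once $f$ is biholomorphic over the dense open set $X^{s}_{\epsilon}/\!\!/G$, the image of the compact space $X^{s}_{\xi}/\!\!/G$ is closed and contains that dense set, so $f$ is onto. This is exactly how the paper argues.
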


Especially, we consider the wall-crossing of K\"{a}hler quotients.
Let $\Delta_{-}$ and $\Delta_{+}$ be two subpolytopes of $\Phi(X)$ separated by a wall $F$.

\begin{thm}[=Theorem \ref{m2}]\label{main-thm-2}
Let $\xi$, $\epsilon$ and $\zeta$ be arbitrary points lying in $\mathrm{int}\,\Delta_{-}$, $\mathrm{int}\,F$ and $\mathrm{int}\,\Delta_{+}$ respectively.
Then we have two natural proper modifications
\begin{equation*}
f_{\xi, \epsilon}:X^{ss}(\Phi_{\xi})/\!\!/T^{\mathbb{C}}\longrightarrow X^{ss}(\Phi_{\epsilon})/\!\!/T^{\mathbb{C}}
\end{equation*}
and
\begin{equation*}
f_{\zeta, \epsilon}:X^{ss}(\Phi_{\zeta})/\!\!/T^{\mathbb{C}}\longrightarrow X^{ss}(\Phi_{\epsilon})/\!\!/T^{\mathbb{C}}
\end{equation*}
which have the same center in $X^{ss}(\Phi_{\epsilon})/\!\!/T^{\mathbb{C}}$;
moreover, each fiber of $f_{\xi,\epsilon}$ (resp, $f_{\zeta,\epsilon}$) over the center is biholomorphic to the quotient of a weighted projective space by a finite group.
\end{thm}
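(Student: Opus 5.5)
The plan is to build the two modifications from semistable loci and then analyse their fibres over the centre with the Holomorphic Slice Theorem. First I show the inclusions $X^{ss}(\Phi_{\xi})\subset X^{ss}(\Phi_{\epsilon})$ and $X^{ss}(\Phi_{\zeta})\subset X^{ss}(\Phi_{\epsilon})$. For torus actions I use the characterisation that $x$ is $\Phi_{\alpha}$-semistable iff $\alpha\in\Phi(\overline{T^{\mathbb{C}}x})$, where $\Phi(\overline{T^{\mathbb{C}}x})$ is a convex polytope, the convex hull of the images of the $T$-fixed points in the orbit closure. Since $\epsilon\in\mathrm{int}\,F$ lies in the closure of $\mathrm{int}\,\Delta_{\pm}$, the semistable locus is locally constant on open cells. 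A polytope whose vertices are critical values and which contains $\xi$ must contain the whole closed subpolytope $\overline{\Delta_{-}}$, and in particular $\epsilon$. The same argument works for $\zeta$.

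With the inclusions in hand, the categorical quotient property of $X^{ss}(\Phi_{\epsilon})\to X^{ss}(\Phi_{\epsilon})/\!\!/T^{\mathbb{C}}$ gives holomorphic maps $f_{\xi,\epsilon}$ and $f_{\zeta,\epsilon}$. These are exactly the modifications of Theorem \ref{main-thm-1} (Theorem \ref{m1}), so I take properness, surjectivity and bimeromorphicity from there. The centre is identified as follows. Let $Y_{\epsilon}\subset X^{ss}(\Phi_{\epsilon})/\!\!/T^{\mathbb{C}}$ be the image of the points whose closed orbit in $X^{ss}(\Phi_{\epsilon})$ has positive-dimensional stabiliser. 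Equivalently, via the homeomorphism with $Z_{\epsilon}/T$, $Y_{\epsilon}$ is the set of $T$-orbits in $Z_{\epsilon}$ with non-finite stabiliser; because $\epsilon$ lies in the interior of a codimension-one wall, the identity component of each such stabiliser is a fixed circle $S\subset T$ whose Lie algebra is the normal direction of $F$. Off $Y_{\epsilon}$, every point of $X^{ss}(\Phi_{\epsilon})$ is stable with finite stabiliser, and those points are also $\Phi_{\xi}$- and $\Phi_{\zeta}$-stable. Both maps are therefore isomorphisms there, so both have centre $Y_{\epsilon}$, which is defined intrinsically in the target.

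For the fibres, I fix a closed orbit $T^{\mathbb{C}}x\subset X^{ss}(\Phi_{\epsilon})$ mapping to a point of $Y_{\epsilon}$. Its stabiliser is $H=S^{\mathbb{C}}\times\Gamma$ up to a finite group $\Gamma$. The Holomorphic Slice Theorem (\cite[Theorem 1.12]{Sj95}) gives a saturated neighbourhood of the form $T^{\mathbb{C}}\times_{H}N$, where $N$ is the normal representation of $H$. Choose a coordinate $t$ on $S^{\mathbb{C}}\cong\mathbb{C}^{*}$ and split $N=N_{+}\oplus N_{0}\oplus N_{-}$ by the sign of the $S^{\mathbb{C}}$-weights. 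Shifting the moment map from $\epsilon$ toward $\xi$ amounts, in the local model, to linearising the $\mathbb{C}^{*}$-action by a nonzero character of one sign. Semistability then means the $N_{+}$-component is nonzero; shifting toward $\zeta$ gives the opposite sign, and semistability means the $N_{-}$-component is nonzero. The fibre of $f_{\xi,\epsilon}$ over $[x]$ is the set of orbits whose closure contains the origin of $N_{0}$. It is therefore $(N_{+}\setminus 0)/\mathbb{C}^{*}$ further divided by $\Gamma$ (and by the residual finite part of $T^{\mathbb{C}}$ acting on the slice), that is, a weighted projective space $\mathbb{P}(w_{1},\dots,w_{k})$ modulo a finite group. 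The fibre of $f_{\zeta,\epsilon}$ is $\mathbb{P}(|w'_{1}|,\dots,|w'_{l}|)$ from $N_{-}$, again modulo a finite group.

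The hardest step is making the local linearisation precise. I need to show that, in the slice chart, the $\Phi_{\xi}$-semistable set equals the GIT-semistable set for the character determined by $\xi-\epsilon$. I would first try a local moment map computation: on $N$ the moment map for $S$ is, after shifting by $\epsilon$, the quadratic form $\tfrac12\sum w_{j}|z_{j}|^{2}$ plus higher-order terms. I would then show, by a convexity or homotopy argument in the spirit of Theorem \ref{m1}, that for $\xi$ close to $\epsilon$ the sign of $\langle\xi-\epsilon,\cdot\rangle$ alone decides which orbits reach the level set. Independence of the choice of $\xi$ within $\mathrm{int}\,\Delta_{-}$ then follows from the constancy of the quotient on the open cell.
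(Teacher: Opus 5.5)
\textbf{Verdict: genuine gaps.} Your outline is a legitimate alternative to the paper's proof: inclusions of the stable loci, a centre $B$ defined intrinsically in $X^{ss}(\Phi_{\epsilon})/\!\!/T^{\mathbb{C}}$, then linear GIT for $S^{\mathbb{C}}$ on a holomorphic slice $T^{\mathbb{C}}\times_H N$. The paper instead argues globally with the Carrell--Sommese cells $W^{\pm}_b$ of $G^0_b$ (Propositions \ref{v=gw} and \ref{iso-wb}). However, the two steps that carry the content of the theorem are not established.

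\textbf{First gap: the stabilisers.} You assert that, because $\epsilon$ lies in the interior of a codimension-one wall, every stabiliser over the centre has a circle as identity component. This is exactly the truly-faithfulness that the paper singles out as the key point (Proposition \ref{G-b}, Remark \ref{rk:tf}), and you give no argument for it. Nothing you say excludes $H^{0}$ being a torus of dimension $\geq 2$. In that case the null cone is not $N_+\cup N_-$, and the fibre is a toric GIT quotient rather than a weighted projective space.

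The paper proves the claim as follows. $X^{ss}(\Phi_\lambda)$ is constant for $\lambda\in\mathrm{int}\,F$ (Lemma \ref{sta}(ii), which rests on the non-trivial Lemma \ref{F-gx}), so $\mathrm{int}\,F\subset\Phi(\overline{G\cdot b})$. Atiyah's formula $\dim\Phi(\overline{G\cdot b})=\dim(T\cdot b)$ then gives $\dim T_b=1$.

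Within your local framework you could instead use the symplectic normal form at $b$. Write $\mathfrak{t}_b^{0}$ for the annihilator of $\mathfrak{t}_b$ and lift each weight $\alpha$ of the symplectic slice to $\mathfrak{t}^{\ast}$. If $\dim T_b\geq 2$, vectors of weight $\alpha\neq 0$ have stabiliser $\ker\alpha$, which has positive dimension. So they produce critical values $\epsilon+\mathfrak{t}_b^{0}+\mathbb{R}_{\geq 0}\alpha$ accumulating at $\epsilon$. A neighbourhood of $\epsilon$ minus the affine hyperplane of $F$ lies in $\mathrm{int}\,\Delta_-\cup\mathrm{int}\,\Delta_+$, so all these critical values lie in that hyperplane. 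Hence all weights lie in a hyperplane of $\mathfrak{t}_b^{\ast}$. Then a subtorus acts trivially near $b$, hence on all of $X$, which is a contradiction.

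\textbf{Second gap: the fibre computation.} You need $\Phi_\xi$-stability on the slice to coincide with GIT-stability for the character determined by $\xi-\epsilon$, and you leave this as a plan. The plan is only heuristic. The holomorphic slice is not a symplectic chart, and a Taylor expansion of $\Phi$ at $b$ does not by itself decide whether $\overline{G\cdot y}$ meets $\Phi^{-1}(\xi)$.

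You do not actually need this identification. Once $H^{0}=S^{\mathbb{C}}$ is known, the slice model gives $\Pi_\epsilon^{-1}([x])\cong T^{\mathbb{C}}\times_H(N_+\cup N_-)$, and the vertex (corresponding to $b$) is not $\Phi_\xi$-stable. So the $\Phi_\xi$-stable points form $T^{\mathbb{C}}\times_H A$, where $A$ is an open $H$-invariant subset of $(N_+\setminus 0)\sqcup(N_-\setminus 0)$, and the fibre is $A/H$. The fibre is:
\begin{itemize}
\item compact;
\item non-empty, by surjectivity in Theorem \ref{m1};
\item connected, by Property \ref{fiber-conn}.
\end{itemize}
Therefore $A/H$ is open, closed and connected in $\bigl(\mathbb{P}(N_+)\sqcup\mathbb{P}(N_-)\bigr)/\Gamma$. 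Hence it is an entire weighted projective space modulo a finite group. This open--closed argument is precisely how the paper finishes Theorem \ref{m2}, with $W^{+}_b$ in place of $N_+$.

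\textbf{A smaller slip.} In your first step, knowing that the vertices of $\Phi(\overline{T^{\mathbb{C}}x})$ are critical values does not force $\overline{\Delta_-}\subset\Phi(\overline{T^{\mathbb{C}}x})$. What you need is that every proper face consists of critical values. This holds because each proper face is the image of an orbit closure with positive-dimensional stabiliser.
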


A brief review of concepts in bimeromorphic geometry can be found in Appendix \ref{mod-bim}.
It is noteworthy that both Theorem \ref{main-thm-1} and Theorem \ref{main-thm-2} have the definite prototypes in the theory of VGIT, and we would like to add a comment about their relation with the known results in the literature.

The first difference is that, certainly, we deal with the general K\"ahler manifolds rather than only the projective ones.
As expected, many results can be extends to this larger category by using the more or less of the same methods coming from the algebraic case.
In \cite{Fu96}, Fujiki have made some exploration in this direction.
Theorem \ref{main-thm-1} is another example of such relatively straightforward generalizations.
In fact, we suspect that Theorem \ref{main-thm-1} is well known to experts.
But we can not find a suitable reference in the literature, and therefore we decide to give it a detailed proof.

As a contrast, Theorem \ref{main-thm-2}, although is still very similar to its VGIT counterpart, has crucial differences with the classical results both in its statement and proof.
Unlike the tradition in the theory of VGIT, where the K\"ahler form and the moment map value vary simultaneously as in \cite{DH98,Th96,Fu96}, we fix the K\"ahler form and only varies the values of the moment map.
Considering the explicit description of the image of the moment map under the torus action, we hope that in this smaller parameter space, the whole theory can become more concrete and transparent.
We can say such an object is fulfilled in some sense as we successfully remove a delicate concept, truly faithful cell defined in \cite{DH98}, from the statement.

Moreover, to investigate the complex geometry properties for the varying K\"{a}hler quotients, this small parameter space is also very useful since many interesting phenomena have already appeared in this range.
As to the proof of Theorem \ref{main-thm-2}, we remark that, even useful for the projective case, Theorem \ref{main-thm-2} still seems not to be a very trivial corollary of the classical result in \cite{DH98,Th96}.
The reason lies in the fact that the condition of truly faithful cells is not very straightforward to check.
Actually, a key point in proof of Theorem \ref{main-thm-2} is the fact that many subsets of the critical value of the moment map are automatically truly faithful, see Proposition \ref{G-b} and Remark \ref{rk:tf}.

Based upon Theorem \ref{main-thm-2}, we prove the following result which describes the wall-crossing of K\"{a}hler quotients by a quasi-free torus action in the complex analytic category.
\begin{thm}[=Theorem \ref{qusi-free-wc}]\label{main-thm-3}
If the $T$-action is quasi-free and the common center $B$ of the modifications $f_{\xi,\epsilon}$ and $f_{\zeta,\epsilon}$ is a finite set, then the quotient $X^{ss}(\Phi_{\zeta})/\!\!/T^{\mathbb{C}}$ can be obtained from the quotient $X^{ss}(\Phi_{\xi})/\!\!/T^{\mathbb{C}}$ by a blow-up at an explicit smooth center followed a blow-down at an explicit smooth center.
\end{thm}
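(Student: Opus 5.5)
The plan is to reduce to a local model near each point of the common center $B$ and then use the explicit form of the two modifications $f_{\xi,\epsilon}$ and $f_{\zeta,\epsilon}$ from Theorem \ref{main-thm-2}. Away from $B$ both maps are biholomorphisms. So the quotients $X^{ss}(\Phi_{\xi})/\!\!/T^{\mathbb{C}}$ and $X^{ss}(\Phi_{\zeta})/\!\!/T^{\mathbb{C}}$ are identified over $X^{ss}(\Phi_{\epsilon})/\!\!/T^{\mathbb{C}}\setminus B$, and everything reduces to a neighbourhood of each of the finitely many points $b\in B$. The fibers $f_{\xi,\epsilon}^{-1}(b)$ and $f_{\zeta,\epsilon}^{-1}(b)$ are the exceptional loci $E_-$ and $E_+$.

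Over $b$ the closed $T^{\mathbb{C}}$-orbit in $X^{ss}(\Phi_{\epsilon})$ has stabiliser a subtorus. Since $\epsilon$ lies in the interior of a codimension-one wall $F$ and $B$ is finite, I expect this stabiliser to be a circle $S^{1}\subset T$. By the Holomorphic Slice Theorem (\cite[Theorem 1.12]{Sj95}), a saturated neighbourhood of this orbit is $T^{\mathbb{C}}$-equivariantly biholomorphic to $T^{\mathbb{C}}\times_{\mathbb{C}^{*}}V$, where $V$ is the normal slice representation. The three quotients are then locally the variation of GIT quotients of the linear $\mathbb{C}^{*}$-action on $V$. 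Here shifting the moment value across the wall corresponds to the two characters $\pm 1$ of $\mathbb{C}^{*}$, with $\epsilon$ corresponding to the trivial character, i.e.\ the affine quotient $V/\!\!/\mathbb{C}^{*}$.

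Quasi-freeness forces all weights of $S^{1}$ on $V$ to be $0$ or $\pm1$. Write $V=V_{0}\oplus V_{+}\oplus V_{-}$ with $\dim V_{\pm}=n_{\pm}$. The quotient $V/\!\!/_{\pm}\mathbb{C}^{*}$ is then $V_{0}\times$ the total space of $\mathcal{O}(-1)^{\oplus n_{\mp}}$ over $\mathbb{P}(V_{\pm})$. This is smooth, and the weighted projective spaces of Theorem \ref{main-thm-2} become honest projective spaces with trivial finite group, so $E_{\pm}\cong\mathbb{P}^{n_{\pm}-1}$. The standard local picture then applies. Blow up $V/\!\!/_{+}\mathbb{C}^{*}$ along the zero section $\mathbb{P}(V_{+})$ (times $V_0$). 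The result is $V_{0}$ times the total space of $\mathcal{O}(-1,-1)$ over $\mathbb{P}(V_{+})\times\mathbb{P}(V_{-})$. This is symmetric in $\pm$, so it is also the blow-up of $V/\!\!/_{-}\mathbb{C}^{*}$ along $\mathbb{P}(V_{-})$. The common blow-up is the fibre product resolving the rational map over $V/\!\!/\mathbb{C}^{*}$, equivalently the closure of the graph of $f_{\zeta,\epsilon}^{-1}\circ f_{\xi,\epsilon}$.

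To globalise, I define $\widetilde{X}$ as the closure of the graph of the bimeromorphic map $f_{\zeta,\epsilon}^{-1}\circ f_{\xi,\epsilon}$ in the fibre product over $X^{ss}(\Phi_{\epsilon})/\!\!/T^{\mathbb{C}}$. By the local computation at each $b\in B$, $\widetilde{X}$ is the blow-up of $X^{ss}(\Phi_{\xi})/\!\!/T^{\mathbb{C}}$ along the smooth center $\bigsqcup_{b}E_-$. It is also the blow-up of $X^{ss}(\Phi_{\zeta})/\!\!/T^{\mathbb{C}}$ along $\bigsqcup_{b}E_+$, with common exceptional divisor $\bigsqcup_b \mathbb{P}^{n_{+}-1}\times\mathbb{P}^{n_{-}-1}$. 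Blowing down along the second projection gives the claimed transformation.

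The main obstacle is the identification of the stabiliser and slice data. One must check that the stabiliser over each $b$ is exactly a circle whose weights have the $\pm1$/$0$ pattern. One must also check that the shifts $\xi$, $\zeta$ of the moment value really correspond to the two opposite linearisations of the local model; this needs the fact that interior points of $F$ are truly faithful (Proposition \ref{G-b}). It must moreover be verified that the slice biholomorphism is compatible with the three quotient maps, so that the local identification of $\widetilde X$ patches with the global graph closure. Finally, one checks that the finite group in Theorem \ref{main-thm-2} is trivial by quasi-freeness.
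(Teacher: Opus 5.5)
Your argument is correct in outline, but it takes a different route from the paper.

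\textbf{The paper's route.} The paper never linearizes. For each $x_i\in B$ it forms $\mathrm{Bl}_{x_i}U_i$ and $\mathrm{Bl}_{A_{\xi,i}}f_{\xi,\epsilon}^{-1}(U_i)$. It then asserts, by citing Hironaka--Rossi, that the first is smooth and isomorphic to the second compatibly with $f_{\xi,\epsilon}$. It glues these local pieces into $\mathrm{Bl}_BX_\epsilon\cong\mathrm{Bl}_{A_\xi}X_\xi\cong\mathrm{Bl}_{A_\zeta}X_\zeta$. Finally it uses the universal properties of the fibre product and of the blow-up to identify this space with the component of $X_\xi\times_{X_\epsilon}X_\zeta$ dominating $X_\epsilon$.

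\textbf{Your route.} You reduce, via the holomorphic slice theorem, Proposition~\ref{G-b} and quasi-freeness, to a linear $\mathbb{C}^*$-action with weights $0,\pm1$, and compute directly. Both blow-ups come out as $\mathrm{Tot}(\mathcal{O}(-1,-1))$ over $\mathbb{P}^{n_+-1}\times\mathbb{P}^{n_--1}$. This is also the blow-up of the Segre cone at its vertex, so you recover the paper's statement about $\mathrm{Bl}_BX_\epsilon$ as well.

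\textbf{What each buys.} Your approach actually proves the step the paper delegates to a citation, namely smoothness and the isomorphism of the two blow-ups. It also gives the exceptional divisor and its normal bundle explicitly. It avoids the universal-property step, which needs the pulled-back ideal of $A_\xi$ to be invertible, not merely its zero set to be a hypersurface. It is also the natural starting point for non-finite $B$, where one keeps the $V_0$ factor. The paper's route is shorter on the page and gets the fibre-product identification from universal properties rather than by computation.

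\textbf{Your listed obstacles.} The compatibility checks you flag can be discharged with results already in the paper.
\begin{enumerate}
\item Finiteness of $B$ forces $V_0=0$, because near $[G\cdot b]$ the set $B$ is the image of $V_0$.
\item On a saturated slice neighbourhood $G\cdot S$ of $G\cdot b$, points not over $[G\cdot b]$ are $\Phi_\epsilon$-stable. By Lemma~\ref{ep-in} they therefore lie in $X^s_\xi\cap X^s_\zeta$.
\item Over $[G\cdot b]$, Proposition~\ref{v=gw} together with the proof of Theorem~\ref{m2} gives $X^s_\xi\cap\Pi_\epsilon^{-1}([G\cdot b])=G\cdot(W^+_b\setminus\{b\})$. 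In slice coordinates this is $G\times_{\mathbb{C}^*}(V_+\setminus 0)$. Likewise $X^s_\zeta$ picks out exactly the $W^-_b$-part.
\item The two sides are opposite because $X^s_\xi\cap X^s_\zeta=X^s_\epsilon$.
\end{enumerate}
Hence $X^s_\xi\cap G\cdot S=G\times_{\mathbb{C}^*}(S\cap\{v_+\neq 0\})$ is exactly the local stable set. You never need to match moment-map shifts with characters.
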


In general, from Theorem \ref{main-thm-2}, a wall-crossing of K\"{a}hler quotients corresponds to a canonical bimeromorphic transformation.
Particularly, in the case of quasi-free torus action, Theorem \ref{main-thm-3} provides an explicit example of bimeromorphic map between compact K\"{a}hler manifolds for which the Strong Factorization Conjecture holds.

There is a subsidiary result of Theorem \ref{main-thm-2}.
Note that the induced K\"{a}hler structure $\kappa_{\alpha}$ on the quotient $X^{ss}(\Phi_{\alpha})/\!\!/T^{\mathbb{C}}$ determines a cohomology class $c_{1}(\kappa_{\alpha})$ in the second \v{C}ech cohomology group of the sheaf of locally constant $\mathbb{R}$-valued functions on $X^{ss}(\Phi_{\alpha})/\!\!/T^{\mathbb{C}}$.
Denote by $V$ the fiber product of $X^{s}(\Phi_{\xi})/\!\!/T^{\mathbb{C}}$ and $X^{s}(\Phi_{\zeta})/\!\!/T^{\mathbb{C}}$ over $X^{s}(\Phi_{\epsilon})/\!\!/T^{\mathbb{C}}$.
Based on the Duistermaat--Heckman Theorem \cite{DH82}, we obtain a natural comparison of K\"{a}hler classes $c_{1}(\kappa_{\alpha})$ which contains the critical value on the wall.
\begin{thm}[=Theorem \ref{comp-cla}]\label{thm1.4}
Let $l$ be a line segment with the ending points $\xi$ in $\mathrm{int}\,\Delta_{-}$ and $\zeta$ in $\mathrm{int}\,\Delta_{+}$ such that $l$ intersects with the wall $F$ at $\epsilon$ in $\mathrm{int}\,F$.
As $\alpha$ crossing the wall $F$ along $l$, the variation of the K\"{a}hler classes $c_{1}(\kappa_{\alpha})$ gives rise to a broken line segment in the second \v{C}ech cohomology group of the sheaf $\underline{\mathbb{R}}_{V}$.
\end{thm}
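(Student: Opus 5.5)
The plan is to pull all the K\"{a}hler classes back to the common space $V$ and to use Duistermaat--Heckman on each side of the wall. Write $X_{\xi}$, $X_{\epsilon}$, $X_{\zeta}$ for the three quotients $X^{ss}(\Phi_{\xi})/\!\!/T^{\mathbb{C}}$, $X^{ss}(\Phi_{\epsilon})/\!\!/T^{\mathbb{C}}$, $X^{ss}(\Phi_{\zeta})/\!\!/T^{\mathbb{C}}$. Let $p_{-}:V\to X_{\xi}$ and $p_{+}:V\to X_{\zeta}$ be the two projections of the fiber product, and let $q=f_{\xi,\epsilon}\circ p_{-}=f_{\zeta,\epsilon}\circ p_{+}$ be the map to $X_{\epsilon}$. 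These maps come from the two proper modifications of Theorem \ref{main-thm-2}.

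\textbf{Step 1: a single regular model on each side.} For $\alpha$ on the open segment $l_{-}=(\xi,\epsilon)$, the point $\alpha$ lies in $\mathrm{int}\,\Delta_{-}$. By the Duistermaat--Heckman theorem \cite{DH82} and the constancy of the quotient inside a chamber, all quotients $X_{\alpha}$ are canonically identified, as complex orbifolds, with $X_{\xi}$. I will make the identification explicit in the \v{C}ech setting. Choose a $T$-equivariant local-model (normal form) neighbourhood of $Z_{\xi}$, fibred over a neighbourhood of $\xi$, so that each $Z_{\alpha}$ is a principal (orbi-)bundle $P\to X_{\xi}$ with connection $\theta$. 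Then
\[
c_{1}(\kappa_{\alpha})=c_{1}(\kappa_{\xi})+\langle c,\alpha-\xi\rangle ,
\]
where $c\in \check{H}^{2}(X_{\xi},\underline{\mathbb{R}})\otimes\mathfrak{t}$ is the Chern class of $P$. Here one must check that the \v{C}ech class of the singular K\"{a}hler structure $\kappa_{\alpha}$ agrees with the de Rham orbifold class. This holds because on an orbifold the sheaf of pluriharmonic functions gives the standard comparison of $\check{H}^{2}(\underline{\mathbb{R}})$ with de Rham cohomology. The same argument on $l_{+}=(\epsilon,\zeta)$ gives an affine formula on $X_{\zeta}$ with class $c'$.

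\textbf{Step 2: continuity at the wall.} I will show that $q^{*}c_{1}(\kappa_{\epsilon})=\lim_{\alpha\to\epsilon}p_{-}^{*}c_{1}(\kappa_{\alpha})$, and similarly from the $+$ side. The tool is the construction behind Theorem \ref{main-thm-1}: $f_{\xi,\epsilon}$ is induced by the inclusion $X^{ss}(\Phi_{\xi})\subset X^{ss}(\Phi_{\epsilon})$. On $X_{\xi}$ the K\"{a}hler potentials of $\kappa_{\alpha}$ are given locally by the shifted Kempf--Ness/Sjamaar potential $\rho-\langle\alpha,\log|t|\rangle$ restricted to slices. This expression is continuous in $\alpha$ up to $\epsilon$. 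At $\alpha=\epsilon$, pulling back the potential of $\kappa_{\epsilon}$ via $f_{\xi,\epsilon}$ gives a degenerate (semipositive) structure whose class is the affine extension evaluated at $\epsilon$. Pulling everything back to $V$ then shows that the two affine functions $\alpha\mapsto p_{\mp}^{*}c_{1}(\kappa_{\alpha})$ meet at $q^{*}c_{1}(\kappa_{\epsilon})$. As a result, the map $\alpha\mapsto c_{1}(\kappa_{\alpha})$, viewed in $\check{H}^{2}(V,\underline{\mathbb{R}}_{V})$, is continuous and piecewise affine along $l$ with one break at $\epsilon$.

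\textbf{Step 3: the break is genuine.} The slopes are $p_{-}^{*}c$ and $p_{+}^{*}c'$. Their difference is supported near $q^{-1}(B)$, where $B$ is the common center. It is computed from the weighted-projective fibres of Theorem \ref{main-thm-2}: the slope jump is a multiple of the exceptional class coming from $\mathcal{O}(1)$ on those fibres. This difference is nonzero whenever $B\neq\emptyset$, which justifies calling the path a broken line.

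\textbf{Main obstacle.} I expect the hardest part to be Step 2 at the singular quotient. There one needs a \v{C}ech description of $\kappa_{\epsilon}$ (a singular K\"{a}hler structure on a normal space) that is compatible under pullback with the orbifold classes, and one must justify the limit of potentials uniformly near the center. I would first try to work in holomorphic slices around the closed $T^{\mathbb{C}}$-orbits in $X^{ss}(\Phi_{\epsilon})$, where the action is linear and everything can be computed explicitly.
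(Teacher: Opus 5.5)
You have the paper's architecture: Duistermaat--Heckman on each side of the wall, identification of the value at the wall with the pull-back of $c_1(\kappa_\epsilon)$, and pull-back of everything to $V$. The gap is that the step carrying all the content, your Step~2, is only asserted. What must be proved is the exact identity $c_1(\kappa_\xi)=f_{\xi,\epsilon}^{\ast}c_1(\kappa_\epsilon)-\langle c_\xi,\xi-\epsilon\rangle$ in $\check{H}^2(X_\xi,\underline{\mathbb{R}})$ (Lemma~\ref{limit-dh}). Your sentence that the pulled-back potential of $\kappa_\epsilon$ ``gives a degenerate structure whose class is the affine extension evaluated at $\epsilon$'' restates this identity rather than proving it.

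The continuity heuristic does not supply it. For $[G\cdot x]\in A_\xi$ the orbit has finite stabiliser but is $\Phi_\epsilon$-strictly semistable, so it does not meet $Z_\epsilon$. At $\alpha=\epsilon$ the shifted potential therefore has no minimum on the orbit. You would still have to prove two things: that its infimum equals $\varphi_i\circ f_{\xi,\epsilon}$, and that the resulting functions represent the pulled-back class. Moreover, convergence of local potentials (even locally uniform, which you flag as unclear near the centre) is not an argument about classes. The map $c_1=\delta^1\circ\delta^0$ only sees the $\mathscr{PH}$-cocycle of differences of potentials, so it is these cocycles that must be controlled.

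The missing idea is an exact, cocycle-level comparison, with no limit in $\alpha$ at all.
\begin{enumerate}
\item Shift so that $\epsilon=0$. Take Heinzner--Schwarz potentials $\rho_i$ on $G$-invariant, $\Pi_0$-saturated open sets $U_i$, with $dd^c\rho_i=\omega$ and $-\iota_{\eta_X}d^c\rho_i=\Phi^\eta$. Their descents $\varphi_i$ represent $\kappa_0$, so $\psi_{0,i}=\varphi_i\circ f_{\xi,0}$ represents $f^{\ast}_{\xi,0}\check{\omega}_0$.
\item Let $\psi_{\xi,i}$ be the descent of $\rho_i|_{Z_\xi\cap U_i}$. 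The gradient flow $F_t$ of $-\tfrac12\|\Phi\|^2$ stays in the $G$-orbit, hence in $U_i$, and converges to the point of $Z_0$ in the orbit closure. Thus $f_{\xi,0}(\Pi_\xi(p))=\Pi_0(F_\infty(p))$.
\item From $\tfrac{d}{dt}\rho_i(F_t(p))=-\|\Phi(F_t(p))\|^2$ one gets $\psi_{\xi,i}-\psi_{0,i}=\int_0^\infty\|\Phi(F_t(p))\|^2\,dt$, a single function independent of $i$. So $\{\psi_{\xi,i}\}$ and $\{\psi_{0,i}\}$ have the same $\mathscr{PH}$-cocycle and the same $c_1$.
\item Since $\psi_{\xi,i}$ is orbifold-smooth, $c_1(\{\psi_{\xi,i}\})=[dd^c\psi_{\xi,i}]$. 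A connection-form computation on $Z_\xi$ gives $\imath_\xi^{\ast}d^c\rho_i-\pi_\xi^{\ast}d^c\psi_{\xi,i}=-\langle\Theta,\xi\rangle$, hence $[\omega_\xi]-[dd^c\psi_{\xi,i}]=-\langle c_\xi,\xi\rangle$.
\end{enumerate}
Apply this identity to every $a\in[\xi,\epsilon)$, using $X_a=X_\xi$ and $c_a=c_\xi$. The curve on $V$ is then affine on each side with common value $\phi^{\ast}\check{\omega}_\epsilon$, and continuity at $\epsilon$ is automatic.

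Your Step~3 is not part of the statement: the paper only shows two affine pieces meeting at $\gamma(\epsilon)$. Its justification via $\mathcal{O}(1)$ on the fibres also cannot work in general. Take the circle acting on $\mathbb{CP}^2$ with weights $0,1,2$. Both fibres over $B$ are points, both modifications are isomorphisms, and $V\cong\mathbb{CP}^1$. Yet the area of $X_\alpha$ increases on one side of the wall and decreases on the other, so the break is invisible on the fibres.
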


As applications, we show that all nondegenerate K\"{a}hler quotients have the same algebraic dimension (Theorem \ref{inv-alg-dim}), and each singular nondegenerate K\"{a}hler quotient admits a natural shift desingularisation which is partial and rational (Theorem \ref{par-des}).
This enables us to prove the invariance of Riemann--Roch numbers of nondegenerate K\"{a}hler quotients (Corollary \ref{inv-euler}).
Particularly, inspired by multiplicity formulae of singular symplectic quotients in \cite{Sj95}, we apply the main theorems to the computation of the Riemann--Roch numbers of singular K\"{a}hler quotients in the integral case (Theorem \ref{L}).

\subsection{Outline of the paper}
We devote Section \ref{pre} to a brief review on the reductive group actions on K\"{a}hler manifolds.
In Section \ref{nat-map}, we construct a natural morphism from a regular K\"{a}hler quotient to a singular one from the bimeromorphic geometry point of view.
In Section \ref{var-quo}, we study the variation of reduced spaces.
In Section \ref{bim-equ}, we study wall-crossing of the K\"{a}hler quotients by a quasi-free torus action.
In Section \ref{var-cla}, we study the change of the K\"{a}hler structures on quotients in an appropriate \v{C}ech cohomology group.
In Section \ref{apps}, we list several applications of the main result.
Appendix \ref{kah-spa} devotes to the definition of K\"{a}hler metric on a singular complex analytic space.
The final Appendix \ref{mod-bim} contains some basic notions in bimeromorphic geometry used in the paper.

\subsection*{Convention}
By a convex polytope (or a polytope for short) $E$ in a finitely dimensional linear space $V$ in this paper, we mean that $E$ is a bounded subset which is a finite intersection of closed half spaces in $V$.
Let $L \supseteq E$ be the support hyperplane of $E$, i.e., the hyperplane containing $E$ of the minimal dimension.
The relative interior of a convex subset $E$ is defined to be the topological interior of $E$ as a subset of $L$.
%==================================
\subsection*{Acknowledgements}
All the authors would like to express their gratitude to Professor Reyer Sjamaar and Professor Weiping Zhang for useful comments, and Song Yang for helpful suggestions.
The second author would like to thank the School of Mathematics of Sichuan University, Tianyuan Mathemtical Center in Southwest China and the Chern Institute of Mathematics (CIM) for hosting his research visits when he was working on this work.
X.\ Wang was partially supported by the National Natural Science Foundation of China (Grant No. 12471049, 12101361), the Project of Young Scholars of Shandong University.
X.\ Yang was partially supported by the National Nature Science Foundation of China (Grant No.12271225).
%=====================================
\section{Reductive group actions on K\"{a}hler manifolds}\label{pre}

Throughout of this paper, we denote by $K$ a compact connected Lie group with the Lie algebra
$\frak{k}$ and $G=K^{\mathbb{C}}$ its complexification.
A \emph{K\"{a}hler Hamiltonian $K$-manifold} is defined to be a tuple $(X, ds^{2}, G, \Phi)$
satisfying the following conditions:
\begin{itemize}
  \item [(i)] $ds^{2}$ is a $K$-invariant K\"{a}hler metric on $X$;
  \item [(ii)] $G$ acts on $X$ holomorphically;
  \item [(iii)] the $K$-action $(X, -\mathrm{Im}\,ds^{2})$ is Hamiltonian and $\Phi:
      X\rightarrow\mathfrak{k}^{\ast}$ is the moment map.
\end{itemize}
Throughout the paper, we assume that the generic infinitesimal isotropic subgroup of the $K$-action is trivial, i.e., the regular value of $\Phi$ is non-empty.

Consider a K\"{a}hler Hamiltonian $K$-manifold $(X, ds^{2}, G,\Phi)$.
Put $Z=\Phi^{-1}(0)$ and $X_{0}=Z/K$ equipped with the induced sub-topology and the quotient
topology respectively.
Then we have the \emph{inclusion-quotient diagram:}
$$
\xymatrix@=1cm{
  Z \ar[d]_{\pi} \ar[r]^{\imath} &   X     \\
  X_{0}                     }
$$
where $\pi$ is the orbit map and $\imath$ is the inclusion map.
There is a natural way to endow a structure sheaf on $X_{0}$ (cf. \cite{HS20}).
For any open subset $U$ of $Z$, a continuous function $f: U\rightarrow\mathbb{C}$ is
\emph{holomorphic} if it can extend to a holomorphic function on an open neighborhood of $U$ in $X$.
We denote by $\mathscr{O}_{Z}$ the sheaf of holomorphic functions on $Z$
and the structure sheaf of $X_{0}$ is defined to the $K$-invariant direct image of
$\mathscr{O}_{Z}$ along $\pi$, i.e.,
$\mathscr{O}_{X_{0}}=\pi^{K}_{\ast}\mathscr{O}_{Z}$.
\begin{thm}[{\cite[Theorem 1]{HS20}}]\label{reduced-normal}
The ringed space $(X_{0}, \mathscr{O}_{X_{0}})$ is a reduced normal complex analytic space.
\end{thm}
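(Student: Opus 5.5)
The plan is to identify $X_{0}$ with the analytic Hilbert quotient of the semistable set and import the known analytic structure and normality from there.

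\emph{Step 1: the semistable set.} Let
\[
X^{ss}=\{x\in X:\ \overline{G\cdot x}\cap Z\neq\emptyset\}.
\]
Using the Kempf--Ness type analysis of Heinzner--Loose and Sjamaar, I would show that $X^{ss}$ is an open $G$-stable subset of $X$. Indeed, $X^{ss}$ is the set of points flowing into $Z$ under the negative gradient flow of $\|\Phi\|^{2}$, and it has an open neighbourhood of $Z$ as its $G$-saturation.

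\emph{Step 2: the analytic Hilbert quotient.} Next I would invoke the existence of the analytic Hilbert quotient $p\colon X^{ss}\to X^{ss}/\!\!/G$. This is constructed locally from the Holomorphic Slice Theorem: near a point of $Z$ the space $X^{ss}$ is $G$-equivariantly modelled on $G\times_{H}S$, with $H=K_{x}^{\mathbb C}$ reductive and $S$ an $H$-stable Stein slice. Locally the quotient is therefore $S/\!\!/H$, the affine-type categorical quotient, whose structure sheaf is $(p_{\ast}\mathscr{O}_{X^{ss}})^{G}$. Normality of $X^{ss}/\!\!/G$ follows because $X^{ss}$ is a manifold, hence normal, and invariants of a reductive group acting on a normal Stein space form a normal ring. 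Integral closedness is preserved under taking invariants via the Reynolds operator. Reducedness is immediate since we are dealing with rings of functions.

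\emph{Step 3: identifying $X_{0}$ with $X^{ss}/\!\!/G$.} The inclusion $Z\hookrightarrow X^{ss}$ induces a continuous map $X_{0}=Z/K\to X^{ss}/\!\!/G$. I would show it is a homeomorphism using the fact that every closed $G$-orbit in $X^{ss}$ meets $Z$ in exactly one $K$-orbit. Properness of $Z\to X_{0}$, which follows from compactness of $K$ and closedness of $Z$, then upgrades the bijection to a homeomorphism.

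\emph{Step 4: matching structure sheaves.} Finally I must check that $\pi^{K}_{\ast}\mathscr{O}_{Z}$ agrees with $(p_{\ast}\mathscr{O}_{X^{ss}})^{G}$ under this homeomorphism. One direction is easy: a $G$-invariant holomorphic function on a saturated open set restricts to a $K$-invariant function on $Z$ that extends holomorphically. For the converse, take a $K$-invariant continuous function $f$ on $U\subset Z$ that extends holomorphically near $U$. I would average the extension over $K$ to make it $K$-invariant, hence $G$-invariant near $U$ by holomorphicity, and then extend it along $G$-orbits to the saturation. The delicate point is well-definedness on the saturation, since a nonclosed orbit's closure meets $Z$. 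Invariance together with continuity forces the extension to be constant along fibres of $p$, so it descends and agrees with $f$.

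I expect this last step to be the main obstacle: showing that every function holomorphic in the weak sense on $Z$ comes from a $G$-invariant holomorphic function on the saturation. I would attack it locally in a slice model, reducing to $S/\!\!/H$ with $Z$ replaced by the zero set of the moment map on $S$, where the statement becomes a Kempf--Ness statement for a linear reductive action.
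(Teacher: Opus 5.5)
Your strategy, identifying $(X_{0},\mathscr{O}_{X_{0}})$ with $X^{ss}/\!\!/G$ and importing normality from invariant theory, is viable when $G$ acts, as it does in this paper. The paper itself just cites \cite[Theorem~1]{HS20}, which needs no global $G$-action, and takes that identification from \cite[Theorem~2]{HS20} for Theorem~\ref{main-k-q}(ii). However, Steps~1--3 are the classical Heinzner--Loose/Sjamaar results, so the entire content of the statement is your Step~4, and your sketch there has a genuine gap.

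\begin{itemize}
\item \emph{Gluing.} Averaging over $K$ needs a single holomorphic extension on a $K$-invariant neighbourhood of $\pi^{-1}(V)$. As a sheaf, $\mathscr{O}_{Z}$ only guarantees local extensions. These are not unique where $Z$ lies in a proper complex submanifold: for $S^{1}$ acting on $\mathbb{C}^{2}$ with weights $(0,1)$ and $\Phi=\frac12|w|^{2}$, one has $Z=\{w=0\}$ and any multiple of $w$ can be added. So local extensions need not patch.
\item \emph{Well-definedness.} The problem with $\tilde F(g\cdot w):=F(w)$ has nothing to do with non-closed orbits. A $K$-invariant holomorphic $F$ on $W$ is only constant along paths inside $G\cdot w\cap W$. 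If that set is disconnected, nothing relates the values of $F$ on its different pieces. Appealing to ``invariance plus continuity'' is circular, since $\tilde F$ must be well defined before it can be invariant. The standard remedy, orbit-convex neighbourhoods of $Z$ in the sense of Heinzner, is a real extra input you do not supply.
\item \emph{Slice-model fallback.} The holomorphic slice theorem gives $G\times_{H}S$ biholomorphically, but it does not carry $\Phi$ to the linear moment map of the slice representation. So in that model $Z$ is not $K\times_{K_{z}}\mu_{N}^{-1}(0)$, and there is no linear Kempf--Ness statement to reduce to.
\item \emph{Step~3.} Properness of $Z\to Z/K$ says nothing about continuity of the inverse of $Z/K\to X^{ss}/\!\!/G$. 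What you need is that $p|_{Z}$ is closed. This is automatic if $Z$ is compact, and otherwise is part of the Heinzner--Loose theorem.
\end{itemize}

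Step~4 can be closed with what you already have.
\begin{itemize}
\item \emph{Reduction.} A section $h$ of $\pi^{K}_{\ast}\mathscr{O}_{Z}$ over $V$ is continuous on $V\subset X^{ss}/\!\!/G$, which is normal. By the Riemann extension theorem it suffices to show $h$ is holomorphic off the nowhere dense analytic set formed by the non-open orbit-type strata.
\item \emph{Local model.} Take $p(z)$ in an open stratum, with $H=G_{z}$ and slice $S\subset N$. Every closed $H$-orbit in $S$ has stabiliser a subgroup of $H$ conjugate to $H$, hence equal to $H$. By scaling, $N=N^{H}\oplus N'$ with $N'/\!\!/H$ a point. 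So the closed orbits form a complex submanifold $C\cong G/H\times(S\cap N^{H})$, on which $p$ is the projection, and $Z\subset C$ near $z$.
\item \emph{Genericity.} $Z$ is generic in $C$: at points $z'$ of $Z$ near $z$, where $Z$ is smooth, $T_{z'}C=T_{z'}Z\oplus J(\mathfrak{k}\cdot z')$. This follows from $d\Phi^{\eta}(J\eta_{X})=\omega(\eta_{X},J\eta_{X})\neq0$ whenever $\eta_{X}(z')\neq0$.
\item \emph{Conclusion.} Let $F$ be any single local extension of $h\circ p|_{Z}$ near $z$. The holomorphic functions $\eta_{X}F$ for $\eta\in\mathfrak{k}$ vanish on $Z$, hence on $C$. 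So $F|_{C}$ is locally $G$-invariant, and $h=F\circ\sigma$ near $p(z)$ with $\sigma(a)=[e,a]$ holomorphic. This needs neither gluing nor orbit convexity.
\end{itemize}
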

\begin{rem}
Actually, in \cite{HS20}, the authors proved the above theorem in a more general setting: the group $K$ is a closed Lie subgroup of the group of holomorphic isometries of the K\"{a}hler structure $ds^{2}$, and hence the $K$-action on $X$ is not necessarily given by restricting a holomorphic action of the complexified group $G=K^{\mathbb{C}}$ on $X$.
\end{rem}

\begin{defn}\label{stability}
Let $(X, ds^{2}, G,\Phi)$ a K\"{a}hler Hamiltonian $K$-manifold, a point $x$ in $X$ is called
\begin{itemize}
  \item [(i)] \emph{$\Phi$-unstable} if the intersection of the closure of the orbit $\overline{G\cdot x}$  and the zero level set $\Phi^{-1}(0)$ is empty;
  \item [(ii)]\emph{$\Phi$-semistable} if the closure of the orbit $\overline{G\cdot x}$ intersects the zero
      level set $\Phi^{-1}(0)$;
  \item [(iii)]\emph{$\Phi$-stable} if the orbit $G\cdot x$ intersects the zero level set
      $\Phi^{-1}(0)$ and the stabilizer $G_{x}$ is finite.
\end{itemize}
\end{defn}

We denote by $X^{us}(\Phi)$, $X^{ss}(\Phi)$, and $X^{s}(\Phi)$ the sets of
\emph{$\Phi$-unstable}, \emph{$\Phi$-semistable}, and \emph{$\Phi$-stable} points,
respectively.
By definition, we have  $X^{us}(\Phi)\cap X^{ss}(\Phi)=\emptyset$ and $X^{s}(\Phi)\subset
X^{ss}(\Phi)$.
The complement of $X^{s}(\Phi)$ in $X^{ss}(\Phi)$, denoted by $X^{sss}(\Phi)$, is called the set
of $\Phi$-\emph{strictly semistable} points.
%In particular, there exists a decomposition
%\begin{equation}
%X=X^{us}(\Phi)\cup X^{ss}(\Phi).
%\end{equation}
For any $\Phi$-semistable points $x$ and $x^{\prime}$, we say that $x$ and $x^{\prime}$ are
\emph{related} if and only if
$$
\overline{G\cdot x}\cap\overline{G\cdot
x^{\prime}}\cap X^{ss}(\Phi)\neq\emptyset.
$$
The relation $\sim$ is an equivalence relation and we denote the quotient by
$\Pi: X^{ss}(\Phi)\rightarrow X^{ss}(\Phi)/\!\!/G$.
Because each fiber $\Pi^{-1}(p)$ contains a unique closed $G$-orbit $G\cdot x$ for some $x$ in $\Phi^{-1}(0)$ (cf. \cite[Proposition 2.4]{Sj95}), we say that $p$ is of \emph{$G$-orbit type $(H)$} is the stabilizer $G_{x}$ is conjugate to $(K_{x})^{\mathbb{C}}$ in $G$.

Recall the definition of \emph{analytic Hilbert quotient} in the
analytic category which serves as the \emph{good quotient} in the Geometric Invariant Theory.
Suppose that $V$ is a complex space on which $G$ acts holomorphically.
The analytic Hilbert quotient of $V$ by the action of $G$ is a complex space $W$ together with a $G$-invariant holomorphic surjection $\pi: V\rightarrow W$ satisfying the following two conditions:
\begin{itemize}
  \item [(i)] $\pi$ is a locally Stein map, which means that there exists an open covering of $W$ by
      Stein subsets $\{U_{\alpha}\}$ such that $\pi^{-1}(U_{\alpha})$ is a Stein subset of $V$ for
      all $\alpha$,
  \item [(ii)] $\mathscr{O}_{W}$ is equal to the sheaf $\pi^{G}_{\ast}\mathscr{O}_{V}$.
\end{itemize}
If the analytic Hilbert quotient of a holomorphic $G$-space $V$ exists then it is unique
up to bihomomorphism and we will denote it by $V/\!\!/G$.
In summary, we have the following theorem which collects the main results on the quotients of K\"{a}hler Hamiltonian $K$-manifolds.
\begin{thm}\label{main-k-q}
Let $(X, ds^{2}, G, \Phi)$ be a compact K\"{a}hler Hamiltonian $K$-manifold.
Then we have:
\begin{itemize}
  \item [(i)] $X^{ss}(\Phi)$ is the smallest $G$-invariant open subset containing $\Phi^{-1}(0)$ such that the analytic Hilbert quotient $\Pi: X^{ss}(\Phi)\rightarrow X^{ss}(\Phi)/\!\!/G$ exists.
  \item [(ii)] The inclusion $\imath:\Phi^{-1}(0)\hookrightarrow X^{ss}(\Phi)$ induces a biholomorphic map
                     $$\tilde{\imath}:(X_{0}, \mathscr{O}_{X_{0}})\stackrel{\simeq}\longrightarrow
                     (X^{ss}(\Phi)/\!\!/G, \mathscr{O}_{X^{ss}(\Phi)/\!\!/G}).$$
  \item [(iii)] If $0$ is a regular value of $\Phi$, then $X^{ss}(\Phi)$ coincides with $X^{s}(\Phi)$.
  \item [(iv)] The stratification of $X_{0}$ by $K$-orbit types is identical with the
  stratification of $X^{ss}(\Phi)/\!\!/G$ by $G$-orbit types under the homeomorphism $\tilde{\imath}$.
  \item [(v)]   Each stratum of $X^{ss}(\Phi)/\!\!/G$ is a complex manifold whose closure is an complex-analytic subvariety of $X^{ss}(\Phi)/\!\!/G$.
  \item [(vi)] The K\"{a}hler metric $ds^{2}$ induces a unique K\"{a}hler structure on
                    $X_{0}$ which restricts to smooth K\"{a}hler metric on each stratum.
\end{itemize}
\end{thm}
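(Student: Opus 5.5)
The statement is a compilation of known results, so the proof will mostly assemble the literature (Heinzner--Loose \cite{HL94}, Heinzner--Huckleberry--Loose \cite{HHL94}, Sjamaar \cite{Sj95}, Heinzner--Schwarz \cite{HS20}) rather than build anything new. The backbone is the Holomorphic Slice Theorem (\cite[Theorem 1.12]{Sj95}, \cite[(2.7) Theorem]{HL94}). For $x\in\Phi^{-1}(0)$ the stabiliser $G_x=(K_x)^{\mathbb{C}}$ is reductive. A $G$-saturated neighbourhood of $G\cdot x$ is then $G$-equivariantly biholomorphic to a neighbourhood of the zero section in $G\times_{G_x}S_x$, where $S_x$ is the holomorphic slice (a $G_x$-stable Stein neighbourhood of $0$ in a linear $G_x$-representation). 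Everything then reduces locally to the affine GIT quotient $S_x/\!\!/G_x$ of a linear reductive action, where the classical facts are available.

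Items (i) and (ii) come first. I will show that $X^{ss}(\Phi)=\{x:\overline{G\cdot x}\cap\Phi^{-1}(0)\neq\emptyset\}$ is open and $G$-invariant, using properness of $\Phi$ (compactness of $X$) and the gradient flow of $\|\Phi\|^2$, which preserves $G$-orbit closures. The slice charts glue to a locally Stein $G$-invariant holomorphic surjection $\Pi$, and locally $\mathscr{O}_{X^{ss}/\!\!/G}=\Pi^G_\ast\mathscr{O}$ holds by the local models. Minimality follows because any saturated open set admitting a Hilbert quotient and containing $\Phi^{-1}(0)$ must contain every point whose orbit closure meets $\Phi^{-1}(0)$. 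For (ii), each fibre of $\Pi$ contains a unique closed orbit, and that orbit meets $\Phi^{-1}(0)$ in exactly one $K$-orbit (\cite[Proposition 2.4]{Sj95}, the Kempf--Ness type uniqueness). Hence $\tilde\imath$ is a continuous bijection, and it is a homeomorphism by properness. On structure sheaves, a $K$-invariant function holomorphic near $\Phi^{-1}(0)$ extends uniquely to a $G$-invariant holomorphic function on the saturation, by identity-theorem uniqueness along $G$-orbits. This identifies $\pi^K_\ast\mathscr{O}_Z$ with $\Pi^G_\ast\mathscr{O}$, which is consistent with Theorem~\ref{reduced-normal}.

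Items (iii)--(v) are orbit-type bookkeeping. If $0$ is a regular value, then $K$ acts locally freely on $\Phi^{-1}(0)$. All $K_x$ are then finite, so every closed orbit in $X^{ss}$ has finite stabiliser. Any semistable orbit is then closed, since a non-closed orbit would have a lower-dimensional orbit in its closure, which is impossible when all closed orbits have full dimension. This gives $X^{ss}=X^s$. For (iv), the closed orbit over $p$ has stabiliser $G_x=(K_x)^{\mathbb{C}}$ with $x\in\Phi^{-1}(0)$. Conjugacy of $K_x$ in $K$ corresponds to conjugacy of $(K_x)^{\mathbb{C}}$ in $G$, because compact subgroups conjugate in $G$ are conjugate in $K$. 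So the strata match under $\tilde\imath$. For (v), in the local model the $(H)$-stratum is $(S_x)^{G_x}$ times the principal stratum of the complementary part. This is a smooth locally closed stratum whose closure is analytic, which is Luna's stratification for $S_x/\!\!/G_x$.

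Item (vi) I expect to be the main obstacle, since it needs the notion of Kähler structure on a singular space from Appendix~\ref{kah-spa}. The plan is the Heinzner--Huckleberry--Loose construction. Locally on $X^{ss}/\!\!/G$, write $\omega=i\partial\bar\partial\rho$ with $\rho$ a $K$-invariant strictly plurisubharmonic potential on a $G$-saturated Stein neighbourhood; the moment map is then determined by $\rho$ up to a constant. Define the reduced potential $\hat\rho(p)=\rho|_{\Phi^{-1}(0)\cap\Pi^{-1}(p)}$. One must show that $\hat\rho$ is continuous and strictly plurisubharmonic on the quotient, using convexity of $\rho$ along $\exp(i\mathfrak{k})$-directions, where $\Phi^{-1}(0)$ consists of the minima. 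One must also show that on overlaps the differences are pluriharmonic. On each stratum the Marsden--Weinstein argument identifies $i\partial\bar\partial\hat\rho$ with the smooth reduced form $\omega_0$ satisfying $\pi^\ast\omega_0=\imath^\ast\omega$, and uniqueness follows from density of the top stratum.
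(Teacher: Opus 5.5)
Your overall route is the paper's: there the theorem is proved purely by citation (\cite[Proposition 2.4]{Sj95} and \cite{HL94} for (i), \cite[Theorem 2]{HS20} for (ii), \cite[Theorems 2.9--2.10]{Sj95} for (iii)--(v), \cite{HHL94} for (vi)). Your sketches of (i), (iii), (iv) and (vi) are consistent with those sources. The one place where you replace a citation by your own argument is the identification of the structure sheaves in (ii), and there the argument has a genuine gap.

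A section of $\mathscr{O}_{X_{0}}=\pi^{K}_{\ast}\mathscr{O}_{Z}$ over $V$ is a $K$-invariant continuous function $f$ on $\pi^{-1}(V)\subset Z$. It is only \emph{locally} the restriction of holomorphic functions defined on open subsets of $X$. These local extensions are not $K$-invariant, and in general they are not unique: they are determined only modulo holomorphic germs vanishing on $Z$. This ideal can be nonzero. For example, take $K=T$ and $0\in\partial\Delta$. By Lemma~\ref{sta} every point of $Z$ then has positive-dimensional stabiliser, so near a generic point $Z$ lies in a proper fixed-point submanifold $X^{H}$.

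So you are not handed a ``$K$-invariant function holomorphic near $\Phi^{-1}(0)$'' to extend. Even for an honest $K$-invariant holomorphic function on a $K$-invariant neighbourhood $U$ of $Z$, the identity theorem gives only \emph{uniqueness} of a $G$-invariant extension. Existence on $G\cdot U$ requires $U$ to be chosen suitably (orbit-convex).

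What your argument actually establishes is the easy half: injectivity of the restriction map $\mathscr{O}(\Pi^{-1}(V))^{G}\to\mathscr{O}_{X_{0}}(\tilde\imath^{-1}(V))$. This is clear anyway, since a $G$-invariant continuous function is constant on each fibre of $\Pi$ and every fibre meets $Z$. The substance of (ii) is surjectivity: the function $\bar f$ induced by $f$ on $X^{ss}(\Phi)/\!\!/G$ via $\tilde\imath$ must be holomorphic. This is exactly \cite[Theorem 2]{HS20}.

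You can either cite it, as the paper does, or prove it. One way to prove it:
\begin{itemize}
\item[(a)] Show $\bar f$ is holomorphic on the dense top orbit-type stratum. There $d\bar f$ is induced by the complex-linear differential of a local holomorphic extension, restricted to a $J$-invariant horizontal space.
\item[(b)] Extend across the nowhere dense analytic complement, using continuity of $\bar f$ and normality of $X^{ss}(\Phi)/\!\!/G$.
\end{itemize}

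There is also a minor slip in (v). Write $S_{x}=S_{x}^{G_{x}}\oplus N_{x}$ with $N_{x}$ a $G_{x}$-stable complement. The $(G_{x})$-stratum through $p$ then corresponds to $S_{x}^{G_{x}}\times\{[0]\}\subset S_{x}^{G_{x}}\times N_{x}/\!\!/G_{x}$. That is the \emph{most singular} stratum of $N_{x}/\!\!/G_{x}$, not its principal stratum. The conclusion of (v) is unaffected.
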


The assertion $(i)$ comes from \cite[Proposition 2.4]{Sj95} and see \cite{HL94} for a proof in a more general setting.
The assertion $(ii)$ is a direct consequence of \cite[Theorem 2]{HS20}.
The assertions $(iii)$-$(v)$ come from \cite[Theorems 2.9-2.10]{Sj95}.
For a proof of $(vi)$, we refer to \cite{HHL94}.

From now on, we will identify $(X_{0}, \mathscr{O}_{X_{0}})$ and $(X^{ss}(\Phi)/\!\!/G, \mathscr{O}_{X^{ss}(\Phi)/\!\!/G})$ as complex analytic spaces.
Particularly, via refining the decomposition in (iv), we may assume that every stratum is connected.
By Theorem \ref{reduced-normal}, the K\"{a}hler quotient $X^{ss}(\Phi)/\!\!/G$ is normal.
Note that $X^{ss}(\Phi)/\!\!/G$ is connected.
Because every connected reduced normal complex space is irreducible (cf. \cite[Theorem, Page 168]{GR84}), we are led to the following result:
\begin{prop}\label{irreducible}
As a compact complex analytic space, the K\"{a}hler quotient $X^{ss}(\Phi)/\!\!/G$ is irreducible.
\end{prop}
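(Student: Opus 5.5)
The plan is to establish three facts: the quotient $X^{ss}(\Phi)/\!\!/G$ is compact, it is connected, and a connected normal complex space is irreducible. Normality is supplied by Theorem \ref{reduced-normal}, and the identification of Theorem \ref{main-k-q}(ii) transports it from $X_{0}$ to $X^{ss}(\Phi)/\!\!/G$.

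\textbf{Compactness.} Since $X$ is compact and $Z=\Phi^{-1}(0)$ is closed in $X$, the level set $Z$ is compact. Hence $X_{0}=Z/K$ is compact, being the image of $Z$ under the continuous orbit map $\pi$. Compactness then passes to $X^{ss}(\Phi)/\!\!/G$ through the homeomorphism $\tilde{\imath}$.

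\textbf{Connectedness.} I will not attempt to show that $Z$ itself is connected. Instead I would argue on the quotient side, using that $X^{ss}(\Phi)$ is connected and that $\Pi$ is continuous and surjective, so its image is connected. To see that $X^{ss}(\Phi)$ is connected, I would use that it is a nonempty Zariski-open subset of the connected complex manifold $X$. Indeed, by the standing assumption regular values exist, so $X^{ss}(\Phi)$ is nonempty whenever $0\in\Phi(X)$. Its complement $X^{us}(\Phi)$ is a closed analytic subset: this follows from the stratification by the gradient flow of $\|\Phi\|^{2}$, or in the torus case from the description of unstable points by the faces of the moment polytope not containing $0$. Removing a proper analytic subset from a connected complex manifold leaves it connected. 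Should citing the analyticity of $X^{us}(\Phi)$ be undesirable, a fallback for the torus case (the one used later) is to invoke the Atiyah connectedness theorem: the fibres of a torus moment map on a compact connected manifold are connected. Then $Z$ is connected, and so is $X_{0}=\pi(Z)$.

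\textbf{Irreducibility.} With compactness, connectedness and normality in hand, I apply the cited fact from \cite{GR84}. A normal complex space has irreducible local germs, so its irreducible components are disjoint; since they are also locally finite and closed, connectedness forces there to be exactly one component.

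The main obstacle is the connectedness step, in particular making rigorous that $X^{us}(\Phi)$ is analytic, or invoking fibre connectedness, for a general compact group $K$. I would first try the Kirwan--Ness stratification argument, and otherwise restrict to the torus case via Atiyah's theorem, which is the setting of the paper's main results.
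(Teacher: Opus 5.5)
Your proposal is correct and follows the paper's route: normality from Theorem \ref{reduced-normal}, connectedness of the quotient, and then the fact from \cite{GR84} that a connected reduced normal complex space is irreducible. The paper only asserts connectedness, so your justification fills in a step it leaves implicit. The cleanest version is your fallback: Atiyah's connectedness of $\Phi^{-1}(0)$ for tori, or Kirwan's analogous theorem for general $K$.
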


Let $\mathfrak{X}_{\ast}(G)$ be the set of group homomorphisms
$\rho:\mathbb{C}^{\ast}\rightarrow G$, i.e. the set of one-parameter subgroup.
Then $\mathfrak{X}_{\ast}(G)$ can be identified with a subset of the Lie algebra $\mathfrak{k}$.
Given a subset $A\subset\mathfrak{k}^{\ast}$, we denote by $d_{\rho}(0, A)$ the signed distance
from $0$ to the boundary of the projection of the set $A$ onto the positive ray spanned by
$\rho^{\ast}$.

\begin{defn}[cf. {\cite[\S\,2.5]{DH98} or \cite[\S\,5.1]{Wa21}}]\label{hm-nf}
The \emph{Hilbert--Munford numerical function} on $X$ is defined to be
\begin{eqnarray*}
% \nonumber to remove numbering (before each equation)
  \mathrm{M}^{\Phi}: X &\longrightarrow& \mathbb{R} \\
  x &\longmapsto& \sup_{\rho\in\mathfrak{X}_{\ast}(G)}d_{\rho}
  (0,\Phi(\overline{\rho(\mathbb{C}^{\ast})\cdot x})).
\end{eqnarray*}
\end{defn}

Using the Hilbert--Munford numerical function, we have the following:
\begin{prop}\label{prop-ss}
The sets unstable $X^{us}(\Phi)$, $X^{ss}(\Phi)$, and $X^{s}(\Phi)$ can be described as:
\begin{eqnarray*}
% \nonumber to remove numbering (before each equation)
  X^{us}(\Phi) &=& \{x\in X\,|\, \mathrm{M}^{\Phi}(x)>0\},\\
  X^{ss}(\Phi) &=& \{x\in X\,|\, \mathrm{M}^{\Phi}(x)\leq0\} ,\\
  X^{s}(\Phi) &=& \{x\in X\,|\, \mathrm{M}^{\Phi}(x)<0\}.
\end{eqnarray*}
\end{prop}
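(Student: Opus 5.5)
The plan is to reduce the three characterizations to the Hilbert--Mumford criterion for Kähler Hamiltonian actions. For reductive group actions on Kähler manifolds this criterion reads: $\overline{G\cdot x}\cap\Phi^{-1}(0)\neq\emptyset$ if and only if, for every one-parameter subgroup $\rho\in\mathfrak{X}_{\ast}(G)$, the closure $\overline{\rho(\mathbb{C}^{\ast})\cdot x}$ meets $\Phi^{-1}_{\rho}(0)$. Here $\Phi_{\rho}=\langle\Phi,\rho\rangle$ is the moment map of the circle generated by $\rho$; see \cite[\S\,2.5]{DH98} and \cite[\S\,5.1]{Wa21}. Since the statement cites these references, I would take this criterion, and its stable version, as the input. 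The argument then splits into three steps.

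\textbf{Step 1: the one-dimensional picture.} Fix $\rho$ and a point $x$. The closure $C=\overline{\rho(\mathbb{C}^{\ast})\cdot x}$ is either a point or a rational curve with two $\rho$-fixed limit points $x_{0}=\lim_{t\to0}\rho(t)x$ and $x_{\infty}=\lim_{t\to\infty}\rho(t)x$. The function $\Phi^{\rho}$ is strictly monotone along the gradient flow, which is the $\mathbb{R}_{>0}$ part of the orbit. Hence $\Phi^{\rho}(C)$ is the interval between $\Phi^{\rho}(x_{0})$ and $\Phi^{\rho}(x_{\infty})$. Consequently $d_{\rho}(0,\Phi(C))$ is the signed distance from $0$ to this projected interval, and it satisfies:
\begin{itemize}
\item $d_{\rho}(0,\Phi(C))\le 0$ exactly when $0$ lies in the projection;
\item $d_{\rho}(0,\Phi(C))<0$ exactly when $0$ lies in its interior, or in the degenerate case with the appropriate convention.
\end{itemize}
With the sign conventions of Definition~\ref{hm-nf}, I would verify carefully that $d_{\rho}\le0$ means $C\cap\Phi_{\rho}^{-1}(0)\neq\emptyset$.

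\textbf{Step 2: semistable and unstable sets.} Combining Step 1 with the Hilbert--Mumford criterion gives $x\in X^{ss}(\Phi)$ if and only if $d_{\rho}\le 0$ for all $\rho$. Taking the supremum yields $\mathrm{M}^{\Phi}(x)\le 0$. The converse needs care, because a supremum $\le 0$ gives each term $\le0$ directly, so this direction is immediate. The description of $X^{us}(\Phi)$ then follows by complement, since $X=X^{us}\sqcup X^{ss}$.

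\textbf{Step 3: the stable set, which is the main obstacle.} Here one must show that $\mathrm{M}^{\Phi}(x)<0$, i.e. a supremum strictly negative, is equivalent to stability. For the direction stable $\Rightarrow\mathrm{M}^{\Phi}(x)<0$: if $G\cdot x$ meets $\Phi^{-1}(0)$ at a point $y$ with $G_{y}$ finite, then $d\Phi_{y}$ is surjective. So $0$ lies in the relative interior of $\Phi$ of a neighborhood of $y$ along every $\rho$-orbit direction, which gives $d_{\rho}<0$ for each $\rho$. To get uniformity, I would use that there are only finitely many possible values $\Phi(x_{0}),\Phi(x_{\infty})$, since they are moment images of fixed components of subtori. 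Also, $d_{\rho}$ depends only on the direction of $\rho$ and is a minimum of finitely many linear functions of $\rho$ over the compact sphere of directions, so the supremum is attained and is therefore $<0$. For the converse, $\mathrm{M}^{\Phi}(x)<0$ gives semistability by Step 2. If $x$ were strictly semistable, the closed orbit in $\overline{G\cdot x}$ would have a positive-dimensional stabilizer containing some $\rho$. Equivalently, via the relative Hilbert--Mumford criterion, some $\rho$ would have $0$ on the boundary of the projected interval, forcing $d_{\rho}=0$ and contradicting $\mathrm{M}^{\Phi}(x)<0$.

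The finiteness and compactness argument making the supremum attained is the delicate point. I would handle it through the torus case, where $\Phi(\overline{T^{\mathbb{C}}x})$ is a convex polytope with vertices among finitely many fixed-point values.
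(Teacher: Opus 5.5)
Your outline can be made to work for tori, but the step you yourself flag as delicate, uniformity in Step 3, is not actually established.

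\textbf{The uniformity gap.} The justification ``only finitely many possible values $\Phi(x_0),\Phi(x_\infty)$, since they are moment images of fixed components of subtori'' does not hold up. On a component of the fixed set of $\rho(\mathbb{C}^{\ast})$ only $\langle\Phi,\rho\rangle$ is constant, not $\Phi$. Even a finite list of limit values would not make $\rho\mapsto d_\rho$ a minimum of finitely many linear functions (continuous, with attained extremum) unless you know \emph{which} value occurs for which $\rho$.

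The missing lemma is the following. Put $P_x:=\Phi(\overline{G\cdot x})$ and let $h_{P_x}(v)=\max_{p\in P_x}\langle p,v\rangle$ be its support function. Then
\[
\lim_{t\to+\infty}\bigl\langle\Phi(\exp(\mathbf{i}\rho t)x),\rho\bigr\rangle=h_{P_x}(\rho).
\]
The proof goes in four steps:
\begin{enumerate}
\item For $g\in G$, the limit of $\exp(\mathbf{i}\rho t)gx$ is $gx_\infty$.
\item Since $G$ is connected and commutes with $\rho$, the point $gx_\infty$ lies in the same connected component of the fixed set of $\rho(\mathbb{C}^{\ast})$ as $x_\infty$.
\item On that component $\langle\Phi,\rho\rangle$ is constant, because $d\Phi^{\rho}=\iota_{\rho_X}\omega$ vanishes there.
\item Monotonicity of $\langle\Phi,\rho\rangle$ along $t\mapsto\exp(\mathbf{i}\rho t)gx$ then gives $\langle\Phi(gx),\rho\rangle\le\langle\Phi(x_\infty),\rho\rangle$, and density of $G\cdot x$ in $\overline{G\cdot x}$ finishes.
\end{enumerate}
Hence $d_\rho(x)=-h_{P_x}(\rho/|\rho|)$, and $h_{P_x}$ is a maximum of finitely many linear functions. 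Since directions of one-parameter subgroups are dense in the unit sphere, $\mathrm{M}^{\Phi}(x)=-\min_{|v|=1}h_{P_x}(v)$, which is the signed distance from $0$ to $\partial P_x$.

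A side remark: by \eqref{re-hm}, $d_\rho$ is one-sided, so your Step 1 equivalences hold only after pairing $\rho$ with $\rho^{-1}$. This is harmless for the supremum, because $\mathfrak{X}_{\ast}(G)$ is closed under inversion.

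\textbf{What the lemma buys, and what remains.} With this identification the torus case is pure convex geometry, and neither the Hilbert--Mumford criterion nor its relative version is needed.
\begin{itemize}
\item $X^{ss}(\Phi)=\{x\mid 0\in P_x\}$ is just the definition.
\item $X^{s}(\Phi)=\{x\mid 0\in\mathrm{int}\,P_x\}$ follows from Atiyah's orbit-closure theorem ($\Phi(G\cdot x)$ is the relative interior of $P_x$, and $\dim P_x=\dim T\cdot x$) together with $\mathfrak{g}_x=\mathfrak{t}_x\otimes\mathbb{C}$.
\item The identity $P_{gx}=P_x$ supplies the $G$-invariance you use silently when you replace $x$ by $y\in G\cdot x\cap\Phi^{-1}(0)$ in Step 3.
\end{itemize}

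Your plan does not cover the non-abelian case. The proposition is stated in Section 2 for arbitrary compact $K$, where there is no orbit polytope. There the weight $w(x,\xi)=\lim_{t\to+\infty}\langle\Phi(\exp(\mathbf{i}\xi t)x),\xi\rangle$ need not be piecewise linear in $\xi$. So the uniform bound $\inf_{|\xi|=1}w(x,\xi)>0$ for stable $x$ needs a different idea, for instance linear growth of the Kempf--Ness function.

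The paper gives no argument of its own. It simply cites [GRS21, Corollary 12.7] and [Wa21, Proposition 5.1], which treat general $K$, and only the torus case is used later.
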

\begin{proof}
See for example \cite[Corollary 12.7]{GRS21} or \cite[Proposition 5.1]{Wa21}.
\end{proof}

%=============================
\section{Natural morphisms among K\"{a}hler quotients}\label{nat-map}
The purpose of this section is to study K\"{a}hler quotients from the bimeromorphic geometry point of view.
From now on, we assume that $K=T$ is a torus of dimension $d$ and then $G=T^{\mathbb{C}}$.
We also assume that the Lie algebra $\mathfrak{t}$ is equipped with an invariant inner product.
Consider a compact K\"{a}hler Hamiltonian $T$-manifold $(X, ds^{2}, T^{\mathbb{C}}, \Phi)$.
Set $\omega=-\mathrm{Im}\,ds^{2}$.
The set $\Phi(X^{T})$ is finite and we call the points in $\Phi(X^{T})$ the \emph{vertices} of $\Phi$.
By the convexity theorem \cite{At82,GS82}, the moment body $\Delta=\Phi(X)$ is the convex hull of $\Phi(X^{T})$, which is the union of some subpolytopes of dimension $d$.
Moreover, the interiors of these subpolytopes are disjoint and consist of regular values of $\Phi$.
We call a face of a $d$-dimensional subpolytope a \emph{wall} which consists of critical values of $\Phi$.

As we have said in the introduction, our definition of a wall is different from the one in the sense of Dolgachev--Hu, compare to \cite[Definition 3.3.1]{DH98}.
By definition, a point $\gamma\in\mathfrak{t}^{\ast}$ lies in a wall of $\Delta$ if and only if there exists $x \in X$ such that $\Phi(x)=\gamma$ and $x$ is a critical point of $\Phi$, which is also equivalent to there exists a point $x$ in $X$ with $\mathrm{dim}\,T_{x}>0$ such that $\mathrm{M}^{\Phi-{\gamma}}(x)=0$.
In fact, fixed a wall $F$, in Proposition \ref{prop:dh-wall}, we can further show that for any $\gamma\in F$, we can choose the same point $x$ such that $\mathrm{dim}\,T_{x}>0$ and $\mathrm{M}^{\Phi-{\gamma}}(x)=0$ hold.
In other words, the wall defined in this paper is a subset of the wall defined in \cite{DH98} in general.
%==========
\subsection{Properties of the numerical function}
%==========
For any element $\alpha\in\mathfrak{t}^{\ast}$, we define the map $\Phi_{\alpha}:
X\rightarrow\mathfrak{t}^{\ast}$ by setting $\Phi_{\alpha}(x)=\Phi(x)-\alpha$, which is a moment map for the Hamiltonian $T$-action on $(X,\omega)$.
Let $r$ be an arbitrary positive real number.
Note that $r\omega$ is a symplectic form on $X$, and the $T$-action on $(X, r\omega)$ is Hamiltonian with the moment map $r\Phi$.
For any $\alpha,\,\beta\in\mathfrak{t}^{\ast}$, let $\gamma=(1-r)\alpha-r\beta$ for $r\in[0, 1]$, then we get $(1-r)\Phi_{\alpha}+r\Phi_{\beta}=\Phi_{\gamma}$.
Note that $X$ is compact and $G=T^{\mathbb{C}}$ is abelian.
The Hilbert--Mumford numerical function $\mathrm{M}^{\Phi_{\alpha}}(x)$ can be reformulated as
\begin{equation}\label{re-hm}
\mathrm{M}^{\Phi_{\alpha}}(x)=-\inf_{v\in\mathfrak{t}\setminus\{0\}}
\biggl\{\lim_{t\rightarrow+\infty}\bigl\langle\Phi_{\alpha}(\exp(\textbf{i}vt)x), v/|v|\bigr\rangle\biggr\}.
\end{equation}
The existences of the limit and the inf on the right-hand side of \eqref{re-hm} are guaranteed by \cite[Lemma 5.4]{GRS21} and \cite[Theorem 12.1]{GRS21}, respectively.
Consequently, for every $x\in X$, we can define a function on $\mathfrak{t}^{\ast}\cong\mathbb{R}^{d}$ by setting
\begin{eqnarray*}
% \nonumber to remove numbering (before each equation)
  f^{HM}_{x}:\mathfrak{t}^{\ast} &\longrightarrow& \mathbb{R} \\
  \alpha&\longmapsto& \mathrm{M}^{\Phi_{\alpha}}(x).
\end{eqnarray*}

\begin{prop}\label{prop-convex}
With the same notions as above, we have:
\begin{itemize}
  \item [(i)] $\mathrm{M}^{r\Phi_{\alpha}}(x)=r\mathrm{M}^{\Phi_{\alpha}}(x)$, for $\alpha\in\mathfrak{t}^{\ast}$ and $r\geq0$;
  \item [(ii)]$\mathrm{M}^{\Phi_{\alpha}+\Phi_{\beta}}(x)\leq
      \mathrm{M}^{\Phi_{\alpha}}(x)+\mathrm{M}^{\Phi_{\beta}}(x)$, for $\alpha,\,\beta\in\mathfrak{t}^{\ast}$;
  \item [(iii)] the function $f^{HM}_{x}$ is a continuous convex function on $\mathfrak{t}^{\ast}$, for any $x\in X$.
\end{itemize}
\end{prop}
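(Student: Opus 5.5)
The proof will work directly from the reformulation \eqref{re-hm}. For $x\in X$ and $v\in\mathfrak{t}\setminus\{0\}$, set
\[
\lambda_{\alpha}(x,v)=\lim_{t\to+\infty}\bigl\langle\Phi_{\alpha}(\exp(\textbf{i}vt)x),\,v/|v|\bigr\rangle .
\]
Then $\mathrm{M}^{\Phi_{\alpha}}(x)=-\inf_{v}\lambda_{\alpha}(x,v)=\sup_{v}\bigl(-\lambda_{\alpha}(x,v)\bigr)$. The key point is that the flow $t\mapsto\exp(\textbf{i}vt)x$ is determined by the complex action of $T^{\mathbb{C}}$ alone, so it does not depend on the moment map. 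Only the function being evaluated along the flow changes. Also, by \cite[Lemma 5.4]{GRS21}, the limit $x_{v}=\lim_{t\to+\infty}\exp(\textbf{i}vt)x$ exists. Since $\Phi$ is continuous, this gives
\[
\lambda_{\alpha}(x,v)=\langle\Phi(x_{v})-\alpha,\,v/|v|\rangle .
\]
So $\lambda_{\alpha}(x,v)$ is an affine function of the moment map used, evaluated at the single point $x_{v}$.

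For (i), replacing $\Phi_{\alpha}$ by $r\Phi_{\alpha}$ multiplies $\lambda_{\alpha}(x,v)$ by $r$ for every $v$. Since $r\ge 0$, taking the supremum of $-\lambda$ commutes with multiplying by $r$. This gives $\mathrm{M}^{r\Phi_{\alpha}}(x)=r\mathrm{M}^{\Phi_{\alpha}}(x)$; the case $r=0$ is trivial because both sides vanish.

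For (ii), additivity of the limit gives $\lambda^{\Phi_{\alpha}+\Phi_{\beta}}(x,v)=\lambda_{\alpha}(x,v)+\lambda_{\beta}(x,v)$. Hence
\[
\sup_{v}\bigl(-\lambda_{\alpha}-\lambda_{\beta}\bigr)\le\sup_{v}(-\lambda_{\alpha})+\sup_{v}(-\lambda_{\beta}).
\]
One point needs care: $\Phi_{\alpha}+\Phi_{\beta}$ is itself a moment map for $2\omega$, and (i) together with \cite[Theorem 12.1]{GRS21} guarantees that the formula \eqref{re-hm} is valid for it. I will state this explicitly.

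For (iii), I will not deduce convexity from (i) and (ii) with a rescaled form, which is awkward because the sum of moment maps belongs to a rescaled symplectic form. Instead I will argue directly that $f^{HM}_{x}(\alpha)=\sup_{v}\bigl(\langle\alpha-\Phi(x_{v}),\,v/|v|\rangle\bigr)$ is a pointwise supremum of affine functions of $\alpha$, and is therefore convex. The supremum is finite because $|\langle\alpha-\Phi(x_{v}),v/|v|\rangle|\le|\alpha|+\max_{X}|\Phi|$, using compactness of $X$. The same bound shows that each affine function $\alpha\mapsto\langle\alpha-\Phi(x_{v}),v/|v|\rangle$ is $1$-Lipschitz in $\alpha$. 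A supremum of $1$-Lipschitz functions is $1$-Lipschitz, so $f^{HM}_{x}$ is continuous; alternatively, one can use that a finite convex function on $\mathbb{R}^{d}$ is continuous.

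The main obstacle is justifying that the limit in \eqref{re-hm} equals $\langle\Phi(x_{v})-\alpha,v/|v|\rangle$ uniformly in the choice of $\alpha$. This rests on the existence of the limit point $x_{v}$ of the gradient-type flow, which I take from \cite[Lemma 5.4]{GRS21}. Once that is granted, all three assertions reduce to elementary properties of suprema of affine functions.
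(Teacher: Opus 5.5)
Your proposal is correct. For (i) and (ii) it is the paper's argument: both read the identities off \eqref{re-hm}, using that the flow $t\mapsto\exp(\textbf{i}vt)x$ does not depend on the moment map.

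For (iii) you take a different route.
\begin{itemize}
\item The paper derives convexity formally from (i) and (ii). It writes $\Phi_\gamma=(1-r)\Phi_\alpha+r\Phi_\beta$ and applies subadditivity to the rescaled maps $(1-r)\Phi_\alpha$ and $r\Phi_\beta$. These are moment maps for $(1-r)\omega$ and $r\omega$, so (ii) as stated does not literally cover them, although its proof does. Continuity then comes from the general fact that a finite convex function on $\mathbb{R}^d$ is continuous \cite[Theorem 2.2]{Gr07}.
\item You instead set $w(x,v)=\lim_{t\to+\infty}\langle\Phi(\exp(\textbf{i}vt)x),v/|v|\rangle$ and write $f^{HM}_x(\alpha)=\sup_{v}\bigl(\langle\alpha,v/|v|\rangle-w(x,v)\bigr)$. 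This is a supremum of affine functions whose linear parts have unit norm.
\end{itemize}
Your route gives convexity, finiteness and a global $1$-Lipschitz bound at once. It also makes (iii) independent of (i) and (ii), and it avoids the rescaled-form bookkeeping. The paper's route, for its part, presents (iii) as a purely formal consequence of the homogeneity and subadditivity in (i)--(ii).

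Two minor remarks.
\begin{itemize}
\item The limit point $x_v$ is not needed, so your ``main obstacle'' is not really one. Since $\langle\alpha,v/|v|\rangle$ is constant in $t$, you have $\lim_{t\to+\infty}\langle\Phi_\alpha(\exp(\textbf{i}vt)x),v/|v|\rangle=w(x,v)-\langle\alpha,v/|v|\rangle$. Here $w(x,v)$ exists by the fact the paper cites from \cite[Lemma 5.4]{GRS21}, and it satisfies $|w(x,v)|\le\max_X|\Phi|$ because it is a limit of values bounded by $\max_X|\Phi|$.
\item The $1$-Lipschitz property comes from Cauchy--Schwarz, $|\langle\alpha-\alpha',v/|v|\rangle|\le|\alpha-\alpha'|$. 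The bound $|\alpha|+\max_X|\Phi|$ you invoke only gives finiteness of the supremum.
\end{itemize}
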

\begin{proof}
The assertion (i) is a direct consequence of the reformulation \eqref{re-hm}.
Also from \eqref{re-hm}, we have
\begin{eqnarray*}
\mathrm{M}^{\Phi_{\alpha}+\Phi_{\beta}}(x)&=&-\inf_{v\in\mathfrak{t}\setminus\{0\}}
\biggl\{\lim_{t\rightarrow+\infty}\bigl\langle\Phi_{\alpha}(\exp(\textbf{i}vt)x), v/|v|\bigr\rangle\\
& &\quad\quad\quad\,\,
+\lim_{t\rightarrow+\infty}\bigl\langle\Phi_{\beta}(\exp(\textbf{i}vt)x), v/| v|\bigr\rangle\biggr\}\\
&\leq&-\inf_{v\in\mathfrak{t}\setminus\{0\}}
\biggl\{\lim_{t\rightarrow+\infty}\bigl\langle\Phi_{\alpha}(\exp(\textbf{i}vt)x), v/|v|\bigr\rangle\biggr\}\\
& &-\inf_{v\in\mathfrak{t}\setminus\{0\}}
\biggl\{\lim_{t\rightarrow+\infty}\bigl\langle\Phi_{\beta}(\exp(\textbf{i}vt)x), v/|v|\bigr\rangle\biggr\}\\
&=&\mathrm{M}^{\Phi_{\alpha}}(x)+\mathrm{M}^{\Phi_{\beta}}(x).
\end{eqnarray*}
Consider the third assertion.
By the assertions (i) and (ii), we get
\begin{eqnarray*}
% \nonumber to remove numbering (before each equation)
  \mathrm{M}^{(1-r)\Phi_{\alpha}+r\Phi_{\beta}}(x) &\leq& \mathrm{M}^{(1-r)\Phi_{\alpha}}(x)+\mathrm{M}^{r\Phi_{\beta}}(x) \\
   &=& (1-r)\mathrm{M}^{\Phi_{\alpha}}(x)+r\mathrm{M}^{\Phi_{\beta}}(x)
\end{eqnarray*}
for any $\alpha,\,\beta\in\mathfrak{t}^{\ast}$ and any $r\in[0, 1]$.
Observe that $(1-r)\Phi_{\alpha}+r\Phi_{\beta}=\Phi_{\gamma}$, where $\gamma=(1-r) \alpha+r\beta$.
This implies
$$
f^{HM}_{x}((1-r)\alpha+r\beta)\leq(1-r)f^{HM}_{x}(\alpha)+rf^{HM}_{x}(\beta)
$$
and therefore $f^{HM}_{x}$ is a convex function on $\mathfrak{t}^{\ast}$.
It follows from a standard result in convex geometry (cf. \cite[Theorem 2.2]{Gr07}) that $f^{HM}_{x}$ is continuous on $\mathfrak{t}^{\ast}$ and this completes the proof.
\end{proof}
%=====
\subsection{Quotients inside a subpolytope}
%=====

Consider the K\"{a}hler Hamiltonian $T$-manifold $(X, ds^{2}, T^{\mathbb{C}}, \Phi)$.
Given a connected component $D$ of the regular values set of $\Phi$, we have the following result.
\begin{lem}\label{lem-xi-zeta}
If $\alpha$ and $\beta$ lie in $D$, then
$X^{s}(\Phi_{\alpha})=X^{s}(\Phi_{\beta})$, and hence $X^{s}(\Phi_{\alpha})/\!\!/G=X^{s}(\Phi_{\beta})/\!\!/G$ as complex analytic spaces.
\end{lem}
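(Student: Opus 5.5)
The approach uses the continuity and convexity of the Hilbert--Mumford function $f^{HM}_{x}$ from Proposition \ref{prop-convex}, together with the characterisation of stability in Proposition \ref{prop-ss}. Since $D$ is connected and open, it is path connected. It therefore suffices to show that the set
$$
D_{x}=\{\alpha\in D\,|\,x\in X^{s}(\Phi_{\alpha})\}=\{\alpha\in D\,|\,f^{HM}_{x}(\alpha)<0\}
$$
is either empty or all of $D$, for each fixed $x\in X$. Once this is done, $X^{s}(\Phi_{\alpha})=X^{s}(\Phi_{\beta})$ follows at once.

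By continuity of $f^{HM}_{x}$ (Proposition \ref{prop-convex}(iii)), the set $D_{x}$ is open in $D$. For closedness in $D$, take $\alpha_{n}\in D_{x}$ converging to $\alpha\in D$. Continuity gives $f^{HM}_{x}(\alpha)\le 0$, so $x\in X^{ss}(\Phi_{\alpha})$. Because $\alpha$ is a regular value of $\Phi$, Theorem \ref{main-k-q}(iii), applied to the moment map $\Phi_{\alpha}$ with regular value $0$, gives $X^{ss}(\Phi_{\alpha})=X^{s}(\Phi_{\alpha})$. Hence $f^{HM}_{x}(\alpha)<0$ and $\alpha\in D_{x}$. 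So $D_{x}$ is clopen in the connected set $D$, which proves the dichotomy.

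For the second assertion, the two open sets $X^{s}(\Phi_{\alpha})=X^{ss}(\Phi_{\alpha})$ and $X^{ss}(\Phi_{\beta})$ coincide as $G$-invariant open subsets of $X$. The analytic Hilbert quotient of a given $G$-space is unique up to biholomorphism, so the two quotients are the same complex analytic space. One can also see this directly: the quotient map identifies points with intersecting orbit closures in the semistable set, and the structure sheaf is $\Pi^{G}_{\ast}\mathscr{O}$; both depend only on the open $G$-set and not on $\alpha$.

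The main obstacle is the closedness step, where a limit of stable points must stay stable. This is exactly where regularity of $\alpha$, via Theorem \ref{main-k-q}(iii), is used. One point to check is that Theorem \ref{main-k-q} applies to $\Phi_{\alpha}$, which is legitimate because $\Phi_{\alpha}$ is again a moment map for the same $T$-action.
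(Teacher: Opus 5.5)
Your proof is correct and essentially matches the paper's. The paper joins $\alpha$ to $\beta$ by a path in $D$ and applies the intermediate value theorem to the continuous function $f^{HM}_{x}$; a sign change would produce a regular value $\gamma$ with $f^{HM}_{x}(\gamma)=0$, contradicting $X^{ss}(\Phi_{\gamma})=X^{s}(\Phi_{\gamma})$, whereas you package the same two facts (continuity of $f^{HM}_{x}$ and $X^{ss}=X^{s}$ at regular values) as a clopen argument on the connected set $D$, and the identification of the quotients is handled the same way in both.
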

\begin{proof}
We first show $X^{s}(\Phi_{\alpha})\subset X^{s}(\Phi_{\beta})$.
Since $D$ is a connected open subset of $\mathfrak{t}^*$, there exists a continuous path $l$ connecting $\alpha$ and $\beta$ lying in $D$.
By Proposition \ref{prop-ss}, for any $x\in X^{s}(\Phi_{\alpha})$, we have
$f^{HM}_{x}(\alpha)=\mathrm{M}^{\Phi_{\alpha}}(x)<0$.
If $f^{HM}_{x}(\beta)=\mathrm{M}^{\Phi_{\beta}}(x)=0$ then $X^{ss}(\Phi_{\beta})\setminus X^{s}(\Phi_{\beta})\neq\emptyset$ which leads to a contradiction with the fact that $\beta$ is a regular value of $\Phi$.
Due to the assertion (iii) in Lemma \ref{prop-convex}, the function $f^{HM}_{x}$ is continuous on the path $l$.
If $f^{HM}_{x}(\beta)=\mathrm{M}^{\Phi_{\beta}}(x)>0$, by continuity there exists a point $\gamma\in l$ such that $f^{HM}_{x}(\gamma)=\mathrm{M}^{\Phi_{\gamma}}(x)=0$ and therefore $X^{ss}(\Phi_{\gamma})\setminus X^{s}(\Phi_{\gamma})\neq\emptyset$, also a contradiction since $\gamma$ is a regular value of $\Phi$.
As a result, we obtain $f^{HM}_{x}(\beta)=\mathrm{M}^{\Phi_{\beta}}(x)<0$ which means $x\in X^{s}(\Phi_{\beta})$ and thus $X^{s}(\Phi_{\alpha})\subset X^{s}(\Phi_{\beta})$.
Likewise, we can show $X^{s}(\Phi_{\beta})\subset X^{s}(\Phi_{\alpha})$.
As a direct consequence of Theorem \ref{main-k-q}, the K\"{a}hler quotients $X^{s}(\Phi_{\alpha})/\!\!/G$ and $X^{s}(\Phi_{\beta})/\!\!/G$ are identical as reduced normal complex analytic spaces.
\end{proof}

The proof of Lemma~\ref{lem-xi-zeta} shows that the moment body can be decomposed into a union of convex subpolytopes.
Although this is a well known results for experts, we provide a proof here for the convenience of readers.
\begin{prop}\label{prop:decomp}
There exists finitely many top-dimensional subpolytopes $\Delta_i$ of the moment body $\Delta = \Phi(M)$ such that $\Delta = \cup_i \Delta_i$ and $\cup_i \mathrm{int}\,\Delta_{i}$ is the regular value set of the moment map $\Phi$.
\end{prop}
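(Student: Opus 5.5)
The plan is to describe the critical values of $\Phi$ explicitly as a finite union of codimension-one convex pieces. The top-dimensional subpolytopes will then be the closures of the connected components of the complement.

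\emph{Step 1 (critical values).} A point $x$ is critical for $\Phi$ exactly when $\dim T_{x}>0$, that is, when the stabilizer of $x$ in $T$ contains a nontrivial subtorus. By compactness and the slice theorem, only finitely many subtori $H\subset T$ occur as identity components of stabilizers. For each such $H$, take a connected component $C$ of the fixed point set $X^{H}$. Then $T$ acts on $C$, and $\Phi|_{C}$ is a moment map for this action. For $v\in\mathfrak{h}$, the function $\langle\Phi,v\rangle$ is locally constant on $C$, so $\Phi(C)$ lies in an affine subspace $\alpha_{C}+\mathfrak{h}^{\perp}$ of codimension $\dim H\geq1$. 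The convexity theorem \cite{At82,GS82}, applied to the compact Kähler Hamiltonian $T$-manifold $C$, shows that $\Phi(C)$ is a convex polytope, namely the convex hull of $\Phi(C^{T})$. Hence the set of critical values is
$$
\mathcal{W}=\bigcup_{H,C}\Phi(C),
$$
a finite union of convex polytopes, each of dimension at most $d-1$.

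\emph{Step 2 (chambers).} Since $\mathcal{W}$ is a closed set of measure zero, the regular values in $\mathrm{int}\,\Delta$ form a dense open set $\mathrm{int}\,\Delta\setminus\mathcal{W}$. It remains to show that each connected component $D$ of this set is the interior of a convex polytope. The route I will take is to enlarge each $\Phi(C)$ to its affine span $A_{C}$, a hyperplane or lower-dimensional affine subspace. For the codimension-one ones, consider the finite hyperplane arrangement $\{A_{C}\}$ together with the facet hyperplanes of $\Delta$. The open cells of this arrangement inside $\Delta$ are interiors of convex polytopes. Each chamber $D$ is a union of such cells together with pieces of hyperplanes not contained in $\mathcal{W}$. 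Let $\Delta_{D}=\overline{D}$.

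\emph{Main obstacle.} The hard step is showing that $\overline{D}$ is convex, since $\mathcal{W}$ need not contain whole hyperplanes. My first attempt will use Lemma \ref{lem-xi-zeta}: for $\alpha\in D$ the stable set $X^{s}(\Phi_{\alpha})=:X_{D}$ is constant. I then claim
$$
D=\{\beta\in\mathrm{int}\,\Delta\setminus\mathcal{W}\,:\,f^{HM}_{x}(\beta)<0 \ \text{for all } x\in X_{D}\}.
$$
Each $f^{HM}_{x}$ is convex by Proposition \ref{prop-convex}(iii), so its sublevel set $\{f^{HM}_{x}<0\}$ is convex, and the intersection of these sets is convex. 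To close the argument I must show that removing $\mathcal{W}$ does not break this convexity. The mechanism is that $f^{HM}_{x}$ is piecewise linear, being a supremum over finitely many relevant weights, so each sublevel set is a polyhedron. If a segment between two points of $D$ met $\mathcal{W}$ at some $\gamma$, then some $x\in X_{D}$ would satisfy $f^{HM}_{x}(\gamma)=0$ for a point with positive-dimensional stabilizer in its orbit closure, which contradicts strict negativity along the segment. Making this step precise, using the wall characterization via $\mathrm{M}^{\Phi-\gamma}(x)=0$, is the delicate point. I would also need to check separately that the intersection above is a polyhedron; polyhedrality, rather than convexity alone, is what makes $\Delta_{D}$ a polytope.

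\emph{Step 3 (finiteness and covering).} The arrangement is finite, so there are finitely many chambers $D$. Their closures cover $\Delta$ because $\mathcal{W}$ has empty interior. By construction $\cup_{D}\mathrm{int}\,\Delta_{D}=\cup_{D} D$, and this is the regular value set in $\mathrm{int}\,\Delta$.
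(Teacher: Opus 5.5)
Your strategy is the paper's. It too identifies a chamber $D$ with $P=\bigcap_{x\in X_D}\{\gamma : f^{HM}_{x}(\gamma)<0\}$, where $X_D=X^{s}(\Phi_{\alpha})$ and $\alpha\in D$. It takes convexity from Proposition~\ref{prop-convex} and polytopality from the critical values forming a finite union of lower-dimensional polytopes; your Step~1 is a correct way to see the latter.

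But your proposal stops exactly where the work is. The proof of Lemma~\ref{lem-xi-zeta} gives only $D\subset P$. What you need is that the convex set $P$ contains no critical value, so that $D=P$ and not merely $D=P\setminus\mathcal{W}$. Your mechanism, that some $x\in X_D$ would satisfy $f^{HM}_{x}(\gamma)=0$ at a critical $\gamma$ on the segment, is asserted rather than derived. The wall characterization you cite cannot supply it: it produces points $y\in\Phi^{-1}(\gamma)$ with $\dim T_y>0$, and such points are never $\Phi_\alpha$-stable, hence never in $X_D$. The paper asserts $D=P$ by appeal to the proof of Lemma~\ref{lem-xi-zeta}, which likewise yields only one inclusion, so you were right to flag this step.

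The missing step is global rather than local; here is one way to close it.
\begin{enumerate}
\item Let $\gamma\in P$, so $X_D\subset X^{s}(\Phi_\gamma)$.
\item Fibres of $\Pi_\gamma$ through $\Phi_\gamma$-stable points are single $G$-orbits. Hence $X_D$ is a saturated open subset of $X^{ss}(\Phi_\gamma)$, and $\Pi_\gamma(X_D)$ is open in $X^{ss}(\Phi_\gamma)/\!\!/G$.
\item $\Pi_\gamma(X_D)$ is also a continuous image of the compact space $X_D/G\cong\Phi^{-1}(\alpha)/T$, hence closed.
\item Since $\Phi^{-1}(\gamma)$ is connected \cite{At82}, so is $X^{ss}(\Phi_\gamma)/\!\!/G$. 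Therefore $X^{ss}(\Phi_\gamma)=X_D\subset X^{s}(\Phi_\gamma)$, every point of $\Phi^{-1}(\gamma)$ has finite stabilizer, and $\gamma$ is regular.
\item So $P$ is a connected set of regular values containing $\alpha$, which gives $P=D$, and $D$ is convex.
\end{enumerate}

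For polyhedrality, piecewise linearity of $f^{HM}_x$ is the wrong tool: outside $\Phi(\overline{G\cdot x})$ it is a distance to that polytope, not piecewise linear. Instead use that $\{f^{HM}_x<0\}=\mathrm{int}\,\Phi(\overline{G\cdot x})$ for $x\in X_D$. Each $\Phi(\overline{G\cdot x})$ is the convex hull of a subset of the finite set $\Phi(X^T)$, so $P$ is a finite intersection of open polytopes. This also shows there are only finitely many chambers, so the hyperplane arrangement in your Step~2 is unnecessary.
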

\begin{proof}
By the proof of the lemma above, any connected component $D_{i}$ of the regular values set of $\Phi$ can be reformulated as
\begin{equation*}
D_{i}=\bigcap_{x\in X^{s}(\Phi_{\alpha})}\{\gamma\in\mathfrak{t}^{\ast}\,|\, f^{HM}_{x}(\gamma)<0\},
\end{equation*}
where $\alpha$ is an arbitrary point in $D_{i}$.
Therefore, by the convexity of $f^{HM}_{x}$, the set $D_{i}$ is a convex open subset of $\mathfrak{t}^*$.
Moreover, by Atiyah's orbit closure convexity theorem, the critical value set of $\Phi$ is a union of finitely many polytopes in $\mathfrak{t}^*$, and none of these polytopes are of top dimension.
Denote by $\Delta_i$ the closure of $D_{i}$ in $\mathfrak{t}^{\ast}$.
As a result, $\Delta_i$ must be a top-dimensional convex subpolytope of $\Delta$; moreover, the moment body $\Delta$ is finite union of such $\Delta_i$.
\end{proof}

With the same notations in Proposition \ref{prop:decomp}, suppose
$$
\Delta=\Phi(X)=\bigcup_{i=1}^{k}\Delta_{i}.
$$
Let $\epsilon$ be a critical value of $\Phi$ which lies in $\mathrm{int}\,\Delta$.
Define a set
$$
\mathrm{I}(\epsilon)=\{j\,|\, \epsilon\in\Delta_{j}\}\subset\{1,2,\cdots, k\}.
$$
Then we have the following result.
\begin{lem}\label{ep-in}
The $\Phi_{\epsilon}$-stable set $X^{s}(\Phi_{\epsilon})$ is nonempty and is equal to the intersection
$$
X^{s}(\Phi_{\epsilon})=\bigcap_{j\in \mathrm{I}(\epsilon)}X^{s}(\Phi_{\xi_{j}}),
$$
where $\xi_{j}\in\mathrm{int}\,\Delta_{j}$ for every $j\in\mathrm{I}(\epsilon)$.
\end{lem}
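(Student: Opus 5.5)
The plan is to work with the convex functions $f^{HM}_{x}$ of Proposition \ref{prop-convex} and the characterisation of stability in Proposition \ref{prop-ss}: $x\in X^{s}(\Phi_{\gamma})$ if and only if $f^{HM}_{x}(\gamma)<0$. I will prove the two inclusions separately and then deduce nonemptiness from them.

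\emph{Inclusion $\subseteq$.} Let $x\in X^{s}(\Phi_{\epsilon})$, so $f^{HM}_{x}(\epsilon)<0$. The function $f^{HM}_{x}$ is continuous, so $\{\gamma : f^{HM}_{x}(\gamma)<0\}$ is an open neighbourhood $U$ of $\epsilon$. For each $j\in\mathrm{I}(\epsilon)$ we have $\epsilon\in\Delta_{j}=\overline{\mathrm{int}\,\Delta_{j}}$, hence $U$ meets $\mathrm{int}\,\Delta_{j}$ at some point $\gamma_{j}$. This gives $x\in X^{s}(\Phi_{\gamma_{j}})$. Lemma \ref{lem-xi-zeta} says $X^{s}(\Phi_{\gamma_{j}})=X^{s}(\Phi_{\xi_{j}})$, since $\mathrm{int}\,\Delta_{j}$ is a connected component of the regular values. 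So $x\in X^{s}(\Phi_{\xi_{j}})$ for every $j\in\mathrm{I}(\epsilon)$.

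\emph{Inclusion $\supseteq$.} Suppose $f^{HM}_{x}(\xi_{j})<0$ for all $j\in\mathrm{I}(\epsilon)$. By Lemma \ref{lem-xi-zeta} this holds with $\xi_j$ replaced by any point of $\mathrm{int}\,\Delta_{j}$, so $f^{HM}_{x}<0$ on $\bigcup_{j\in\mathrm{I}(\epsilon)}\mathrm{int}\,\Delta_{j}$. By continuity, $f^{HM}_{x}\le 0$ on a neighbourhood of $\epsilon$ inside $\bigcup_{j\in \mathrm{I}(\epsilon)}\Delta_j$.

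Because $\epsilon\in\mathrm{int}\,\Delta$ and there are finitely many closed $\Delta_i$, the union $\bigcup_{j\in\mathrm{I}(\epsilon)}\Delta_{j}$ contains a whole ball around $\epsilon$. Indeed, the $\Delta_i$ not containing $\epsilon$ lie at positive distance from it. So $\epsilon$ lies in the interior of the convex hull of finitely many points $\gamma_{1},\dots,\gamma_{m}$ taken in the open sets $\mathrm{int}\,\Delta_{j}$ near $\epsilon$. Concretely, I would take a small simplex around $\epsilon$, perturb its vertices into nearby regular values (the regular set is open and dense), and check the perturbation keeps $\epsilon$ in its interior.

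Writing $\epsilon=\sum r_{k}\gamma_{k}$ with $r_{k}\ge 0$ and $\sum r_k=1$, convexity of $f^{HM}_{x}$ gives
$$f^{HM}_{x}(\epsilon)\le\sum_k r_{k}f^{HM}_{x}(\gamma_{k})<0.$$
Hence $x\in X^{s}(\Phi_{\epsilon})$. The expected obstacle is exactly this covering step, namely that the $\mathrm{int}\,\Delta_{j}$ with $j\in\mathrm{I}(\epsilon)$ surround $\epsilon$ well enough to contain such points $\gamma_k$. It uses $\epsilon\in\mathrm{int}\,\Delta$ together with the finite decomposition of Proposition \ref{prop:decomp}.

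\emph{Nonemptiness.} It suffices to show the intersection $\bigcap_{j\in\mathrm{I}(\epsilon)}X^{s}(\Phi_{\xi_{j}})$ is nonempty. Each $X^{s}(\Phi_{\xi_{j}})$ is a nonempty Zariski-open $G$-invariant subset of the irreducible compact manifold $X$: its complement is the unstable set together with points having positive-dimensional stabiliser, which is a proper analytic subset by the theory recalled in Theorem \ref{main-k-q}. Alternatively, one can argue that every $X^{s}(\Phi_{\xi_{j}})$ contains the generic-orbit-type locus, namely points $x$ with trivial infinitesimal stabiliser whose $\Phi(\overline{G x})$ equals all of $\Delta$. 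Atiyah's orbit closure convexity shows $\Phi(\overline{Gx})=\Delta$ for $x$ in a dense open set. Then $\epsilon\in\mathrm{int}\,\Phi(\overline{Gx})$, which gives $f^{HM}_{x}(\epsilon)<0$ directly by \eqref{re-hm}, since every direction $v$ sees $\epsilon$ strictly inside the projected orbit-closure image. I would present this second argument, as it proves nonemptiness of $X^{s}(\Phi_{\epsilon})$ directly.
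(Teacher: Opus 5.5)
Your proof is correct and follows the paper's: the inclusion $\subseteq$ is identical, and for $\supseteq$ the paper uses the same convexity of $f^{HM}_{x}$. The paper writes $\epsilon$ as a point of a single line segment whose endpoints lie in $\mathrm{int}\,\Delta_{l}$ and $\mathrm{int}\,\Delta_{m}$, rather than as a point inside a simplex; it asserts that segment without justification, and justifying it takes the same local density argument you give for your simplex. For nonemptiness the paper uses your first argument (the complements of the $X^{s}(\Phi_{\xi_{j}})$ are proper analytic subsets, so the finite intersection is nonempty), while your preferred route via Atiyah's generic orbit closure $\Phi(\overline{G\cdot x})=\Delta$ is a valid and more self-contained alternative that does not need analyticity of the unstable locus.
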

\begin{proof}
Let $x$ be an arbitrary point in $X^{s}(\Phi_{\epsilon})$, then we have
$f^{HM}_{x}(\epsilon)=\mathrm{M}^{\epsilon}(x)<0$.
Due to the continuity of $f^{HM}_{x}$, there exists an open neighborhood $U_{\epsilon}$ of $\epsilon$ in $\mathfrak{t}^{\ast}\cong\mathbb{R}^{d}$ satisfying $U_{\epsilon}\subset\mathrm{int}\,\Delta$ and $f^{HM}_{x}(\epsilon^{\prime})<0$ for any $\epsilon^{\prime}$ in $U_{\epsilon}$.
According to Lemma \ref{lem-xi-zeta}, the stable set $X^{s}(\Phi_{\xi_{j}})$ is
constant as $\xi_{j}$ varies in $\mathrm{int}\,\Delta_{j}$.
Without loss of generality, we may choose $\xi_{j}$ lies in $\mathrm{int}\,\Delta_{j}\cap U_{\epsilon}$.
Then we obtain $f^{HM}_{x}(\xi_{j})<0$, i.e., $x\in X^{s}(\Phi_{\xi_{j}})$ and therefore
\begin{equation}\label{alf-in-bj}
X^{s}(\Phi_{\epsilon})\subset \bigcap_{j\in\mathrm{I}(\epsilon)}X^{s}(\Phi_{\xi_{j}}).
\end{equation}
Observe that each subpolytope $\Delta_{i}$ is of $d$-dimension.
Particularly, the interiors of these subpolytopes are disjoint and the union of all $\Delta_{i}$ equals $\Delta=\Phi(X)$ which is the convex hull of the set $\Phi(X^{T})$.
Note that there always exists a line segment through $\epsilon$ such that the ending points, denoted by $\xi_{l}$ and $\xi_{m}$, lie in the interiors of the subpolytopes $\Delta_{l}$ and $\Delta_{m}$ for some $l$ and $m$ in $\mathrm{I}(\epsilon)$.
As a result, we get $\epsilon=(1-r)\xi_{l}+r\xi_{m}$ for some $r\in(0,1)$.
For any point $x$ in $\bigcap_{j\in\mathrm{I}(\epsilon)}X^{s}(\Phi_{\xi_{j}})$, we have
$$
x\in \bigcap_{j\in\mathrm{I}(\epsilon)}X^{s}(\Phi_{\xi_{j}})
\subset X^{s}(\Phi_{\xi_{l}})\cap X^{s}(\Phi_{\xi_{m}}).
$$
This implies
$f^{HM}_{x}(\xi_{l})<0$ and $f^{HM}_{x}(\xi_{m})<0$.
On account of Proposition \ref{prop-convex},
we obtain
$$
f^{HM}_{x}(\epsilon)\leq
(1-r)f^{HM}_{x}(\xi_{l})+rf^{HM}_{x}(\xi_{m})<0
$$
which means that $x$ is $\Phi_{\epsilon}$-stable and thus
\begin{equation}\label{bj-in-alf}
\bigcap_{j\in\mathrm{I}(\epsilon)}X^{s}(\Phi_{\xi_{j}})\subset X^{s}(\Phi_{\epsilon}).
\end{equation}
Since the complements of $X^{s}(\Phi_{\xi_{j}})$ are complex analytic subsets of $X$, the stable point sets $X^{s}(\Phi_{\xi_{j}})$ are open and dense, and hence their intersection is non-empty.
Combining \eqref{alf-in-bj} with \eqref{bj-in-alf} completes the proof.
\end{proof}

Choose an arbitrary critical value $\epsilon$ of $\Phi$ in $\mathrm{int}\,\Delta$.
Suppose that $\epsilon$ lies in the boundary of a subpolytope $\Delta_{j}$.
For each $\xi\in\mathrm{int}\,\Delta_{j}$, we have $X^{ss}(\Phi_{\xi})=X^{s}(\Phi_{\xi})$ since $\xi$ is a regular value of $\Phi$.
By the continuity of the Hilbert--Munford numerical function, there is a natural inclusion map
$$i_{\xi, \epsilon}: X^{s}(\Phi_{\xi})\hookrightarrow X^{ss}(\Phi_{\epsilon})$$
which induces a natural continuous map between the analytic Hilbert quotients under the quotient topologies
\begin{equation*}
f_{\xi, \epsilon}:X^{s}(\Phi_{\xi})/\!\!/G\longrightarrow X^{ss}(\Phi_{\epsilon})/\!\!/G
\end{equation*}
such that the following diagram is commutative.
\begin{equation}\label{i-f-}
\vcenter{
\xymatrix@=1cm{
  X^{s}(\Phi_{\xi}) \ar[d]_{\Pi_{\xi}}
  \ar[r]^{i_{\xi, \epsilon}} & X^{ss}(\Phi_{\epsilon}) \ar[d]^{\Pi_{\epsilon}} \\
  X^{s}(\Phi_{\xi}) /\!\!/G
  \ar[r]^{f_{\xi, \epsilon}} & X^{ss}(\Phi_{\epsilon})/\!\!/G}
  }
\end{equation}
Geometrically, for any point $x$ of $X^{s}_{\xi}$, denote by $[G\cdot x]$ the equivalence class in the regular K\"{a}hler quotient $X^{s}(\Phi_{\xi}) /\!\!/G$, then $f_{\xi, \epsilon}([G\cdot x])$ is given by
$$
f_{\xi, \epsilon}([G\cdot x])=\Pi_{\epsilon}(\overline{G\cdot x}),
$$
where $\overline{G\cdot x}$ is the closure of the orbit in $X^{ss}(\Phi_{\epsilon})$.

Consider the map $f_{\xi, \epsilon}$.
For the sake of simplicity, we write $X^{s}_{\xi}=X^{s}(\Phi_{\xi})$ and
$X^{ss}_{\epsilon}=X^{ss}(\Phi_{\epsilon})$.
Let $V$ be an arbitrary open subset of $X^{ss}_{\epsilon}/\!\!/G$.
By definition, we have
$$\mathscr{O}_{X^{ss}_{\epsilon}/\!\!/G}(V)=\mathscr{O}_{X}(\Pi^{-1}_{\epsilon}(V))^{G}.$$
Due to the commutativity of \eqref{i-f-}, we get
$$
\Pi^{-1}_{\xi}\bigl(f_{\xi, \epsilon}^{-1}(V)\bigr)=\Pi^{-1}_{\epsilon}(V)\cap X^{s}_{\xi};
$$
moreover, for any section $h\in\mathscr{O}_{X}(\Pi^{-1}_{\epsilon}(V))^{G}$,   the restriction of $h$ to the
intersection $\Pi^{-1}_{\epsilon}(V)\cap X^{s}_{\xi}$ descends to a holomorphic function on
$f_{\xi, \epsilon}^{-1}(V)$.
As a result, the map $f_{\xi, \epsilon}$ determines a natural morphism of sheaves
$$
f_{\xi, \epsilon}^{\natural}:\mathscr{O}_{X^{ss}_{\epsilon}/\!\!/G}\longrightarrow
(f_{\xi, \epsilon})_{\ast}\mathscr{O}_{X^{s}_{\xi}/\!\!/G}.
$$
This implies that
\begin{equation}\label{nat-map-0}
f_{\xi, \epsilon}:(X^{s}_{\xi}/\!\!/G, \mathscr{O}_{X^{s}_{\xi}/\!\!/G})\longrightarrow
(X^{ss}_{\epsilon}/\!\!/G, \mathscr{O}_{X^{ss}_{\epsilon}/\!\!/G})
\end{equation}
is a morphism of complex spaces.
Note that both $X^{s}_{\xi}/\!\!/G$ and $X^{ss}_{\epsilon}/\!\!/G$ are compact.
The map $f_{\xi, \epsilon}$ is proper necessarily.

Put $B=\Pi_{\epsilon}(X^{sss}_{\epsilon})$, where $X^{sss}_{\epsilon}$ is the set of $\Phi_{\epsilon}$-strictly
semistable points, i.e., the complement of $X^{s}_{\epsilon}$ in $X^{ss}_{\epsilon}$.
We claim the following result
\begin{lem}\label{B-dense}
$B$ is a nowhere dense analytic subset in $X^{ss}_{\epsilon}/\!\!/G$ and $X^{sss}_{\epsilon}$ is saturated with respect to $\Pi_{\epsilon}$, i.e., $\Pi^{-1}_{\epsilon}(B)=X^{sss}_{\epsilon}$.
\end{lem}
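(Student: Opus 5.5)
We would prove the two assertions in the order saturation first, then analyticity, then nowhere density, since the last two follow fairly formally once saturation is known.

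\emph{Saturation.} The inclusion $X^{sss}_{\epsilon}\subset\Pi^{-1}_{\epsilon}(B)$ is immediate. For the reverse, take $y\in X^{ss}_{\epsilon}$ with $\Pi_{\epsilon}(y)=\Pi_{\epsilon}(x)$ for some strictly semistable $x$, and suppose $y\in X^{s}_{\epsilon}$.
\begin{itemize}
\item Since $y$ is stable, $G\cdot y$ meets $\Phi_{\epsilon}^{-1}(0)$. By Theorem \ref{main-k-q} and \cite[Proposition 2.4]{Sj95}, $G\cdot y$ is then the unique closed orbit in the fibre $\Pi^{-1}_{\epsilon}(\Pi_{\epsilon}(y))$.
\item Hence $\overline{G\cdot x}\supset G\cdot y$.
\item The stabiliser $G_{y}$ is finite, so $\dim G\cdot y=\dim G$.
\item An orbit closure contains only orbits of dimension $\leq\dim G\cdot x$, with equality only for $G\cdot x$ itself. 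Therefore $G\cdot x=G\cdot y$, and $x$ would be stable, a contradiction.
\end{itemize}
As an alternative argument, one can use Proposition \ref{prop-ss}: $\mathrm{M}^{\Phi_{\epsilon}}$ is constant along $G$-orbits and upper semicontinuous on orbit closures. The orbit-dimension argument seems cleaner, so we would use that.

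\emph{Analyticity.} We would show that $X^{sss}_{\epsilon}$ is a $G$-invariant closed analytic subset of $X^{ss}_{\epsilon}$. It is the set of semistable points whose closed orbit in its fibre has positive-dimensional stabiliser. Equivalently, it is $\Pi^{-1}_{\epsilon}$ of the union of the strata of $X^{ss}_{\epsilon}/\!\!/G$ of $G$-orbit type $(H)$ with $H$ non-finite. By Theorem \ref{main-k-q}(iv),(v) this union is a finite union of stratum closures, hence an analytic subvariety of $X^{ss}_{\epsilon}/\!\!/G$.

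One must check that $B$ is exactly this union of strata. Under the homeomorphism $\tilde{\imath}$, a point of $X_{0}$ lies in $B$ iff its $T$-orbit in $\Phi_{\epsilon}^{-1}(0)$ has positive-dimensional stabiliser $T_{x}$. Since the closure of a stratum of infinite-stabiliser type only adds strata with larger, hence still infinite, stabilisers, $B$ is closed and equals the finite union of these closures. This is where we must cite the frontier property of the orbit type stratification. We expect this identification to be the main point requiring care.

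An alternative route to analyticity is that $X^{sss}_{\epsilon}=\{x\in X^{ss}_{\epsilon}:\dim G_{x}>0\}\cdot$-saturated, and upper semicontinuity of $\dim G_x$ gives an analytic set. Its image under the analytic Hilbert quotient is analytic by the properness of $\Pi_{\epsilon}$ on closed invariant sets. We would use the stratification argument instead.

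\emph{Nowhere density.} By Lemma \ref{ep-in}, $X^{s}_{\epsilon}$ is nonempty. By saturation, $\Pi_{\epsilon}(X^{s}_{\epsilon})$ is the complement of $B$, so $B$ is a proper analytic subset. By Proposition \ref{irreducible}, $X^{ss}_{\epsilon}/\!\!/G$ is irreducible. A proper analytic subset of an irreducible complex space is nowhere dense, which completes the proof.
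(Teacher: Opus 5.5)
Your proposal is correct and takes essentially the same route as the paper: $B$ is identified with the union of the orbit-type strata of $X^{ss}_{\epsilon}/\!\!/G$ that have positive-dimensional stabilisers, the frontier condition together with Theorem~\ref{main-k-q}(v) makes $B$ a closed analytic subset, and nowhere density follows because $B$ is a proper analytic subset of the irreducible quotient. Your version is more careful than the paper's in three places: you actually prove saturation (the unique closed orbit in the fibre plus the orbit-dimension argument, which here can be justified by Atiyah's description of torus orbit closures), you derive properness of $B$ from Lemma~\ref{ep-in}, and your ``finite union of stratum closures'' avoids the paper's tacit assumption that there is a single maximal stratum $Q_{\mathrm{max}}$; the paper takes the first two for granted.
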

\begin{proof}
On account of Theorem \ref{main-k-q}, there is a natural stratification of $X^{ss}_{\epsilon}/\!\!/G$ by
$G$-orbit types and each stratum is a complex manifold whose closure is an analytic subvariety of
$X^{ss}_{\epsilon}/\!\!/G$.
Note that $B$ is equal to the union of strata with positive-dimensional stabilizers.
It follows $\Pi^{-1}_{\epsilon}(B)\subset X^{sss}_{\epsilon}$ and thus $\Pi^{-1}_{\epsilon}(B)=X^{sss}_{\epsilon}$ since $B=\Pi_{\epsilon}(X^{sss}_{\epsilon})$.
Because the union of strata with finite stabilizers is equal to $X^{s}_{\epsilon}/\!\!/G$, the subset $B$ is identical with the complement of $X^{s}_{\epsilon}/\!\!/G$ in $X^{ss}_{\epsilon}/\!\!/G$.
Let $Q_{\mathrm{max}}$ be the maximal stratum of $X^{ss}_{\epsilon}/\!\!/G$ whose stabilizer is positive-dimensional.
The frontier condition of the stratification implies that the closure of $Q_{\mathrm{max}}$ is the
union of all strata with positive-dimensional stabilizers, namely,
$B=\overline{Q}_{\mathrm{max}}$.
Particularly, as a closed complex analytic subspace of $X^{ss}_{\epsilon}/\!\!/G$, the
codimension of $B$ in $X^{ss}_{\epsilon}/\!\!/G$ is greater than or equal to 1 and therefore $B$ is nowhere dense.
\end{proof}

We are ready to prove the main theorem of this section.
\begin{thm}\label{m1}
The natural map \eqref{nat-map-0} is a proper modification.
\end{thm}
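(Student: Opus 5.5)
To show that $f_{\xi,\epsilon}$ is a proper modification, I will verify the definition recalled in Appendix \ref{mod-bim}. We need a proper surjective holomorphic map between reduced irreducible complex spaces, together with a nowhere dense analytic subset $B\subset X^{ss}_{\epsilon}/\!\!/G$ such that $f_{\xi,\epsilon}^{-1}(B)$ is nowhere dense and $f_{\xi,\epsilon}$ restricts to a biholomorphism off these sets. Holomorphicity and properness are already established above, since both spaces are compact. Both spaces are irreducible by Proposition \ref{irreducible}. The natural choice of center is $B=\Pi_{\epsilon}(X^{sss}_{\epsilon})$, which is nowhere dense and analytic by Lemma \ref{B-dense}.

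The first step is to identify the complement of $B$. By Lemma \ref{B-dense}, $\Pi_\epsilon^{-1}(B)=X^{sss}_\epsilon$, so
$$\Pi_\epsilon^{-1}\bigl((X^{ss}_\epsilon/\!\!/G)\setminus B\bigr)=X^{s}_\epsilon.$$
By Lemma \ref{ep-in}, $X^{s}_\epsilon\subset X^{s}_\xi$ whenever $\epsilon\in\Delta_j$ and $\xi\in\mathrm{int}\,\Delta_j$. Moreover, $X^s_\epsilon$ is $G$-saturated in $X^s_\xi$, since it is $G$-invariant and every orbit in $X^s_\xi$ is closed in $X^s_\xi$. Using the commutativity of \eqref{i-f-}, we get
$$f_{\xi,\epsilon}^{-1}\bigl((X^{ss}_\epsilon/\!\!/G)\setminus B\bigr)=\Pi_\xi(X^s_\epsilon)=X^s_\epsilon/G.$$
On $X^s_\epsilon$ both quotients are geometric quotients, with closed orbits and finite stabilizers. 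Their structure sheaves are both the $G$-invariant holomorphic functions on the same open $G$-saturated set $X^s_\epsilon$: for $\Pi_\epsilon$ this uses saturation in $X^{ss}_\epsilon$, and for $\Pi_\xi$ saturation in $X^s_\xi$. Hence, by uniqueness of analytic Hilbert quotients, the restriction of $f_{\xi,\epsilon}$ to $\Pi_\xi(X^s_\epsilon)\to X^s_\epsilon/\!\!/G$ is a bijective morphism inducing an isomorphism of structure sheaves, i.e.\ a biholomorphism.

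The second step is to show that $f_{\xi,\epsilon}^{-1}(B)=\Pi_\xi(X^s_\xi\setminus X^s_\epsilon)$ is a nowhere dense analytic subset. It is analytic as the preimage of an analytic set. For nowhere density, note that $X^s_\epsilon$ is Zariski open and dense in $X$, being nonempty by Lemma \ref{ep-in} with an analytic complement. Therefore $X^s_\xi\setminus X^s_\epsilon$ is a proper analytic subset of the irreducible open set $X^s_\xi$, and its image under the open map $\Pi_\xi$ has empty interior.

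Surjectivity follows because the image of $f_{\xi,\epsilon}$ is compact and contains the dense set $(X^{ss}_\epsilon/\!\!/G)\setminus B$. The main obstacle I anticipate is the first step, namely the careful check that the restricted map is biholomorphic and not merely a homeomorphism. This rests on $X^s_\epsilon$ being saturated for both quotient maps, plus the universal property and uniqueness of analytic Hilbert quotients. The key input is the openness and $G$-invariance of $X^s_\epsilon$ inside $X^s_\xi$; normality of both spaces (Theorem \ref{reduced-normal}) can serve as a backup, via Zariski's main theorem for a bijective holomorphic map onto a normal space.
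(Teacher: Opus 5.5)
Your proof is correct and takes essentially the same route as the paper. The paper inverts $f_{\xi,\epsilon}$ on $W=f_{\xi,\epsilon}^{-1}(X^{s}_{\epsilon}/\!\!/G)$ by descending the inclusion $X^{s}_{\epsilon}\hookrightarrow X^{s}_{\xi}$ to a map $\underline{j}$; this is your uniqueness-of-analytic-Hilbert-quotients argument, and your saturation check is what makes $W=\Pi_{\xi}(X^{s}_{\epsilon})$ explicit. Surjectivity is proved by the same compactness-and-density argument.

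The one real difference is how nowhere density of $f_{\xi,\epsilon}^{-1}(B)$ is obtained. The paper uses irreducibility of $X^{s}_{\xi}/\!\!/G$. You push density down from $X$, and that works only because $X^{s}_{\xi}\setminus X^{s}_{\epsilon}$ is $G$-invariant: the $\Pi_{\xi}$-preimage of any open set contained in its image is then a nonempty open subset of it. Openness of $\Pi_{\xi}$ alone would not be enough, so you should state the $G$-invariance explicitly.
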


\begin{proof}
Observe that the stable quotient $X^{s}_{\epsilon}/\!\!/G$ is open and dense in $X^{ss}_{\epsilon}/\!\!/G$ since $X^{ss}_{\epsilon}/\!\!/G=(X^{s}_{\epsilon}/\!\!/G)\coprod B$ and $B$ is a closed complex analytic subset of $X^{ss}_{\epsilon}/\!\!/G$, see Lemma \ref{B-dense}.
Consider the restriction map
\begin{equation}\label{res-f-}
f_{\xi, \epsilon}: W:=f_{\xi, \epsilon}^{-1}(X^{s}_{\epsilon}/\!\!/G)\longrightarrow X^{s}_{\epsilon}/\!\!/G.
\end{equation}
According to Lemma \ref{ep-in}, as a subset of $X^{s}_{\xi}$, the set of
$\Phi_{\epsilon}$-stable points $X^{s}_{\epsilon}$ is nonempty and hence we get a natural inclusion
$j: X^{s}_{\epsilon}\hookrightarrow X^{s}_{\xi}$ satisfying $i_{\xi,\epsilon}\circ j=\mathrm{id}_{X^{s}_{\epsilon}}$ and $j\circ(i_{\xi,\epsilon}|_{X^{s}_{\epsilon}})=\mathrm{id}_{X^{s}_{\epsilon}}$
which induces a holomorphic mapping $\underline{j}:X^{s}_{\epsilon}/\!\!/G\rightarrow
X^{s}_{\xi}/\!\!/G$.
From definition, we have $(f_{\xi, \epsilon})\circ\underline{j}=\mathrm{id}_{X^{s}_{\epsilon}/\!\!/G}$ and $\underline{j}\circ(f_{\xi, \epsilon}|_{W})=\mathrm{id}_{W}$.
This implies that \eqref{res-f-} is a biholomorphic mapping.
Since $X^{s}_{\xi}/\!\!/G$ is compact, its image under $f_{\xi,\epsilon}$ is a compact subset of $X^{ss}_{\epsilon}/\!\!/G$.
Note that $X^{ss}_{\epsilon}/\!\!/G$ is Hausdorff.
It follows that $f_{\xi,\epsilon}(X^{s}_{\xi}/\!\!/G)$ is closed in $X^{ss}_{\epsilon}/\!\!/G$, and we have
$$
X^{s}_{\epsilon}/\!\!/G\subset f_{\xi,\epsilon}(X^{s}_{\xi}/\!\!/G)
\subset X^{ss}_{\epsilon}/\!\!/G.
$$
Because $X^{s}_{\epsilon}/\!\!/G$ is dense in $X^{ss}_{\epsilon}/\!\!/G$, we obtain
$$
X^{ss}_{\epsilon}/\!\!/G=\overline{X^{s}_{\epsilon}/\!\!/G}\subset f_{\xi,\epsilon}(X^{s}_{\xi}/\!\!/G)
\subset X^{ss}_{\epsilon}/\!\!/G
$$
which means that $f_{\xi, \epsilon}$ is surjective.

Let $A$ be the inverse image of $B$ under $f_{\xi, \epsilon}$, which is a closed complex subspace of $X^{s}_{\xi}/\!\!/G$.
As $X^{s}_{\xi}/\!\!/G$ is irreducible, $A$ is a nowhere dense analytic subset of $X^{s}_{\xi}/\!\!/G$; moreover, the restriction
$$
f_{\xi, \epsilon}: (X^{s}_{\xi}/\!\!/G)\setminus A\longrightarrow (X^{ss}_{\epsilon}/\!\!/G)\setminus B.
$$
is identical with the biholomorphic map \eqref{res-f-}.
Consequently, we are led to the conclusion that $f_{\xi, \epsilon}$ is a proper modification in the sense of Definition \ref{mod}.
\end{proof}

%=======
\subsection{Quotients on the wall}
%=======
To study the geometry of quotients corresponding to the critical values on a wall, we need the
following result.
\begin{prop}\label{prop-x}
For any $x\in X$, if $\mathrm{dim}\,T_{x}>0$ then there exists a point $x^{\prime}\in X$ such that $\mathrm{dim}\,T_{x^{\prime}}=0$ and $x\in\overline{G\cdot x^{\prime}}$.
\end{prop}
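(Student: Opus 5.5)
The plan is to reduce the problem to a linear one via the holomorphic slice theorem at $x$, and then handle the linear problem with a weight argument for the torus $H=G_{x}$.

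\textbf{Step 1: a local model near $x$.} Put $\gamma=\Phi(x)$. Then $x\in\Phi_{\gamma}^{-1}(0)$, so by Theorem~\ref{main-k-q} (and \cite[Proposition 2.4]{Sj95}) the orbit $G\cdot x$ is closed in $X^{ss}(\Phi_{\gamma})$. Moreover $H=G_{x}=(T_{x})^{\mathbb{C}}$ is a complex torus of positive dimension $r=\dim T_{x}$. The Holomorphic Slice Theorem (\cite[Theorem 1.12]{Sj95}, \cite[(2.7) Theorem]{HL94}) then gives a $G$-invariant neighbourhood $U$ of $x$ and a $G$-equivariant biholomorphism onto a neighbourhood of the zero section in $G\times_{H}N$. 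Here $N$ is the normal slice, a linear $H$-representation, and $x$ corresponds to $[e,0]$. In this model, the point $[g,v]$ has stabilizer conjugate to $H_{v}\subset H$. Hence it suffices to find $v\in N$ that is arbitrarily small, has finite stabilizer $H_{v}$, and satisfies $0\in\overline{H\cdot v}$; we then take $x'=[e,v]$.

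\textbf{Step 2: the weights of $N$ span $\mathfrak{h}^{\ast}$.} Decompose $N=\bigoplus_{\chi}N_{\chi}$ into weight spaces. By the standing assumption, the generic infinitesimal stabilizer is trivial. The set of points with positive-dimensional stabilizer is a proper analytic subset of the connected manifold $X$, so points with $\dim T_{y}=0$ are dense and meet $U$. By Step 1, some $v\in N$ therefore has $H_{v}$ finite. For a torus, $H_{v}$ is finite exactly when the weights $\chi$ with $v_{\chi}\neq 0$ span $\mathfrak{h}^{\ast}$. So the set of weights of $N$ spans $\mathfrak{h}^{\ast}$.

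\textbf{Step 3: choosing $v$.} Pick linearly independent weights $\chi_{1},\dots,\chi_{r}$ of $N$ and nonzero vectors $v_{i}\in N_{\chi_{i}}$, and set $v=\sum_{i}v_{i}$, scaled small enough to lie in the model neighbourhood. Then $H_{v}=\bigcap_{i}\ker\chi_{i}$ is finite, so $\dim T_{x'}=0$. Since the $\chi_{i}$ are independent, there is a rational (hence, after clearing denominators, integral) cocharacter $\rho\in\mathfrak{X}_{\ast}(H)\subset\mathfrak{X}_{\ast}(G)$ with $\langle\chi_{i},\rho\rangle>0$ for all $i$. Then
\[
\rho(t)\cdot v=\sum_{i}t^{\langle\chi_{i},\rho\rangle}v_{i}\longrightarrow 0 \quad (t\to 0),
\]
so $\rho(t)\cdot x'\to[e,0]=x$ and $x\in\overline{G\cdot x'}$.

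\textbf{Main obstacle.} I expect the delicate point to be Step 1: justifying that the slice theorem applies at $x$, namely that $G\cdot x$ is closed in a $G$-saturated open set admitting an analytic Hilbert quotient, and that stabilizers in $U$ are read off from $N$. I would handle this using the shifted moment map $\Phi_{\gamma}$ as above. The remaining steps are elementary torus-weight combinatorics.
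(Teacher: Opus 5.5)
Your proof is correct, and it takes a genuinely different route from the paper's.

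\textbf{The paper's route.} The paper argues globally, by induction on $d=\dim T$, using the Carrell--Sommese plus/minus decompositions \cite{CS78}. For $d=1$ the point $x$ is $\mathbb{C}^{\ast}$-fixed, and a point of the vector-space fibre over $x$ flows to $x$. For the inductive step it writes $T=H\times L$ with $L$ a circle. It first finds $y$ with finite $H$-stabilizer and $x\in\overline{H^{\mathbb{C}}\cdot y}$. If $y$ is $L$-fixed, it then lifts $y$ along the $H^{\mathbb{C}}$-equivariant plus decomposition of $L^{\mathbb{C}}$.

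\textbf{Your route.} You shift the moment map so that $x$ lies in the zero level. You then linearize near $G\cdot x$ with the holomorphic slice theorem, and reduce everything to weight combinatorics for $G_x$ acting on the slice $N$. The standing genericity assumption enters exactly once, to show that the weights of $N$ span $\mathfrak{h}^{\ast}$.

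\textbf{Comparison.} Your approach needs heavier input, namely the slice theorem and $G_x=(T_x)^{\mathbb{C}}$ from \cite{Sj95}, whereas the paper only uses $\mathbb{C}^{\ast}$-decompositions. In return it gives more:
\begin{enumerate}
\item $x'$ can be taken arbitrarily close to $x$.
\item $x=\lim_{t\to 0}\rho(t)\cdot x'$ for a single one-parameter subgroup $\rho$ of $G_x$.
\item You compute the full stabilizer $H_v=\bigcap_i\ker\chi_i$ at once.
\end{enumerate}
The third point avoids a subtlety in the coordinatewise induction, which must combine stabilizer information from $H$ and $L$ separately. Finiteness of $H_y$ and $L_y$ does not force $T_y$ to be finite. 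For example, under the standard $T^2$-action on $\mathbb{CP}^2$, the point $[0:1:1]$ has the diagonal circle as stabilizer. So the paper's step ``$\dim L_y=0$ implies $\dim G_y=0$'' needs repair, e.g.\ by inducting on $\dim T_x$ using a circle chosen inside $T_x$.

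\textbf{One small correction.} $H=(T_x)^{\mathbb{C}}$ need not be connected, so run the weight argument for the identity component $H^{0}$. Finiteness of $H_v$ and the choice of $\rho\in\mathfrak{X}_{\ast}(H^{0})$ are unaffected.
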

\begin{proof}
Recall that $T=(\mathbb{S}^{1})^{d}$ and $G=T^{\mathbb{C}}=(\mathbb{C}^{\ast})^{d}$.
The assertion is proved by induction, which is a direct consequence of the plus and the minus
decompositions by Carrell--Sommese \cite[Proposition II]{CS78} for $(\mathbb{C}^{\ast})^{d}$-action.
If $d=1$, then $x$ is fixed by the $\mathbb{C}^{\ast}$-action.
Let $F$ be the connected component of the $\mathbb{C}^{\ast}$-fixed point set in which $x$ lies.
Then there exist a $\mathbb{C}^{\ast}$-invariant Zariski open subset $V$ of $X$ together with a $\mathbb{C}^{\ast}$-invariant maximal rank holomorphic surjection
$\pi:V\rightarrow F$ with vector space fibres.
In particular, the projection $\pi$ is defined by setting
$\pi(v)=\lim_{t\rightarrow0}t\cdot v$ for any $v\in V$,
where $t\in\mathbb{C}^{\ast}$.
This implies that the assertion is valid for $d=1$.
Put $H=(\mathbb{S}^{1})^{k-1}\times\{1\}$ and $L=\{1\}\times \mathbb{S}^{1}$.
Then we have $H^{\mathbb{C}}=(\mathbb{C}^{\ast})^{k-1}\times\{1\}$ and $L^{\mathbb{C}}=\{1\}\times\mathbb{C}^{\ast}$.
Suppose that the assertion holds for the $H^{\mathbb{C}}$-action.
Since $(\mathbb{C}^{\ast})^{k}$ is commutative, for the action of
$H^{\mathbb{C}}$ there exists a point $y\in X$ such that
$\mathrm{dim}\,H_{y}=0$ and $x\in\overline{H^{\mathbb{C}}\cdot y}$.
If $\mathrm{dim}\,L_{y}=0$, then we have $\mathrm{dim}\,G_{y}=0$ and
$
x\in \overline{H^{\mathbb{C}}\cdot y}\subset\overline{G\cdot y}.
$
Otherwise, $y$ is fixed by the $L^{\mathbb{C}}$-action.
By \cite[Proposition II]{CS78} again, there is a $L^{\mathbb{C}}$-invariant holomorphic
projection $\pi^{\prime}$ from a Zariski open subset $V^{\prime}$ to the connected component
of the
$L^{\mathbb{C}}$-fixed point set $F^{\prime}$ which contains the point $y$.
Consequently, there is a point $x^{\prime}\in V^{\prime}$ satisfying $\mathrm{dim}\,L_{x^{\prime}}=0$ and
$\pi^{\prime}(x^{\prime})=y$, i.e., $y\in\overline{L^{\mathbb{C}}\cdot x^{\prime}}$.
Note that $H^{\mathbb{C}}$ leaves the $L^{\mathbb{C}}$-fixed set invariant and so does the plus and the minus decompositions with respect to the $L^{\mathbb{C}}$-action.
It follows that $H_{x^{\prime}}$ is a subgroup of $H_{y}$ and therefore
$\mathrm{dim}\,H_{x^{\prime}}=0$.
As a result, we get $\mathrm{dim}\,T_{x^{\prime}}=0$ and
$
x\in \overline{H^{\mathbb{C}}\cdot y}
\subset \overline{H^{\mathbb{C}}\cdot(\overline{L^{\mathbb{C}}\cdot x^{\prime}})}
\subset\overline{G\cdot x^{\prime}}. $
\end{proof}

Using Proposition \ref{prop-x}, we obtain the following

\begin{lem}\label{F-gx}
Let $F$ be a wall of $\Delta$.
For any $x\in X$, if there exists an element $\lambda\in\mathrm{int}\,F$ such that $\lambda\in\Phi(\overline{G\cdot x})$ then $F\subset\Phi(\overline{G\cdot x})$.
\end{lem}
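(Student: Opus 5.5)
The plan is to use Atiyah's orbit-closure convexity theorem: for any $x\in X$, the image $\Phi(\overline{G\cdot x})$ is the convex hull of the finitely many vertices $\Phi(\overline{G\cdot x}\cap X^{T})$. Combined with the description of walls as unions of critical values, this reduces the claim to a convex-geometric statement about which polytopes can contain a relative interior point of $F$.

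\textbf{Step 1: reduce to a generic point.} Set $P_x=\Phi(\overline{G\cdot x})$, a convex polytope. By Proposition \ref{prop-x}, if $\dim T_x>0$ we can pick $x'$ with $\dim T_{x'}=0$ and $x\in\overline{G\cdot x'}$. Then $\overline{G\cdot x}\subset\overline{G\cdot x'}$, so $P_x\subset P_{x'}$.

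\textbf{Step 2: classify the polytopes by dimension.} If $\dim T_x=0$, then $P_x$ is $d$-dimensional, since $\Phi$ restricted to $G\cdot x$ is a submersion onto an open set. If $\dim T_x>0$, then every point of $\overline{G\cdot x}$ has stabilizer containing $T_x$. Hence $P_x$ lies in a translate of the annihilator of $\mathrm{Lie}(T_x)$, and it consists entirely of critical values of $\Phi$.

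\textbf{Step 3: the key claim.} If $\lambda\in\mathrm{int}\,F$ lies in a polytope $P=P_y$ for some $y\in X$, then $F\subset P$. We split into cases according to $\dim P$.

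\emph{Case $\dim P=d$.} Suppose $F\not\subset P$. Then some facet hyperplane $H$ of $P$ cuts through $\mathrm{int}\,F$ transversally to the affine span of $F$. Points of the boundary of $P$ near $\lambda$ are images of points of $\overline{G\cdot y}$ with positive-dimensional stabilizers, so they are critical values. This would place a codimension-one piece of the critical value set inside the interior of a subpolytope $\Delta_i$ adjacent to $F$ (or crossing $F$ in an interior direction). That contradicts the fact that $\mathrm{int}\,\Delta_i$ consists of regular values.

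To make this precise, I would use that the critical value set is a finite union of polytopes $P_z$ with $\dim T_z>0$, from Step 2. I would also use that $\mathrm{int}\,\Delta_i\cap\mathrm{int}\,\Delta_j=\emptyset$, together with the fact that $F$ is a face of some $\Delta_i$ whose interior is free of critical values.

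\emph{Case $\dim P<d$.} In this case $P$ is a critical-value polytope through $\lambda$, and $P$ cannot enter $\mathrm{int}\,\Delta_i$. Since $\lambda$ is a relative interior point of the face $F$ of $\Delta_i$, the polytope $P$ near $\lambda$ must lie in the affine span of $F$. I would then argue $F\subset P$ by the same boundary argument, applied inside that affine span, using the adjacent subpolytopes. Alternatively, I would pass to a point $x'$ with $\dim T_{x'}=0$ via Step 1 and apply the first case.

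\textbf{Main obstacle.} The hardest step is the first case: showing that a boundary face of the full-dimensional polytope $P_{x'}$ cannot pass through the relative interior of the wall $F$ without cutting $F$. The danger is that a boundary face of $P_{x'}$ might coincide with a wall of a different, finer subdivision. I would handle this by choosing $\lambda$ generically in $\mathrm{int}\,F$. Near such $\lambda$, the critical value set equals the affine span of $F$ intersected with a neighbourhood, because the walls meeting $\mathrm{int}\,F$ are exactly $F$ and the subdivision is locally a hyperplane. Consequently every boundary face of $P_{x'}$ through $\lambda$ must contain a neighbourhood of $\lambda$ in $F$.

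Convexity of $P_{x'}\cap\mathrm{span}(F)$, together with the finiteness of the vertex set $\Phi(X^{T})$, then propagates this local containment to all of $F$. I would run a connectedness argument along $\mathrm{int}\,F$: the set of points of $\mathrm{int}\,F$ contained in $P_{x'}$ is closed, and it is open by the local argument, so it is all of $\mathrm{int}\,F$. Taking closures then gives $F\subset P_{x'}$.
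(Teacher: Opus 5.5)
There is a genuine gap, and it is where you placed the ``main obstacle''. Let $\Delta_i$ be a subpolytope having $F$ as a facet. Write $\Pi_F=\{\langle v,\cdot\rangle=c\}$ for the hyperplane spanned by $F$, with $\mathrm{int}\,\Delta_i\subset H^-=\{\langle v,\cdot\rangle<c\}$, and let $P=\Phi(\overline{G\cdot x'})$.

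If $P$ meets $\mathrm{int}\,\Delta_i$, you get $\Delta_i\subset P$. Your boundary argument does this, and so does the paper's use of the constancy of $X^{s}(\Phi_\alpha)$ on the chamber (Lemma~\ref{lem-xi-zeta}).

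The hard case is different: $P\subset\Pi_F\cup H^+$, and $P\cap\Pi_F$ is either a face of dimension $\le d-2$ (e.g.\ a vertex at $\lambda$) or a facet not containing $F$. Then the faces of $P$ through the relevant points of $\mathrm{int}\,F$ run into $H^+$. There they contradict nothing, unless you already know that no critical polytope lying in a transverse hyperplane terminates at a point of $\mathrm{int}\,F$ from the $H^+$ side. That is the real content of the lemma, and it does not follow from the definitions. A wall is only a face of one subpolytope, and a finite union of critical polytopes can a priori form such ``T-junctions''.

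Choosing $\lambda$ generically is not available to you. The point $\lambda$ is dictated by $x$, and in the bad configuration $P\cap\mathrm{int}\,F$ may be just $\{\lambda\}$. Moreover, your open--closed argument along $\mathrm{int}\,F$ needs the local statement at every point of $P\cap\mathrm{int}\,F$, not only at generic ones. The same unjustified step appears in your case $\dim P<d$: avoiding $\mathrm{int}\,\Delta_i$ only puts $P$ into $\Pi_F\cup H^+$ near $\lambda$, not into the affine span of $F$.

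The missing ingredient is symplectic, not convex-geometric. In the bad configuration the paper picks $y\in\overline{G\cdot x}$ with $\Phi(y)=\eta$, where $\eta$ is a point of $F$ on a face of $\Phi(\overline{G\cdot x})$ of dimension $\le d-2$; hence $\dim T_y\ge 2$. The local convexity theorem at $y$, combined with the absence of critical values in $\mathrm{int}\,\Delta_i$, forces $\Phi(U)\subset\Pi_F\cup H^+$ for some neighbourhood $U$ of $y$. So $\Phi^v$ has a local minimum at $y$. Atiyah's lemma says $\Phi^v$ is Morse--Bott with even indices and so has a unique local minimum. That minimum is therefore global, which contradicts $\mathrm{int}\,\Delta_i\subset H^-$. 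What this really shows is that no point with stabilizer of dimension $\ge 2$ maps into $\mathrm{int}\,F$. Granted that fact, your open--closed argument along $\mathrm{int}\,F$ would go through.

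There is also a second gap: your reduction to $x'$ runs the wrong way. Since $\Phi(\overline{G\cdot x})\subset\Phi(\overline{G\cdot x'})$, knowing $F\subset\Phi(\overline{G\cdot x'})$ says nothing about $\Phi(\overline{G\cdot x})$. The paper closes this with Atiyah's theorem that $\Phi(\overline{G\cdot x})$ is a \emph{face} of $\Phi(\overline{G\cdot x'})$. A face containing the relative interior point $\lambda$ of the convex set $F\subset\Phi(\overline{G\cdot x'})$ must contain all of $F$.
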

\begin{proof}
Suppose $F$ is a face of a $d$-dimensional subpolytope $\Delta_{i}$.
Put $P=\mathrm{int}\,\Delta_{i}$.
We divide the proof into two cases.

$\textbf{Case 1.}$ Assume that $\mathrm{dim}\,T_{x}=0$.
In this case, the image of the orbit closure $\Phi(\overline{G\cdot x})$ is a $d$-dimensional polytope.
Denote by $\Pi_{F}$ the hyperplane in $\mathfrak{t}^{\ast}\cong\mathbb{R}^{d}$ in which the wall $F$ lies.
Assume that $\Pi_{F}$ is defined by the equation $\langle v,\lambda\rangle=c$ for some $v\in\mathfrak{t}$ and  $c\in\mathbb{R}$.
Then we get two open half-planes in $\mathfrak{t}^{\ast}$ denoted by $H^{-}:=\{\lambda\in\mathfrak{t}^{\ast}\,|\,\langle v,\lambda\rangle<c\}$ and
$H^{+}:=\{\lambda\in\mathfrak{t}^{\ast}\,|\,\langle v,\lambda\rangle>c\}$.
Suppose $P\cap\Phi(\overline{G\cdot x})=\emptyset$, then we get $\overline{P}\cap\Phi(G\cdot x)=\emptyset$ and therefore $P$ and $\Phi(G\cdot x)$ lie on the opposite sides of $\Pi_{F}$, respectively.
Without loss of generality, we assume that $P$ lies in $H^{-}$ and $\Phi(G\cdot x)$ lies in $H^{+}$ (see Figure \ref{fig-0} below).
Since $\lambda\in \mathrm{int}\,F\cap\Phi(\overline{G\cdot x})$, we have $\overline{P}\cap\Phi(\overline{G\cdot x})\neq\emptyset$.
Suppose $F$ does not lie in a $(d-1)$-dimensional face of $\Phi(\overline{G\cdot x})$, then there exists a face $F^{\prime}$ of $\Phi(\overline{G\cdot x})$ such that $\mathrm{dim}\,F^{\prime}\leq d-2$ and $\overline{P}\cap F^{\prime}\neq\emptyset$.
Let $\eta\in\overline{P}\cap F^{\prime}$ and choose a point $y\in\overline{G\cdot x}$ such that $\Phi(y)=\eta$.
Let $\Pi_{F^{\prime}}$ be the plane in $\mathfrak{t}^{\ast}$ satisfying $F^{\prime}\subset \Pi_{F^{\prime}}$ and $\mathrm{dim}\,F^{\prime}=\mathrm{dim}\, \Pi_{F^{\prime}}$.
Then we get $\Pi_{F^{\prime}}\subset\Pi_{F}$.

We claim that  there is an open neighborhood $U$ of $y$ in $X$ such that $\Phi(U)\subseteq\Pi_{F}\cup H^{+}$.
Let $\mathfrak{t}_{y}$ be the Lie algebra of the stabilizer of $y$ and $\imath_{y}:\mathfrak{t}_{y}\hookrightarrow\mathfrak{t}$ the inclusion of Lie algebras.
Then $(X, \omega)$ become a Hamiltonian $T_{y}$-manifold with the moment map
$$
\Phi_{y}:=\imath^{\ast}_{y}\circ\Phi:X\longrightarrow\mathfrak{t}^{\ast}_{y},
$$
where $\imath^{\ast}_{y}$ is the adjoint map of $\imath_{y}$.
Choose an open neighborhood $\tilde{U}$ of $y$ in $X$ and an open neighborhood $W$ of $\eta$ in $\mathfrak{t}^{\ast}$ such that $\Phi(\tilde{U})\subseteq W$.
Note that $(\Pi_{F^{\prime}})^{\perp}\cong\mathfrak{t}^{\ast}_{y}$ with respect to the invariant inner product on $\mathfrak{t}^{\ast}$.
So that $\mathrm{dim}\,T_{y}\geq2$ and in this case the set $\Phi_{y}(\tilde{U})$ belongs to the convex hull of the critical values of $\Phi_{y}|_{\tilde{U}}$.
Denote by $\mathrm{Cri}(\Phi|_{\tilde{U}})$ (resp. $\mathrm{Cri}(\Phi_{y}|_{\tilde{U}})$) the set of critical values of $\Phi|_{\tilde{U}}$ (resp. $\Phi_{y}|_{\tilde{U}}$).
Then we have
\begin{eqnarray*}
% \nonumber to remove numbering (before each equation)
  \mathrm{Cri}(\Phi_{y}|_{\tilde{U}}) &\subseteq&
\imath^{\ast}_{y}\bigl(\mathrm{Cri}(\Phi|_{\tilde{U}})\bigr) \\
   &\subseteq& \imath^{\ast}_{y}\bigl(\mathrm{Cri}(\Phi)\cap W\bigr)\\
   &\subseteq& \imath^{\ast}_{y}\bigl((F\cup H^{+})\cap W\bigr)\\
   &\subseteq& \imath^{\ast}_{y}\bigl(\Pi_{F}\cup H^{+}\bigr).
\end{eqnarray*}
It follows that $\Phi_{y}(\tilde{U})$ lies in $\imath^{\ast}_{y}\bigl(\Pi_{F}\cup H^{+}\bigr)$.
According to the local convexity theorem \cite[Theorem 3]{GS82}, there exists an open neighborhood $U$ of $y$ in $X$ and an open neighborhood $V$ of $\Phi_{y}(y)=\imath^{\ast}_{y}(\eta)$ in $\mathfrak{t}^{\ast}_{y}$ such that $U\subset\tilde{U}$ and
\begin{equation*}
\Phi_{y}(U)=V\cap\bigl(\imath^{\ast}_{y}(\eta)+S(\alpha_{1},\cdots,\alpha_{l})\bigr)
\subset\imath^{\ast}_{y}\bigl(\Pi_{F}\cup H^{+}\bigr),
\end{equation*}
where $\alpha_{1},\cdots,\alpha_{l}$ are the weights of the linear isotropy representation of $T_{y}$ on the tangent space $T_{y}X$ and
$$
S(\alpha_{1},\cdots,\alpha_{l})
=\{s_{1}\alpha_{1}+\cdots+s_{l}\alpha_{l}\,|\,s_{1},\cdots,s_{l}\geq0\}.
$$
Since $(\Pi_{F^{\prime}})^{\perp}\cong\mathfrak{t}^{\ast}_{y}$ the image of $\Pi_{F}$ under $\imath^{\ast}_{y}$ is a hyperplane in $\mathfrak{t}^{\ast}_{y}$ and therefore $\Phi_{y}(U)=\imath^{\ast}_{y}(\Phi(U))$ lies in a closed half-plane in $\mathfrak{t}^{\ast}_{y}$.
As a result, we are led to the conclusion $\Phi(U)\subseteq\Pi_{F}\cup H^{+}$.
%Since $P\cap\Pi_{F}=\emptyset$ and $P\cap H^{+}=\emptyset$ we are led to the conclusion $\Phi(U)\cap P=\emptyset$.

Recall that
$\Pi_{F}=\{\lambda\in\mathfrak{t}^{\ast}\,|\, \langle v,\lambda\rangle=c\}.$
Consider the $v$-component of the moment map $\Phi^{v}=\langle\Phi,v \rangle$.
It follows from \cite[Lemma 2.2]{At82} that $\Phi^{v}$ is a Morse function (in the sense of Bott) on $X$ which has only critical manifolds of even indexes, and therefore it has a unique local minimum and a unique local maximum.
Since $\Phi(U)\subseteq\Pi_{F}\cup H^{+}$ and $\Phi(y)=\eta$ belongs to $\Pi_{F}$, we get
$\Phi^{v}(x)\geq\langle\eta,v\rangle$
for any $x\in U$, i.e., $\langle\eta,v\rangle$ is a local minimum of $\Phi^{v}$.
The uniqueness of the local minimum for $\Phi^{v}$ implies $\Phi^{v}(x)\geq\langle\eta,v\rangle$
for all $x\in X$.
This contradicts with the fact $P\subset H^{-}$.
Consequently, we have $P\cap\Phi(\overline{G\cdot x})\neq\emptyset$, i.e., there exists an element $\lambda\in P$ such that $\lambda$ lies in $\Phi(\overline{G\cdot x})$, which means $x\in X^{ss}(\Phi_{\lambda})$.
Note that $P$ only consists of the regular values of $\Phi$.
This implies $X^{ss}(\Phi_{\alpha})=X^{s}(\Phi_{\alpha})$ and $X^{s}(\Phi_{\alpha})=X^{s}(\Phi_{\lambda})$ for any $\alpha\in P$, see Lemma \ref{lem-xi-zeta}.
Since $\alpha\in\Phi(\overline{G\cdot x})$ if and only if $x\in X^{s}(\Phi_{\alpha})$,
we get $\alpha\in\Phi(\overline{G\cdot x})$ for each $\alpha\in P$, i.e., $P\subset\Phi(\overline{G\cdot x})$.
As a result, we obtain
$
F\subset\overline{P}\subset\Phi(\overline{G\cdot x}).
$

$\textbf{Case 2.}$ Assume that $\mathrm{dim}\,T_{x}>0$.
According to Proposition \ref{prop-x}, there is a point $x^{\prime}\in X$ such that $\mathrm{dim}\,T_{x^{\prime}}=0$ and $x\in\overline{G\cdot x^{\prime}}$.
This implies $\overline{G\cdot x}\subset\overline{G\cdot x^{\prime}}$ and thus $\lambda\in\Phi(\overline{G\cdot x})\subset\Phi(\overline{G\cdot x^{\prime}})$.
It follows from the result in the first case that the wall $F$ lies in a $(d-1)$-dimensional face $\tilde{F}$ of the convex polytope $\Phi(\overline{G\cdot x^{\prime}})$.
In particular, $\lambda$ is an interior point of $\tilde{F}$ since it lies in $\mathrm{int}\,F$.
A result by Atiyah \cite[Theorem 2, (b)]{At82} shows that $\Phi(\overline{G\cdot x})$ is a face of $\Phi(\overline{G\cdot x^{\prime}})$ in which $\lambda$ lies.
This implies that $\tilde{F}$ is identical with $\Phi(\overline{G\cdot x})$ and thus we get $F\subset\Phi(\overline{G\cdot x})$.
\end{proof}

\begin{figure}[h]
\centering
\begin{tikzpicture}[scale=0.5]
  \shade[ball color=black] (1,5) circle (0.1);
  \shade[ball color=black] (3,2) circle (0.1);
  \shade[ball color=black] (8,2) circle (0.1);
  \shade[ball color=black] (10,5) circle (0.1);
  \shade[ball color=black] (8,9) circle (0.1);
  \shade[ball color=black] (3,9) circle (0.1);
  \shade[ball color=black] (9,7) circle (0.1);
  \shade[ball color=black] (12,11) circle (0.1);
  \shade[ball color=black] (14,6) circle (0.1);
  \draw  (15,8.5)node[right] {$\Phi(\overline{G\cdot x})$};
  \draw  (7.5,6.5)node[right] {$\eta$};
  \draw  (4.5,5.2)node[right] {$P$};
  \draw  (10.5,2)node[right] {$\Pi_{F}$};
  \draw[dashed,black] (9,7) circle (1.5cm);
  \fill[dashed,red, opacity=0.2] (9,7) circle (1.5cm);
  \draw  (13,4)node[right] {$W$};
  \draw[blue] (11,3)--(7,11);
  \draw(9,7)--(12,11)--(14,6)--(9,7);
  \draw[-latex,blue] (12,8)--(15,8.5);
  \draw[-latex,blue] (9.5,6.5)--(13,4);
  \fill[dashed,blue, opacity=0.1] (9,7)--(12,11)--(14,6)--(9,7);
  \draw (1,5) --(3,2)--(8,2)--(10,5)--(8,9)--(3,9)--(1,5);
  \fill[dashed,blue, opacity=0.1] (1,5) --(3,2)--(8,2)--(10,5)--(8,9)--(3,9)--(1,5);
\end{tikzpicture}
\caption{}
\label{fig-0}
\end{figure}

The following lemma is useful in the study of variation of the quotients.
\begin{lem}\label{sta}
Let $(X, ds^{2}, T^{\mathbb{C}}, \Phi)$ be a compact K\"{a}hler Hamiltonian $T$-manifold.
\begin{itemize}
  \item [(i)] If $\alpha$ lies in the boundary of the moment body $\Delta=\Phi(X)$, then $X^{s}(\Phi_{\alpha})=\emptyset$.
  \item [(ii)] If $\alpha$ and $\beta$ lie in the relative interior of a wall $F$, then
      $X^{ss}(\Phi_{\alpha})=X^{ss}(\Phi_{\beta})$.
\end{itemize}
\end{lem}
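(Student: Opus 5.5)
For (i), I will use the characterisation of stability through orbit-closure images. Suppose $x\in X^{s}(\Phi_{\alpha})$. Then the orbit $G\cdot x$ meets $\Phi^{-1}(\alpha)$ and $G_{x}$ is finite, so $\dim T_{x}=0$. Consequently $\Phi(\overline{G\cdot x})$ is a $d$-dimensional convex polytope by Atiyah's orbit-closure convexity theorem. Moreover, $\alpha$ lies in the image $\Phi(G\cdot x)$ of the open orbit, which is the interior of this polytope. Hence $\Phi(\overline{G\cdot x})\subset\Delta$ contains a neighbourhood of $\alpha$, contradicting $\alpha\in\partial\Delta$.

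Alternatively, (i) follows from Proposition \ref{prop-ss} and continuity. If $\mathrm{M}^{\Phi_{\alpha}}(x)<0$, then by Proposition \ref{prop-convex}(iii) we have $\mathrm{M}^{\Phi_{\alpha'}}(x)<0$ for all $\alpha'$ near $\alpha$. Choosing $\alpha'\notin\Delta$ gives $x\in X^{ss}(\Phi_{\alpha'})$, so $\Phi^{-1}(\alpha')\neq\emptyset$, which is absurd. I would present this second argument, since it only uses results already established.

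For (ii), I will show $X^{ss}(\Phi_{\alpha})\subset X^{ss}(\Phi_{\beta})$; the reverse inclusion follows by symmetry. The key input is Lemma \ref{F-gx}. The criterion I will use is that $x\in X^{ss}(\Phi_{\gamma})$ if and only if $\overline{G\cdot x}\cap\Phi^{-1}(\gamma)\neq\emptyset$, i.e. if and only if $\gamma\in\Phi(\overline{G\cdot x})$. Here the closure is taken in the compact manifold $X$, and $\overline{G\cdot x}$ is compact and $G$-invariant.

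Now take $x\in X^{ss}(\Phi_{\alpha})$. Then $\alpha\in\Phi(\overline{G\cdot x})$ with $\alpha\in\mathrm{int}\,F$. Lemma \ref{F-gx} gives $F\subset\Phi(\overline{G\cdot x})$, so in particular $\beta\in\Phi(\overline{G\cdot x})$. Hence $\overline{G\cdot x}$ meets $\Phi^{-1}(\beta)$, that is, $x\in X^{ss}(\Phi_{\beta})$.

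The main point to check is the equivalence between semistability and $\gamma\in\Phi(\overline{G\cdot x})$. This is exactly Definition \ref{stability}(ii) applied to the shifted moment map $\Phi_{\gamma}$, whose zero level is $\Phi^{-1}(\gamma)$, with $\overline{G\cdot x}$ taken in $X$. So the step is immediate once one notes that Definition \ref{stability} uses the closure in $X$. With that observation, the whole argument reduces to Lemma \ref{F-gx}, which carries all the real content.
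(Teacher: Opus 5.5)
Your proof is correct and follows the paper's. For (i) the paper uses exactly your second argument (continuity of $f^{HM}_{x}$ together with $\mathrm{M}^{\Phi_{\alpha'}}(x)>0$ for $\alpha'\notin\Delta$), and for (ii) it likewise uses $X^{ss}(\Phi_{\gamma})=\{x\,|\,\gamma\in\Phi(\overline{G\cdot x})\}$ plus Lemma \ref{F-gx}; your alternative for (i), via Atiyah's theorem that the orbit $G\cdot x$ maps onto the interior of the $d$-dimensional polytope $\Phi(\overline{G\cdot x})$, is also valid.
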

\begin{proof}
At first, for any $x\in X$, we claim $f^{HM}_{x}(\alpha)=\mathrm{M}^{\Phi_{\alpha}}(x)>0$ if $\alpha\in\mathfrak{t}^{\ast}\setminus\Delta$.
If the assertion does not hold, then there exists a point $x\in X$ such that
$f^{HM}_{x}(\alpha)=\mathrm{M}^{\Phi_{\alpha}}(x)\leq0$ and therefore $X^{ss}(\Phi_{\alpha})\neq\emptyset$.
As a result, we get
$$
\Phi^{-1}(\alpha)/T=(\Phi_{\alpha})^{-1}(0)/T=X^{ss}(\Phi_{\alpha})/\!\!/G\neq\emptyset
$$
which leads to a contradiction with the fact $\Phi^{-1}(\alpha)=\emptyset$.
Suppose $X^{s}(\Phi_{\alpha})\neq\emptyset$, then $f^{HM}_{x}(\alpha)=\mathrm{M}^{\Phi_{\alpha}}(x)<0$ for some $x\in X$.
By continuity of $f^{HM}_{x}$, there is an open neighborhood $U_{\alpha}$ of $\alpha$ in $\mathfrak{t}^{\ast}$ such that
$f^{HM}_{x}(\tau)=\mathrm{M}^{\Phi_{\tau}}(x)<0$ for all $\tau\in U_{\alpha}$.
Since $f^{HM}_{x}(\tau)>0$ when $\tau$ is outside of $\Delta$, we have
$U_{\alpha}\subset\Delta$ and this contradicts the assumption that $\alpha$ lies in the boundary of
$\Delta$.

It remains to prove the second assertion.
Recall that
$$
X^{ss}(\Phi_{\alpha})=\{x\in X\,|\,\overline{G\cdot x}\cap\Phi^{-1}(\alpha)\neq\emptyset\}
=\{x\in X\,|\,\alpha\in\Phi(\overline{G\cdot x})\}
$$
for any $\alpha\in\Delta=\Phi(X)$.
Let $\alpha$ be an arbitrary point in $\mathrm{int}\,F$, then we have
$\alpha\in\Phi(\overline{G\cdot x})$ for any $x\in X^{ss}(\Phi_{\alpha})$.
Due to Lemma \ref{F-gx}, the wall $F$ lies in $\Phi(\overline{G\cdot x})$.
As a result, for any $\beta\in \mathrm{int}\,F$, we obtain
$$
\beta\in\bigcap_{x\in X^{ss}(\Phi_{\alpha})}\Phi(\overline{G\cdot x}).
$$
This implies $X^{ss}(\Phi_{\alpha})\subseteq X^{ss}(\Phi_{\beta})$.
Similarly, using the same argument, we can prove $X^{ss}(\Phi_{\beta})\subseteq X^{ss}(\Phi_{\alpha})$ and this completes the proof.
\end{proof}

As another application of Lemma \ref{F-gx}, the following result explains the
relation between the two concepts of walls in this paper and \cite{DH98} as we
have mentioned at the beginning of this section.

\begin{prop}\label{prop:dh-wall}
Let $F$ be a wall of $\Delta$.
For any $x \in X$, if $x$ is a critical point of $\Phi$ and $\Phi(x)$ lying in the relative interior of $F$, then $\mathrm{dim}\,T_{x}>0$ and $\mathrm{M}^{\Phi_{\gamma}}(x)=0$ for any $\gamma \in F$.
\end{prop}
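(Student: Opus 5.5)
We prove the two assertions in turn, using Lemma \ref{F-gx} and the Hilbert--Mumford characterisation of Proposition \ref{prop-ss}.

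\textbf{Step 1: the stabiliser is positive-dimensional.} Since $x$ is a critical point of $\Phi$, the differential $d\Phi_{x}$ is not surjective. The identity $d\Phi^{\xi}=\iota_{\xi_{X}}\omega$ and the nondegeneracy of $\omega$ give
$$
\bigl(\mathrm{im}\,d\Phi_{x}\bigr)^{\perp}=\{\xi\in\mathfrak{t}\,|\,\xi_{X}(x)=0\}=\mathfrak{t}_{x}.
$$
Hence $\mathfrak{t}_{x}\neq0$, and so $\mathrm{dim}\,T_{x}>0$.

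\textbf{Step 2: reduce to the orbit closure.} Put $\lambda=\Phi(x)\in\mathrm{int}\,F$. Since $x\in\overline{G\cdot x}$, we have $\lambda\in\Phi(\overline{G\cdot x})$. Lemma \ref{F-gx} then gives $F\subset\Phi(\overline{G\cdot x})$. Consequently, for every $\gamma\in F$ the orbit closure $\overline{G\cdot x}$ meets $\Phi^{-1}(\gamma)=\Phi_{\gamma}^{-1}(0)$. By definition this means $x\in X^{ss}(\Phi_{\gamma})$, and Proposition \ref{prop-ss} gives $\mathrm{M}^{\Phi_{\gamma}}(x)\leq 0$.

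\textbf{Step 3: the reverse inequality.} It remains to exclude $\mathrm{M}^{\Phi_{\gamma}}(x)<0$, that is, to show $x\notin X^{s}(\Phi_{\gamma})$. Suppose instead $x\in X^{s}(\Phi_{\gamma})$. Then by Definition \ref{stability} the stabiliser $G_{x}$ is finite. But $G_{x}\supset T_{x}$, and $\mathrm{dim}\,T_{x}>0$ by Step 1. This contradiction shows $\mathrm{M}^{\Phi_{\gamma}}(x)\geq 0$. Combined with Step 2, we get $\mathrm{M}^{\Phi_{\gamma}}(x)=0$ for all $\gamma\in F$.

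If one prefers an argument phrased in terms of $f^{HM}_{x}$, there is an alternative route. Take a nonzero $v\in\mathfrak{t}_{x}$. Then $\exp(\mathbf{i}vt)x=x$ for all $t$, so in \eqref{re-hm} the limits along $v$ and along $-v$ equal $\langle\Phi_{\gamma}(x),\pm v\rangle/|v|$. Their sum is $0$, so one of them is $\leq0$, and hence $\mathrm{M}^{\Phi_{\gamma}}(x)\geq0$ directly.

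\textbf{Main obstacle.} The only substantive step is Step 2, which passes from the single value $\Phi(x)$ to the whole wall $F$. This is exactly what Lemma \ref{F-gx} supplies. One must check that its hypothesis needs only $\lambda\in\mathrm{int}\,F\cap\Phi(\overline{G\cdot x})$, which holds trivially here. The case $\gamma\in\partial F$ causes no extra difficulty, because the lemma gives the inclusion of the closed wall $F$ and not just of its relative interior.
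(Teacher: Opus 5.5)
Your proof is correct and follows the paper's route: in both, the key input is Lemma~\ref{F-gx} applied with $\lambda=\Phi(x)\in\mathrm{int}\,F$, which gives $F\subseteq\Phi(\overline{G\cdot x})$. The paper finishes by noting that $\mathrm{M}^{\Phi_{\gamma}}(x)=0$ exactly when $\gamma$ lies in the topological boundary of $\Phi(\overline{G\cdot x})$, and that this polytope is lower-dimensional and hence equals its boundary. Your split into ``semistable, hence $\le 0$'' and ``positive-dimensional stabiliser, hence not stable, hence $\ge 0$'' is the unpacked form of that same step, and it needs only Proposition~\ref{prop-ss} and Definition~\ref{stability} rather than Atiyah's dimension formula.
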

\begin{proof}
Since $x$ is a critical point of $\Phi$, we have $\mathrm{dim}\,T_{x}>0$ by the gradient flow interpretation of complex orbits.

To see another conclusion, we first notice that for any $\gamma \in \mathfrak{t}^*$, the numerical function $\mathrm{M}^{\Phi_{\gamma}}(x)$ is equal to 0 if and only if $\gamma \in \partial (\Phi(\overline{G\cdot x}))$.\footnote{Although $\Phi(\overline{G\cdot x})$ is a convex polytope, we use $\partial (\Phi(\overline{G\cdot x}))$ to denote the topological boundary of it rather than the polytope boundary.}
Let $\Phi(x) = \gamma_0$.
Since $x$ is a critical point of $\Phi$, we get $\dim \Phi(\overline{G\cdot x}) < \dim \mathfrak{t}^*$.
As a result, we have
\begin{equation}\label{eq:bd-gx}
\Phi(\overline{G\cdot x}) =  \partial (\Phi(\overline{G\cdot x})).
\end{equation}
Moreover, by assumption, the element $\gamma_0$ lies in the relative interior of $F$.
From Lemma \ref{F-gx}, we get $F \subseteq \Phi(\overline{G\cdot x})$.
Therefore, (\ref{eq:bd-gx}) implies that for any $\gamma \in F$, we obtain $\gamma \in \partial (\Phi(\overline{G\cdot x}))$, from which $\mathrm{M}^{\Phi_{\gamma}}(x)=0$ follows.
\end{proof}

%======================
\section{Variation of K\"{a}hler quotients}\label{var-quo}

Henceforth we will make the following assumptions (see Figure \ref{fig-1}).
\begin{itemize}
  \item [(\textbf{A1})] $\Delta_{-}$ and $\Delta_{+}$ are two subpolytopes of the moment body $\Delta$
                        separated by a wall $F$.
  \item [(\textbf{A2})] $l$ is a line segment with end-points $\xi$ and $\zeta$ which
                        lie in the interiors $\mathrm{int}\,\Delta_{-}$ and $\mathrm{int}\,\Delta_{+}$ respectively.
  \item [(\textbf{A3})] $l$ intersects the wall at a relative interior point $\epsilon$.
\end{itemize}
\begin{figure}[h]
\centering
\begin{tikzpicture}[scale=0.4]
  \shade[ball color=black] (3,8) circle (0.1);
    \shade[ball color=black] (11,8) circle (0.1);
     \shade[ball color=black] (7,8) circle (0.1);
\draw  (1.2,7.2)node[right] {$\xi$};
\draw  (10.8,7.2)node[right] {$\zeta$};
\draw  (5.8,7.2)node[right] {$\epsilon$};
\draw  (3.5,5.2)node[right] {$\Delta_{-}$};
\draw  (8,5.2)node[right] {$\Delta_{+}$};
\draw[-latex,blue] (3,8) --(7,8);
\draw[-latex,blue] (11,8)--(7,8);
\draw(7,2)--(7,12);
\draw (0,7) --(7,12)--(14,7)--(7,2)--(0,7);
\fill[dashed,blue, opacity=0.1] (0,7) --(7,12)--(14,7)--(7,2)--(0,7);
\end{tikzpicture}
\caption{}
\label{fig-1}
\end{figure}
Because of Lemma \ref{ep-in}, the equality
$
X^{s}_{\epsilon}=X^{s}_{\xi}\cap X^{s}_{\zeta}
$
is valid.
Particularly, from Theorem \ref{m1}, there exist two natural proper modifications
\begin{equation*}
f_{\xi, \epsilon}:X^{s}_{\xi}/\!\!/G\longrightarrow X^{ss}_{\epsilon}/\!\!/G
\end{equation*}
and
\begin{equation*}
f_{\zeta, \epsilon}:X^{s}_{\zeta}/\!\!/G\longrightarrow X^{ss}_{\epsilon}/\!\!/G.
\end{equation*}
To describe $f_{\xi, \epsilon}$ and $f_{\zeta, \epsilon}$ more precisely, we consider their fibers over $B$ as reduced closed complex subspaces of $X^{s}_{\xi}/\!\!/G$ and $X^{s}_{\zeta}/\!\!/G$.
Note that $X^{s}_{\xi}/\!\!/G$, $X^{s}_{\zeta}/\!\!/G$ and $X^{ss}_{\epsilon}/\!\!/G$ are compact, connected, reduced normal complex spaces.
Combining the Stein factorization of $f_{\xi, \epsilon}$ (resp. $f_{\zeta, \epsilon}$) with Zariski's Main Theorem derives the following result, see \cite[Corollary 1.12]{Ue75}.
\begin{prope}\label{fiber-conn}
All fibers of $f_{\xi, \epsilon}$ (resp. $f_{\zeta, \epsilon}$) are connected.
\end{prope}
We divide this section into two parts: first we consider the case of $\mathbb{C}^{\ast}$-actions, and then the general torus actions.

\subsection{$\mathbb{C}^{\ast}$-actions}
Assume that $G=\mathbb{C}^{\ast}$, with the same setting as in Theorem \ref{m1}, then we have the following:
\begin{thm}\label{c-action}
For each $b\in B$ the fiber  $f_{\xi, \epsilon}^{-1}(b)$ (resp. $f_{\zeta, \epsilon}^{-1}(b))$ is biholomorphic to a weighted projective space satisfying:
\begin{equation}\label{dim-fiber}
\mathrm{dim}_{\mathbb{C}}\,f_{\xi, \epsilon}^{-1}(b)+
\mathrm{dim}_{\mathbb{C}}\,f_{\zeta, \epsilon}^{-1}(b)=
\mathrm{codim}_{\mathbb{C}}\,(B_{i}, X^{ss}_{\epsilon}/\!\!/\mathbb{C}^{\ast})-1,
\end{equation}
where $B_{i}$ is the connected component  of $B$ in which $b$ lies.
\end{thm}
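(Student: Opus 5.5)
The plan is to work locally near a point of $B$ using the Holomorphic Slice Theorem, which reduces the question to a linear $\mathbb{C}^{\ast}$-action on a representation.

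\textbf{Local model.} Fix $b\in B$. By Lemma \ref{B-dense}, $\Pi_{\epsilon}^{-1}(b)$ contains a unique closed orbit. For $G=\mathbb{C}^{\ast}$ a strictly semistable closed orbit must be a fixed point $p\in Z_{\epsilon}=\Phi^{-1}(\epsilon)$ with $G_{p}=\mathbb{C}^{\ast}$. By the Holomorphic Slice Theorem (\cite[Theorem 1.12]{Sj95}) there is a saturated Stein neighbourhood $U$ of $\Pi_{\epsilon}^{-1}(b)$ that is $\mathbb{C}^{\ast}$-equivariantly biholomorphic to a neighbourhood of $0$ in $T_{p}X=V_{0}\oplus V_{+}\oplus V_{-}$. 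Here $V_{0}$ is the weight-zero part, $V_{+}$ carries positive weights $a_{1},\dots,a_{m}$, and $V_{-}$ carries negative weights $-c_{1},\dots,-c_{n}$. On this neighbourhood the stability conditions become explicit. Since $\xi<\epsilon<\zeta$ (after orienting $\mathfrak{t}^{\ast}\cong\mathbb{R}$), the local model of the moment map is $\Phi-\epsilon\approx\tfrac12(\sum a_{i}|z_{i}|^{2}-\sum c_{j}|w_{j}|^{2})$. I will use Proposition \ref{prop-ss} together with the continuity and convexity of $f^{HM}_{x}$ (Proposition \ref{prop-convex}) to check the following. Points of $U$ are $\Phi_{\xi}$-stable exactly when the $V_{-}$-component is nonzero, and $\Phi_{\zeta}$-stable exactly when the $V_{+}$-component is nonzero. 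This is the analogue of the Białynicki-Birula plus/minus decomposition already used in Proposition \ref{prop-x}, and the roles of $\pm$ may need swapping depending on the orientation.

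\textbf{Identifying the fibre.} With that description, $\Pi_{\epsilon}^{-1}(b)\cap U$ is the set of points flowing to $p$, namely $\{v_{0}=0\}\times(V_{+}\times\{0\}\cup\{0\}\times V_{-})$ near $0$. Then
\begin{equation*}
f^{-1}_{\xi,\epsilon}(b)=\Pi_{\xi}\bigl(\Pi_{\epsilon}^{-1}(b)\cap X^{s}_{\xi}\bigr)=(V_{-}\setminus\{0\})/\mathbb{C}^{\ast}\cong\mathbb{P}(c_{1},\dots,c_{n}).
\end{equation*}
The same computation gives $f^{-1}_{\zeta,\epsilon}(b)\cong\mathbb{P}(a_{1},\dots,a_{m})$. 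Compactness and connectedness are consistent with Property \ref{fiber-conn}. To get a biholomorphism rather than only a homeomorphism, I will use that the analytic Hilbert quotient structures are compatible with the slice chart, since they are defined via invariant functions and the $\mathbb{C}^{\ast}$-orbits in $V_{-}\setminus\{0\}$ are closed in the stable locus.

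\textbf{Dimension count.} Let $B_{i}\ni b$ be the component of $B$. Near $b$, $B_{i}$ is the image of the fixed component through $p$, which is locally $V_{0}\cap Z$, and $\dim B_{i}=\dim V_{0}$. Since $\dim X=\dim V_{0}+m+n$ and $\dim X^{ss}_{\epsilon}/\!\!/\mathbb{C}^{\ast}=\dim X-1$, we get $\operatorname{codim}B_{i}=m+n-1$. The fibre dimensions are $n-1$ and $m-1$, so their sum is $(m+n-1)-1$, which is \eqref{dim-fiber}.

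\textbf{Expected obstacle.} The hard step is the local identification of $X^{s}_{\xi}\cap U$ and $X^{s}_{\zeta}\cap U$ with the complements of $V_{0}\times V_{+}$ and $V_{0}\times V_{-}$, shrinking $U$ as needed. The slice model only matches $\Phi$ to first order, so I will argue through orbit-closure images. A point $x$ in the slice has $\Phi(\overline{G\cdot x})$ equal to an interval whose endpoints are the values of $\Phi$ at the limits $\lim_{t\to0}$ and $\lim_{t\to\infty}$. The fixed component through $p$ has $\Phi$-value $\epsilon$, and the other fixed points reached by these limits lie strictly on one side of $\epsilon$, by Lemma \ref{F-gx} and the fact that $\xi,\zeta$ lie in the adjacent chambers. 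From this I can read off whether $\xi$ or $\zeta$ lies in $\Phi(\overline{G\cdot x})$.
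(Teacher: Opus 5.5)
Your proposal is correct, and it reaches the same picture as the paper by a different route: the fibre is the weighted projectivisation of one weight space at the fixed point, and $\dim V_{+}+\dim V_{-}$ is the codimension of the fixed component.

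\textbf{The paper's route.} It argues globally with the Carrell--Sommese plus/minus decompositions. It writes $f_{\xi,\epsilon}^{-1}(b)=V^{\xi}_{b}/\mathbb{C}^{\ast}$ with $V^{\xi}_{b}\subset(W^{+}_{b}\cup W^{-}_{b})\setminus\{b\}$. It then assigns $W^{+}_{b}$ to $\xi$ and $W^{-}_{b}$ to $\zeta$ ``without loss of generality'', citing connectedness of fibres. Finally it obtains all of $(W^{+}_{b}\setminus\{b\})/\mathbb{C}^{\ast}$ by an open-and-closed argument: the image is open because $X^{s}_{\xi}$ is open, and closed because fibres of the proper map are compact.

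\textbf{Your route.} You work in one saturated slice chart. Saturation puts the whole nullcone, and hence all of $\Pi_{\epsilon}^{-1}(b)$, inside the chart, so the open--closed step is unnecessary. Analytic Hilbert quotients restrict to saturated open sets, so you get a genuine biholomorphism with $\mathbb{P}(c_{1},\dots,c_{n})$ rather than only a topological identification; the paper leaves this point implicit. Your orbit-closure argument also proves which weight space belongs to $\xi$ and which to $\zeta$, instead of assuming it.

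\textbf{Two small fixes.} For the orbit-closure step, cite the rank-one facts rather than Lemma \ref{F-gx}, which is vacuous when walls are points. For non-fixed $x$, both limits are fixed points, and $\Phi$ is strictly monotone along the $\mathbb{R}_{>0}$-orbit (equivalently $\dim\Phi(\overline{G\cdot x})=\dim T\cdot x=1$). So the far endpoint is a critical value $\neq\epsilon$, on the side determined by which of $v_{\pm}$ is nonzero. Since $\epsilon$ is the only critical value in $[\xi,\zeta]$, that endpoint lies beyond $\zeta$ (resp.\ $\xi$). You should also record that $V_{\pm}\neq 0$, for example from the surjectivity in Theorem \ref{m1}.

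\textbf{What each buys.} Your local argument is more explicit on orientation and on the complex structure. The paper's global formulation has the advantage that it transfers almost verbatim to the torus case (Propositions \ref{v=gw} and \ref{iso-wb}), with $W^{\pm}_{b}$ taken for the one-dimensional stabiliser $G^{0}_{b}$.
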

\begin{proof}
Observe that the critical set of the moment map $\Phi: X\rightarrow\mathrm{Lie}^{\ast}(\mathbb{S}^{1})\cong\mathbb{R}^{1}$ is precisely the fixed point set $X^{\mathbb{S}^{1}}=X^{\mathbb{C}^{\ast}}$.
It follows from the construction of $f_{\zeta, \epsilon}$ that $B$ is identical with the union of the connected components of $X^{\mathbb{C}^{\ast}}$ which lie in $\Phi^{-1}(\epsilon)$.
Let $B_{1}, B_{2}, \cdots, B_{k}$ be the connected components of $X^{\mathbb{C}^{\ast}}$ and put
$$
V^{+}_{i}:=\{x\in X\,|\,\lim_{\lambda\rightarrow0}\lambda\cdot x\in B_{i}\},\quad\quad
V^{-}_{i}:=\{x\in X\,|\,\lim_{\lambda\rightarrow\infty}\lambda\cdot x\in B_{i}\},
$$
where $\lambda\in\mathbb{C}^{\ast}$.
Denote by $\pi^{+}_{i}$ (resp. $\pi^{-}_{i}$) the map from $V^{+}_{i}$ (resp. $V^{-}_{i}$) to $B_{i}$ by setting $\pi^{+}_{i}(x)=\lim_{\lambda\rightarrow0}\lambda\cdot x$ (resp. $\pi^{-}_{i}(x)=\lim_{\lambda\rightarrow\infty}\lambda\cdot x$).
According to a result by Carrell--Sommese \cite[Proposition II]{CS78}, the maps $\pi^{+}_{i}$ and $\pi^{-}_{i}$ are $\mathbb{C}^{\ast}$-invariant maximal rank holomorphic surjections with vector space fibers; moreover, we have the plus and the minus decompositions of locally closed analytic submanifolds:
\begin{equation}\label{plus-minus-decom}
X=\bigsqcup_{i=1}^{k}V^{+}_{i}=\bigsqcup_{i=1}^{k}V^{-}_{i}.
\end{equation}

Suppose $b$ lies in a connected component $B_{i}\subset\Phi^{-1}(\epsilon)$, consider the commutative diagram:
\begin{equation*}
\vcenter{
\xymatrix{
  X^{s}_{\xi} \ar[d]_{\Pi_{\xi}} \ar[r]^{i_{\xi, \epsilon}} &
 X^{ss}_{\epsilon} \ar[d]_{\Pi_{\epsilon}}
  &X^{s}_{\zeta}\ar[l]_{i_{\zeta, \epsilon}}\ar[d]_{\Pi_{\zeta}} \\
  X^{s}_{\xi} /\!\!/\mathbb{C}^{\ast} \ar[r]^{f_{\xi, \epsilon}} & X^{ss}_{\epsilon}/\!\!/\mathbb{C}^{\ast}  & X^{s}_{\zeta}/\!\!/\mathbb{C}^{\ast} \ar[l]_{f_{\zeta, \epsilon}}  }
  }
\end{equation*}
From definition, the fibers $f_{\xi, \epsilon}^{-1}(b)$ and $f_{\zeta, \epsilon}^{-1}(b)$ are equal to the respective quotients of the spaces
$$
V_{b}^{\xi}:=\{x\in X^{s}_{\xi}\,|\, b\in\overline{\mathbb{C}^{\ast}\cdot x} \},\quad
V_{b}^{\zeta}:=\{x\in X^{s}_{\zeta}\,|\, b\in\overline{\mathbb{C}^{\ast}\cdot x} \}
$$
by the $\mathbb{C}^{\ast}$-action, where $\overline{\mathbb{C}^{\ast}\cdot x}$ is the closure of the orbit $\mathbb{C}^{\ast}\cdot x$ in $X$.
Put $W^{+}_{b}=(\pi^{+}_{i})^{-1}(b)$ and $W^{-}_{b}=(\pi^{-}_{i})^{-1}(b)$.
Note that $b\in\overline{\mathbb{C}^{\ast}\cdot x}$ if and only if $\lim_{\lambda\rightarrow0}\lambda\cdot x=b$ or $\lim_{\lambda\rightarrow\infty}\lambda\cdot x=b$, i.e., $x\in W^{+}_{b}\setminus\{b\}$ or $x\in W^{-}_{b}\setminus\{b\}$.
Observe that all fibers of $f_{\xi,\epsilon}$ and $f_{\zeta,\epsilon}$ are connected.
Since $W^{+}_{b}\cap W^{-}_{b}=\{b\}$, without loss of generality, we may assume that $V^{\xi}_{b}$ equals $X^{s}_{\xi}\cap W^{+}_{b}$ and $V^{\zeta}_{b}$ equals $X^{s}_{\zeta}\cap W^{-}_{b}$.
Consequently, $V^{\xi}_{b}$ and $V^{\zeta}_{b}$ are $\mathbb{C}^{\ast}$-invariant open subsets of $W^{+}_{b}\setminus\{b\}$ and $W^{-}_{b}\setminus\{b\}$, respectively.

Observe that $W^{+}_{b}$ is a complex vector space on which $\mathbb{C}^{\ast}$ acts linearly, and $b$ is the unique fixed point of the $\mathbb{C}^{\ast}$-action.
This implies that the quotient space $(W^{+}_{b}\setminus\{b\})/\mathbb{C}^{\ast}$ is a weighted projective space, and $f_{\xi, \epsilon}^{-1}(b)=V^{\xi}_{b}/\mathbb{C}^{\ast}$ is a non-empty open subset in $(W^{+}_{b}\setminus\{b\})/\mathbb{C}^{\ast}$.
Note that $f_{\xi, \epsilon}^{-1}(b)$ is compact in $X^{s}_{\xi}/\!\!/\mathbb{C}^{\ast}$, and so is in $(W^{+}_{b}\setminus\{b\})/\mathbb{C}^{\ast}$.
It follows that $f_{\xi, \epsilon}^{-1}(b)$ is a closed subset of $(W^{+}_{b}\setminus\{b\})/\mathbb{C}^{\ast}$.
Consequently, we are led to the conclusion that $f_{\xi, \epsilon}^{-1}(b)$ is open and closed in $(W^{+}_{b}\setminus\{b\})/\mathbb{C}^{\ast}$, which means
$$
f_{\xi, \epsilon}^{-1}(b)=(W^{+}_{b}\setminus\{b\})/\mathbb{C}^{\ast}.
$$
By the same argument, we can show that $f_{\zeta, \epsilon}^{-1}(b)$ is also biholomorphic to the weighted projective space $(W^{-}_{b}\setminus\{b\})/\mathbb{C}^{\ast}$.
According to \cite[Proposition II]{CS78}, we have:
$$
\mathrm{dim}_{\mathbb{C}}\,W^{+}_{b}+
\mathrm{dim}_{\mathbb{C}}\,W^{-}_{b}=
\mathrm{codim}_{\mathbb{C}}\,(B_{i}, X).
$$
Note that $B_{i}$ can be considered as a subspace of $X^{ss}_{\epsilon}/\!\!/\mathbb{C}^{\ast}$ under the quotient map, and we have
$$
\mathrm{codim}_{\mathbb{C}}(B_{i}, X^{ss}_{\epsilon}/\!\!/\mathbb{C}^{\ast})=
\mathrm{codim}_{\mathbb{C}}(B_{i}, X)-1.
$$
As a result, the equality \eqref{dim-fiber} is valid and this completes the proof.
\end{proof}

%=====
\subsection{($\mathbb{C}^{\ast})^{d}$-actions}
Consider the $(\mathbb{C}^{\ast})^{d}$-action with $d>1$.
For every $x\in X$, the \emph{dimension} of the convex polytope $\Phi(\overline{G\cdot x})\subset\mathfrak{t}^{\ast}\cong\mathbb{R}^{d}$ is defined to be the minimal dimension of the affine subspace of $\mathfrak{t}^{\ast}\cong\mathbb{R}^{d}$ which contains $\Phi(\overline{G\cdot x})$.
We consider the fibers of $f_{\xi, \epsilon}$ (resp. $f_{\xi, \epsilon}$) over $B$.
Note that $\epsilon$ lies in the relative interior of the wall $F$, and for any $\bar{b}\in B$, there exists a unique $b\in\Phi^{-1}(\epsilon)$ with $\mathrm{dim}\,T_{b}>0$, such that the orbit $G\cdot b$ is closed in $X^{ss}_{\epsilon}$ and $\bar{b}=\Pi_{\epsilon}(G\cdot b)$.
Denote by $[G\cdot b]=\Pi_{\epsilon}(G\cdot b)$.

\begin{prop}\label{G-b}
Let $b\in\Phi^{-1}(\epsilon)$ with $\mathrm{dim}\,T_{b}>0$ satisfying $[G\cdot b]\in B$, then we have $\mathrm{dim}_{\mathbb{C}}\,G_{b}=1$.
\end{prop}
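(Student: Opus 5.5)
We want to show that the stabiliser of such a point $b$ is exactly one-dimensional. The plan is to compare the dimension of the polytope $\Phi(\overline{G\cdot b})$ with the dimension of the wall $F$, which is $d-1$.

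The setting is this. The point $b\in\Phi^{-1}(\epsilon)$ has $G\cdot b$ closed in $X^{ss}_{\epsilon}$, and $\epsilon\in\mathrm{int}\,F$. The orbit closure $\overline{G\cdot b}$ is a $G$-invariant subvariety on which $G/G_{b}$ acts with a dense orbit. By the Atiyah convexity theorem for orbit closures, $\Phi(\overline{G\cdot b})$ is a convex polytope whose dimension equals $\dim_{\mathbb{C}}(G\cdot b)=d-\dim_{\mathbb{C}}G_{b}$. Indeed, the image lies in an affine translate of $\mathfrak{t}_{b}^{\perp}$, since $\langle\Phi,v\rangle$ is constant on $\overline{G\cdot b}$ for $v\in\mathfrak{t}_{b}$. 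Moreover the effective torus $T/T_{b}$ acts with a dense orbit, so its moment image is full-dimensional in that affine subspace.

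\textbf{Step 1 (upper bound).} Because $\mathrm{dim}\,T_{b}>0$, the argument in Proposition \ref{prop:dh-wall} shows that $\Phi(\overline{G\cdot b})$ has dimension at most $d-1$, i.e.\ $\dim_{\mathbb{C}}G_{b}\geq1$. This direction is immediate.

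\textbf{Step 2 (lower bound).} Since $\Phi(b)=\epsilon\in\Phi(\overline{G\cdot b})$ and $\epsilon\in\mathrm{int}\,F$, Lemma \ref{F-gx} gives $F\subset\Phi(\overline{G\cdot b})$. The wall $F$ is a face of a $d$-dimensional subpolytope, so it has dimension $d-1$. Hence $\Phi(\overline{G\cdot b})$ has dimension at least $d-1$. Combined with the formula above, $d-\dim_{\mathbb{C}}G_{b}\geq d-1$, so $\dim_{\mathbb{C}}G_{b}\leq1$.

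Steps 1 and 2 together give $\dim_{\mathbb{C}}G_{b}=1$.

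The main point to check carefully is the dimension identity $\dim\Phi(\overline{G\cdot b})=d-\dim_{\mathbb{C}}G_{b}$. I would justify it as follows.

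\begin{itemize}
\item The image lies in $\epsilon+\mathfrak{t}_{b}^{\perp}$, because $d\Phi^{v}=\iota_{v_X}\omega$ vanishes along the orbit for $v\in\mathfrak{t}_{b}$, and $G_{b}$ fixes the whole orbit since $G$ is abelian.
\item The map $G\to G\cdot b$, $g\mapsto g\cdot b$, restricted to the $\exp(\mathbf{i}\mathfrak{t})$ directions, has differential $v\mapsto J v_X(b)$. Then $d\Phi(Jv_X)$ paired with $w$ equals $\omega(w_X,Jv_X)=g(w_X,v_X)$. This form is positive definite on $\mathfrak{t}/\mathfrak{t}_{b}$, so $\Phi$ restricted to $\exp(\mathbf{i}\mathfrak{t})\cdot b$ is a local submersion onto $\epsilon+\mathfrak{t}_{b}^{\perp}$.
\end{itemize}

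This gives the full dimension $d-\dim\mathfrak{t}_{b}$, and $\dim\mathfrak{t}_{b}=\dim_{\mathbb{C}}G_{b}$ because $G_{b}=(T_{b})^{\mathbb{C}}$ for a closed orbit meeting $\Phi^{-1}(\epsilon)$.
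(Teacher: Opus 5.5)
Your proof is correct and follows essentially the same route as the paper: show that $\Phi(\overline{G\cdot b})$ contains the $(d-1)$-dimensional wall, then combine Atiyah's identity $\dim\Phi(\overline{G\cdot b})=\dim(T\cdot b)$ with $G_b=(T_b)^{\mathbb{C}}$. You invoke Lemma~\ref{F-gx} directly where the paper goes through Lemma~\ref{sta}, which is itself proved from Lemma~\ref{F-gx}; also, the easy containment $\Phi(\overline{G\cdot b})\subset\epsilon+\mathfrak{t}_b^{\perp}$ alone already forces $\dim T_b\le 1$, so your submersion computation is not needed.
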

\begin{proof}
Recall that $G$ is the complexification of the $d$-torus $T=(\mathbb{S}^{1})^{d}$.
According to \cite[Proposition 1.6]{Sj95}, the complex stabilizer of $b$ is equal to the complexification of the compact stabilizer $T_{b}$, i.e., $G_{b}=(T_{b})^{\mathbb{C}}$.
To end the proof, it suffices to show $\mathrm{dim}\,T_{b}=1$.
Consider the moment image of the orbit closure $\overline{G\cdot b}$, which is a convex
polytopy in $\mathfrak{t}^{\ast}$.
By definition, we have
$$
\Phi(\overline{G\cdot b})=\{\lambda\in\mathfrak{t}^{\ast}\,|\,
b\in X^{sss}(\Phi_{\lambda})\}.
$$
Observe that $\epsilon\in\Phi(\overline{G\cdot b})$ since $\Phi(b)=\epsilon$.
Due to Lemma \ref{sta}, we have $X^{sss}(\Phi_{\epsilon})=X^{sss}(\Phi_{\lambda})$
for any $\lambda\in\mathrm{int}\,F$.
This implies $b\in X^{sss}(\Phi_{\lambda})$ and hence
$\lambda\in\Phi(\overline{G\cdot b})$, i.e., $\mathrm{int}\,F$ is an open subset of
$\Phi(\overline{G\cdot b})$.
Consequently, the dimension of the convex polytope $\Phi(\overline{G\cdot b})$ is equal to
$\mathrm{dim}\,F=d-1$.
On account of \cite[Theorem 2, (c)]{At82}, we have
$\mathrm{dim}\,\Phi(\overline{G\cdot b})=\mathrm{dim}\,(T\cdot b)$, and therefore we are led to the conclusion $\mathrm{dim}\,T_{b}=1$.
\end{proof}

\begin{rem}\label{rk:tf}
  Two comments on Proposition~\ref{G-b}.

  \begin{enumerate}[leftmargin=0pt, itemindent=3em, labelsep=0.5em]
    \item Note that $B$ itself is singular in general.
    However, the proposition above shows that each connected component of $B$ is a closed complex subspaces of $X^{ss}_{\epsilon}/\!\!/G$ which has finite quotient singularities at most.

    \item By Proposition \ref{G-b}, in the language of \cite{DH98}, we can say that the the relative interior of a wall $F$ is a truly faithful cell.
    In the projective case, \cite[Corollary~4.1.10]{DH98} shows a closely related result that all codimension $1$ wall (in their sense) are truly faithful for the torus action.
    However, there are two problems to deduce Proposition \ref{G-b} from this result.
    First, the parameter space in \cite{DH98} is larger than $\mathfrak{t}^*$ here.
    As a result, it seems not clear why the relative interior of $F$ is always contained in a codimension $1$ wall.
    Even so, since not all cells contained in a wall (in the sense of \cite{DH98}) are necessarily truly faithful, we still do not know whether $F$ is a subset of a truly faithful cell.
    Therefore, we think that Proposition \ref{G-b} may be also useful for the projective case.\footnote{Note that in~\cite[Example~3.3.23]{DH98}, the authors discuss a special case, $X= \mathbb{CP}^n$, $G = (\mathbb{C}^*)^n$. In this case, the large parameter space is a cone over the moment body and Proposition~\ref{G-b} does follow from~\cite[Corollary~4.1.10]{DH98}.}
  \end{enumerate}
\end{rem}

For each $[G\cdot b]\in B$ with $b\in\Phi^{-1}(\epsilon)$, it follows from Proposition \ref{G-b} that the identity component of the stabilizer $G_{b}$, denoted by $G^{0}_{b}$, is isomorphic to $\mathbb{C}^{\ast}$.
Without loss of generality, we may assume that $G^{0}_{b}=\mathbb{C}^{\ast}$ is the first component of $G=(\mathbb{C}^{\ast})^{d}$ and $G_{b}=\mathbb{C}^{\ast}\times\Gamma$,
where $\Gamma$ is a finite subgroup of $(\mathbb{C})^{d-1}$.
Let $\{F_{i}\,|\,1\leq i\leq r\}$ be the connected components of the fixed point set of the $G^{0}_{b}$-action on $X$.
Then we have the plus and the minus decompositions \eqref{plus-minus-decom} corresponding to the $G^{0}_{b}$-action.
Define the respective subspaces $V_{b}^{\xi}\subset X^{s}_{\xi}$ and $V_{b}^{\zeta}\subset X^{s}_{\zeta}$ by setting:
$$
V_{b}^{\xi}=\{x\in X^{s}_{\xi}\,|\, G\cdot b\subset\overline{G\cdot x} \},\quad
V_{b}^{\zeta}=\{x\in X^{s}_{\zeta}\,|\, G\cdot b\subset\overline{G\cdot x} \},
$$
where $\overline{G\cdot x}$ is the closure of the orbit $G\cdot x$ in $X$.
Then we have the following results.
\begin{prop}\label{v=gw}
With the same notions as above, we have $V_{b}^{\xi}=G(X^{s}_{\xi}\cap W^{+}_{b})$ and
$V_{b}^{\zeta}=G(X^{s}_{\zeta}\cap W^{-}_{b})$.
\end{prop}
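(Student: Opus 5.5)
The plan is to prove the two inclusions for $V_{b}^{\xi}$ directly; the statement for $V_{b}^{\zeta}$ then follows by the same argument with $\xi\leftrightarrow\zeta$ and $W^{+}_{b}\leftrightarrow W^{-}_{b}$. Here $W^{\pm}_{b}=(\pi^{\pm}_{i})^{-1}(b)$ are the fibres over $b$ of the plus and minus decompositions for the $G^{0}_{b}\cong\mathbb{C}^{\ast}$-action, where $F_{i}\ni b$.

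First I will fix the orientation of the generator $v\in\mathfrak{t}$ of $G^{0}_{b}$ so that $W^{+}_{b}$ is attached to the $\Delta_{-}$ side. By Proposition \ref{G-b} and \cite[Theorem 2]{At82}, $\mathrm{Lie}(T_{b})$ is the annihilator of the affine span of $\Phi(\overline{G\cdot b})\supseteq F$. Hence $v$ is normal to the wall $F$. I choose its sign so that $\langle\cdot ,v\rangle$ is smaller on $\Delta_{+}$ than on $\Delta_{-}$ near $\epsilon$. With this sign, for a point $x$ with $\lim_{\lambda\to 0}\lambda\cdot x=b$, the polytope $\Phi(\overline{G\cdot x})$ lies on the $\Delta_{-}$ side of the hyperplane of $F$. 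This is because the moment component $\Phi^{v}$ is monotone along the gradient trajectory $\exp(\mathbf{i}vt)x$. The main theorem's picture (Theorem \ref{c-action} and Property \ref{fiber-conn}) already forces such a choice: an orbit closure passing through $b$ from the $W^{+}$ side cannot meet $\mathrm{int}\,\Delta_{+}$.

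Inclusion $\supseteq$. Let $x\in X^{s}_{\xi}\cap W^{+}_{b}$. Then
\[
b=\lim_{\lambda\to0}\lambda\cdot x\in\overline{G^{0}_{b}\cdot x}\subset\overline{G\cdot x}.
\]
Since $\overline{G\cdot x}$ is $G$-invariant, it contains $G\cdot b$, so $x\in V^{\xi}_{b}$. Both $X^{s}_{\xi}$ and the condition $G\cdot b\subset\overline{G\cdot x}$ are $G$-invariant, and $G$ is abelian. Therefore $G(X^{s}_{\xi}\cap W^{+}_{b})\subset V^{\xi}_{b}$.

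Inclusion $\subseteq$. Let $x\in V^{\xi}_{b}$. By Lemma \ref{F-gx} (applied with $\epsilon\in\mathrm{int}\,F\cap\Phi(\overline{G\cdot x})$), $F$ is contained in a $(d-1)$-dimensional face $\tilde F$ of the $d$-dimensional polytope $\Phi(\overline{G\cdot x})$. Since $x\in X^{s}_{\xi}$, this polytope contains $\mathrm{int}\,\Delta_{-}$ near $\epsilon$ (Lemma \ref{lem-xi-zeta}). Thus $\tilde F$ is exactly the face on which $\langle\cdot,v\rangle$ attains its minimum over $\Phi(\overline{G\cdot x})$.

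Now consider $y=\lim_{t\to+\infty}\exp(\mathbf{i}vt)x$, i.e. $\lim_{\lambda\to0}\lambda\cdot x$ for $\lambda\in G^{0}_{b}$; this limit exists by \cite[Proposition II]{CS78}. By Atiyah's description \cite[Theorem 2]{At82}, $\Phi(\overline{G\cdot y})$ is the face of $\Phi(\overline{G\cdot x})$ minimizing $\langle\cdot,v\rangle$, namely $\tilde F$. The same face is $\Phi(\overline{G\cdot b})$, by Case 2 of the proof of Lemma \ref{F-gx} applied to $b$. Atiyah's correspondence says the $G$-orbits in $\overline{G\cdot x}$ are in bijection with the faces of its moment polytope. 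Hence $G\cdot y=G\cdot b$, say $y=g_{0}\cdot b$. Since $G$ is abelian,
\[
\lim_{\lambda\to0}\lambda\cdot(g_{0}^{-1}x)=g_{0}^{-1}y=b,
\]
so $g_{0}^{-1}x\in X^{s}_{\xi}\cap W^{+}_{b}$ and therefore $x\in G(X^{s}_{\xi}\cap W^{+}_{b})$.

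The main obstacle is this last step: showing that the one-parameter limit lands in the orbit $G\cdot b$ itself, and not in a smaller orbit or in a different orbit with the same polytope. I will handle it through the face–orbit bijection for torus orbit closures in \cite{At82}, which holds without any normality of $\overline{G\cdot x}$. I will also need the sign bookkeeping above to check that the limit is taken at $\lambda\to0$ rather than $\lambda\to\infty$ for points stable at $\xi$.
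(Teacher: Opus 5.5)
Your proof is correct. The inclusion $G(X^{s}_{\xi}\cap W^{+}_{b})\subseteq V^{\xi}_{b}$ is argued as in the paper, but for $V^{\xi}_{b}\subseteq G(X^{s}_{\xi}\cap W^{+}_{b})$ you take a genuinely different route.

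The paper stays on the group side:
\begin{enumerate}
\item it splits $G=G_{1}\times G_{2}$ with $G_{1}=G^{0}_{b}$ and rewrites the hypothesis as $G_{2}\cdot b\subset\overline{G_{1}\cdot(G_{2}\cdot z)}$;
\item it uses \cite[Lemma 5.2]{Wa21} to get $G_{2}$-stability of $z$, hence closedness of the $G_{2}$-orbits of its translates;
\item from this it extracts one $k\in G_{2}$ with $b\in\overline{G_{1}\cdot(k\cdot z)}$.
\end{enumerate}
You work on the polytope side instead. $\mathrm{Lie}(T_{b})$ is the normal line to $F$, and $\Phi(\overline{G\cdot b})$ is the facet of $\Phi(\overline{G\cdot x})$ on which $\langle\cdot,v\rangle$ is minimal. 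The $G^{0}_{b}$-limit $y$ of $x$ has the same orbit polytope, and the face--orbit bijection of \cite[Theorem 2]{At82} in $\overline{G\cdot x}$ then forces $y\in G\cdot b$.

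The paper's argument is shorter and uses no facts about orbit polytopes beyond Proposition \ref{G-b}. Yours makes explicit two points the paper leaves implicit.
\begin{itemize}
\item The orientation of $G^{0}_{b}\cong\mathbb{C}^{\ast}$ really matters. With the opposite sign, $X^{s}_{\xi}\cap W^{+}_{b}=\emptyset$, because $\Phi(\overline{G\cdot x})$ contains $\xi$, where $\langle\cdot,v\rangle$ is larger than at $b$.
\item You show why the one-parameter limit lands in $G\cdot b$ itself, on the $W^{+}$ side. In the paper the passage to a single $k$ is only sketched. Also, stability for $G_{2}$ gives closed $G_{2}$-orbits in the $G_{2}$-semistable locus, not in $X$.
\end{itemize}

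The price is that you need the sharper form of Atiyah's theorem: $\lim_{\lambda\to0}\lambda\cdot x$ lies over the relative interior of the \emph{whole} extremal face, not over a subface. This is exactly what rules out a smaller orbit, so state it explicitly and justify it. It is contained in the proof of \cite[Theorem 2]{At82}, or follows from the weight decomposition in a holomorphic slice at a point over the relative interior of that face.

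Two minor corrections.
\begin{enumerate}
\item The facet property of $\tilde F$ does not follow from the statement of Lemma \ref{F-gx}, which only gives $F\subset\Phi(\overline{G\cdot x})$. For $x\in X^{s}_{\epsilon}$, for instance, $\epsilon$ is interior to that polytope. Get the facet property from $b\in\overline{G\cdot x}$ instead:
\begin{itemize}
\item $\Phi(\overline{G\cdot b})$ is a face of $\Phi(\overline{G\cdot x})$ by \cite[Theorem 2]{At82};
\item it contains $F$ by the proof of Proposition \ref{G-b};
\item it has dimension $d-1$ by Proposition \ref{G-b}.
\end{itemize}
Your later identification $\tilde F=\Phi(\overline{G\cdot b})$ already contains this, so only the order of the steps changes.
\item Under the normalisation of \eqref{re-hm}, $\exp(\mathbf{i}vt)x$ with $t\to+\infty$ moves toward the face \emph{maximising} $\langle\cdot,v\rangle$. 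The limit you want is therefore $\lim_{t\to+\infty}\exp(-\mathbf{i}vt)x$. Since you fix the orientation of $G^{0}_{b}$ geometrically, this affects only notation.
\end{enumerate}
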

\begin{proof}
We shall prove only the identity $V_{b}^{\xi}=G(X^{s}_{\xi}\cap W^{+}_{b})$ and the argument for $V_{b}^{\zeta}=G(X^{s}_{\zeta}\cap W^{-}_{b})$ is similar.
For any $x\in X^{s}_{\xi}\cap W^{+}_{b}$, we get $\lim_{\lambda\rightarrow0}\lambda\cdot x=b$ and hence $b\in\overline{G^{0}_{b}\cdot x}$,
where $\lambda\in G^{0}_{b}$ and $\overline{G^{0}_{b}\cdot x}$ is the closure of the orbit $G^{0}_{b}\cdot x$ in $X$.
Consequently, we obtain
$$
G\cdot b\subset G\cdot\overline{(G^{0}_{b}\cdot x)}\subset \overline{G\cdot (G^{0}_{b}\cdot x)}.
$$
Since $G\cdot (G^{0}_{b}\cdot x)=G\cdot x$, we get $G\cdot b\subset\overline{G\cdot x}$ and thus $x\in V^{\xi}_{b}$.
It follows that $X^{s}_{\xi}\cap W^{+}_{b}$ is a subspace of $V_{b}^{\xi}$.

We claim that $G\cdot(X^{s}_{\xi}\cap W^{+}_{b})$ belongs to $V_{b}^{\xi}$.
Let $y$ be an arbitrary point in $X^{s}_{\xi}\cap W^{+}_{b}$.
For any $g\in G$, note that
$$
\lim_{\lambda\rightarrow0}\lambda\cdot (g\cdot y)=
g(\lim_{\lambda\rightarrow0}\lambda\cdot  y)=
g\cdot b.
$$
This implies $g\cdot b\in\overline{G^{0}_{b}\cdot(g\cdot y)}$ and therefore we get
$$
G\cdot b\subset G\cdot\overline{G^{0}_{b}\cdot(g\cdot y)}
\subset \overline{G\cdot G^{0}_{b}\cdot(g\cdot y)}=
\overline{G\cdot(g\cdot y)}
$$
which means that $g\cdot y$ lies in $V^{\xi}_{b}$, i.e., $G(X^{s}_{\xi}\cap W^{+}_{b})\subset V^{\xi}_{b}$.

It remains to verify $V^{\xi}_{b}\subset G(X^{s}_{\xi}\cap W^{+}_{b})$.
For any $z\in V^{\xi}_{b}\setminus(X^{s}_{\xi}\cap W^{+}_{b})$, we have
$G\cdot b\subset\overline{G\cdot z}$.
Without loss of generality, we may identity $G^{0}_{b}$ with the first component of $G=(\mathbb{C}^{\ast})^{d}$.
Set $G_{1}=\mathbb{C}^{\ast}\times\{1\}$ and $G_{2}=\{1\}\times(\mathbb{C}^{\ast})^{d-1}$.
By the commutativity of $G$, we get
$$
G\cdot b=G_{1}\cdot G_{2}\cdot b=G_{2}\cdot G_{1}\cdot b=G_{2}\cdot b,
$$
and hence $G\cdot b\subset\overline{G\cdot z}$ is equal to
\begin{equation}\label{g2b}
G_{2}\cdot b\subset\overline{G_{1}\cdot(G_{2}\cdot z)}.
\end{equation}
Note that the point $z$ is $\Phi_{\xi}$-stable.
Due to \cite[Lemma 5.2]{Wa21}, we get that $z$ is stable for the $G_{2}$-action with respect to the restricted moment map, and thus the orbit $G_{2}\cdot z$ is closed in $X$.
Moreover, for every $g\in G$, the orbit $G_{2}\cdot (g\cdot z)$ is closed in $X$ because the point $g\cdot z$ is stable for the $G_{2}$-action.
It follows from \eqref{g2b} that $b$ belongs to $\overline{G_{1}\cdot(k\cdot z)}$ for some $k\in G_{2}$.
As a result, there is an element $k\in G_{2}$ such that $b\in\overline{G_{1}\cdot(k\cdot z)}$ which means $k\cdot z\in X^{s}_{\xi}\cap W^{+}_{b}$.
Put $w=k\cdot z$.
Then $z=k^{-1}\cdot w$ and hence $V^{\xi}_{b}\subset G(X^{s}_{\xi}\cap W^{+}_{b})$.
\end{proof}

\begin{prop}\label{iso-wb}
The subgroup of $G$ which preserves $W^{+}_{b}=(\pi^{+}_{i})^{-1}(b)$ (resp. $W^{-}_{b}=(\pi^{-}_{i})^{-1}(b)$) is precisely the stabilizer $G_{b}$.
\end{prop}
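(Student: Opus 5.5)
We need to show that $g\cdot W^{+}_{b}=W^{+}_{b}$ holds exactly when $g\in G_{b}$. We will use that $W^{+}_{b}=(\pi^{+}_{i})^{-1}(b)$, where $\pi^{+}_{i}(x)=\lim_{\lambda\to 0}\lambda\cdot x$ for $\lambda\in G^{0}_{b}\cong\mathbb{C}^{\ast}$, and that $G$ is abelian. The argument for $W^{-}_{b}$ is identical, replacing $\lambda\to0$ by $\lambda\to\infty$.

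\emph{Step 1: $G_{b}$ preserves $W^{+}_{b}$.} Let $g\in G_{b}$ and $x\in W^{+}_{b}$. Since $G$ is commutative and acts continuously,
$$\lim_{\lambda\to0}\lambda\cdot(g\cdot x)=g\cdot\lim_{\lambda\to0}\lambda\cdot x=g\cdot b=b,$$
which is the same computation already used in the proof of Proposition \ref{v=gw}. Hence $g\cdot x\in W^{+}_{b}$, so $g\cdot W^{+}_{b}\subset W^{+}_{b}$. Applying this to $g^{-1}\in G_{b}$ gives equality.

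\emph{Step 2: an element preserving $W^{+}_{b}$ fixes $b$.} Suppose $g\cdot W^{+}_{b}\subset W^{+}_{b}$. Since $b\in W^{+}_{b}$ (the limit of a fixed point is itself), we get $g\cdot b\in W^{+}_{b}$. On the other hand, $g\cdot b$ is fixed by $G^{0}_{b}$, again by commutativity, so
$$\pi^{+}_{i}(g\cdot b)=\lim_{\lambda\to0}\lambda\cdot(g\cdot b)=g\cdot b.$$
Membership in $W^{+}_{b}$ means $\pi^{+}_{i}(g\cdot b)=b$, hence $g\cdot b=b$ and $g\in G_{b}$.

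The only subtle point is whether $g\cdot b$ lies in the same fixed component $F_{i}$, so that $\pi^{+}_{i}$ applies to it. This is automatic once $g\cdot b\in W^{+}_{b}\subset V^{+}_{i}$. Independently, $G$ is connected and preserves $X^{G^{0}_{b}}$, so it maps $F_{i}$ to itself. With this remark, the plus and minus decompositions of \cite[Proposition II]{CS78} give well-defined limits and the argument closes.
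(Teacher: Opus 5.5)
Your proof is correct and follows the paper's argument. You show that $G_b$ preserves $W^{+}_{b}$ by commuting $g$ past the limit, and for the converse you compare $\lim_{\lambda\to0}\lambda\cdot(g\cdot x)=g\cdot b$ with membership in $W^{+}_{b}$. The paper does this with an arbitrary $x\in W^{+}_{b}$ while you take $x=b$, which is the same argument. Your closing remark about the component $F_{i}$ is harmless but unnecessary, since the plus decomposition of $X$ already guarantees the limit exists at every point.
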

\begin{proof}
Recall that
$$
W^{+}_{b}=\{x\in X\,|\,\lim_{\lambda\rightarrow0}\lambda\cdot x=b\},\quad
W^{-}_{b}=\{x\in X\,|\,\lim_{\lambda\rightarrow\infty}\lambda\cdot x=b\},
$$
where $\lambda\in G^{0}_{b}$.
We only prove the conclusion for $W^{+}_{b}$, and $W^{-}_{b}$ is treated similarly.
Let $x$ be an arbitrary point in $W^{+}_{b}$.
On the one hand, for any $g\in G_{b}$, we have
$$
\lim_{\lambda\rightarrow0}\lambda\cdot(g\cdot x)=
\lim_{\lambda\rightarrow0}g\cdot(\lambda\cdot x)=
g\cdot(\lim_{\lambda\rightarrow0}\lambda\cdot x)=g\cdot b=b.
$$
This implies $g\cdot x\in W^{+}_{b}$ and therefore $G_{b}$ preserves $W^{+}_{b}$.
On the other hand, given an element $g^{\prime}\in G$ such that $g^{\prime}\cdot W^{+}_{b}\subset W^{+}_{b}$.
Then for any $x\in W^{+}_{b}$, the identity $\lim_{\lambda\rightarrow0}\lambda\cdot(g^{\prime}\cdot x)=b$ is valid.
Because of the fact $\lim_{\lambda\rightarrow0}\lambda\cdot(g^{\prime}\cdot x)=g^{\prime}\cdot b$, we get $g^{\prime}\cdot b=b$ and hence $g^{\prime}\in G_{b}$.
\end{proof}

We are ready to prove the main result of this subsection.
\begin{thm}\label{m2}
For each $[G\cdot b]\in B$ with $b\in\Phi^{-1}(\epsilon)$, let $G^{0}_{b}$ be the identity component of the stabilizer $G_{b}$ and $F$ the fixed point set by the $G^{0}_{b}$-action on $X$.
Then we have:
\begin{itemize}
  \item [(i)] the fiber  $f_{\xi, \epsilon}^{-1}([G\cdot b])$ (resp. $f_{\zeta, \epsilon}^{-1}([G\cdot b])$) is biholomorphic to the quotient of a weighted projective spaces by the finite group $\Gamma_{b}=G_{b}/G^{0}_{b}$;
  \item [(ii)] the dimensions of the fibers satisfies
        $$
        \mathrm{dim}_{\mathbb{C}}\,f_{\zeta, \epsilon}^{-1}([G\cdot b])+
        \mathrm{dim}_{\mathbb{C}}\,f_{\xi, \epsilon}^{-1}([G\cdot b])=
        \mathrm{codim}_{\mathbb{C}}\,(B_{i}, X^{ss}_{\epsilon}/\!\!/G)-1,
         $$
         where $B_{i}$ is the stratum of $B$ in which $[G\cdot b]$ lies.
\end{itemize}
\end{thm}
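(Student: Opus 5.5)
The plan is to reduce to the $\mathbb{C}^{\ast}$-case of Theorem \ref{c-action}, using Propositions \ref{G-b}, \ref{v=gw} and \ref{iso-wb}.

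Fix $[G\cdot b]\in B$ with $b\in\Phi^{-1}(\epsilon)$ and $G\cdot b$ closed in $X^{ss}_{\epsilon}$. By Proposition \ref{G-b}, $G^{0}_{b}\cong\mathbb{C}^{\ast}$, and after a change of coordinates $G_{b}=G^{0}_{b}\times\Gamma_{b}$ with $\Gamma_{b}$ finite. Let $F_{i}$ be the component of the $G^{0}_{b}$-fixed set containing $b$. By Carrell--Sommese \cite[Proposition II]{CS78}, $W^{\pm}_{b}=(\pi^{\pm}_{i})^{-1}(b)$ are complex vector spaces on which $G^{0}_{b}$ acts linearly with nonzero weights (positive, resp. negative), and
\[
\dim W^{+}_{b}+\dim W^{-}_{b}=\mathrm{codim}_{\mathbb{C}}(F_{i},X).
\]
By the diagram \eqref{i-f-}, the fiber $f_{\xi,\epsilon}^{-1}([G\cdot b])$ is the image of $V^{\xi}_{b}$ in $X^{s}_{\xi}/\!\!/G$. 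This uses that a $\Phi_{\xi}$-stable orbit maps to $[G\cdot b]$ exactly when its closure contains the unique closed orbit $G\cdot b$, which follows from Lemma \ref{ep-in} together with the picture of the semistable equivalence. One should also check here, as in the $\mathbb{C}^{\ast}$ case, that connectedness of fibers (Property \ref{fiber-conn}) and $W^{+}_{b}\cap W^{-}_{b}=\{b\}$ allow us to assign $W^{+}_{b}$ to $\xi$ and $W^{-}_{b}$ to $\zeta$. This is legitimate because the sign of the $G^{0}_{b}$-weight pairing with $\xi-\epsilon$ is opposite to that with $\zeta-\epsilon$.

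Proposition \ref{v=gw} gives $V^{\xi}_{b}=G\cdot(X^{s}_{\xi}\cap W^{+}_{b})$. Since $G$ is abelian, Proposition \ref{iso-wb} shows that two points of $W^{+}_{b}$ in the same $G$-orbit differ by an element of $G_{b}$. Hence the natural map
\[
(X^{s}_{\xi}\cap W^{+}_{b})/G_{b}\longrightarrow V^{\xi}_{b}/G
\]
is a bijection, and it is holomorphic. To upgrade it to a biholomorphism I would use the holomorphic slice theorem at $b$. Alternatively, the quotients are normal, the map is a continuous bijective holomorphic map between normal spaces, and it is proper because the fiber is compact.

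Next, $(X^{s}_{\xi}\cap W^{+}_{b})/G^{0}_{b}$ is an open subset of the weighted projective space $(W^{+}_{b}\setminus\{b\})/\mathbb{C}^{\ast}$. It is nonempty because $f_{\xi,\epsilon}$ is surjective. It is closed because it is compact, being homeomorphic up to the finite group to the compact fiber. The weighted projective space is connected, so the subset is all of it. Quotienting further by the finite group $\Gamma_{b}=G_{b}/G^{0}_{b}$, which acts since $G$ is abelian, gives (i). The same argument with $W^{-}_{b}$ gives the result for $\zeta$.

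For (ii), we get $\dim f_{\xi,\epsilon}^{-1}+\dim f_{\zeta,\epsilon}^{-1}=\dim W^{+}_{b}+\dim W^{-}_{b}-2$. It remains to compare $\mathrm{codim}(F_{i},X)$ with $\mathrm{codim}(B_{i},X^{ss}_{\epsilon}/\!\!/G)$. Near $b$, the stratum $B_{i}$ is locally $(F_{i}\cap X^{ss}_{\epsilon})/(G/G_{b})$, of dimension $\dim F_{i}-(d-1)$. The quotient has dimension $\dim X-d$. So
\[
\mathrm{codim}(B_{i})=\mathrm{codim}(F_{i},X)-1,
\]
which yields the formula.

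The main obstacle is the identification of $B_{i}$ locally with $F_{i}/(G/G_{b})$. Concretely, one must check that all nearby points of the stratum of orbit type $(G_{b})$ lie in $F_{i}$ and have stabilizer conjugate to $G_{b}$; I would verify this via the holomorphic slice at $b$. The other delicate point is the $\pm$ assignment between $\xi$ and $\zeta$.
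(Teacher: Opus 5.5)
Your proposal follows the paper's proof essentially step by step. The fibre is reduced to $(X^{s}_{\xi}\cap W^{+}_{b})/G_{b}$ via Propositions \ref{G-b}, \ref{v=gw} and \ref{iso-wb}. The open--closed argument is then run inside $(W^{+}_{b}\setminus\{b\})/G^{0}_{b}$, and the result is quotiented by $\Gamma_{b}$. For (ii) you use the Carrell--Sommese identity $\dim W^{+}_{b}+\dim W^{-}_{b}=\mathrm{codim}(F_{i},X)$ together with $\dim B_{i}=\dim F_{i}-(d-1)$. Your sign argument for attaching $W^{+}_{b}$ to $\xi$ and $W^{-}_{b}$ to $\zeta$ is a better justification than the connectedness remark the paper relies on.

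The step you single out as the main obstacle needs correcting. Read $B_{i}$ as the $G$-orbit-type stratum through $[G\cdot b]$. Then, near $[G\cdot b]$, $B_{i}$ need not coincide with the image of $F_{i}\cap X^{ss}_{\epsilon}$, because $\Gamma_{b}$ may act nontrivially on $T_{b}F_{i}$. Points of $F_{i}\cap\Phi^{-1}(\epsilon)$ arbitrarily close to $b$ can then have stabilizer only $G^{0}_{b}$. This is also why $G/G_{b}$ does not act on $F_{i}$; the group that acts is $G/G^{0}_{b}$.

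Here is an example. Let $T=(\mathbb{S}^{1})^{2}$ act on $\mathbb{CP}^{4}$ by $(s,t)\cdot[z_0:z_1:z_2:w_+:w_-]=[z_0:tz_1:t^{2}z_2:sw_+:s^{-1}w_-]$, with moment map $\Phi=(|w_+|^{2}-|w_-|^{2},\ |z_1|^{2}+2|z_2|^{2})/\|(z,w)\|^{2}$. Take $\epsilon=(0,3/2)$, which lies in the relative interior of the wall $\{0\}\times[1,2]$, and $b=[1:0:\sqrt{3}:0:0]$.
\begin{itemize}
\item $T_{b}=\mathbb{S}^{1}\times\{\pm1\}$ and $F_{i}=\{w_+=w_-=0\}$.
\item Nearby points of $F_{i}\cap\Phi^{-1}(\epsilon)$ with $z_{1}\neq 0$ have stabilizer $\mathbb{S}^{1}\times\{1\}$.
\item $W^{\pm}_{b}$ are lines, so both fibres are points and the left side of (ii) is $0$.
\item With $B_{i}=\Pi_{\epsilon}(X^{ss}_{\epsilon}\cap F_{i})$, a curve in the surface $X^{ss}_{\epsilon}/\!\!/G$, the right side of (ii) is $0$.
\item With the orbit-type stratum $\{[G\cdot b]\}$, the right side is $1$.
\end{itemize}

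So the check you propose cannot produce the identification: it only gives the easy inclusion, that nearby points with stabilizer $G_{b}$ lie in $F_{i}$. Instead, do what the paper does and set $B_{i}:=\Pi_{\epsilon}(X^{ss}_{\epsilon}\cap F_{i})$. This is the K\"ahler quotient of $F_{i}$ at level $\epsilon$ by the $(d-1)$-torus $T/T^{0}_{b}$. Near $b$ this torus acts with stabilizers contained in $\Gamma_{b}$, so $\dim B_{i}=\dim F_{i}-(d-1)$, and your dimension count then goes through unchanged.

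A minor point concerns your alternative way of upgrading the bijection $(X^{s}_{\xi}\cap W^{+}_{b})/G_{b}\to V^{\xi}_{b}/G$ to a biholomorphism. It presupposes that the reduced fibre is normal, which is not known in advance. The holomorphic slice theorem at $b$ is the route to use; the paper simply asserts this isomorphism.
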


\begin{proof}
Note that the fiber $f_{\xi, \epsilon}^{-1}([G\cdot b])$ is a compact connected complex analytic subvariety of $X^{s}_{\xi}/\!\!/G$.
Recall that $V^{\xi}_{b}=\{x\in X^{s}_{\xi}\,|\, G\cdot b\subset\overline{G\cdot x} \}$.
By definition, we have
\begin{equation}\label{f-xi-0}
f_{\xi, \epsilon}^{-1}([G\cdot b])=V^{\xi}_{b}/G.
\end{equation}
According to Proposition \ref{v=gw} and Proposition \ref{iso-wb}, we obtain
\begin{equation}\label{f-xi-1}
V^{\xi}_{b}/G\cong (X^{s}_{\xi}\cap W^{+}_{b})/G_{b}
\cong\bigl((X^{s}_{\xi}\cap W^{+}_{b})/G^{0}_{b}\bigr)/\Gamma_{b}.
\end{equation}
We claim that $(X^{s}_{\xi}\cap W^{+}_{b})/G^{0}_{b}$ is a weighted projective space.
We may identify $W^{+}_{b}$ with $\mathbb{C}^{m+1}$ and $G^{0}_{b}=\mathbb{C}^{\ast}$ acts linearly on $\mathbb{C}^{m+1}$ with the weight $\textbf{a}=(a_{0},\cdots,a_{m})$ which has the unique fixed point $0$.
It follows from the plus and the minus decompositions for the $G^{0}_{b}$-action on $X$ that the quotient $(W^{+}_{b}\setminus\{b\})/G^{0}_{b}$ is a weighted projective space denoted by $\widetilde{\mathbb{P}}(\textbf{a})$.
Since $f_{\xi,\epsilon}$ is surjective, the space
$\bigl((X^{s}_{\xi}\cap W^{+}_{b})/G^{0}_{b}\bigr)/\Gamma_{b}$
is nonempty.
On the one hand, due to the openness of $X^{s}_{\xi}$ in $X$, the set
$X^{s}_{\xi}\cap W^{+}_{b}
=X^{s}_{\xi}\cap (W^{+}_{b}\setminus\{b\})$
is an open subset of $W^{+}_{b}\setminus\{b\}$,
and thus
$$
(X^{s}_{\xi}\cap W^{+}_{b})/G^{0}_{b}
=\bigl(X^{s}_{\xi}\cap (W^{+}_{b}\setminus\{b\})\bigr)/G^{0}_{b}
$$
is an open subset of $\widetilde{\mathbb{P}}(\textbf{a})$.
Consequently, the quotient space $\bigl((X^{s}_{\xi}\cap W^{+}_{b})/G^{0}_{b}\bigr)/\Gamma_{b}$ is open in the quotient $\widetilde{\mathbb{P}}(\textbf{a})/\Gamma_{b}$.
On the other hand, as a fiber of a proper map, the quotient space
$\bigl((X^{s}_{\xi}\cap W^{+}_{b})/G^{0}_{b}\bigr)/\Gamma_{b}$
is compact in $X^{s}_{\xi}/G$ and so is in $\widetilde{\mathbb{P}}(\textbf{a})/\Gamma_{b}$.
It follows that
$\bigl((X^{s}_{\xi}\cap W^{+}_{b})/G^{0}_{b}\bigr)/\Gamma_{b}$
is closed in $\widetilde{\mathbb{P}}(\textbf{a})/\Gamma_{b}$. 
As a result, we get
\begin{equation}\label{f-xi-2}
\bigl((X^{s}_{\xi}\cap W^{+}_{b})/G^{0}_{b}\bigr)/\Gamma_{b}=
\widetilde{\mathbb{P}}(\textbf{a})/\Gamma_{b}.
\end{equation}
From \eqref{f-xi-0}-\eqref{f-xi-2}, we are led to the conclusion that the fiber $f_{\xi, \epsilon}^{-1}([G\cdot b])$ is biholomorphic to $\widetilde{\mathbb{P}}(\textbf{a})/\Gamma_{b}$.
Using the same argument, we can prove the conclusion for the fiber $f_{\zeta, \epsilon}^{-1}([G\cdot b])$.

We now turn to the conclusion (ii).
Denote by $F_{i}$ the connected component of the fixed point set $F$ by the $G^{0}_{b}$-action on $X$, which is a closed complex submanifold of $X$.
Note that $G^{0}_{b}\cong\mathbb{C}^{\ast}$ and $T^{0}_{b}\cong\mathbb{S}^{1}$.
Set $B_{i}=\Pi_{\epsilon}(X^{ss}_{\epsilon}\cap F_{i})$ which is a stratum of $B$.
Due to \cite[Theorem 2.10]{Sj95}, we get
$$
\mathrm{dim}_{\mathbb{C}}\,B_{i}=\mathrm{dim}_{\mathbb{C}}\,F_{i}-(d-1).
$$
On the one hand, owing to \cite[Proposition II]{CS78}, we have
$$
\mathrm{dim}_{\mathbb{C}}\,W^{+}_{b}+\mathrm{dim}_{\mathbb{C}}\,W^{-}_{b}
=\mathrm{codim}_{\mathbb{C}}\,(F_{i}, X).
$$
On the other hand, the result of (i) shows
$
\mathrm{dim}_{\mathbb{C}}\,f_{\xi, \epsilon}^{-1}([G\cdot b])=
\mathrm{dim}_{\mathbb{C}}\,W^{+}_{b}-1
$
and
$
\mathrm{dim}_{\mathbb{C}}\,f_{\zeta, \epsilon}^{-1}([G\cdot b])=
\mathrm{dim}_{\mathbb{C}}\,W^{-}_{b}-1.
$
As a direct result, we obtain
\begin{eqnarray*}
% \nonumber to remove numbering (before each equation)
  \mathrm{dim}_{\mathbb{C}}\,f_{\zeta, \epsilon}^{-1}([G\cdot b])+
\mathrm{dim}_{\mathbb{C}}\,f_{\xi, \epsilon}^{-1}([G\cdot b])
&=& \mathrm{codim}_{\mathbb{C}}\,(F_{i}, X)-2 \\
&=& \mathrm{codim}_{\mathbb{C}}\,(B_{i}, X^{ss}_{\epsilon}/\!\!/G)-1
\end{eqnarray*}
and this completes the proof.
\end{proof}

Recall that a complex analytic space $Y$ is \emph{$\mathbb{Q}$-factorial} if every Weil divisor on $Y$ is $\mathbb{Q}$-Cartier.
Since every complex analytic spaces with finite quotient singularities only are $\mathbb{Q}$-factorial (cf. \cite[Proposition 5.15]{KM98}),
the regular  K\"{a}hler quotients $X^{s}_{\xi}/\!\!/G$ and $X^{s}_{\zeta}/\!\!/G$, as compact K\"{a}hler orbifolds, are $\mathbb{Q}$-factorial necessarily.
As corollaries of Theorem \ref{m2}, we get
\begin{cor}\label{divisorial}
Suppose the center $B$ has only one stratum,
if $f_{\xi,\epsilon}$ (resp. $f_{\zeta,\epsilon}$) is divisorial, then $f_{\zeta,\epsilon}$ (resp. $f_{\xi,\epsilon}$) is a biholomorphic map.
\end{cor}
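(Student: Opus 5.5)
I will deduce the corollary directly from the dimension formula in Theorem \ref{m2}(ii). Since $B$ has only one stratum, say $B=B_{1}$, every point $[G\cdot b]\in B$ has the same codimension $c:=\mathrm{codim}_{\mathbb{C}}(B, X^{ss}_{\epsilon}/\!\!/G)$. Theorem \ref{m2}(ii) then gives
$$
\mathrm{dim}_{\mathbb{C}}\,f_{\xi,\epsilon}^{-1}([G\cdot b])+\mathrm{dim}_{\mathbb{C}}\,f_{\zeta,\epsilon}^{-1}([G\cdot b])=c-1
$$
for every $[G\cdot b]\in B$. By Theorem \ref{m2}(i), each fiber is a finite quotient of a weighted projective space, so it has pure dimension. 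By Theorem \ref{m1}, $f_{\xi,\epsilon}$ and $f_{\zeta,\epsilon}$ are isomorphisms off $B$, so their exceptional sets are exactly $A_{\xi}=f_{\xi,\epsilon}^{-1}(B)$ and $A_{\zeta}=f_{\zeta,\epsilon}^{-1}(B)$.

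Assume $f_{\xi,\epsilon}$ is divisorial, i.e.\ its exceptional set $A_{\xi}$ has codimension one in the $n$-dimensional space $X^{s}_{\xi}/\!\!/G$, where $n=\dim X^{ss}_{\epsilon}/\!\!/G$. Since the fibers over $B$ are equidimensional, of dimension $e_{\xi}$ say, we get $\dim A_{\xi}=\dim B+e_{\xi}=(n-c)+e_{\xi}$. Setting this equal to $n-1$ gives $e_{\xi}=c-1$. The formula above then forces $\dim f_{\zeta,\epsilon}^{-1}([G\cdot b])=0$ for every $b$. By Property \ref{fiber-conn} the fibers are connected, and a weighted projective space of dimension $0$ modulo a finite group is a single point. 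Hence $f_{\zeta,\epsilon}$ is bijective.

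It remains to upgrade bijectivity to biholomorphicity. The map $f_{\zeta,\epsilon}$ is a proper bijective holomorphic map between irreducible reduced complex spaces, and its target $X^{ss}_{\epsilon}/\!\!/G$ is normal by Theorem \ref{reduced-normal}. A finite bimeromorphic map onto a normal space is an isomorphism: this is the analytic Zariski Main Theorem, or equivalently the Riemann extension theorem applied to the continuous inverse, which is holomorphic off the nowhere dense set $B$. So $f_{\zeta,\epsilon}$ is biholomorphic. The case with the roles of $\xi$ and $\zeta$ swapped is symmetric.

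The main obstacle is the dimension count for $A_{\xi}$. It relies on the fibers over the single stratum having constant dimension, and on the stratum $B$ being irreducible so that "divisorial" pins down $e_{\xi}$. Constancy follows because $\dim W^{\pm}_{b}$ is locally constant along the connected fixed component $F_{i}$ (Carrell--Sommese). If $B$ is disconnected, I would argue componentwise, since divisoriality is a componentwise condition.
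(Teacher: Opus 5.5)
Your proposal is correct and follows essentially the same route as the paper: the dimension identity of Theorem~\ref{m2}(ii), combined with $\dim A_{\xi}=\dim X^{ss}_{\epsilon}/\!\!/G-1$, forces the fibers of $f_{\zeta,\epsilon}$ over $B$ to be zero-dimensional. Connectedness (Property~\ref{fiber-conn}) then makes $f_{\zeta,\epsilon}$ injective over $B$, and Zariski's Main Theorem on the normal space $X^{ss}_{\epsilon}/\!\!/G$ upgrades this to a biholomorphism; your justification that the fiber dimension is constant along the single stratum (via Carrell--Sommese) supplies a point the paper uses tacitly.
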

\begin{proof}
Put $A_{\xi}=f^{-1}_{\xi,\epsilon}(B)$ and $A_{\zeta}=f^{-1}_{\zeta,\epsilon}(B)$.
If $f_{\xi,\epsilon}$ is divisorial, then it follows from the definition that $A_{\xi}$ is an irreducible and reduced complex subspace of codimension 1, i.e., $\mathrm{dim}_{\mathbb{C}}\,A_{\xi}=(n-d)-1$, see Appendix \ref{mod-bim}.
By assumption, the center $B$ has only one stratum and thus $B$ is a connected complex manifold.
Due to Theorem \ref{m2}, for each point $[G\cdot b]$ in $B$, we have
\begin{eqnarray*}
% \nonumber to remove numbering (before each equation)
  \mathrm{dim}_{\mathbb{C}}\,A_{\xi}
  &=&
  \mathrm{dim}_{\mathbb{C}}\,f_{\xi, \epsilon}^{-1}([G\cdot b])+\mathrm{dim}_{\mathbb{C}}\,B \\
  &=&
  \mathrm{dim}_{\mathbb{C}}\,X^{ss}_{\epsilon}/\!\!/G
-\mathrm{dim}_{\mathbb{C}}\,f_{\zeta, \epsilon}^{-1}([G\cdot b])-1\\
&=&(n-d)-\mathrm{dim}_{\mathbb{C}}\,f_{\zeta, \epsilon}^{-1}([G\cdot b])-1.
\end{eqnarray*}
This implies the fiber $f_{\zeta, \epsilon}^{-1}([G\cdot b])$ has dimension zero.
Because $f_{\zeta, \epsilon}^{-1}([G\cdot b])$ is connected, the restriction of $f_{\zeta,\epsilon}$ to $A_{\zeta}$ is injective.
As a result, $f_{\zeta,\epsilon}$ is a finite modification of normal complex spaces which is a biholomorphic map due to Zariski's Main Theorem \cite[Theorem 1.11]{Ue75}.
\end{proof}
Particularly, when $G=\mathbb{C}^{\ast}$ the center $B$ is connected necessarily provided that $A_{\xi}$ is a prime divisor and therefore $B$ has only one stratum.
\begin{cor}\label{small}
The modification $f_{\xi,\epsilon}$ (resp. $f_{\zeta,\epsilon}$) is small if and only if each fiber of $f_{\zeta,\epsilon}$ (resp. $f_{\xi,\epsilon}$) over $B$ is of positive dimension.
\end{cor}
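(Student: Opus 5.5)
We need the definition of small: a proper modification $f$ is small if the exceptional set $A=f^{-1}(B)$ has codimension at least $2$. (The standard definition in Appendix~\ref{mod-bim} may be phrased as the exceptional locus having codimension $\geq 2$; we use that.) The plan is a dimension count via Theorem~\ref{m2}(ii), run stratum by stratum over $B$.

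First decompose $B$ into its strata $B_i$ (Lemma~\ref{B-dense}, Theorem~\ref{main-k-q}). Over a stratum $B_i$, all fibers of $f_{\xi,\epsilon}$ have the same dimension $\dim W^+_b-1$, and all fibers of $f_{\zeta,\epsilon}$ have dimension $\dim W^-_b-1$. This holds because $\dim W^{\pm}_b$ is constant along the connected component $F_i$ of the fixed set of $G^0_b$, by \cite[Proposition II]{CS78}. Hence
\begin{equation*}
\dim_{\mathbb{C}} f_{\xi,\epsilon}^{-1}(B_i)=\dim_{\mathbb{C}} B_i+\dim_{\mathbb{C}} f_{\xi,\epsilon}^{-1}([G\cdot b]),
\end{equation*}
since $f_{\xi,\epsilon}^{-1}(B_i)\to B_i$ is surjective with equidimensional fibers. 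If needed, this can be made rigorous via local triviality coming from the plus decomposition, or via upper semicontinuity of fiber dimension together with the constancy just noted. Then $A_\xi=f_{\xi,\epsilon}^{-1}(B)=\bigcup_i f_{\xi,\epsilon}^{-1}(B_i)$, so $\dim A_\xi=\max_i \dim f^{-1}_{\xi,\epsilon}(B_i)$.

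Next apply Theorem~\ref{m2}(ii). Writing $N=\dim X^{ss}_\epsilon/\!\!/G=n-d$, it gives
\begin{equation*}
\dim_{\mathbb{C}} f_{\xi,\epsilon}^{-1}(B_i)=N-1-\dim_{\mathbb{C}} f_{\zeta,\epsilon}^{-1}([G\cdot b]),\qquad b\in B_i,
\end{equation*}
exactly as in the proof of Corollary~\ref{divisorial}. Therefore $\operatorname{codim}(f_{\xi,\epsilon}^{-1}(B_i),X^s_\xi/\!\!/G)=1+\dim f_{\zeta,\epsilon}^{-1}([G\cdot b])$. Consequently $A_\xi$ has codimension $\geq2$ if and only if, for every stratum $B_i$, the fibers of $f_{\zeta,\epsilon}$ over $B_i$ have positive dimension, that is, if and only if every fiber of $f_{\zeta,\epsilon}$ over $B$ has positive dimension. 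The statement with $\xi$ and $\zeta$ interchanged follows by symmetry.

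The main obstacle is the first step: justifying the equality $\dim f^{-1}(B_i)=\dim B_i+\text{fiber dim}$ and the constancy of fiber dimension on each stratum. Here the strata $B_i$ should be taken as $\Pi_\epsilon(X^{ss}_\epsilon\cap F_i)$, as in the proof of Theorem~\ref{m2}, which are connected. I would argue via the analytic map $f_{\xi,\epsilon}^{-1}(B_i)\to B_i$ and the standard dimension formula for proper surjective holomorphic maps with constant fiber dimension. A secondary point is that $A_\xi$ is nonempty, i.e.\ $f$ is not an isomorphism; this is handled by surjectivity of $f_{\xi,\epsilon}$ from Theorem~\ref{m1}, which guarantees nonempty fibers.
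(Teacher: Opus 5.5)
Your proposal is correct and follows the paper's own proof: a stratum-by-stratum dimension count over connected strata $B_i$, using Theorem~\ref{m2}(ii) to rewrite $\dim_{\mathbb{C}} f_{\xi,\epsilon}^{-1}([G\cdot b])+\dim_{\mathbb{C}} B_i$ as $(n-d)-1-\dim_{\mathbb{C}} f_{\zeta,\epsilon}^{-1}([G\cdot b])$, then reading off the codimension-$2$ condition. You also justify the formula $\dim f_{\xi,\epsilon}^{-1}(B_i)=\dim B_i+\text{fiber dimension}$ through the constancy of $\dim W^{\pm}_b$ along $F_i$; the paper takes this step for granted as part of the definition.
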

\begin{proof}
Via refining the decomposition, we may assume that each stratum $B_{i}$ of $B$ is connected.
By definition,  $f_{\xi,\epsilon}$ is small, if and only if for every $B_{i}$ the inequality
\begin{equation}\label{sml}
\mathrm{dim}_{\mathbb{C}}\,f^{-1}_{\xi,\epsilon}([G\cdot b])+\mathrm{dim}_{\mathbb{C}}\,B_{i}
\leq(n-d)-2
\end{equation}
is valid, where $[G\cdot b]$ is an arbitrary point in $B_{i}$.
By Theorem \ref{m2}, we have
$$
\mathrm{dim}_{\mathbb{C}}\,f^{-1}_{\xi,\epsilon}([G\cdot b])+\mathrm{dim}_{\mathbb{C}}\,B_{i}=
(n-d)-\mathrm{dim}_{\mathbb{C}}\,f^{-1}_{\zeta,\epsilon}([G\cdot b])-1.
$$
It follows that \eqref{sml} holds if and only if the fiber $f^{-1}_{\zeta,\epsilon}([G\cdot b])$ has dimension $\geq1$ and this completes the proof.
\end{proof}

%=============================
\section{Wall-crossing for quasi-free actions}\label{bim-equ}

Under the assumptions (\textbf{A1})-(\textbf{A3}),
from Theorem \ref{main-thm-2}, we get two natural proper modifications
$f_{\xi,\epsilon}:X^{s}_{\xi}/\!\!/G\rightarrow X^{ss}_{\epsilon}/\!\!/G$ and $f_{\zeta,\epsilon}:X^{s}_{\zeta}/\!\!/G\rightarrow X^{ss}_{\epsilon}/\!\!/G$ with the common center $B\subset X^{ss}_{\epsilon}/\!\!/G$.
For simplicity, we write $X_{\xi}=X^{s}_{\xi}/\!\!/G$, $X_{\epsilon}=X^{ss}_{\epsilon}/\!\!/G$ and $X_{\zeta}=X^{s}_{\zeta}/\!\!/G$.
Put $A_{\xi}=f^{-1}_{\xi,\epsilon}(B)$ and  $A_{\zeta}=f^{-1}_{\zeta,\epsilon}(B)$ which are closed complex analytic subsets of $X_{\xi}$ and $X_{\zeta}$ respectively.
Observe that both
$$
f_{\xi,\epsilon}|_{X_{\xi}\setminus A_{\xi}}:X_{\xi}\setminus A_{\xi}\longrightarrow X_{\epsilon}\setminus B
$$
and
$$
f_{\zeta,\epsilon}|_{X_{\zeta}\setminus A_{\zeta}}:X_{\zeta}\setminus A_{\zeta}\longrightarrow X_{\epsilon}\setminus B
$$
are biholomorphic maps.
The composition of $f_{\xi,\epsilon}|_{X_{\xi}\setminus A_{\xi}}$ and $(f_{\zeta,\epsilon}|_{X_{\zeta}\setminus A_{\zeta}})^{-1}$ gives rise to a biholomorphic map
$$
\tilde{f}:X_{\xi}\setminus A_{\xi}\longrightarrow X_{\zeta}\setminus A_{\zeta}.
$$
The graph $\mathrm{Graph}(\tilde{f})$ is a closed complex analytic subset of the product space
$(X_{\xi}\setminus A_{\xi})\times (X_{\zeta}\setminus A_{\zeta})$.
Moreover, $\mathrm{Graph}(\tilde{f})$ is reduced, normal, and irreducible, since it is biholomorphic to $X_{\xi}\setminus A_{\xi}$.
Observe that
$$
\mathrm{Graph}(\tilde{f})=
\bigl(X_{\xi}\times_{X_{\epsilon}} X_{\zeta}\bigr)\setminus\bigl(A_{\xi}\times_{B}A_{\zeta}\bigr).
$$
Let $\Gamma$ be the closure of the graph $\mathrm{Graph}(\tilde{f})$ in $X_{\xi}\times X_{\zeta}$.
Then $\Gamma$ is a complex analytic subset of $X_{\xi}\times X_{\zeta}$, see \cite[Corollary 5.4]{Dem12}.

We claim that $\Gamma$ is irreducible.
Suppose $\Gamma$ is reducible.
Without loss of generality, we may assume that $\Gamma$ has two irreducible components $\Gamma_{1}$ and $\Gamma_{2}$.
Note that $\Gamma=\overline{\mathrm{Graph}(\tilde{f})}=\Gamma_{1}\cup \Gamma_{2}$.
Since $\mathrm{Graph}(\tilde{f})$ is irreducible, we obtain $\mathrm{Graph}(\tilde{f})\subset \Gamma_{1}$ or $\mathrm{Graph}(\tilde{f})\subset \Gamma_{2}$.
Consequently, the closure $\overline{\mathrm{Graph}(\tilde{f})}$ lies in $\Gamma_{1}$ or $\Gamma_{2}$, which contradicts with the assumption $\Gamma=\Gamma_{1}\cup \Gamma_{2}$.
The projections from $X_{\xi}\times X_{\zeta}$ to $X_{\xi}$ and  $X_{\zeta}$ induce two natural holomorphic maps
$p_{\xi}:\Gamma\rightarrow  X_{\xi}$ and $p_{\zeta}:\Gamma\rightarrow  X_{\zeta}$.
Observe that $\Gamma$ is compact and $p_{\xi}$ restricts to a biholomorphism from $\mathrm{Graph}(\tilde{f})$ to $X_{\xi}\setminus A_{\xi}$ which is open and dense in $X_{\xi}$.
It follows that $p_{\xi}$ is surjective and so is the map $p_{\zeta}$.
From definition, $p_{\xi}$ and $p_{\zeta}$ are proper modifications.
As a result, we obtain a natural bimeromorphic map in the sense of Definition \ref{mero}:
\begin{eqnarray}\label{bier-map-f}
  % \nonumber to remove numbering (before each equation)
    f: X_{\xi}&\dashrightarrow&  X_{\zeta}\\
     v&\longmapsto& p_{\zeta}(p^{-1}_{\xi}(v)). \nonumber
\end{eqnarray}

Recall that the $T$-action on $X$ is \emph{quasi-free}, if for every point $x$ the stabilizer $T_{x}$ is connected, and the action is free on an open dense set, see \cite[\S 2]{GS89}.
Assume that the $T$-action is quasi-free, then by definition the regular K\"{a}hler quotients $X_{\xi}$ and $X_{\zeta}$ are compact K\"{a}hler manifolds and therefore \eqref{bier-map-f} becomes a bimeromorphic map of compact K\"{a}hler manifolds.
The \emph{Strong Factorization Conjecture} in birational geometry \cite[Conjecture 0.2.1]{AKMW02} states that any birational map (resp. bimeromorphic map) between complete nonsingular varieties (resp. compact complex manifolds) can be factored into a succession of blow-ups with smooth centers followed by a succession of blow-downs with smooth centers.
Motivated by this conjecture, we get the following
\begin{thm}\label{qusi-free-wc}
Suppose the $T$-action is quasi-free.
If the center $B$ is a finite set, then the blow-ups of $X_{\xi}$ (resp. $X_{\zeta}$) with the center $A_{\xi}$ (resp. $A_{\zeta}$), and of $X_{\epsilon}$ with the center $B$, are naturally biholomorphic to the irreducible component of the fiber product $X_{\xi}\times_{X_{\epsilon}}X_{\zeta}$ dominating $X_{\epsilon}$.
\end{thm}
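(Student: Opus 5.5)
The proof is local over each point of $B$, and the plan is to reduce it to the Guillemin--Sternberg local model. Since $B$ is finite and the three maps $f_{\xi,\epsilon}$, $f_{\zeta,\epsilon}$ and the blow-up of $X_{\epsilon}$ at $B$ are biholomorphisms away from $B$ (Theorem \ref{m1}), it suffices to work over a small Stein neighbourhood $U$ of a single point $[G\cdot b]\in B$. Outside $B$, all four spaces (the two blow-ups, $\mathrm{Bl}_{B}X_{\epsilon}$, and the dominant component $\widetilde{\Gamma}$ of $X_{\xi}\times_{X_{\epsilon}}X_{\zeta}$) are canonically identified with $X_{\epsilon}\setminus B$. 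Note that $\widetilde{\Gamma}$ is precisely the closure $\Gamma$ of $\mathrm{Graph}(\tilde f)$ constructed above. So we only need biholomorphisms over $U$ extending these identifications.

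\textbf{Local model.} By Proposition \ref{G-b} and quasi-freeness, $G_{b}=G^{0}_{b}\cong\mathbb{C}^{\ast}$, so $\Gamma_{b}$ is trivial. Apply the Holomorphic Slice Theorem at $b$. A neighbourhood of $G\cdot b$ in $X^{ss}_{\epsilon}$ is $G\times_{G_{b}}S$, where $S$ is a $G_{b}$-stable Stein slice, linearisable as $\mathbb{C}^{k}\oplus W^{+}\oplus W^{-}$. Here $\mathbb{C}^{\ast}$ acts trivially on $\mathbb{C}^{k}$ (the tangent directions of the fixed component modulo the orbit), with positive weights on $W^{+}$ and negative weights on $W^{-}$. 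Quasi-freeness forces all weights to be $\pm1$. Finiteness of $B$ means $B_{i}$ is a point, so $k=0$ after passing to the quotient.

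In this model:
\begin{itemize}
\item $X_{\epsilon}$ is locally the affine cone $(W^{+}\oplus W^{-})/\!\!/\mathbb{C}^{\ast}=\mathrm{Spec}$ of the invariants, which are generated by the products $w^{+}_{i}w^{-}_{j}$. This is the cone over the Segre embedding of $\mathbb{P}(W^{+})\times\mathbb{P}(W^{-})$.
\item By Proposition \ref{v=gw} and Theorem \ref{m2}, $X_{\xi}$ is locally $\bigl((W^{+}\setminus 0)\oplus W^{-}\bigr)/\mathbb{C}^{\ast}$, the total space of $\mathcal{O}(-1)^{\oplus\dim W^{-}}$ over $\mathbb{P}(W^{+})$, with $A_{\xi}=\mathbb{P}(W^{+})$ its zero section.
\item Symmetrically, $X_{\zeta}$ is the total space of $\mathcal{O}(-1)^{\oplus \dim W^{+}}$ over $\mathbb{P}(W^{-})$, with $A_{\zeta}=\mathbb{P}(W^{-})$.
\end{itemize}
Identifying $X_{\xi}$ and $X_{\zeta}$ with these models requires knowing that $X^{s}_{\xi}\cap S=(W^{+}\setminus0)\oplus W^{-}$. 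I would obtain this from Proposition \ref{v=gw} combined with the Hilbert--Mumford description (Proposition \ref{prop-ss}) applied on the slice, since $\xi$ and $\zeta$ lie on opposite sides of $F$.

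\textbf{Local computation.} In the model, each of $\mathrm{Bl}_{A_{\xi}}X_{\xi}$, $\mathrm{Bl}_{A_{\zeta}}X_{\zeta}$ and $\mathrm{Bl}_{0}X_{\epsilon}$ is the total space of $\mathcal{O}(-1,-1)$ over $\mathbb{P}(W^{+})\times\mathbb{P}(W^{-})$, with exceptional divisor the zero section. The fibre product $X_{\xi}\times_{X_{\epsilon}}X_{\zeta}$ has two kinds of components:
\begin{itemize}
\item the closure of the graph, which is again this line bundle $\mathcal{O}(-1,-1)$;
\item possibly the extra component $A_{\xi}\times A_{\zeta}=\mathbb{P}(W^{+})\times\mathbb{P}(W^{-})$ lying over $0$, which does not dominate.
\end{itemize}
I will verify the graph closure explicitly by writing points as pairs $([w^{+}],\,w^{+}\otimes w^{-})$ and checking that the maps to all four spaces are compatible with the identifications off the exceptional loci. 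Finally, gluing the local biholomorphisms with the global identification on the complement of $B$ gives the canonical isomorphisms; uniqueness of the extension follows from normality and density of the complement.

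\textbf{Main obstacle.} The hardest step is justifying the local model: that the slice linearisation really identifies $X_{\xi}$, $X_{\zeta}$ and $X_{\epsilon}$ near the fibres over $[G\cdot b]$ with the explicit quotients, including the stability sets on the slice, and that quasi-freeness gives weights exactly $\pm1$ in the normal directions. I would handle this by pulling everything back to the $G_{b}$-slice and using Theorem \ref{main-k-q}(i), which says that analytic Hilbert quotients are compatible with saturated open Stein subsets.
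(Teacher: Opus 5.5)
Your proof is correct, but it follows a genuinely different route from the paper's. The paper never writes down a local model. It blows up $U_i=X_{\epsilon}\setminus(B\setminus\{x_i\})$ at $x_i$ and $V_i=f^{-1}_{\xi,\epsilon}(U_i)$ along $A_{\xi,i}$, cites Hironaka--Rossi to identify the two blow-ups over $U_i$, and glues them into $\tilde X_\epsilon$ with maps $h$, $l$ to $X_\xi$, $X_\zeta$ over $X_\epsilon$. It then recognises $\tilde X_\epsilon$ inside $V=X_\xi\times_{X_\epsilon}X_\zeta$ formally:
\begin{itemize}
\item the fibre-product property gives $\sigma\colon\tilde X_\epsilon\to V$;
\item the universal property of the blow-up $h$ gives $\rho\colon V\to\tilde X_\epsilon$ with $\pi_1=h\circ\rho$;
\item and $\rho\circ\sigma=\mathrm{id}$.
\end{itemize}
That is shorter, but it rests on black boxes whose hypotheses are not checked there. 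In particular, $\rho$ exists only if $\pi_1^{-1}\mathscr{I}_{A_\xi}\cdot\mathscr{O}_V$ is invertible, which is stronger than $A_\xi\times_B A_\zeta$ being a hypersurface.

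Your slice-theorem reduction uses three inputs: $G_b\cong\mathbb{C}^*$; weights $\pm1$ by quasi-freeness; and no weight-zero directions, because the $G_b$-fixed part of the slice injects into the finite set $B$. It costs a stability analysis on the slice, but it is self-contained and yields more. In the chart $\{w^+_i\neq0\}\times\{w^-_l\neq0\}$ the fibre-product equations $a_kc_j=d_kb_j$ collapse to $c_j=c_lb_j$ and $d_k=a_kc_l$. Hence $V$ is smooth and $\pi_1^{-1}(A_\xi)=\{c_l=0\}$ is Cartier. All three blow-ups, and $V$ itself, are the total space of $\mathcal{O}(-1,-1)$ over $\mathbb{P}(W^+)\times\mathbb{P}(W^-)$. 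So there is no extra component: $A_\xi\times_B A_\zeta$ is the zero section, and under these hypotheses the fibre product is irreducible.

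The step you flag as the main obstacle closes as follows.
\begin{itemize}
\item Points with $w^+\neq0\neq w^-$ have closed free $G_b$-orbits. They are therefore $\Phi_\epsilon$-stable, hence $\Phi_\xi$-stable by Lemma \ref{ep-in}.
\item On $W^{+}\setminus0$ and $W^{-}\setminus0$, the one-parameter subgroup generated by $\pm v\in\mathfrak{t}_b$ has limit $b$. This gives $\mathrm{M}^{\Phi_\xi}\ge\pm\langle\xi-\epsilon,v\rangle/|v|$. Since $v$ is normal to the hyperplane of $F$ and $\xi$ lies off it, this bound is positive on one of the two sets, which is therefore $\Phi_\xi$-unstable. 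The other set is $\Phi_\zeta$-unstable.
\item The remaining set is entirely $\Phi_\xi$-stable by Proposition \ref{v=gw} and the open--closed argument in Theorem \ref{m2}.
\end{itemize}
One citation fix: compatibility of analytic Hilbert quotients with saturated open subsets is a general property of such quotients, not the content of Theorem \ref{main-k-q}(i).
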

\begin{proof}
Consider the proper modification $f_{\xi,\epsilon}:X_{\xi}\rightarrow X_{\epsilon}$.
Since the $T$-action is quasi-free, the singular locus of $X_{\epsilon}$, if nonempty, is equal to $B$, and therefore $X_{\epsilon}\setminus B$ is non-singular.
By assumption, $B$ is a finite set and we put $B=\{x_{1},\cdots,x_{k}\}$.
Due to Proposition \ref{iso-wb}, each fiber $A_{\xi,i}:=f^{-1}_{\xi,\epsilon}(x_{i})$ is biholomorphic to a complex projective space and $A_{\xi}$ is a disjoint union of complex projective spaces of different dimensions.
For every $1\leq i\leq k$, denote by $B_{i}=B\setminus\{x_{i}\}$ and $U_{i}=X_{\epsilon}\setminus B_{i}$.
Then $U_{i}$ is an open neighborhood of $x_{i}$ in $X_{\epsilon}$ such that $B\cap U_{i}$ contains $x_{i}$ only and $U_{i}\setminus\{x_{i}\}$ is non-singular.
Set $V_{i}=f^{-1}_{\xi,\epsilon}(U_{i})$.
The modification $f_{\xi,\epsilon}$ induces an biholomorphism of $V_{i}\setminus f^{-1}_{\xi,\epsilon}(x_{i})$ to $U_{i}\setminus\{x_{i}\}$.
Note that $X_{\xi}$ and $X_{\epsilon}$ are reduced.
Let $g_{i}:\tilde{U}_{i}\rightarrow U_{i}$ be the blow-up of $U_{i}$ with the center $\{x_{i}\}$ and $h_{i}:\tilde{V}_{i}\rightarrow V_{i}$ be the blow-up of $V_{i}$ with the center $A_{\xi, i}$.
From \cite[Theorem 1]{HR64} due to Hironaka--Rossi, $\tilde{U}_{i}$ is non-singular, and $\tilde{V}_{i}$ is biholomorphic to $\tilde{U}_{i}$.
Particularly, via identifying $\tilde{V}_{i}$ with $\tilde{U}_{i}$, the diagram
\begin{equation}\label{tri-i}
\vcenter{
\xymatrix@C=1.5cm{
                & \tilde{U}_{i}\ar[dl]_{h_{i}} \ar[dr]^{g_{i}}             \\
 V_{i}  \ar[rr]^{f_{\xi,\epsilon}|_{V_{i}}} & &     U_{i}        }
 }
\end{equation}
commutes.
Observe that $U_{i}\cap U_{j}\neq\emptyset$ for any $j\neq i$, and there exists a unique biholomorphism
$$
l_{ij}:g^{-1}_{i}(U_{i}\cap U_{j})\stackrel{\simeq}\longrightarrow g^{-1}_{j}(U_{i}\cap U_{j})
$$
such that $g_{i}=g_{j}\circ l_{ij}$ and $h_{i}=h_{j}\circ l_{ij}$ on $g^{-1}_{i}(U_{i}\cap U_{j})$.
Piecing together $\{\tilde{U}_{i}\}_{1\leq i\leq k}$ by means of the biholomorphisms $\{l_{ij}\}$ derives a compact complex manifold $\tilde{X}_{\epsilon}$ and a holomorphic map $g:\tilde{X}_{\epsilon}\rightarrow X_{\epsilon}$.
The commutativity of \eqref{tri-i} implies
$h^{-1}_{i}(V_{i}\cap V_{j})=g^{-1}_{i}(U_{i}\cap U_{j})$ and
$h^{-1}_{j}(V_{i}\cap V_{j})=g^{-1}_{j}(U_{i}\cap U_{j})$.
As a result, $\{h_{i}\}_{1\leq i\leq k}$ induces a holomorphic map $h:\tilde{X}_{\epsilon}\rightarrow X_{\xi}$ satisfying $g=f_{\xi,\epsilon}\circ h$.
Actually, $g:\tilde{X}_{\epsilon}=\mathrm{Bl}_{B}X_{\epsilon}\rightarrow X_{\epsilon}$ is the blow-up of $X_{\epsilon}$ at $B$ and $h:\tilde{X}_{\epsilon}\cong\mathrm{Bl}_{A_{\xi}}X_{\xi}\rightarrow X_{\xi}$ is the blow-up of $X_{\xi}$ at $A_{\xi}$.
Similarly, we can show that the blow-up of $X_{\zeta}$ with the center $A_{\zeta}$ is naturally biholomorphic to $\tilde{X}_{\epsilon}$ and therefore we get a commutative diagram:
\begin{equation*}
\vcenter{
\xymatrix@C=1.5cm{
  \tilde{X}_{\epsilon} \ar[d]_{h}\ar[dr]^{g} \ar[r]^{l} & X_{\zeta} \ar[d]^{f_{\zeta,\epsilon}} \\
  X_{\xi} \ar[r]^{f_{\xi,\epsilon}} & X_{\epsilon}  }
  }
\end{equation*}
where $l:\tilde{X}_{\epsilon}\cong\mathrm{Bl}_{A_{\zeta}}X_{\zeta}\rightarrow X_{\zeta}$ is the blow-up morphism.

It remains to verify that the blow-up of $X_{\xi}$ at $A_{\xi}$ is biholomorphic to the irreducible component of the fiber product $V=X_{\xi}\times_{X_{\epsilon}}X_{\zeta}$ which dominates $X_{\epsilon}$.
Due to the universal property of the fiber product, there exists a unique holomorphic map $\sigma:\tilde{X}_{\epsilon}\rightarrow V$ such that the diagram
\begin{equation}\label{univ-f-b}
\vcenter{
\xymatrix@C=1.5cm{
  \tilde{X}_{\epsilon} \ar@/_/[ddr]_{h} \ar@/^/[drr]^{l}
    \ar@{.>}[dr]|-{\sigma}                   \\
   & V \ar[d]^{\pi_{1}} \ar[r]_{\pi_{2}}
                      & X_{\zeta} \ar[d]_{f_{\zeta,\epsilon}}    \\
   & X_{\xi} \ar[r]^{f_{\xi,\epsilon}}     & X_{\epsilon}               }
}
\end{equation}
commutes.
Put $A_{\zeta,i}=f^{-1}_{\zeta,\epsilon}(x_{i})$.
Since $f_{\xi,\epsilon}$ and $f_{\zeta,\epsilon}$ are proper modifications with the common center $B$,
we get
$$
\pi^{-1}_{1}(A_{\xi})=A_{\xi}\times_{B}A_{\zeta}=\pi^{-1}_{2}(A_{\zeta})
$$
and the isomorphisms
$$
X_{\xi}\setminus A_{\xi}\cong V\setminus(A_{\xi}\times_{B}A_{\zeta})
\cong X_{\zeta}\setminus A_{\zeta}
$$
under the restrictions of $\pi_{1}$ and $\pi_{2}$ to $V\setminus(A_{\xi}\times_{B}A_{\zeta})$.
As a result, the fiber product $V$ and $X_{\epsilon}$ have the same dimension.
Note that the center $B=\{x_{1},\cdots, x_{k}\}$ is a finite set.
It follows from Theorem \ref{m2} that
$$
A_{\xi}\times_{B}A_{\zeta}=\bigcup_{i=1}^{k}\bigl(A_{\xi, i}\times A_{\zeta,i}\bigr)
$$
is a hypersurface and hence there exists a unique morphism $\rho:V\rightarrow \tilde{X}_{\epsilon}$ such that $\pi_{1}=h\circ \rho$.
By the commutativity of \eqref{univ-f-b}, we obtain
\begin{equation*}
h\circ(\rho\circ\sigma)=(h\circ\rho)\circ\sigma=\pi_{1}\circ\sigma=h.
\end{equation*}
Because of the uniqueness of a holomorphic morphism $\varkappa:\tilde{X}_{\epsilon}\rightarrow\tilde{X}_{\epsilon}$ such that $h\circ\varkappa=h$, we must have $\rho\circ\sigma=\mathrm{id}_{\tilde{X}_{\epsilon}}$.
This implies that $\sigma:\tilde{X}_{\epsilon}\rightarrow V$ is an imbedding.
Observe that $\tilde{X}_{\epsilon}$ is irreducible.
Consequently, we are led to the conclusion that $\tilde{X}_{\epsilon}$ is biholomorphic to the irreducible component of $V$ which dominates $X_{\epsilon}$ and this completes the proof.
\end{proof}

Let $\pi_{\epsilon,1}:X_{\epsilon, 1}\rightarrow X_{\epsilon, 0}=X_{\epsilon}$ be the blow-up of $X_{\epsilon}$ at $x_{1}$.
Under the biholomorphism
$$
X_{\epsilon,1}\setminus\pi^{-1}_{\epsilon,1}(x_{1})\stackrel{\simeq}\longrightarrow X_{\epsilon,0}\setminus\{x_{1}\}
$$
induced by $\pi_{\epsilon, 1}$, the point $x_{2}$ can be considered as a point of $X_{\epsilon, 1}$.
Denote by $\pi_{\epsilon, 2}:X_{\epsilon,2}\rightarrow X_{\epsilon,1}$ the blow-up of $X_{\epsilon,1}$ at the point $x_{2}$.
Likewise, for every $3\leq i\leq k$, let $\pi_{\epsilon,i}:X_{\epsilon,i}\rightarrow X_{\epsilon,i-1}$ be the blow-up of $X_{\epsilon, i-1}$ at $x_{i}$.
By definition, $g$ is equal to the composition of the blow-ups $\pi_{\epsilon, 1}\circ\cdots\circ\pi_{\epsilon, k}$.
Observe that $A_{\xi}$ is the disjoint union of complex projective spaces $A_{\xi, 1},\cdots, A_{\xi, k}$. Let
$\varpi_{1, i}:X_{\xi, 1}\rightarrow X_{\xi, 1}=X_{\xi}$ be the blow-up of $X_{\xi, 0}$ centered at $A_{\xi, 1}$ and $\varpi_{\xi, i}:X_{\xi, i}\rightarrow X_{\xi, i-1}$ be the blow-up of $X_{\xi, i-1}$ with the center $A_{\xi, i}$ for every $2\leq i\leq k$.
From the construction of $h$, we obtain $h=\varpi_{\xi,1}\circ\cdots\circ\varpi_{\xi, k}$.
By Theorem \ref{qusi-free-wc}, the bimeromorphic map $f:X_{\xi}\dashrightarrow X_{\zeta}$ is factored into a succession of $k$-blow-ups with smooth centers followed by a succession of $k$-blow-downs with smooth centers, and this gives rise to an explicit bimeromorphic map between compact K\"{a}hler manifolds for which the Strong Factorization Conjecture holds.

\begin{rem}
Comparing to \cite[Theorem 1.9]{Th96} on the variation of GIT-quotients, in some sense, Theorem \ref{qusi-free-wc} is an analytic counterpart of \cite[Theorem 1.9]{Th96} under the condition of quasi-free actions.
\end{rem}
%============================
\section{Comparison of K\"{a}hler classes}\label{var-cla}

Let $(X, ds^{2}, T^{\mathbb{C}}, \Phi)$ be a compact K\"{a}hler Hamiltonian $T$-manifold.
Denote by $\omega$ the K\"{a}hler form of $ds^{2}$ and $\check{\omega}$ the \v{C}ech cohomology class corresponding to $[\omega]$ under the de Rham--\v{C}ech isomorphism $H^{2}_{DR}(X)\cong\check{H}^{2}(X,\underline{\mathbb{R}}_{X})$.
For each $a\in\Phi(X)$, set $Z_{a}=\Phi^{-1}(a)$.
Due to Theorem \ref{main-k-q}, we shall identify $X_{a}=Z_{a}/T$ with $X^{ss}_{a}/\!\!/T^{\mathbb{C}}$ as complex analytic spaces.
Let $\imath_{a}:Z_{a}\hookrightarrow X$ be the inclusion and $\pi_{a}:Z_{a}\rightarrow X_{a}$ the quotient map.
In particular, there exists a unique K\"{a}hler metric (in the sense of Definition \ref{kahler space}) $\kappa_{a}$ on $X_{a}$ induced from $ds^{2}$, such that the corresponding K\"{a}hler class $\check{\omega}_{a}=c_{1}(\kappa_{a})\in\check{H}^{2}(X_{a}, \underline{\mathbb{R}}_{X_{a}})$ satisfies the condition $\pi^{\ast}_{a}\check{\omega}_{a}=\imath^{\ast}_{a}\check{\omega}$, see \cite[Corollary 4]{HS20} and \cite[Proposition 4.2]{Ya22}.
According to \cite[Theorem 1.1]{Ya22}, the K\"{a}hler class $\check{\omega}_{a}$ actually gives rise to a \emph{cohomologically symplectic structure} on $X_{a}$ in the following sense:
\begin{defn}[{\cite[Definition 5.1]{Ya22}}]
Let $(M,\mathscr{S}_{M},\mathscr{C}^{\infty}_{M})$ be a compact reduced differentiable stratified space of dimension $2n$.
By a cohomologically symplectic structure on $M$, we mean a \v{C}ech cohomology class $\check{\omega}\in\check{H}^{2}(M, \underline{\mathbb{R}}_{M})$ such that $\check{\omega}^{n}$ is nonzero in $\check{H}^{2n}(M, \underline{\mathbb{R}}_{M})$.
\end{defn}
This implies that the K\"{a}hler reduction of $(X, ds^{2}, T^{\mathbb{C}}, \Phi)$ produces a family of cohomologically symplectic stratified spaces:
$$
\{(X_{a}, \check{\omega}_{a})\,|\,a\in\Phi(X)\}.
$$
Inspired by \cite{DH82} and \cite{GS89}, we consider the wall-crossing problem of the cohomologically symplectic structures on K\"{a}hler quotients.
With the assumptions (\textbf{A1})--(\textbf{A3}), we get three cohomologically symplectic stratified spaces $(X_{\xi}, \check{\omega}_{\xi})$, $(X_{\epsilon}, \check{\omega}_{\epsilon})$ and $(X_{\zeta}, \check{\omega}_{\zeta})$, together with two modifications
$f_{\xi,\epsilon}:X_{\xi}\rightarrow X_{\epsilon}$ and $f_{\zeta,\epsilon}:X_{\zeta}\rightarrow X_{\epsilon}$, see Figure \ref{fig-2} below.
\begin{figure*}[h]
\centering
\begin{tikzpicture}[scale=0.4]
  \shade[ball color=black] (3,8) circle (0.1);
    \shade[ball color=black] (11,8) circle (0.1);
     \shade[ball color=black] (7,8) circle (0.1);
\draw  (1.2,7.2)node[right] {$\xi$};
\draw  (10.8,7.2)node[right] {$\zeta$};
\draw  (5.8,7.2)node[right] {$\epsilon$};
\draw  (0,14)node[right] {$(X_{\xi}, \check{\omega}_{\xi})$};
\draw  (6.5,14)node[right] {$(X_{\epsilon}, \check{\omega}_{\epsilon})$};
\draw  (12.5,14)node[right] {$(X_{\zeta}, \check{\omega}_{\zeta})$};
\draw  (3.5,5.2)node[right] {$\Delta_{-}$};
\draw  (8,5.2)node[right] {$\Delta_{+}$};
\draw[-latex,dashed,red](3,8)--(1.5,13.2);
\draw[-latex,dashed,red](11,8)--(13,13.2);
\draw[-latex,dashed,red](7,8)--(7.8,13.2);
\draw[-latex,blue] (3,8) --(7,8);
\draw[-latex,blue] (11,8)--(7,8);
\draw[black](7,2)--(7,12);
\draw (0,7) --(7,12)--(14,7)--(7,2)--(0,7);
\fill[dashed,blue, opacity=0.1] (0,7) --(7,12)--(14,7)--(7,2)--(0,7);
\end{tikzpicture}
\caption{}
\label{fig-2}
\end{figure*}
The pullback of locally constant $\mathbb{R}$-valued functions gives rise to a morphism of sheaves
\begin{equation}\label{con1-iso}
f^{\natural}_{\xi,\epsilon}:\underline{\mathbb{R}}_{X_{\epsilon}}\longrightarrow
(f_{\xi,\epsilon})_{\ast}\underline{\mathbb{R}}_{X_{\xi}}.
\end{equation}
Since the map $f_{\xi,\epsilon}$ is surjective and closed\footnote{Because $X_{\xi}$ is compact and $X_{\epsilon}$ is Hausdorff, the closed map lemma shows that $f_{\xi,\epsilon}$ is a closed map.}, $X_{\epsilon}$ has the quotient topology determined by $f_{\xi,\epsilon}$.
Observe that the fibers of $f_{\xi,\epsilon}$ is connected, see Property \ref{fiber-conn}.
It follows that the morphism \eqref{con1-iso} is an isomorphism (cf. \cite[Example 3.44]{Wed16}).
Likewise, the modification $f_{\zeta,\epsilon}:X_{\zeta}\rightarrow X_{\epsilon}$ also yields an isomorphism of sheaves
\begin{equation}\label{con2-iso}
f^{\natural}_{\zeta,\epsilon}:\underline{\mathbb{R}}_{X_{\epsilon}}
\stackrel{\simeq}\longrightarrow
(f_{\zeta,\epsilon})_{\ast}\underline{\mathbb{R}}_{X_{\zeta}}.
\end{equation}
As a result, we have the following isomorphisms of \v{C}ech cohomology groups:
\begin{equation}\label{cech-iso}
\check{H}^{\ast}(X_{\epsilon}, (f_{\xi,\epsilon})_{\ast}\underline{\mathbb{R}}_{X_{\xi}})\cong
\check{H}^{\ast}(X_{\epsilon}, \underline{\mathbb{R}}_{X_{\epsilon}})\cong
\check{H}^{\ast}(X_{\epsilon}, (f_{\zeta,\epsilon})_{\ast}\underline{\mathbb{R}}_{X_{\zeta}}).
\end{equation}

Consider the Leray spectral sequence corresponding to $f_{\xi,\epsilon}$, denoted by $\{E_{r}, d_{r}\}$, which has the second term
$$
E^{p,q}_{2}=\check{H}^{p}(X_{\epsilon}, R^{q}(f_{\xi,\epsilon})_{\ast}\underline{\mathbb{R}}_{X_{\xi}})
$$
and converges to $\check{H}^{\ast}(X_{\xi},\underline{\mathbb{R}}_{X_{\xi}})$.
The edge homomorphism
$$
E^{2,0}_{2}=\check{H}^{2}(X_{\epsilon}, (f_{\xi,\epsilon})_{\ast}\underline{\mathbb{R}}_{X_{\xi}})\twoheadrightarrow
E^{2,0}_{\infty}\hookrightarrow
\check{H}^{2}(X_{\xi},\underline{\mathbb{R}}_{X_{\xi}})
$$
and the first isomorphism in \eqref{cech-iso} derives a canonical morphism
\begin{equation}\label{cech-2-xi}
f_{\xi,\epsilon}^{\ast}:
\check{H}^{2}(X_{\epsilon},\underline{\mathbb{R}}_{X_{\epsilon}})
\longrightarrow
\check{H}^{2}(X_{\xi},\underline{\mathbb{R}}_{X_{\xi}}).
\end{equation}
Similarly, from \eqref{con2-iso} and the second isomorphism in \eqref{cech-iso}, the modification $f_{\zeta,\epsilon}$ also induces a canonical morphism
\begin{equation}\label{cech-2-zeta}
f_{\zeta,\epsilon}^{\ast}:
\check{H}^{2}(X_{\epsilon},\underline{\mathbb{R}}_{X_{\epsilon}})
\longrightarrow
\check{H}^{2}(X_{\zeta},\underline{\mathbb{R}}_{X_{\zeta}}).
\end{equation}

Particularly, we have the following:
\begin{lem}\label{key-lem}
Both the morphisms $f_{\xi,\epsilon}^{\ast}$ and $f_{\zeta,\epsilon}^{\ast}$ are injective.
\end{lem}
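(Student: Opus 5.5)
The plan is to read off injectivity from the Leray spectral sequence used to define \eqref{cech-2-xi}, and to handle only $f_{\xi,\epsilon}$, since the case of $f_{\zeta,\epsilon}$ is word for word the same with $(W^{-}_{b},\zeta)$ in place of $(W^{+}_{b},\xi)$. By construction, $f^{\ast}_{\xi,\epsilon}$ is the composite
$$
\check{H}^{2}(X_{\epsilon},\underline{\mathbb{R}}_{X_{\epsilon}})\cong E^{2,0}_{2}\twoheadrightarrow E^{2,0}_{\infty}\hookrightarrow\check{H}^{2}(X_{\xi},\underline{\mathbb{R}}_{X_{\xi}}).
$$
The first arrow is an isomorphism by \eqref{con1-iso}, and the last is injective. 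So it suffices to show $E^{2,0}_{2}=E^{2,0}_{\infty}$. The only differential that can hit $E^{2,0}_{r}$ for $r\geq2$ is $d_{2}:E^{0,1}_{2}\rightarrow E^{2,0}_{2}$; all others would originate at $E^{2-r,r-1}_{r}$ with $2-r<0$. It is therefore enough to prove $R^{1}(f_{\xi,\epsilon})_{\ast}\underline{\mathbb{R}}_{X_{\xi}}=0$, which gives $E^{0,1}_{2}=0$.

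To prove this vanishing I would compute the stalks. Since $f_{\xi,\epsilon}$ is proper between locally compact Hausdorff spaces, the proper base change theorem gives
$$
\bigl(R^{1}(f_{\xi,\epsilon})_{\ast}\underline{\mathbb{R}}\bigr)_{y}\cong\check{H}^{1}(f^{-1}_{\xi,\epsilon}(y),\mathbb{R}).
$$
For $y\notin B$ the fiber is a point by Theorem \ref{m1}, so the stalk vanishes. For $y=[G\cdot b]\in B$, Theorem \ref{m2}(i) identifies the fiber with $\widetilde{\mathbb{P}}(\textbf{a})/\Gamma_{b}$. For real coefficients and a finite group, $H^{\ast}(Y/\Gamma;\mathbb{R})\cong H^{\ast}(Y;\mathbb{R})^{\Gamma}$. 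Moreover, a weighted projective space has the real cohomology of $\mathbb{CP}^{m}$; it is the quotient of $\mathbb{CP}^{m}$ by a finite group $\prod\mu_{a_{i}}$, so the same averaging argument applies. Hence $H^{1}$ of the fiber is zero, and all odd cohomology vanishes.

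The main obstacle is making sure that Čech cohomology of the constant sheaf agrees with singular cohomology on these spaces, and that the spectral sequence and base change are legitimate in the Čech setting. Both $X_{\xi}$ and each fiber are compact triangulable analytic spaces, hence paracompact and locally contractible. On such spaces Čech, sheaf and singular cohomology with $\mathbb{R}$ coefficients coincide. This justifies both the stalk computation and the transfer argument for the finite quotients.
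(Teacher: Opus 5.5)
Your proposal is correct and follows the paper's proof essentially step by step: you reduce injectivity to the vanishing of $d_{2}\colon E^{0,1}_{2}\to E^{2,0}_{2}$ (the paper phrases this via the five-term exact sequence), prove $R^{1}(f_{\xi,\epsilon})_{\ast}\underline{\mathbb{R}}_{X_{\xi}}=0$ stalkwise by proper base change, and then use Theorem \ref{m2} together with $H^{1}(\widetilde{\mathbb{P}}/\Gamma;\mathbb{R})\cong H^{1}(\widetilde{\mathbb{P}};\mathbb{R})^{\Gamma}=0$. The only cosmetic difference is that you get $H^{1}(\widetilde{\mathbb{P}}(\textbf{a});\mathbb{R})=0$ by writing the weighted projective space as $\mathbb{CP}^{m}/\prod_{i}\mu_{a_{i}}$ and averaging once more, whereas the paper cites Dolgachev for this vanishing and Grothendieck's theorem on $\Gamma$-sheaves for the passage to invariants.
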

\begin{proof}
Consider the second term of the Leray spectral sequence $\{E_{r}, d_{r}\}$ associated to the morphism $f_{\xi,\epsilon}$ and the sheaf $\underline{\mathbb{R}}_{X_{\xi}}$.
Let $(\mathscr{A}^{[\bullet]}, d)$ be the simplicial flabby resolution of $\underline{\mathbb{R}}_{X_{\xi}}$ which yields a complex of sheaves on $X_{\epsilon}$ denoted by $\mathscr{L}^{\bullet}=(f_{\xi,\epsilon})_{\ast}\mathscr{A}^{[\bullet]}$.
Then we can associate to $\mathscr{L}^{\bullet}$ a double complex of groups
$$
K^{p,q}_{\mathscr{L}}=(\mathscr{L}^{q})^{[p]}(X_{\epsilon}),
$$
where $p,q\geq0$.
The Leray spectral sequence $\{E_{r}, d_{r}\}$ can be reformulated as the spectral sequence of the double complex $K^{\bullet,\bullet}_{\mathscr{L}}$, see \cite[\S 13.B]{Dem12}.
According to \cite[Theorem 11.8]{Dem12}, there exists an exact sequence:
\begin{equation}\label{5-ex}
\xymatrix@C=0.7cm{
  0 \ar[r] & E^{1,0}_{2} \ar[r]^{} & \check{H}^{1}(X_{\xi},\underline{\mathbb{R}}_{X_{\xi}})
   \ar[r]^{} & E^{0,1}_{2} \ar[r]^{d_{2}} & E^{2,0}_{2} \ar[r]^{} &
   \check{H}^{2}(X_{\xi},\underline{\mathbb{R}}_{X_{\xi}}) &  }
\end{equation}
where the non indicated arrows are edge homomorphisms.
From the definition of $f^{\ast}_{\xi,\epsilon}$ and the exactness of \eqref{5-ex},
$f^{\ast}_{\xi,\epsilon}$ is injective if and only if the image of
$$
E^{0,1}_{2}=
\check{H}^{0}(X_{\epsilon}, R^{1}(f_{\xi,\epsilon})_{\ast}\underline{\mathbb{R}}_{X_{\xi}})
$$
under $d_{2}$ is zero.
We claim that $R^{1}(f_{\xi,\epsilon})_{\ast}\underline{\mathbb{R}}_{X_{\xi}})$ is vanishing.
Recall that $f_{\xi,\epsilon}:X_{\xi}\rightarrow X_{\epsilon}$ is a proper modification with the center $B$.
It suffices to show that the stalk of $R^{1}(f_{\xi,\epsilon})_{\ast}\underline{\mathbb{R}}_{X_{\xi}}$ at each point $x=[G\cdot b]\in B$ is zero.
By definition, we have
$$
\bigl(R^{1}(f_{\xi,\epsilon})_{\ast}\underline{\mathbb{R}}_{X_{\xi}}\bigr)_{x}=
\varinjlim_{x\in U}\check{H}^{1}(f^{-1}_{\xi,\epsilon}(U),\underline{\mathbb{R}}_{X_{\xi}})
$$
where $U$ runs over all open neighborhoods of $x$ in $X_{\epsilon}$.
Put $X_{\xi,x}=f^{-1}_{\xi,\epsilon}(x)$.
As $f_{\xi,\epsilon}:X_{\xi}\rightarrow X_{\epsilon}$ is a proper map of compact spaces, applying \cite[Chapter IV, Theorem 9.10]{Dem12} to $f_{\xi,\epsilon}$, we obtain
$$
\bigl(R^{1}(f_{\xi,\epsilon})_{\ast}\underline{\mathbb{R}}_{X_{\xi}}\bigr)_{x}=
\check{H}^{1}(X_{\xi,x},i^{-1}_{x}\underline{\mathbb{R}}_{X_{\xi}}),
$$
where $i_{x}:X_{\xi,x}\hookrightarrow X_{\xi}$ is the inclusion map.
Note that $i^{-1}_{x}\underline{\mathbb{R}}_{X_{\xi}}=\underline{\mathbb{R}}_{X_{\xi,x}}$ (cf. \cite[Example 3.56]{Wed16}).
So we have
$$
\bigl(R^{1}(f_{\xi,\epsilon})_{\ast}\underline{\mathbb{R}}_{X_{\xi}}\bigr)_{x}=
\check{H}^{1}(X_{\xi,x},\underline{\mathbb{R}}_{X_{\xi,x}}).
$$
According to Theorem \ref{m2}, the fiber $X_{\xi,x}$ is biholomorphic to the quotient of a weighted projective space by a finite group $\widetilde{\textbf{P}}/\Gamma$,
where $\Gamma$ is determined by the stabilizer $G_{b}$.
Let $\pi:\widetilde{\textbf{P}}\rightarrow Q:=\widetilde{\textbf{P}}/\Gamma$ be the quotient map.
To end the proof, we need to verify
$
\check{H}^{1}(Q,\underline{\mathbb{R}}_{Q})=0.
$
Observe that the constant sheaf $\underline{\mathbb{R}}_{\widetilde{\textbf{P}}}$ can be regarded as an abelian $\Gamma$-sheaf on $\widetilde{\textbf{P}}$ in the sense of \cite[\S\,5.1]{Gro57}, and the multiplication by the order of $\Gamma$ is an automorphism of $\underline{\mathbb{R}}_{\widetilde{\textbf{P}}}$.
Particularly, there is a natural sheaf morphism induced by the pullback of locally constant functions
$$
\pi^{\natural}:\underline{\mathbb{R}}_{Q}\longrightarrow
\pi^{\Gamma}_{\ast}\underline{\mathbb{R}}_{\widetilde{\textbf{P}}},
$$
where $\pi^{\Gamma}_{\ast}$ is the $\Gamma$-invariant direct image functor.
A direct checking shows that $\pi^{\natural}$ is an isomorphism and therefore we have
\begin{equation}\label{iso-y-z-1}
\check{H}^{1}(Q,\underline{\mathbb{R}}_{Q})\cong
\check{H}^{1}(Q,\pi^{\Gamma}_{\ast}\underline{\mathbb{R}}_{\widetilde{\textbf{P}}}).
\end{equation}
By a result by Grothendieck \cite[Page 16, Grothendieck Theorem]{Ala97}, there exists an isomorphism
\begin{equation}\label{iso-y-z-2}
\check{H}^{1}(Q,\pi^{\Gamma}_{\ast}\underline{\mathbb{R}}_{\widetilde{\textbf{P}}})\cong
\check{H}^{1}(\widetilde{\textbf{P}},
\underline{\mathbb{R}}_{\widetilde{\textbf{P}}})^{\Gamma}.
\end{equation}
On account of \cite[Corollary 2.3.6]{Dol82} due to Dolgachev, we have the following vanishing result
\begin{equation}\label{iso-y-z-3}
\check{H}^{1}(\widetilde{\textbf{P}},
\underline{\mathbb{R}}_{\widetilde{\textbf{P}}})=0.
\end{equation}
From \eqref{iso-y-z-1}-\eqref{iso-y-z-3}, we are led to the conclusion $\check{H}^{1}(Q,\underline{\mathbb{R}}_{Q})=0$.
This implies that \eqref{cech-2-xi} is injective and so is the morphism \eqref{cech-2-zeta} by the same argument.
\end{proof}

Denote by $\Gamma_{\xi}<T$ the finite subgroup generated by stabilizers $T_{x}$ for all $x\in Z_{\xi}$.
Put $Y_{\xi}=Z_{\xi}/\Gamma_{\xi}$.
Then there exists a commutative diagram
\begin{equation*}\label{zyx}
\vcenter{
\xymatrix@C=1cm{
  Z_{\xi} \ar[dr]_{\pi_{\xi}} \ar[r]^{r_{\xi}}
                & Y_{\xi} \ar[d]^{q_{\xi}}  \\
                & X_{\xi}             }
                }
\end{equation*}
where $r_{\xi}$ is a finite brached covering and $q_{\xi}:Y_{\xi}\rightarrow X_{\xi}$ is a principal $T/\Gamma_{\xi}$-bundle.
Put $\Lambda_{\xi}=\{v\in\mathfrak{t}\,|\, \exp(v)\in\Gamma_{\xi}\}$.
Then the principal $T/\Gamma_{\xi}$-bundle $q_{\xi}:Y_{\xi}\rightarrow X_{\xi}$ is characterized by a topological invariant called the Chern class $c_{\xi}\in\check{H}^{2}(X_{\xi}, \Lambda_{\xi})$ (cf. \cite[\S\,1]{DH82}).
For any $a\in\mathrm{int}\,\Delta_{-}$, there exists a canonical identification of $X_{a}$ with $X_{\xi}$ and thus
$
\check{H}^{\ast}(X_{a}, \underline{\mathbb{R}}_{X_{a}})\cong\check{H}^{\ast}(X_{\xi}, \underline{\mathbb{R}}_{X_{\xi}}),
$
see \cite{DH82}.
In particular, we have $c_{a}=c_{\xi}$.
On account of the Duistermaat--Heckman theorem \cite[Theorem 1.1]{DH82}, for any $a\in\mathrm{int}\,\Delta_{-}$, the cohomology class
$
\check{\omega}_{a}\in\check{H}^{2}(X_{a}, \underline{\mathbb{R}}_{X_{a}})\cong\check{H}^{2}(X_{\xi}, \underline{\mathbb{R}}_{X_{\xi}})
$
satisfies the equality\footnote{In this paper, we assume that the moment map satisfies the property $d\Phi^{\xi}=\iota_{\xi_{X}}\omega$ which is different from \cite{DH82}.
}
\begin{equation}\label{dh-xi}
\check{\omega}_{\xi}=\check{\omega}_{a}-\langle c_{\xi}, \xi-a\rangle
\end{equation}
in $\check{H}^{2}(X_{\xi}, \underline{\mathbb{R}}_{X_{\xi}})$.
Similarly, we have
\begin{equation}\label{dh-zeta}
\check{\omega}_{\zeta}=\check{\omega}_{b}-\langle c_{\zeta}, \zeta-b\rangle,
\end{equation}
for any $b\in\mathrm{int}\,\Delta_{+}$.

Using the morphisms \eqref{cech-2-xi} and \eqref{cech-2-zeta}, we can extend the Duistermaat--Heckman formulae to the critical value $\epsilon$.
\begin{lem}\label{limit-dh}
Let $a$ and $b$ trend to $\epsilon$, we have the limit forms of \eqref{dh-xi} and \eqref{dh-zeta}, i.e., the formulae
$$
\check{\omega}_{\xi}=f^{\ast}_{\xi,\epsilon}(\check{\omega}_{\epsilon})-\langle c_{\xi}, \xi-\epsilon\rangle
$$
and
$$
\check{\omega}_{\zeta}=f^{\ast}_{\zeta,\epsilon}(\check{\omega}_{\epsilon})-\langle c_{\zeta}, \zeta-\epsilon\rangle
$$
hold in $\check{H}^{2}(X_{\xi}, \underline{\mathbb{R}}_{X_{\xi}})$ and $\check{H}^{2}(X_{\xi}, \underline{\mathbb{R}}_{X_{\xi}})$, respectively.
\end{lem}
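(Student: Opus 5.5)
The plan is to reduce to a pullback identity at the level of the level sets, and use the injectivity/functoriality established above to pass from $\epsilon$ to nearby regular values. By symmetry I treat only $\xi$. Since $\check{\omega}_{a}$ does not depend on where $a$ sits on the segment $[\xi,\epsilon)$ except through the linear term, the statement is equivalent to the identity
\begin{equation*}
\check{\omega}_{a}=f^{\ast}_{\xi,\epsilon}(\check{\omega}_{\epsilon})+\langle c_{\xi}, a-\epsilon\rangle \quad\text{for } a\in[\xi,\epsilon)\subset l,
\end{equation*}
in $\check{H}^{2}(X_{\xi},\underline{\mathbb{R}}_{X_{\xi}})$. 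Substituting into \eqref{dh-xi} gives the claim.

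First, I would use a Duistermaat--Heckman normal form near $Z_{\epsilon}$. By the equivariant coisotropic embedding / local normal form for the moment map near $Z_{\epsilon}$ (in the stratified sense of Sjamaar--Lerman), for $a$ close to $\epsilon$ in $\mathrm{int}\,\Delta_{-}$ there is a $T$-equivariant retraction of a neighbourhood $N$ of $Z_{\epsilon}$ onto $Z_{\epsilon}$. Composing, this retraction yields a continuous map $r_{a}:X_{a}\to X_{\epsilon}$.

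The key point is that $r_{a}$ is homotopic to $f_{\xi,\epsilon}$ under the identification $X_{a}\cong X_{\xi}$. Concretely, the gradient flow of $-\|\Phi-\epsilon\|^{2}$ restricted to $X^{s}_{\xi}\subset X^{ss}_{\epsilon}$ sends a point of $Z_{a}$ to the limit point on $Z_{\epsilon}$ of its $G$-orbit closure. This is precisely $f_{\xi,\epsilon}$ on the Kirwan/Sjamaar side, and it gives a homotopy.

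Next, I would compare classes on $Z_{a}$. On $Z_{a}$ we have $\pi_{a}^{\ast}\check{\omega}_{a}=\imath_{a}^{\ast}\check{\omega}$. Moreover, the restriction of $[\omega]$ to the thin neighbourhood $N\supset Z_{a}\cup Z_{\epsilon}$ equals the pullback of $\imath_{\epsilon}^{\ast}[\omega]$ under the retraction, since $N$ retracts onto $Z_{\epsilon}$. Hence
\begin{equation*}
\pi_{a}^{\ast}\bigl(\check{\omega}_{a}-r_{a}^{\ast}\check{\omega}_{\epsilon}\bigr)=0 .
\end{equation*}
Consequently the difference $\check{\omega}_{a}-r_{a}^{\ast}\check{\omega}_{\epsilon}$ lies in the kernel of $\pi_{a}^{\ast}$. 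After passing to $Y_{a}=Z_{a}/\Gamma_{\xi}$ with real coefficients, the Gysin sequence of the principal $T/\Gamma_{\xi}$-bundle shows this kernel is spanned by the components of $c_{\xi}$. The difference is therefore $\langle c_{\xi},\mu(a)\rangle$ for some $\mu(a)\in\mathfrak{t}^{\ast}$.

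To identify $\mu(a)$, I would note that $\check{\omega}_{a}$ is affine in $a$ with slope $c_{\xi}$ by \eqref{dh-xi}, while $r_{a}^{\ast}\check{\omega}_{\epsilon}=f^{\ast}_{\xi,\epsilon}\check{\omega}_{\epsilon}$ is independent of $a$. So $\mu(a)=a-\epsilon+\mu_{0}$ with $\mu_{0}$ constant. To show $\mu_{0}=0$, I would compute explicitly in the normal form: there $\omega$ is the minimal coupling form $p^{\ast}\omega_{\epsilon}+d\langle \theta,\Phi-\epsilon\rangle$ with $\theta$ a connection, which on $Z_{a}$ contributes exactly $\langle d\theta, a-\epsilon\rangle$ and vanishes as $a\to\epsilon$. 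Equivalently, $\mu_{0}$ is the limit of the difference as $a\to\epsilon$, and continuity of the normal form forces it to be $0$. The injectivity in Lemma \ref{key-lem} guarantees that $f^{\ast}_{\xi,\epsilon}$ is a faithful target, so no ambiguity arises in this identification.

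The main obstacle is the second step: constructing a retraction of a neighbourhood of the \emph{singular} level $Z_{\epsilon}$ that is compatible with $f_{\xi,\epsilon}$ and with the Čech (rather than de Rham) classes. I would first try Lerman's gradient-flow retraction of $\|\Phi-\epsilon\|^{2}$, which is continuous and $T$-equivariant on $X^{ss}_{\epsilon}$, so it gives the required homotopy on the level of quotient topological spaces.
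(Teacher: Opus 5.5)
Your reduction and your comparison step on $Z_{a}$ are sound, and they partly coincide with the paper. Take $N=X^{ss}_{\epsilon}$ with the gradient-flow retraction $F_{\infty}$ of $\|\Phi-\epsilon\|^{2}$ onto $Z_{\epsilon}$. The induced map on $X_{a}\cong X_{\xi}$ is then not just homotopic but equal to $f_{\xi,\epsilon}$; the paper uses exactly $f_{\xi,0}(\Pi_{\xi}(p))=\Pi_{0}(F_{\infty}(p))$. Together with $\pi_{\epsilon}^{\ast}\check{\omega}_{\epsilon}=\imath_{\epsilon}^{\ast}\check{\omega}$, you correctly conclude that $\check{\omega}_{a}-f^{\ast}_{\xi,\epsilon}(\check{\omega}_{\epsilon})$ lies in $\ker\pi_{a}^{\ast}$, which is the span of the components of $c_{\xi}$.

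But this is exactly where the method stalls. Pulling back to the ordinary cohomology of $Z_{a}$ annihilates precisely the $\langle c_{\xi},\cdot\rangle$-part that the lemma is about. After \eqref{dh-xi} you are left with an undetermined constant class $\langle c_{\xi},\mu_{0}\rangle$, and showing that it vanishes \emph{is} the lemma.

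Your argument for $\mu_{0}=0$ does not work. The minimal coupling form $p^{\ast}\omega_{\epsilon}+d\langle\theta,\Phi-\epsilon\rangle$ is the Duistermaat--Heckman normal form near a level on which $T$ acts locally freely. At the critical value $\epsilon$ this fails:
the level $Z_{\epsilon}$ is singular, $Z_{\epsilon}\to X_{\epsilon}$ is not a principal bundle, and it carries no connection $\theta$. Worse, there is no $T$-equivariant product neighbourhood $Z_{\epsilon}\times U$ at all. Such a neighbourhood would make $Z_{a}$ equivariantly homeomorphic to $Z_{\epsilon}$, whereas $Z_{\epsilon}$ contains points with positive-dimensional stabilisers ($\epsilon$ is a critical value) and $Z_{a}$ does not. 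The Sjamaar--Lerman normal form is local around a single orbit and does not globalise.

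Your fallback, that $\mu_{0}$ ``is the limit of the difference and vanishes by continuity'', is circular. Nothing a priori makes $a\mapsto\check{\omega}_{a}$ continuous at $a=\epsilon$ with limit $f^{\ast}_{\xi,\epsilon}(\check{\omega}_{\epsilon})$ in $\check{H}^{2}(X_{\xi},\underline{\mathbb{R}}_{X_{\xi}})$; that is precisely what has to be proved. The injectivity in Lemma \ref{key-lem} does not help with this.

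The missing ingredient is a moment-map-normalised (i.e.\ equivariant) representative of $\check{\omega}_{\epsilon}$ near the singular level, which keeps the information your pullback discards. The paper gets it from the Heinzner--Sjamaar local potentials, in three steps:
\begin{enumerate}
\item After shifting to $\epsilon=0$, there are $T$-invariant strictly plurisubharmonic $\rho_{i}$ on $\Pi_{0}$-saturated $U_{i}$ with $dd^{c}\rho_{i}=\omega$ and $-\iota_{\eta_{X}}d^{c}\rho_{i}=\Phi^{\eta}$. Their descents $\varphi_{i}$ define $\kappa_{0}$.
\item For $p\in Z_{\xi}$, the flow identity $\rho_{i}(p)-\rho_{i}(F_{\infty}(p))=\int_{0}^{\infty}\|\Phi(F_{t}(p))\|^{2}dt$ is independent of $i$. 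Hence the descents $\psi_{\xi,i}$ of $\rho_{i}|_{Z_{\xi}}$ and the pulled-back potentials $\varphi_{i}\circ f_{\xi,0}$ define the same $\mathscr{PH}$-cocycle.
\item With the connection form $\Theta$ on the regular level $Z_{\xi}$, the normalisation gives, at the level of forms, $\imath_{\xi}^{\ast}d^{c}\rho_{i}-\pi_{\xi}^{\ast}d^{c}\psi_{\xi,i}=-\langle\Theta,\xi\rangle$. This produces the term $-\langle c_{\xi},\xi\rangle$ exactly, leaving no constant to fix.
\end{enumerate}
Your route could be repaired in $H^{2}_{T}(X^{ss}_{\epsilon})\cong H^{2}_{T}(Z_{\epsilon})$, using the equivariantly closed extension of $\omega$ given by $\Phi-\epsilon$. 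However, identifying its restriction to $Z_{\epsilon}$ with the image of $\check{\omega}_{\epsilon}$ requires essentially these same potentials.

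Finally, watch the sign. By \eqref{dh-xi}, $\check{\omega}_{a}=\check{\omega}_{\xi}+\langle c_{\xi},\xi-a\rangle$, so the lemma is equivalent to $\check{\omega}_{a}=f^{\ast}_{\xi,\epsilon}(\check{\omega}_{\epsilon})-\langle c_{\xi},a-\epsilon\rangle$, with slope $-c_{\xi}$. Your version with $+$, substituted into \eqref{dh-xi}, yields the lemma with the opposite sign.
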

\begin{proof}
We divide the proof into three steps.

$\textbf{Step 1.}$
Due to the shifting trick, we may assume that $\epsilon=0$ and thus we have the following commutative square
\begin{equation}\label{xi-0}
\vcenter{
\xymatrix@=1.3cm{
  X^{s}_{\xi} \ar[d]_{\Pi_{\xi}}
  \ar[r]^{i_{\xi,0}} & X^{ss}_{0} \ar[d]^{\Pi_{0}} \\
  X_{\xi}
  \ar[r]^{f_{\xi,0}} & X_{0}}
  }
\end{equation}
Recall the construction of the K\"{a}hler class $\check{\omega}_{0}$.
According to \cite[Theorem 3]{HS20}, there exists a $T^{\mathbb{C}}$-invariant open covering $\mathscr{U}=\{U_{i}\}_{i\in I}$ of the level set $Z_{0}=\Phi^{-1}(0)$ in $X^{ss}_{0}$ together with $T$-invariant smooth strictly plurisubharmonic functions $\rho_{i}$ on $U_{i}$ such that each $U_{i}$ is $\Pi_{0}$-saturated, and
\begin{equation}\label{local-pt}
dd^{c}\rho_{i}=\omega|_{U_{i}},\quad\quad
-\iota_{\xi_{X}}(d^{c}\rho_{i})=(\Phi^{\xi})|_{U_{i}}\,\,\,(\forall\,\xi\in\mathfrak{t})
\end{equation}
where $d^{c}=\sqrt{-1}(\bar{\partial}-\partial)$ and $\xi_{X}$ is the vector field on $X$ generated by $\xi$.
Put $W_{i}=\Pi_{0}(U_{i})$.
Then $\mathscr{W}_{0}=\{W_{i}\}_{i\in I}$ becomes an open covering of $X_{0}$, and each $\rho_{i}$ descends to a continuous strictly plurisubharmonic function $\varphi_{i}$ on $W_{i}$ which defines a K\"{a}hler metric
$$
\kappa_{0}=\{(W_{i}, \varphi_{i})\}\in \check{H}^{0}(X_{0}, \mathscr{K}^{1}_{X_{0},\mathbb{R}})
$$
in the sense of Definition \ref{kahler space}.
The K\"{a}hler class $\check{\omega}_{0}$ is defined to be $\check{\omega}_{0}=c_{1}(\kappa_{0})$ under the morphism
$$
c_{1}: \check{H}^{0}(X_{0}, \mathscr{K}^{1}_{X_{0},\mathbb{R}})\stackrel{\delta^{0}}\longrightarrow
\check{H}^{1}(X_{0}, \mathscr{PH}_{X_{0},\mathbb{R}})
\stackrel{\delta^{1}}\longrightarrow \check{H}^{2}(X_{0}, \underline{\mathbb{R}}_{X_{0}}).
$$

Note that $f_{\xi,0}:X_{\xi}\rightarrow X_{0}$ is a modification.
Set
$$
\mathscr{W}_{\xi}=\{W_{i,\xi}=f^{-1}_{\xi,0}(W_{0})\}_{i\in I}
$$
which forms an open covering of $X_{\xi}$.
The pullback $\psi_{0,i}:=(f_{\xi,0}|_{W_{i,\xi}})^{\ast}(\varphi_{i})$ is only continuous on $W_{i,\xi}$ in general.
Since $f_{\xi,0}$ is holomorphic, we get
$$
(\psi_{0,i}-\psi_{0,j})|_{W_{i,\xi}\cap W_{j,\xi}}\in
\mathscr{PH}_{X_{\xi},\mathbb{R}}(W_{i,\xi}\cap W_{j,\xi}).
$$
It follows that $\{(W_{i,\xi}, \psi_{0,i})\}$ defines an element in $\check{H}^{0}(X_{\xi}, \mathscr{K}^{1}_{X_{\xi},\mathbb{R}})$.
Similarly, by the morphism
\begin{equation}\label{c1-xi}
c_{1}: \check{H}^{0}(X_{\xi}, \mathscr{K}^{1}_{X_{\xi},\mathbb{R}})\stackrel{\delta^{0}}\longrightarrow
\check{H}^{1}(X_{\xi}, \mathscr{PH}_{X_{\xi},\mathbb{R}})
\stackrel{\delta^{1}}\longrightarrow \check{H}^{2}(X_{\xi}, \underline{\mathbb{R}}_{X_{\xi}}),
\end{equation}
we obtain a \v{C}ech cohomology class $c_{1}(\{(W_{i,\xi}, \psi_{0,i})\})$ in $\check{H}^{2}(X_{\xi}, \underline{\mathbb{R}}_{X_{\xi}})$.
By definition, the equality
\begin{equation}\label{lem6.3-equ0}
f^{\ast}_{\xi,0}(\check{\omega}_{0})=c_{1}(\{(W_{i,\xi}, \psi_{0,i})\})
\end{equation}
is valid.

$\textbf{Step 2.}$
The commutativity of \eqref{xi-0} deduces $\Pi^{-1}_{\xi}(W_{i,\xi})=U_{i}\cap X^{s}_{\xi}$ and that the quotient space $(U_{i}\cap X^{s}_{\xi}\cap\Phi^{-1}(\xi))/T$ is homeomorphic to $\Pi_{\xi}(U_{i}\cap X^{s}_{\xi})=W_{i,\xi}$.
Since $\xi$ is a regular value of $\Phi$, the restriction of $\rho_{i}$ to $U_{i}\cap X^{s}_{\xi}\cap\Phi^{-1}(\xi)$ descends to a smooth function (in the orbifold sense) on $W_{i,\xi}$ denoted by $\psi_{\xi,i}$.
We claim that $\{(W_{i,\xi}, \psi_{\xi,i})\}$ defines an element in $\check{H}^{0}(X_{\xi}, \mathscr{K}^{1}_{X_{\xi},\mathbb{R}})$.

For any point $p$ in $U_{i}\cap X^{s}_{\xi}\cap\Phi^{-1}(\xi)$, put $x=\Pi_{\xi}(p)$ and then we have
\begin{equation}\label{m-x}
\psi_{\xi,i}(x)=\rho_{i}(p).
\end{equation}
Denote by $F_{t}$ the $T$-equivariant gradient flow of the function $\mu=-\frac{1}{2}\|\Phi\|^{2}$.
For any $p\in X$, we identify $\Phi(p)\in\mathfrak{t}^{\ast}$ with a vector in $\mathfrak{t}$ using the fixed inner product, and denote by $(\Phi(p))_{X,p}$ the vector field on $X$ induced by $\Phi(p)$, evaluated at the point $p$.
Then we have
\begin{equation}\label{grad-mu}
\mathrm{grad}\,\mu(p)=-J\Phi(p)_{X,p},
\end{equation}
where $J$ is the complex structure on $X$.
It is well known that the limit
$$F_{\infty}(p):=\lim_{t\rightarrow\infty}F_{t}(p)$$
exists and $F_{\infty}(p)$ lies in $\Phi^{-1}(0)$.
From the definition of $f_{\xi,0}$, we get
$$
f_{\xi,0}(x)=f_{\xi,0}(\Pi_{\xi}(p))=\Pi_{0}(F_{\infty}(p)).
$$
As a result, for any $x\in W_{i,\xi}$, the following equality holds
\begin{equation}\label{m-x-1}
\psi_{0,i}(x)=\varphi_{i}(f_{\xi,0}(x))=\varphi_{i}(\Pi_{0}(F_{\infty}(p)))
=\rho_{i}(F_{\infty}(p)).
\end{equation}
Due to \eqref{local-pt} and \eqref{grad-mu}, we obtain
\begin{eqnarray*}\label{drho-dt}
\frac{d}{dt}\rho_{i}(F_{t}(p))
&=&\langle d\rho_{i}, \mathrm{grad}\,\mu\rangle(F_{t}(p))\\
&=&-\langle d\rho_{i}, J\Phi(F_{t}(p))_{X}\rangle(F_{t}(p))\\
&=&-\langle Jd\rho_{i}, \Phi(F_{t}(p))_{X}\rangle(F_{t}(p))\\
&=&\langle d^{c}\rho_{i}, \Phi(F_{t}(p))_{X}\rangle(F_{t}(p))\\
&=&-\|\Phi(F_{t}(p))\|^{2}.
\end{eqnarray*}
Collecting \eqref{m-x}, \eqref{m-x-1} and the equality above derives
\begin{eqnarray*}
% \nonumber % Remove numbering (before each equation)
 \psi_{\xi,i}(x)-\psi_{0,i}(x)  &=&
   \rho_{i}(F_{0}(p))- \rho_{i}(F_{\infty}(p)) \\
   &=& -\int^{\infty}_{0} \biggl(\frac{d}{dt}\rho_{i}(F_{t}(p))\biggr)dt\\
   &=& \int^{\infty}_{0}\|\Phi(F_{t}(p))\|^{2}dt
\end{eqnarray*}
which means that $\psi_{\xi,i}(x)-\psi_{0,\xi}(x)$ is independent of the index $i$ or $j$ for any $x$ in $W_{i,\xi}\cap W_{j,\xi}$.
As a result, the following equality holds
\begin{equation*}
\psi_{\xi,i}-\psi_{0,i}=\psi_{\xi,j}-\psi_{0,j}
\end{equation*}
on $W_{i,\xi}\cap W_{j,\xi}$ and thus we get
\begin{equation}\label{i-j-plu}
(\psi_{\xi,i}-\psi_{\xi,j})|_{W_{i,\xi}\cap W_{j,\xi}}
=(\psi_{0,i}-\psi_{0,j})|_{W_{i,\xi}\cap W_{j,\xi}}.
\end{equation}
Note that the right-hand side of \eqref{i-j-plu} are plurisubharmonic function on $W_{i,\xi}\cap W_{j,\xi}$.
This implies $\{(W_{i,\xi},\psi_{\xi,i})\}\in\check{H}^{0}(X_{\xi}, \mathscr{K}^{1}_{X_{\xi},\mathbb{R}})$ and
$$
\delta^{0}(\{(W_{i,\xi},\psi_{\xi,i})\})
=\delta^{0}(\{(W_{i,\xi},\psi_{0,i})\})
$$
in $\check{H}^{1}(X_{\xi}, \mathscr{PH}_{X_{\xi},\mathbb{R}})$; moreover, from \eqref{c1-xi}, we have
\begin{equation}\label{lem6.3-equ1}
c_{1}(\{(W_{i,\xi}, \psi_{\xi,i})\})=c_{1}(\{(W_{i,\xi}, \psi_{0,i})\})
\end{equation}
in $\check{H}^{2}(X_{\xi}, \underline{\mathbb{R}}_{X_{\xi}})$.

$\textbf{Step 3.}$
Observe that each $\psi_{\xi,i}$ is a smooth function (in the orbifold sense) and $\{dd^{c}\psi_{\xi,i}\}$ glues to a globally defined differential 2-form on the complex orbifold $X_{\xi}$.
Under the de Rham--\v{C}ech isomorphism $H^{2}_{DR}(X_{\xi})\cong\check{H}^{2}(X_{\xi},\underline{\mathbb{R}}_{X_{\xi}})$,
we have
$$
[\{dd^{c}\psi_{\xi,i}\}]=c_{1}(\{(W_{i,\xi}, \psi_{\xi,i})\}).
$$
Due to \eqref{lem6.3-equ0} and \eqref{lem6.3-equ1}, to finish the proof, it suffices to show that
$$[\omega_{\xi}]-[\{dd^{c}\psi_{\xi,i}\}]=-\langle c_{\xi}, \xi\rangle,$$
where $\omega_{\xi}$ is the reduced symplectic form on $X_{\xi}$.

For simplicity, we may assume that the $T$-action on $Z_{\xi}=\Phi^{-1}(\xi)$ is free, and the general case can be handled by using a suitable finite branched covering as in \cite{DH82}.
For any $p\in Z_{\xi}$, one has a canonical orthogonal decomposition
$$
T_{p}Z_{\xi}=T_{p}(T\cdot p)\oplus W_{p}\cong\mathfrak{t}\oplus W_{p},
$$
where $W_{p}$ is $J$-invariant.
The metric on $X$ defines a $\mathfrak{t}$-valued 1-form $\Theta$ on $Z_{\xi}$, which is a connection form for the principal $T$-bundle $\pi_{\xi}:Z_{\xi}\rightarrow Z_{\xi}/T\cong X_{\xi}$ satisfying the conditions
\begin{equation}\label{w-eta}
\Theta(w)=0\,\,\,(\forall\,w\in W_{p})\,\,\,\mathrm{and}\,\,\,
\Theta(\eta_{X,p})=\eta\,\,\,(\forall\,\eta\in \mathfrak{t}).
\end{equation}
Recall that $\imath_{\xi}:Z_{\xi}\hookrightarrow X$ is the inclusion map and the quotient map $\pi_{\xi}$ is identical with the restriction $\Pi_{\xi}|_{Z_{\xi}}$.
Put $x=\pi_{\xi}(p)$.
Then $W_{p}$ is isomorphic to $T_{x}X_{\xi}$ and the complex structure on $T_{x}X_{\xi}$ is induced from that on $W_{p}$.
Consequently, for any $w\in(W_{p}\otimes\mathbb{C})^{1,0}$, we have
$(\pi_{\xi})_{\ast}(w)\in T^{1,0}_{x}X_{\xi}$ and
\begin{eqnarray*}
% \nonumber to remove numbering (before each equation)
   & &(\imath^{\ast}_{\xi}d^{c}\rho_{i})_{p}(w)-
   (\pi^{\ast}_{\xi}d^{c}\psi_{\xi,i})_{p}(w)\\
   &=& -\sqrt{-1}(\imath^{\ast}_{\xi}d\rho_{i})_{p}(w)+\sqrt{-1}
   (\pi^{\ast}_{\xi}d\psi_{\xi,i})_{p}(w)\,\,\,\,\,\,(w\,\,\mathrm{is}\,\, \mathrm{of}\,\, (1,0)-\mathrm{type}) \\
   &=&  -\sqrt{-1}(d\rho_{i})_{p}(w)+\sqrt{-1}
   (d\pi^{\ast}_{\xi}\psi_{\xi,i})_{p}(w) \\
   &=&  -\sqrt{-1}(d\rho_{i})_{p}(w)+\sqrt{-1}
   (d\rho_{i})_{p}(w)\,\,\,\,\,\,\,\,\qquad (\pi^{\ast}_{\xi}\psi_{\xi,i}=\rho_{i})\\
   &=&  0.
\end{eqnarray*}
The same argument shows
$(\imath^{\ast}_{\xi}d^{c}\rho_{i})_{p}(w)-
   (\pi^{\ast}_{\xi}d^{c}\psi_{\xi,i})_{p}(w)=0
$ for any $w\in(W_{p}\otimes\mathbb{C})^{0,1}$ and therefore the 1-form $\imath^{\ast}_{\xi}d^{c}\rho_{i}-\pi^{\ast}_{\xi}d^{c}\psi_{\xi,i}$
is vanishing on $W_{p}$.
Meanwhile, for any $\eta\in\mathfrak{t}$, since $(\pi_{\xi})_{\ast,p}(\eta_{X,p})=0$ we get
$$
(\pi^{\ast}_{\xi}d^{c}\psi_{\xi,i})_{p}(\eta_{X,p})=
(d^{c}\psi_{\xi,i})_{x}((\pi_{\xi})_{\ast,p}(\eta_{X,p}))=0
$$
and thus
\begin{eqnarray*}
% \nonumber to remove numbering (before each equation)
 (\imath^{\ast}_{\xi}d^{c}\rho_{i})_{p}(\eta_{X,p})-
   (\pi^{\ast}_{\xi}d^{c}\psi_{\xi,i})_{p}(\eta_{X,p})
   &=& (\imath^{\ast}_{\xi}d^{c}\rho_{i})_{p}(\eta_{X,p}) \\
   &\stackrel{\eqref{local-pt}}=& -\langle\eta,\Phi(p)\rangle \\
   &=& -\langle\eta, \xi\rangle\\
   &\stackrel{\eqref{w-eta}}=& -\langle\Theta(\eta_{X,p}), \xi\rangle.
\end{eqnarray*}
Consequently, we are led to the conclusion that
\begin{equation*}
\imath^{\ast}_{\xi}d^{c}\rho_{i}-\pi^{\ast}_{\xi}d^{c}\psi_{\xi,i}
=-\langle\Theta, \xi\rangle
\end{equation*}
is valid on $U_{i}\cap X^{s}_{\xi}\cap Z_{\xi}\cap
=\Pi^{-1}_{\xi}(W_{i,\xi})\cap Z_{\xi}$ for every $i\in I$.
From definition, the reduced symplectic form $\omega_{\xi}$ satisfies the property
$\pi^{\ast}_{\xi}\omega_{\xi}=\imath^{\ast}_{\xi}\omega
=\imath^{\ast}_{\xi}(\{dd^{c}\rho_{i}\})$ and hence we get
\begin{eqnarray*}
% \nonumber to remove numbering (before each equation)
 \pi^{\ast}_{\xi}(\omega_{\xi}-\{dd^{c}\psi_{\xi,i}\})
 &=&\{\imath^{\ast}_{\xi}dd^{c}\rho_{i}-
 \pi^{\ast}_{\xi}dd^{c}\psi_{\xi,i}\} \\
 &=& \{d(\imath^{\ast}_{\xi}d^{c}\rho_{i}-
 \pi^{\ast}_{\xi}d^{c}\psi_{\xi,i})\} \\
 &=& -\langle d\Theta, \xi\rangle \\
 &=& -\pi^{\ast}_{\xi}\langle \Omega, \xi\rangle
\end{eqnarray*}
where $\Omega$ is the curvature form of the connection $\Theta$.
Taking the de Rham cohomology classes deduces an equality
$$
\pi^{\ast}_{\xi}([\omega_{\xi}]-[\{dd^{c}\psi_{\xi,i}\}])
=-\pi^{\ast}_{\xi}\langle c_{\xi}, \xi\rangle
$$
in $H^{2}_{DR}(Z_{\xi})$.
Since $\pi_{\xi}:Z_{\xi}\rightarrow X_{\xi}$ is a submersion the morphism $\pi^{\ast}_{\xi}$ is injective and therefore we obtain
$$
[\omega_{\xi}]-[\{dd^{c}\psi_{\xi,i}\}]
=-\langle c_{\xi}, \xi\rangle.
$$
This completes the proof.
\end{proof}

Consider the fiber product over $f_{\xi,\epsilon}$ and $f_{\zeta,\epsilon}$.
Then we get a cartesian square:
\begin{equation}\label{fiber-pro-0}
\vcenter{
\xymatrix@C=1cm{
  V=X_{\xi}\times_{X_{\epsilon}}X_{\zeta}\ar[dr]^{\phi}\ar[d]_{\pi_{1}} \ar[r]^{\quad\pi_{2}} & X_{\zeta} \ar[d]^{f_{\zeta,\epsilon}} \\
  X_{\xi} \ar[r]_{f_{\xi,\epsilon}} & X_{\epsilon}   }
  }
\end{equation}
where $\phi= f_{\xi,\epsilon}\circ\pi_{1}= f_{\zeta,\epsilon}\circ\pi_{2}$.
Due to Lemma \ref{lem-xi-zeta}, we identify all the K\"{a}hler quotient $X_{a}$ for $a\in[\xi, \epsilon)$ with $X_{\xi}$ and for $a\in(\epsilon, \zeta]$ with $X_{\zeta}$.
Note that $\pi_{1}$, $\pi_{2}$, and $\phi$ are quotient maps which have connected fibers.
Akin to the construction of \eqref{cech-2-xi}, we obtain the following natural morphisms of cohomology groups:
\begin{eqnarray*}
% \nonumber to remove numbering (before each equation)
  \pi^{\ast}_{1}:\check{H}^{2}(X_{\xi}, \underline{\mathbb{R}}_{X_{\xi}})
  &\longrightarrow&
  \check{H}^{2}(V, \underline{\mathbb{R}}_{V}), \\
  \phi^{\ast}:\check{H}^{2}(X_{\epsilon}, \underline{\mathbb{R}}_{X_{\epsilon}})
  &\longrightarrow&
  \check{H}^{2}(V, \underline{\mathbb{R}}_{V}), \\
  \pi^{\ast}_{2}:\check{H}^{2}(X_{\zeta}, \underline{\mathbb{R}}_{X_{\zeta}})
  &\longrightarrow&
  \check{H}^{2}(V, \underline{\mathbb{R}}_{V}).
\end{eqnarray*}
This enables us to compare the K\"{a}hler classes $\check{\omega}_{a}$ in $\check{H}^{2}(V, \underline{\mathbb{R}}_{V})$ as $a$ crosses the wall.
Consider the curve in $\check{H}^{2}(V, \underline{\mathbb{R}}_{V})$ given by
\begin{equation}\label{curve}
\gamma(a):=
\begin{cases}
\pi^{\ast}_{1}(\check{\omega}_{a}),      &  a\in [\xi, \epsilon); \\
\phi^{\ast}(\check{\omega}_{\epsilon}),     & a=\epsilon; \\
\pi^{\ast}_{2}(\check{\omega}_{a}),      &  a\in (\epsilon, \zeta].
\end{cases}
\end{equation}
Then we have
\begin{thm}\label{comp-cla}
As $a$ varies along the line segment $l=[\xi, \zeta]$ (see Figure \ref{fig-2}), the curve defined by \eqref{curve} is continuous which is a broken line segment at the point $\gamma(\epsilon)$.
\end{thm}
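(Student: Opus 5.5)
The plan is to write the curve $\gamma$ explicitly as an affine function of $a$ on each half-segment, using the Duistermaat--Heckman formulae \eqref{dh-xi}, \eqref{dh-zeta} together with their limit forms in Lemma \ref{limit-dh}, all pulled back to $V$. Parametrize $a=\epsilon+t(\xi-\epsilon)$ for $t\in(0,1]$ on $[\xi,\epsilon)$. Rewriting \eqref{dh-xi} with the roles of $\xi$ and $a$ exchanged (both lie in $\mathrm{int}\,\Delta_{-}$ and $X_a$ is canonically identified with $X_\xi$, with $c_a=c_\xi$), we get $\check{\omega}_{a}=\check{\omega}_{\xi}+\langle c_{\xi},\xi-a\rangle$. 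Substituting the first identity of Lemma \ref{limit-dh} for $\check{\omega}_{\xi}$ gives
\begin{equation*}
\check{\omega}_{a}=f^{\ast}_{\xi,\epsilon}(\check{\omega}_{\epsilon})-\langle c_{\xi},a-\epsilon\rangle,\qquad a\in[\xi,\epsilon),
\end{equation*}
and similarly $\check{\omega}_{a}=f^{\ast}_{\zeta,\epsilon}(\check{\omega}_{\epsilon})-\langle c_{\zeta},a-\epsilon\rangle$ for $a\in(\epsilon,\zeta]$.

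Next we apply $\pi_1^\ast$ and $\pi_2^\ast$. The key point is functoriality: $\pi_1^\ast\circ f_{\xi,\epsilon}^\ast=\phi^\ast=\pi_2^\ast\circ f_{\zeta,\epsilon}^\ast$ on $\check H^2(X_\epsilon,\underline{\mathbb R})$. This should follow from $\phi=f_{\xi,\epsilon}\circ\pi_1=f_{\zeta,\epsilon}\circ\pi_2$ in \eqref{fiber-pro-0}. Each of these pullbacks is built, as in \eqref{cech-2-xi}, from the isomorphism $\underline{\mathbb R}_{Y}\cong g_\ast\underline{\mathbb R}_{Y'}$ (connected fibers) followed by the Leray edge map. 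I would identify that composite with the ordinary sheaf-theoretic pullback $g^{-1}$ on \v{C}ech cohomology of constant sheaves, since the edge homomorphism $H^2(Y,g_\ast\mathscr F)\to H^2(Y',\mathscr F)$ is the natural pullback. The ordinary pullback is clearly functorial, which gives the claimed equality. We then obtain
\begin{equation*}
\gamma(a)=\phi^\ast(\check\omega_\epsilon)-\langle \pi_1^\ast c_\xi,a-\epsilon\rangle \ (a\in[\xi,\epsilon)),\qquad \gamma(a)=\phi^\ast(\check\omega_\epsilon)-\langle \pi_2^\ast c_\zeta,a-\epsilon\rangle \ (a\in(\epsilon,\zeta]).
\end{equation*}
Here $\pi_1^\ast c_\xi$ is understood in $\check H^2(V,\underline{\mathbb R}_V)\otimes\mathfrak t^{\ast\ast}$ after tensoring $\Lambda_\xi$ with $\mathbb R$.

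Both pieces are affine in $a$, hence continuous, and both tend to $\gamma(\epsilon)=\phi^\ast(\check\omega_\epsilon)$ as $a\to\epsilon$. So $\gamma$ is continuous on $l$ and is the union of two straight segments meeting at $\gamma(\epsilon)$. The slopes along $l$ are $-\langle\pi_1^\ast c_\xi,\zeta-\xi\rangle$ and $-\langle\pi_2^\ast c_\zeta,\zeta-\xi\rangle$, up to positive scaling. The jump in slope is governed by $\pi_1^\ast c_\xi-\pi_2^\ast c_\zeta$, which is the wall-crossing term.

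I expect the main obstacle to be the functoriality step, i.e.\ identifying the Leray-edge-defined maps with honest pullbacks so that $\pi_i^\ast\circ f^\ast=\phi^\ast$. The remaining steps are formal consequences of Lemma \ref{limit-dh}. Calling the curve genuinely ``broken'' would need the two slopes to differ. I would interpret the statement as saying it is piecewise linear with the break only possibly at $\gamma(\epsilon)$. If needed, one could show that the slopes differ by evaluating on a fiber $A_\xi\times_B A_\zeta$: there the Chern classes restrict to the hyperplane classes of the weighted projective fibers from Theorem \ref{m2}, with opposite signs coming from the positive and negative weights.
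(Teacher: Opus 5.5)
Your proposal is correct and is essentially the paper's proof. You combine \eqref{dh-xi}, \eqref{dh-zeta} with Lemma \ref{limit-dh} to write $\check\omega_a$ as an affine function of $a$ on each side of $\epsilon$, then pull back to $V$ using the commutativity of \eqref{fiber-pro-0}, so that both pieces meet at $\phi^\ast(\check\omega_\epsilon)$.

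Some details differ only cosmetically. The paper asserts $\pi_1^\ast\circ f^\ast_{\xi,\epsilon}=\phi^\ast=\pi_2^\ast\circ f^\ast_{\zeta,\epsilon}$ directly from that square, and your edge-map identification supplies the missing justification. Like you, the paper reads ``broken'' as piecewise affine with the only possible corner at $\gamma(\epsilon)$. Your sign $-\langle c_\xi,a-\epsilon\rangle$ is the one consistent with Lemma \ref{limit-dh}; the paper writes $+$, which does not affect the conclusion.
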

\begin{proof}
According to Lemma \ref{key-lem}, the \v{C}ech cohomology group $\check{H}^{2}(X_{\epsilon},\underline{\mathbb{R}}_{X_{\epsilon}})$ can be thought of as a subgroup of
$\check{H}^{2}(X_{\xi},\underline{\mathbb{R}}_{X_{\xi}})$.
Let $\xi$ tend to $\epsilon$ along $l$.
Then the space $X_{\xi}$ contracts to $X_{\epsilon}$ under the map $f_{\xi,\epsilon}$ and the cohomologically symplectic structure $\check{\omega}_{\xi}$ on $X_{\xi}$ degenerates to $\check{\omega}_{\epsilon}$ on $X_{\epsilon}$.
%Define the curve $\sigma:[\xi, \epsilon]\rightarrow \check{H}^{2}(Z_{\xi}, \underline{\mathbb{R}}_{Z_{\xi}})$ by setting
%\begin{equation}\label{curve-sigma}
%\sigma(a):=
%\begin{cases}
%j^{\ast}_{a}(\eta)
%+\pi^{\ast}_{\xi}\circ f^{\ast}_{\xi,\epsilon}(\check{\omega}_{\epsilon}),
%&  a\in [\xi, \epsilon); \\
%\pi^{\ast}_{\xi}\circ f^{\ast}_{\xi,\epsilon}(\check{\omega}_{\epsilon}),     & a=\epsilon.
%\end{cases}
%\end{equation}
%From Lemma \ref{lem-6-13}, for any $a$ in $[\xi, \epsilon)$, we have
%$\pi^{\ast}_{\xi}(\check{\omega}_{a})=\pi^{\ast}_{\xi}\circ f^{\ast}_{\xi,\epsilon}(\check{\omega}_{\epsilon})$.
%This implies
%$
%\check{\omega}_{a}-f^{\ast}_{\xi,\epsilon}(\check{\omega}_{\epsilon})
%$
%belongs to
%$$
%\ker\,(\pi^{\ast}_{\xi})=\{\langle c_{\xi}, \lambda\rangle\,|\,\forall \,\lambda\in\mathfrak{t}^{\ast}\}
%$$
%due to \eqref{ker-pi}.
Due to Lemma \ref{limit-dh},  we obtain
$$
\check{\omega}_{a}= f^{\ast}_{\xi,\epsilon}(\check{\omega}_{\epsilon})+\langle c_{\xi}, a-\epsilon \rangle,
$$
for any $a\in[\xi, \epsilon)$.
It follows that the limit
$\lim_{a\rightarrow\epsilon}\check{\omega}_{a}$
in $\check{H}^{2}(X_{\xi},\underline{\mathbb{R}}_{X_{\xi}})$ is equal to
$ f^{\ast}_{\xi,\epsilon}(\check{\omega}_{\epsilon})$.
%As $[\xi, \epsilon)\ni a\rightarrow\epsilon$, the equality \eqref{dh-xi} derives
%\begin{equation*}\label{dh-xi-1}
%\check{\omega}_{\xi}=f^{\ast}_{\xi,\epsilon}(\check{\omega}_{\epsilon})+\langle c_{\xi}, \xi-\epsilon\rangle
%\end{equation*}
%in $\check{H}^{2}(X_{\xi}, \underline{\mathbb{R}}_{X_{\xi}})$.
Similarly, for any $b$ in $(\epsilon, \zeta]$, we have
\begin{equation*}\label{dh-zeta-1}
\check{\omega}_{b}=f^{\ast}_{\zeta,\epsilon}(\check{\omega}_{\epsilon})+\langle c_{\zeta}, b-\epsilon\rangle,
\end{equation*}
and the limit
$\lim_{b\rightarrow\epsilon}\check{\omega}_{b}$
in $\check{H}^{2}(X_{\xi},\underline{\mathbb{R}}_{X_{\zeta}})$ is also equal to
$ f^{\ast}_{\xi,\epsilon}(\check{\omega}_{\epsilon})$.
Due to the commutativity of \eqref{fiber-pro-0}, we have
\begin{equation*}\label{dh-xi-1}
\pi_{1}^{\ast}(\check{\omega}_{a})=\phi^{\ast}(\check{\omega}_{\epsilon})+
\pi_{1}^{\ast}(\langle c_{\xi}, a-\epsilon\rangle)\in
\check{H}^{2}(V, \underline{\mathbb{R}}_{V})
\end{equation*}
for any $a\in[\xi, \epsilon)$, and
\begin{equation*}\label{dh-zeta-1}
\pi_{2}^{\ast}(\check{\omega}_{b})=\phi^{\ast}(\check{\omega}_{\epsilon})+
\pi_{2}^{\ast}(\langle c_{\zeta}, b-\epsilon\rangle)\in
\check{H}^{2}(V, \underline{\mathbb{R}}_{V})
\end{equation*}
for any $b\in(\epsilon, \zeta]$.
This implies that the curve $\gamma(a)$ is a broken line segment at the point $\gamma(\epsilon)$.
\end{proof}
\begin{rem}
Consider the Hamiltonian circle action on a compact symplectic manifold.
Godinho \cite[Theroem A.1]{Go01} generalized the Duistermaat--Heckman theorem to interval of values of the moment map containing a critical value.
The theorem above can be considered as a slightly generalization of Godinho's result in the K\"{a}hler setting.
\end{rem}
%=============================
\section{Applications of the main theorem}\label{apps}
With the same setting as above, we assume that $(X, ds^{2}, T^{\mathbb{C}}, \Phi)$ is a compact K\"{a}hler Hamiltonian $T$-manifold.
Put $n=\mathrm{dim}_{\mathbb{C}}\,X$, $T=(\mathbb{S}^{1})^{d}$ and $G=T^{\mathbb{C}}$.
We shall state several applications of Theorem \ref{main-thm-1} to the bimeromorphic geometry of K\"{a}hler quotients.
Following \cite{Hu92}, we call $X_{\alpha}=X^{ss}(\Phi_{\alpha})/\!\!/G$ a \emph{nondegenerate K\"{a}hler quotient}, if $\alpha$ lies in the interior $\mathrm{int}\,\Delta$.
Otherwise, if $\alpha$ is in the boundary $\partial\,\Delta$ then we call $X_{\alpha}$ a \emph{degenerate K\"{a}hler quotient}.
It is noteworthy that a nondegenerate K\"{a}hler quotient has the expected pure dimension $n-d$.
%and the pure dimension of a degenerate K\"{a}hler quotient is strictly less than $n-d$.
%==================================
%==================================
\subsection{Algebraic dimensions of quotients}
The algebraic dimension is an important bimeromorphic invariant in complex geometry,
which distinguishes the algebraic-like and non-algebraic complex spaces based on their meromorphic function fields.
As a direct consequence of Theorem \ref{main-thm-1}, we get
\begin{thm}\label{inv-alg-dim}
Let $\alpha$ vary in $\mathrm{int}\,\Delta$, then the algebraic dimension of the K\"{a}hler quotient $X^{ss}_{\alpha}/\!\!/G$ is independent on the choice of $\alpha$.
\end{thm}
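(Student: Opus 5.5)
The idea is to use that the algebraic dimension $a(Y)$, the transcendence degree of the field of meromorphic functions, is a bimeromorphic invariant of irreducible compact complex spaces. The plan is to connect any two points of $\mathrm{int}\,\Delta$ by a chain of proper modifications supplied by Theorem \ref{m1}. Every nondegenerate quotient $X_{\alpha}=X^{ss}_{\alpha}/\!\!/G$ is a compact, irreducible (Proposition \ref{irreducible}), reduced normal complex space of pure dimension $n-d$, so the invariance statement applies.

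\textbf{Step 1: chambers and faces.} If $\alpha,\beta$ lie in the same $\mathrm{int}\,\Delta_{i}$, Lemma \ref{lem-xi-zeta} gives $X_{\alpha}=X_{\beta}$. If $\epsilon$ is a critical value in $\mathrm{int}\,\Delta$ and $\epsilon\in\Delta_{j}$, Theorem \ref{m1} gives a proper modification $f_{\xi,\epsilon}:X_{\xi}\to X_{\epsilon}$ for any $\xi\in\mathrm{int}\,\Delta_{j}$.

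A proper modification between irreducible reduced compact spaces induces an isomorphism of meromorphic function fields. Indeed, pullback is injective because the map is surjective. It is also surjective, since a meromorphic function on $X_{\xi}$ restricts to the dense open set $X_{\xi}\setminus A\cong X_{\epsilon}\setminus B$. The resulting function on $X_{\epsilon}\setminus B$ extends meromorphically across $B$: its graph closure is the image of the graph closure on $X_{\xi}$ under the proper map $f_{\xi,\epsilon}\times\mathrm{id}$, which is analytic by Remmert's proper mapping theorem. Equivalently, one can cite the standard fact (Ueno \cite{Ue75}) that $a(\cdot)$ is a bimeromorphic invariant. Either way, $a(X_{\xi})=a(X_{\epsilon})$.

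\textbf{Step 2: connecting chambers.} Next I show that any two subpolytopes $\Delta_{i},\Delta_{k}$ can be joined through $\mathrm{int}\,\Delta$. Take points $\xi\in\mathrm{int}\,\Delta_{i}$ and $\zeta\in\mathrm{int}\,\Delta_{k}$, perturbed so that the segment $[\xi,\zeta]\subset\mathrm{int}\,\Delta$ avoids all faces of codimension $\ge 2$. The segment then meets only finitely many walls, each at a relative interior point $\epsilon_{m}$, and it passes successively through chambers $\Delta_{i}=\Delta_{(0)},\Delta_{(1)},\dots,\Delta_{(r)}=\Delta_{k}$. Each $\epsilon_{m}$ lies in both adjacent chambers $\Delta_{(m-1)}$ and $\Delta_{(m)}$. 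Applying Step 1 on both sides gives
\[
a(X_{\xi_{m-1}})=a(X_{\epsilon_{m}})=a(X_{\xi_{m}}),
\]
and by induction along the segment all chamber quotients have the same algebraic dimension.

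\textbf{Step 3: arbitrary critical values.} Any critical $\epsilon\in\mathrm{int}\,\Delta$ lies in some $\Delta_{j}$, so $a(X_{\epsilon})=a(X_{\xi})$ for $\xi\in\mathrm{int}\,\Delta_{j}$, again by Step 1.

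The main obstacle is the genericity argument in Step 2. I need the segment to cross walls only at relative interior points and to stay in $\mathrm{int}\,\Delta$. Convexity of $\Delta$ handles the second requirement. For the first, the union of faces of codimension $\ge 2$ has codimension $\ge 2$, so segments meeting it form a measure-zero set of endpoint pairs. A small perturbation of $\xi$ and $\zeta$ inside their open chambers therefore works.

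In fact Step 2 is only a convenience. Theorem \ref{m1} alone suffices whenever two chambers share some point $\epsilon\in\mathrm{int}\,\Delta$, and the chambers cover the connected set $\mathrm{int}\,\Delta$, so the adjacency graph of chambers is connected. The cleanest route is to use this connectivity argument directly and avoid the codimension bookkeeping.
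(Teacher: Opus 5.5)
Your proposal is correct and follows essentially the same route as the paper. Both arguments use constancy of the quotient inside each chamber (Lemma~\ref{lem-xi-zeta}), the proper modification $f_{\xi,\epsilon}$ of Theorem~\ref{m1} from an adjacent chamber to each critical value in $\mathrm{int}\,\Delta$, and the isomorphism of meromorphic function fields that a proper modification induces. The one difference is that you spell out the connectivity of the chamber adjacency graph through $\mathrm{int}\,\Delta$; the paper leaves this implicit in its final ``consequently'' and also invokes Lemma~\ref{sta}(ii), which your argument does not need.
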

\begin{proof}
Recall that $\Delta$ is a union of several $d$-dimensional subpolytopes $\Delta_{1},\cdots,\Delta_{k}$ whose interiors are disjoint by walls.
If $\alpha$ varies in the interior of a subpolytope or in the relative interior of a wall, due to Lemma \ref{lem-xi-zeta} and the second assertion of Lemma \ref{sta}, the K\"{a}hler quotient $X^{ss}_{\alpha}/\!\!/G$ is independent on the choice of $\alpha$ and so is the algebraic dimension $a(X^{ss}_{\alpha}/\!\!/G)$.

To end the proof, it suffices to verify that if $\alpha\in\mathrm{int}\,\Delta$ lies in the boundary of a subpolytope $\Delta_{i}$ then we have
$a(X^{ss}_{\alpha}/\!\!/G)=a(X^{ss}_{\beta}/\!\!/G)$, where $\beta$ is an arbitrary point in the interior $\mathrm{int}\,\Delta_{i}$.
Due to Theorem \ref{main-thm-1}, we can construct a natural proper modification of K\"{a}hler quotients
$$
f_{\beta,\alpha}:X^{s}_{\beta}/\!\!/G\longrightarrow X^{ss}_{\alpha}/\!\!/G.
$$
By lifting of meromorphic functions, the holomorphic map $f_{\beta,\alpha}$
induces homomorphism of the fields of meromorphic functions:
\begin{equation}\label{mero-iso}
f_{\beta,\alpha}^{\ast}:K(X^{ss}_{\alpha}/\!\!/G)\longrightarrow
K(X^{s}_{\beta}/\!\!/G).
\end{equation}
In particular, the homomorphism \eqref{mero-iso} is an isomorphism of fields, since $f_{\beta,\alpha}$ is a modification (cf. \cite[Chapter VII, Proposition 6.7]{GPR94}).
It follows from Definition \ref{alg-dim} that both the quotients $X^{ss}_{\alpha}/\!\!/G$ and $X^{s}_{\beta}/\!\!/G$ have the same algebraic dimension, i.e., $a(X^{ss}_{\alpha}/\!\!/G)=a(X^{s}_{\beta}/\!\!/G)$.
Consequently, the argument above implies that, for any $\alpha$ in $\mathrm{int}\,\Delta$, the algebraic dimension of the K\"{a}hler quotient $a(X^{ss}_{\alpha}/\!\!/G)$ is constant.
\end{proof}

Note that a proper modification is a special example of bimeromorphic map.
It follows from Theorem \ref{main-thm-1} that, up to bimeromorphic equivalence, the K\"{a}hler quotient $\Phi^{-1}(\alpha)/T$, as a reduced normal complex space, is independent with the choice of $\alpha$ in $\mathrm{int}\,\Delta$.
In the bimeromorphic classification of compact K\"{a}hler manifolds, an interesting class are those with algebraic dimensions zero, i.e., compact K\"{a}hler manifolds having no non-constant meromorphic functions.
It is natural to raise the following
\begin{prob}
Does the condition of algebraic dimension zero descends to the K\"{a}hler quotients?
\end{prob}
If this problem has an affirmative answer, then we can use the technique of K\"{a}hler reduction to construct examples of compact K\"{a}hler manifolds of algebraic dimensions zero.
%==================================
\subsection{Desingularisations of singular quotients}
In order to extend the quantization commutes with reduction theorem to singular symplectic reduction, one has to resolve the non-orbifold singularities of the singular symplectic reduced spaces.
The reason lies in the fact that there is no obvious way to define a Riemann--Roch number as the index of an elliptic operator on a singular symplectic reduced space.

In the algebro-geometric setting, Kirwan \cite{Ki85} showed that there is a canonical \emph{partial desingularisation} of the non-orbifold singularities of the singular GIT-quotient as long as the set of stable points is nonempty.
Particularly, for quotient varieties of torus actions, via perturbing the value of the moment map, Hu \cite{Hu92} proved that there is a more explicit small resolution for a singular quotient.
In the symplectic setting, Meinrenken--Sjamaar \cite{MS99} showed that Kirwan's partial desingularisation procedure still works for symplectic reductions in a slightly less canonical sense;
moreover, the canonical partial desingularisations give the same Riemann--Roch numbers as  the shift desingularisations, via shifting the critical value of the moment map to a nearby regular value.

The purpose of this subsection is to show that there exists an analytic counterpart of \cite[Theorem 2.4]{Hu92} for singular K\"{a}hler quotient.
We start with some basic notions on singularities in the complex analytic setting.
\begin{defn}(\cite[Definition 5.8]{KM98})\label{rational-sin}
Let $V$ be a reduced complex space and $f:W\rightarrow V$ a resolution of singularities of $V$.
We say that $f:W\rightarrow V$ is a \emph{rational resolution} if
\begin{itemize}
  \item [(i)] $f_{\ast}\mathscr{O}_{W}=\mathscr{O}_{V}$ (equivalently, $V$ is normal), and
  \item [(ii)]$R^{i}f_{\ast}\mathscr{O}_{W}=0$ for $i>0$.
\end{itemize}
If every resolution of $V$ is rational, then we say that $V$ has \emph{rational singularities} or $V$ is of \emph{rational type}.
\end{defn}
%Suppose $V$ is a complex space admitting only quotient singularities.
By a \emph{partial desingularisation} of $V$, we mean a proper modification $\pi:Z\rightarrow V$ such that $Z$ is a complex orbifold.
In addition, if $\pi_{\ast}\mathscr{O}_{Z}=\mathscr{O}_{V}$ and $R^{i}\pi_{\ast}\mathscr{O}_{Z}=0$ for $i>0$, we call $\pi:Z\rightarrow V$ a \emph{rational} partial desingularisation of $V$.
In the K\"{a}hler case, as an application of Theorem \ref{main-thm-1}, we obtain the following result.
\begin{thm}\label{par-des}
For each singular nondegenerate K\"{a}hler quotient $X_{\alpha}$, there exists a canonical shift desingularization $f_{\beta,\alpha}:X_{\beta}\rightarrow X_{\alpha}$ which is partial and rational.
\end{thm}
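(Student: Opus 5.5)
Let $X_{\alpha}$ be a singular nondegenerate K\"{a}hler quotient, so $\alpha\in\mathrm{int}\,\Delta$. If $\alpha$ were a regular value, Theorem \ref{main-k-q} would make $X_{\alpha}$ an orbifold. So we may assume $\alpha$ is a critical value lying in $\partial\,\Delta_{i}\cap\mathrm{int}\,\Delta$ for some subpolytope $\Delta_{i}$. Choose $\beta\in\mathrm{int}\,\Delta_{i}$. By Lemma \ref{lem-xi-zeta} the quotient $X_{\beta}=X^{s}_{\beta}/\!\!/G$ does not depend on this choice, which is the sense in which the construction is canonical once the chamber $\Delta_{i}$ is fixed. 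Theorem \ref{m1} gives the natural proper modification $f_{\beta,\alpha}:X_{\beta}\to X_{\alpha}$. Since $\beta$ is regular, $X^{ss}_{\beta}=X^{s}_{\beta}$ and all stabilizers are finite, so $X_{\beta}$ is a compact complex orbifold. Thus $f_{\beta,\alpha}$ is a partial desingularisation, and it remains to prove rationality.

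For condition (i), $f_{\ast}\mathscr{O}_{X_{\beta}}=\mathscr{O}_{X_{\alpha}}$ follows from normality of $X_{\alpha}$ (Theorem \ref{reduced-normal}) together with Property \ref{fiber-conn}. Indeed, $f_{\beta,\alpha}$ is proper and surjective with connected fibres and is biholomorphic off the nowhere dense set $B$ (Lemma \ref{B-dense}). Hence a section of $f_{\ast}\mathscr{O}_{X_{\beta}}$ is a weakly holomorphic function on $X_{\alpha}$, and by normality it is holomorphic. This is Zariski's Main Theorem in the form of \cite[Corollary 1.12]{Ue75}.

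For condition (ii), the vanishing $R^{i}f_{\ast}\mathscr{O}_{X_{\beta}}=0$ for $i>0$, I would reduce to a known statement: quotient singularities are rational. Take a resolution $\mu:\widetilde{X}\to X_{\beta}$. Since $X_{\beta}$ has only finite quotient singularities, it has rational singularities, so $\mu_{\ast}\mathscr{O}_{\widetilde X}=\mathscr{O}_{X_{\beta}}$ and $R^{i}\mu_{\ast}\mathscr{O}_{\widetilde X}=0$ for $i>0$ (cf. \cite[Proposition 5.15]{KM98}). The Leray spectral sequence then gives $R^{i}(f\circ\mu)_{\ast}\mathscr{O}_{\widetilde X}=R^{i}f_{\ast}\mathscr{O}_{X_{\beta}}$. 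So it suffices to show that $X_{\alpha}$ itself has rational singularities, because then the resolution $f\circ\mu$ of $X_{\alpha}$ is rational.

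Rationality of $X_{\alpha}$ is the main obstacle. Locally $X_{\alpha}$ is an analytic Hilbert quotient $X^{ss}_{\alpha}/\!\!/G$ of a complex manifold by a reductive group. By the Holomorphic Slice Theorem it is locally biholomorphic to an affine GIT quotient $(G\times_{G_{b}}S)/\!\!/G\cong S/\!\!/G_{b}$, where $S$ is a smooth $G_{b}$-stable slice. By Boutot's theorem, quotients of smooth (or rational) affine varieties by reductive groups have rational singularities. Alternatively, for the torus case one can use Hochster's theorem that such quotients are normal toric-type singularities, hence Cohen--Macaulay and rational. Rationality is local and analytic, so it transfers to $X_{\alpha}$, and this completes the argument. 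If one wants to avoid Boutot, an alternative is to compute $R^{i}f_{\ast}$ directly. By Theorem \ref{m2} and Proposition \ref{G-b}, the fibres over $B$ are quotients $\widetilde{\mathbb{P}}(\mathbf{a})/\Gamma_{b}$ of weighted projective spaces, which satisfy $H^{i}(\mathscr{O})=0$ for $i>0$. One would then apply the formal function theorem along the fibres, but controlling the higher infinitesimal neighbourhoods is harder, so I would rely on the slice-plus-Boutot route.
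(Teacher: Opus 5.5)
Your proposal is correct and follows essentially the same route as the paper. The partial desingularisation comes from Theorem \ref{m1} with $\beta\in\mathrm{int}\,\Delta_{i}$, and the equality $(f_{\beta,\alpha})_{\ast}\mathscr{O}_{X_{\beta}}=\mathscr{O}_{X_{\alpha}}$ follows from normality via Ueno. The vanishing $R^{i}(f_{\beta,\alpha})_{\ast}\mathscr{O}_{X_{\beta}}=0$ is obtained by composing with a resolution of the orbifold $X_{\beta}$ and using that $X_{\alpha}$ has rational singularities by the holomorphic slice theorem and Boutot.

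The only difference is one step the paper writes out. It shows that the particular resolution $f_{\beta,\alpha}\circ g$ of $X_{\alpha}$ is rational by comparing it with an arbitrary resolution through a resolved graph closure and Hironaka's vanishing for modifications of manifolds. You instead absorb this standard independence-of-resolution step into the definition of rational singularities.
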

\begin{proof}
Since $X_{\alpha}$ is singular and nondegenerate, there exists a $d$-dimensional subpolytope $\Delta_{i}$ of $\Delta=\Phi(X)$ such that $\alpha$ lies in the intersection of $\mathrm{int}\,\Delta$ with the boundary $\partial\Delta_{i}$.
Let $\beta\in\mathrm{int}\,\Delta_{i}$, then $\beta$ is a regular value of $\Phi$ and therefore the K\"{a}hler quotient $X_{\beta}$ has at most finite quotient singularities.
According to Theorem \ref{main-thm-1}, there is a canonical modification
\begin{equation}\label{p-desi-1}
f_{\beta,\alpha}:X_{\beta}\longrightarrow X_{\alpha}
\end{equation}
which gives rise to a partial desingularisation of $X_{\alpha}$.

Now, we turn to prove that \eqref{p-desi-1} is rational.
As a direct consequence of the holomorphic slice theorem (cf. \cite[(2.7) Theorem]{HL94} or \cite[Theorem 1.12]{Sj95}) and the result by Boutot \cite[Corollaire]{Bou87} on the singularities of reductive quotients, we obtain that K\"{a}hler quotients are of rational type, see \cite[Page 125]{Sj95} or \cite[Theorem 3]{BGLM24}.
Since $f_{\beta,\alpha}$ is a proper modification of normal complex spaces, we have
$(f_{\beta,\alpha})_{\ast}\mathscr{O}_{X_{\beta}}=
\mathscr{O}_{X_{\alpha}}$ (cf. \cite[Chapter I, Corollary 1.14]{Ue75}).
Let $g:\widetilde{X}_{\beta}\rightarrow X_{\beta}$ be a resolution of singularities of $X_{\beta}$.
Then we get a commutative diagram
\begin{equation*}
\vcenter{
\xymatrix@=1cm{
  \widetilde{X}_{\beta} \ar[d]_{g} \ar[dr]^{h}        \\
  X_{\beta} \ar[r]^{f_{\beta,\alpha}}  & X_{\alpha}             }
  }
\end{equation*}
where $h=f_{\beta,\alpha}\circ g$.
Note that both $g$ and $f_{\beta,\alpha}$ are proper modifications.
Consequently, the holomorphic map $h$ is proper and surjective; moreover, it is a modification.
Consider the higher direct images $R^{i}(f_{\beta,\alpha}\circ g)_{\ast}\mathscr{O}_{\widetilde{X}_{\beta}}$.
The Grothendieck spectral sequence of $f_{\beta,\alpha}\circ g$ having the second term
$$
E^{s,t}_{2}=
R^{s}(f_{\beta,\alpha})_{\ast}\circ R^{t}g_{\ast}(\mathscr{O}_{\widetilde{X}_{\beta}})
$$
converges to
$
R^{s+t}(f_{\beta,\alpha}\circ g)_{\ast}\mathscr{O}_{\widetilde{X}_{\beta}}=
R^{i}h_{\ast}\mathscr{O}_{\widetilde{X}_{\beta}}
$
where $s+t=i$ (cf. \cite[Theorem 5.18]{GPR94}).
Since $X_{\beta}$ is of rational type, we get $R^{0}g_{\ast}\mathscr{O}_{\widetilde{X}_{\beta}}=\mathscr{O}_{X_{\beta}}$ and $R^{i}g_{\ast}\mathscr{O}_{\widetilde{X}_{\beta}}=0$ for all $i>0$.
This implies that the spectral sequence above degenerates at $E_{2}$-term and thus
\begin{eqnarray*}
% \nonumber to remove numbering (before each equation)
  R^{i}h_{\ast}\mathscr{O}_{\widetilde{X}_{\beta}}&\cong& E^{i,0}_{\infty}
  = E^{i,0}_{2}\\
  &=&R^{i}(f_{\beta,\alpha})_{\ast}\bigl( R^{0}g_{\ast}(\mathscr{O}_{\widetilde{X}_{\beta}})\bigr)\\
  &=&R^{i}(f_{\beta,\alpha})_{\ast}\mathscr{O}_{X_{\beta}}.
\end{eqnarray*}

We will prove $R^{i}h_{\ast}\mathscr{O}_{\widetilde{X}_{\beta}}=0$ for $i>0$.
Let $q:\widetilde{X}_{\alpha}\rightarrow X_{\alpha}$ be a resolution of singularities of $X_{\alpha}$ and $C\subset X_{\alpha}$ the center of the modification $h:\widetilde{X}_{\beta}\rightarrow X_{\alpha}$.
Put $V=C\cup(X_{\alpha})_{\mathrm{sing}}$ which is a closed complex analytic subset of $X_{\alpha}$.
Denote by $\psi$ the composition of $h|_{\widetilde{X}_{\beta}\setminus h^{-1}(V)}$ and $(q|_{\widetilde{X}_{\alpha}\setminus q^{-1}(V)})^{-1}$ which gives rise to a biholomorphism
$$
\psi:\widetilde{X}_{\beta}\setminus h^{-1}(V)\longrightarrow \widetilde{X}_{\alpha}\setminus q^{-1}(V).
$$
Let $Y$ be the closure of the graph $\Gamma(\psi)$ in $\widetilde{X}_{\alpha}\times\widetilde{X}_{\beta}$.
Then $Y$ is an irreducible complex analytic subset of $\widetilde{X}_{\alpha}\times\widetilde{X}_{\beta}$, and the projection of  $\widetilde{X}_{\alpha}\times\widetilde{X}_{\beta}$ onto $\widetilde{X}_{\alpha}$ induces a proper holomorphic map $p_{1}:Y\rightarrow\widetilde{X}_{\alpha}$.
Since $E:=q^{-1}(V)$ is a nowhere dense complex analytic subset of $\widetilde{X}_{\alpha}$ and $Y\setminus p^{-1}_{1}(E)=\Gamma(\psi)$ is biholomorphic to $\widetilde{X}_{\alpha}\setminus E$ under $p_{1}$, it follows that $p_{1}:Y\rightarrow\widetilde{X}_{\alpha}$ is a modification.
Likewise, we can show that $p_{2}:Y\rightarrow\widetilde{X}_{\beta}$ is also a modification.
Let $l:\widetilde{Y}\rightarrow Y$ be a resolution of singularities of $Y$.
Then we get a commutative diagram
\begin{equation}\label{resol-y}
\vcenter{
\xymatrix@C=1cm{
  \widetilde{Y} \ar@/_/[ddr]_{r_{\beta}} \ar@/^/[drr]^{r_{\alpha}}
    \ar@{>}[dr]|-{l}                   \\
   & Y \ar[d]_{p_{2}} \ar[r]^{p_{1}}
                      & \widetilde{X}_{\alpha} \ar[d]^{q}    \\
   & \widetilde{X}_{\beta} \ar[r]^{h}     & X_{\alpha}              }
}
\end{equation}
where $r_{\alpha}=p_{1}\circ l$ and $r_{\beta}=p_{2}\circ l$.
The commutativity of \eqref{resol-y} deduces
\begin{equation}\label{equ-0}
R^{i}(q\circ r_{\alpha})_{\ast}\mathscr{O}_{\widetilde{Y}}=
R^{i}(h\circ r_{\beta})_{\ast}\mathscr{O}_{\widetilde{Y}}.
\end{equation}
Note that $r_{\alpha}$ and $r_{\beta}$ in \eqref{resol-y} are modifications of compact complex manifolds.
By a result of Hironaka \cite[Proposition 2.14]{Ue75}, we have the vanishing of the higher direct images $R^{i}(r_{\alpha})_{\ast}\mathscr{O}_{\widetilde{Y}}$
and $R^{i}(r_{\beta})_{\ast}\mathscr{O}_{\widetilde{Y}}$ for all $i>0$, thereby the Grothendiek spectral sequences of $q\circ r_{\alpha}$ and $h\circ r_{\beta}$ degenerate at $E_{2}$-terms.
On the one hand, since $X_{\alpha}$ is of rational type we get
\begin{equation}\label{equ-1}
  R^{i}(q\circ r_{\alpha})_{\ast}\mathscr{O}_{\widetilde{Y}}
  \cong R^{i}q_{\ast}\bigl( R^{0}(r_{\alpha})_{\ast}\mathscr{O}_{\widetilde{Y}}\bigr)
  \cong R^{i}q_{\ast}\mathscr{O}_{\widetilde{X}_{\alpha}}
  =0. \quad\quad(\forall\,\,i>0)
\end{equation}
On the other hand, we have
\begin{equation}\label{equ-2}
R^{i}(h\circ r_{\beta})_{\ast}\mathscr{O}_{\widetilde{Y}}
=R^{i}h_{\ast}\mathscr{O}_{\widetilde{X}_{\beta}}.
\end{equation}
Combining \eqref{equ-0}-\eqref{equ-2} derives
$R^{i}h_{\ast}\mathscr{O}_{\widetilde{X}_{\beta}}=0$ and therefore
$R^{i}(f_{\beta,\alpha})_{\ast}\mathscr{O}_{X_{\beta}}=0$ for all $i>0$.
The proof is complete.
\end{proof}

In general, suppose $(V, \mathscr{O}_{V})$ is a compact reduced irreducible complex space having rational singularities only.
Note that the finite quotient singularities are rational, see \cite[Proposition 5.15]{KM98}.
Without essential changes, using the same argument in the proof of Theorem \ref{par-des}, we can prove the following result
\begin{prop}
Every partial desingularisation of $(V, \mathscr{O}_{V})$ is rational.
\end{prop}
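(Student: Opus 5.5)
Let $\pi:Z\rightarrow V$ be a partial desingularisation, i.e.\ a proper modification with $Z$ a complex orbifold. The plan is to run the argument of Theorem \ref{par-des} verbatim, with $X_{\beta}$ replaced by $Z$ and $X_{\alpha}$ replaced by $V$. The ingredients we use are: $V$ is normal (rational singularities include normality, by condition (i) of Definition \ref{rational-sin}); $Z$ has only finite quotient singularities, so it is normal and of rational type by \cite[Proposition 5.15]{KM98}; and modifications between compact complex manifolds have vanishing higher direct images of the structure sheaf \cite[Proposition 2.14]{Ue75}.

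\textbf{Step 1 (zeroth direct image).} Since $\pi$ is a proper modification of normal compact complex spaces, $\pi_{\ast}\mathscr{O}_{Z}=\mathscr{O}_{V}$ by \cite[Chapter I, Corollary 1.14]{Ue75}.

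\textbf{Step 2 (reduction to a resolution of $Z$).} Choose a resolution $g:\widetilde{Z}\rightarrow Z$ and set $h=\pi\circ g$; this is a resolution of $V$. Because $Z$ is of rational type, $R^{t}g_{\ast}\mathscr{O}_{\widetilde{Z}}=0$ for $t>0$ and $g_{\ast}\mathscr{O}_{\widetilde{Z}}=\mathscr{O}_{Z}$. Hence the Grothendieck spectral sequence for $\pi\circ g$ degenerates, giving
\[
R^{i}h_{\ast}\mathscr{O}_{\widetilde{Z}}\cong R^{i}\pi_{\ast}\mathscr{O}_{Z}.
\]
It therefore suffices to show $R^{i}h_{\ast}\mathscr{O}_{\widetilde{Z}}=0$ for $i>0$.

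\textbf{Step 3 (comparison with an arbitrary resolution of $V$).} This is the main point. Take any resolution $q:\widetilde{V}\rightarrow V$; since $V$ has rational singularities, $R^{i}q_{\ast}\mathscr{O}_{\widetilde{V}}=0$ for $i>0$. We compare $h$ and $q$ through a common resolution. Let $Y$ be the closure in $\widetilde{V}\times\widetilde{Z}$ of the graph of the biholomorphism between the complements of the preimages of $C\cup V_{\mathrm{sing}}$, where $C$ is the center of $h$. Irreducibility of $V$ makes $Y$ irreducible, and both projections $Y\rightarrow\widetilde{V}$ and $Y\rightarrow\widetilde{Z}$ are modifications. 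Let $l:\widetilde{Y}\rightarrow Y$ resolve $Y$, and write $r_{\alpha},r_{\beta}$ for the resulting modifications $\widetilde{Y}\rightarrow\widetilde{V}$ and $\widetilde{Y}\rightarrow\widetilde{Z}$. These are modifications of compact manifolds, so \cite[Proposition 2.14]{Ue75} gives vanishing of their higher direct images of $\mathscr{O}_{\widetilde{Y}}$, and their zeroth direct images are the structure sheaves by normality. Degeneration of the Leray sequences on both sides then yields
\[
R^{i}h_{\ast}\mathscr{O}_{\widetilde{Z}}\cong R^{i}(h\circ r_{\beta})_{\ast}\mathscr{O}_{\widetilde{Y}}=R^{i}(q\circ r_{\alpha})_{\ast}\mathscr{O}_{\widetilde{Y}}\cong R^{i}q_{\ast}\mathscr{O}_{\widetilde{V}}=0.
\]

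The only place needing care is that nothing in Step 3 used the Kähler quotient structure, only compactness, irreducibility and normality. Compactness guarantees that $Y$ and $\widetilde{Y}$ are compact manifolds after resolution, so Hironaka's vanishing applies. With that checked, Steps 1–3 show that $\pi$ is a rational partial desingularisation.
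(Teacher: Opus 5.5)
Your proposal is correct and is exactly the paper's approach: the paper justifies this proposition only by saying that the proof of Theorem~\ref{par-des} goes through without essential changes, and your Steps 1--3 are that proof with $X_{\beta}$ and $X_{\alpha}$ replaced by $Z$ and $V$. The only cosmetic adjustment is in Step 3, where you should also discard the center of $q$ (not just $C\cup V_{\mathrm{sing}}$) unless your resolutions are assumed biholomorphic over the regular locus; this changes nothing in the argument.
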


Let $(V, \mathscr{O}_{V})$ be a pure dimensional compact complex space and $\mathscr{F}$ a coherent sheaf of $\mathscr{O}_{V}$-modules on $V$.
As $\mathbb{C}$-vector spaces, the sheaf cohomology $H^{i}(V, \mathscr{F})$ are finite dimensional and $H^{i}(V, \mathscr{F})=0$ for all $i>\mathrm{dim}_{\mathbb{C}}\,V$.
The \emph{Riemann--Roch number} (or \emph{holomorphic Euler characteristic}) of $\mathscr{F}$ is defined to be
$$
RR(V, \mathscr{F})=\sum_{i\geq0}(-1)^{i}\mathrm{dim}_{\mathbb{C}}\,H^{i}(V, \mathscr{F}).
$$
Particularly, if $\mathscr{F}=\mathscr{O}_{V}$ we call $RR(V):=RR(V, \mathscr{O}_{V})$ the Riemann--Roch number of $V$.
Consider the Riemann--Roch numbers of K\"{a}hler quotients.
As a corollary of the above theorem, we have
\begin{cor}\label{inv-euler}
All nondegenerate K\"{a}hler quotients of $(X, ds^{2}, T^{\mathbb{C}}, \Phi)$ have the same Riemann--Roch number.
\end{cor}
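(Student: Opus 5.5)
The plan is to show that the Riemann--Roch number $RR(X_{\alpha})=\sum_{i}(-1)^{i}\dim_{\mathbb{C}}H^{i}(X_{\alpha},\mathscr{O}_{X_{\alpha}})$ is unchanged along each of the natural modifications, and then to connect any two points of $\mathrm{int}\,\Delta$ by a chain of such modifications. The key tool is the Leray spectral sequence combined with the rationality established in Theorem \ref{par-des}.

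First, take $\alpha\in\mathrm{int}\,\Delta$ in the boundary of some subpolytope $\Delta_{i}$, and take $\beta\in\mathrm{int}\,\Delta_{i}$. By Theorem \ref{par-des}, the shift desingularisation $f_{\beta,\alpha}:X_{\beta}\rightarrow X_{\alpha}$ satisfies
\begin{equation*}
(f_{\beta,\alpha})_{\ast}\mathscr{O}_{X_{\beta}}=\mathscr{O}_{X_{\alpha}},\qquad R^{i}(f_{\beta,\alpha})_{\ast}\mathscr{O}_{X_{\beta}}=0\quad (i>0).
\end{equation*}
The Leray spectral sequence $E^{s,t}_{2}=H^{s}(X_{\alpha},R^{t}(f_{\beta,\alpha})_{\ast}\mathscr{O}_{X_{\beta}})\Rightarrow H^{s+t}(X_{\beta},\mathscr{O}_{X_{\beta}})$ then has only the row $t=0$, so it degenerates. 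This gives $H^{i}(X_{\beta},\mathscr{O}_{X_{\beta}})\cong H^{i}(X_{\alpha},\mathscr{O}_{X_{\alpha}})$ for all $i$, and hence $RR(X_{\beta})=RR(X_{\alpha})$. All these groups are finite dimensional because the spaces are compact, so the Euler characteristics are defined.

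Second, inside a single chamber $\mathrm{int}\,\Delta_{i}$ the quotient is constant by Lemma \ref{lem-xi-zeta}. On the relative interior of a wall, Lemma \ref{sta}(ii) gives equality of semistable sets, hence equality of the quotients. For a point $\alpha$ of lower-dimensional strata the first step already applies directly, since $\alpha$ lies on $\partial\Delta_{i}$ for every $i\in\mathrm{I}(\alpha)$. Thus every nondegenerate quotient $X_{\alpha}$ has the same Riemann--Roch number as the regular quotients $X_{\beta}$ for every chamber $\Delta_{j}$ containing $\alpha$.

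Third, I connect the chambers. Any two chambers $\Delta_{j},\Delta_{k}$ can be joined by a finite sequence of chambers, each meeting the next in a point of $\mathrm{int}\,\Delta$. Indeed, $\mathrm{int}\,\Delta$ is connected, it is covered by the finitely many closed sets $\Delta_{i}\cap\mathrm{int}\,\Delta$, and so the intersection graph of these sets is connected. If $\epsilon\in\Delta_{j}\cap\Delta_{k}\cap\mathrm{int}\,\Delta$, the first step yields $RR(X_{\xi_{j}})=RR(X_{\epsilon})=RR(X_{\xi_{k}})$, and chaining these equalities finishes the proof. The main obstacle is the rationality input, which Theorem \ref{par-des} already supplies; beyond that, the only point requiring care is that the boundary point $\epsilon$ used in each link lies in $\mathrm{int}\,\Delta$, which is exactly what the connectivity argument guarantees.
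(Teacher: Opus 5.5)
Your proof is correct and follows the paper's argument. Like the paper, you apply the Leray spectral sequence to the shift desingularisation $f_{\beta,\alpha}$, using $(f_{\beta,\alpha})_{\ast}\mathscr{O}_{X_{\beta}}=\mathscr{O}_{X_{\alpha}}$ and the vanishing of the higher direct images from Theorem \ref{par-des}. Your second and third steps spell out the chamber-chaining reduction, via connectedness of $\mathrm{int}\,\Delta$ and of the intersection graph of the sets $\Delta_{i}\cap\mathrm{int}\,\Delta$, which the paper leaves implicit in the phrase ``it suffices to verify''.
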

\begin{proof}
It suffices to verify that a singular nondegenrate K\"{a}hler quotient $X_{\alpha}$ and its shift desingularisations have the same Riemann--Roch number.
Let $f_{\beta,\alpha}:X_{\beta}\rightarrow X_{\alpha}$ be a shift desingularisation.
According to the Leray spectral sequence theorem \cite[Theorem 13.8]{Dem12}, there exists a spectral sequence $\{E_{r}, d_{r}\}$ with the second term
$$
E^{s,t}_{2}=H^{s}(X_{\alpha}, R^{t}(f_{\beta,\alpha})_{\ast}\mathscr{O}_{X_{\beta}})
$$
which converges to $H^{\ast}(X_{\beta}, \mathscr{O}_{X_{\beta}})$.
Due to Theorem \ref{par-des}, we have
$(f_{\beta,\alpha})_{\ast}\mathscr{O}_{X_{\beta}}=\mathscr{O}_{X_{\alpha}}$ and
$R^{i}(f_{\beta,\alpha})_{\ast}\mathscr{O}_{X_{\beta}}=0$ for $i>0$.
It follows that the Leray spectral sequence $\{E_{r}, d_{r}\}$ degenerates at $E_{2}$.
Consequently, as $\mathbb{C}$-vector spaces,
$$
H^{i}(X_{\alpha}, \mathscr{O}_{X_{\alpha}})=
H^{i}(X_{\alpha}, (f_{\beta,\alpha})_{\ast}\mathscr{O}_{X_{\beta}})
$$
is isomorphic to
$H^{i}(X_{\beta}, \mathscr{O}_{X_{\beta}})$
and hence $RR(X_{\beta})=RR(X_{\alpha})$ is valid.
\end{proof}

%=================================
\subsection{Geometric quantization}
Assume that $L$ is a holomorphic line bundle over $X$ admitting a Hermitian metric $h$ with the property that the K\"{a}hler form $\omega$ is equal to $-(2\pi\sqrt{-1})^{-1}\Theta$, where $\Theta$ is the curvature form of the Hermitian connection $\nabla$ of $h$.
From definition, $L$ is positive and therefore due to the Kodaira Embedding Theorem \cite[Page 181]{GH94} $X$ is embeddable in a projective space.

Next suppose that the tours $T$ acts on $L$ by line bundle automorphisms preserving the Hermitian metric $h$.
Then the Hermitian connection $\nabla$ is invariant and $\omega$ is invariant under the induced $T$-action on $X$.
For each $\xi$ in $\mathfrak{t}$, denote by $\xi_{L}$ the vector field on $L$ generated by $\xi$.
Let $\xi_{X,\mathrm{hor}}$ be the horizontal lift of $\xi_{X}$ to $TL$ with respect to the connection $\nabla$ and $\nu_{L}$ be the vector field on $L$ generating the circle action defined by fibrewise multiplication by complex numbers of length 1.
By Kostant's formula, there exists a unique smooth function $\Phi^{\xi}$ on $X$ such that
\begin{equation}\label{k}
\xi_{L}=\xi_{X,\mathrm{hor}}+2\pi\Phi^{\xi}\nu_{L}
\end{equation}
and $d\Phi^{\xi}=\imath_{\xi_{X}}\omega$.
This implies that $(X, \omega)$ is a K\"{a}hler  Hamiltonian $T$-manifold with the moment map $\Phi:X\rightarrow\mathfrak{t}^{\ast}$ given by \eqref{k}.

Let $s$ be a holomorphic section of $L$ and $\langle s,s\rangle$ the norm of $s$ with respect to the Hermitian metric $h$.
For any integer $q>0$,  consider the $q$-th tensor product $L^{q}:=L^{\otimes q}$.
Then $s^{q}:=s^{\otimes q}$ becomes a holomorphic section of $L^{q}$.
Moreover, the metric $h$ induces a unique Hermitian metric on $L^{q}$, denoted by $h^{(q)}$, such that the norm of $s^{q}$ with respect to $h^{(q)}$ satisfies
$$
\langle s^{q}, s^{q}\rangle^{(q)}=(\langle s,s\rangle)^{q}.
$$

Put $\mathscr{L}=\mathscr{O}(L)$ the sheaf of holomorphic sections of $L$.
The Dolbeault theorem states that the sheaf cohomology $H^{i}(X, \mathscr{L})$ is isomorphic to $H^{0,i}_{\bar{\partial}}(X, L)$.
For any $\alpha\in\Delta=\Phi(X)$, consider the K\"{a}hler quotient $\Pi_{\alpha}:X^{ss}_{\alpha}\rightarrow X_{\alpha}$.
Note that $\mathscr{L}$ is a coherent $G$-equivariant sheaf on $X$, see \cite{Rob86}.
The invariant direct image $\mathscr{L}_{\alpha}:=(\Pi_{\alpha})^{G}_{\ast}(\mathscr{L}|_{X^{ss}_{\alpha}})$
is defined by setting
$$
(\Pi_{\alpha})^{G}_{\ast}(\mathscr{L}|_{X^{ss}_{\alpha}})(U)=
\mathscr{L}(\Pi^{-1}_{\alpha}(U))^{G}
$$
for any open subset $U$ of $X_{\alpha}$.
By a result of Roberts \cite{Rob86}, $\mathscr{L}_{\alpha}$ is a coherent $\mathscr{O}_{X_{\alpha}}$-modules.
In general, $\mathscr{L}_{\alpha}$ is not locally free necessarily.
We say that $L$ \emph{descends to} $X_{\alpha}$ if $\mathscr{L}_{\alpha}$ is locally free, i.e., $L|_{X^{ss}_{\alpha}}$ descends to a holomorphic line bundle $L_{\alpha}$ on $X_{\alpha}$ such that $\mathscr{L}_{\alpha}=\mathscr{O}(L_{\alpha})$.
If some power of $L$ descends to $X_{\alpha}$, we say that $L$ \emph{descends fractionally to} $X_{\alpha}$.
According to Kempf's descend lemma \cite[Th\'{e}or\`{e}me  2.3]{DN89}, $L$ descends fractionally to $X_{\alpha}$, provided that $\alpha=0$ or $\alpha$ is a regular value of $\Phi$.
By the quantizations $\mathscr{H}$ and $\mathscr{H}_{0}$, we mean the virtual vector spaces
$$
\mathscr{H}=\bigoplus_{j\geq0}(-1)^{j}H^{j}(X, \mathscr{L})
$$
and
$$
\mathscr{H}_{0}=\bigoplus_{j\geq0}(-1)^{j}H^{j}(X_{0}, \mathscr{L}_{0}).
$$
Note that, for any integer $i\geq0$, the $i$-th Dolbeault cohomology $H^{0,i}_{\bar{\partial}}(X, L)$ is a $T$-representation,
and we denote its $T$-invariant part by $H^{0,i}_{\bar{\partial}}(X, L)^{T}\cong H^{i}(X, \mathscr{L})^{T}$.
Then the Riemann--Roch numbers $RR^{T}(X, \mathscr{L})$ and $RR(X_{0}, \mathscr{L}_{0})$ are defined by
$$
RR(X, \mathscr{L})^{T}
=\sum_{j\geq0}(-1)^{j}\mathrm{dim}_{\mathbb{C}}H^{j}(X, \mathscr{L})^{T}
$$
and
$$
RR(X_{0}, \mathscr{L}_{0})
=\sum_{j\geq0}(-1)^{j}\mathrm{dim}_{\mathbb{C}}H^{j}(X_{0}, \mathscr{L}_{0}).
$$
If $0$ is a regular value of $\Phi$, the \emph{quantization commutes with reduction} theorem \cite{GS82b,JK97,Mer98,TZ98,Br01} states that
$$
RR(X_{0}, \mathscr{L}_{0})=RR(X, \mathscr{L})^{T}.
$$

Originally, Guillemin--Sternberg \cite{GS82b} asserted the conjecture of quantization commutes with reduction in a more general setting: $K$ is a compact Lie group and $(X, \omega)$ is a compact symplectic manifold with a prequantum data.
Under the guiding principle of quantization commutes with reduction, extensive works have been done by many authors, for detailed results and comments we refer to the survey \cite{Ver02} and references therein.
Particularly, the theorem of quantization commutes with reduction has been generalized to various non-compact settings \cite{MZ14,Par15,HM17,Par18} and non-symplectic settings \cite{Mer12,GMW18,GMW21,BLS21,LLSS22,LLSS24}.

Henceforth, we assume that $X_{0}$ is a singular nondegenerate K\"{a}hler quotient.
Due to Theorem \ref{par-des}, there exists a canonical shift desigularization $f_{\beta,0}:X_{\beta}\rightarrow X_{0}$.
Consider the commutative diagram:
\begin{equation}\label{a-b}
\vcenter{
\xymatrix@=1cm{
  X^{s}_{\beta} \ar[d]_{\Pi_{\beta}}
  \ar[r]^{i_{\beta, 0}} & X^{ss}_{0} \ar[d]^{\Pi_{0}} \\
  X_{\beta}
  \ar[r]^{f_{\beta, 0}} & X_{0}}
  }
\end{equation}
For any open subset $V$ of $X_{0}$, by definition, we have
$\mathscr{L}_{0}(V)=\mathscr{L}(\Pi_{0}^{-1}(V))^{G}$ and
$(f_{\beta,0})_{\ast}\mathscr{L}_{\beta}(V)=
\mathscr{L}(\Pi_{\beta}^{-1}((f_{\beta,0})^{-1}(V)))^{G}$.
Observe that
$$
\Pi_{\beta}^{-1}((f_{\beta,0})^{-1}(V))=
i^{-1}_{\beta,0}(\Pi_{0}^{-1}(V))=
X^{s}_{\beta}\cap\Pi_{0}^{-1}(V).
$$
The restriction map
\begin{equation}\label{r-v}
\mathfrak{r}(V):\mathscr{L}(\Pi_{0}^{-1}(V))^{G}\longrightarrow \mathscr{L}(X^{s}_{\beta}\cap\Pi_{0}^{-1}(V))^{G}
\end{equation}
gives rise to a morphism of $\mathscr{O}_{X_{0}}(V)$-modules denoted by
$$
f^{\natural}_{\beta,0}(V):\mathscr{L}_{0}(V)\longrightarrow
(f_{\beta,0})_{\ast}\mathscr{L}_{\beta}(V)
$$
and therefore we obtain a sheaf morphism
\begin{equation}\label{la=lb}
f^{\natural}_{\beta,0}:\mathscr{L}_{0}\longrightarrow
(f_{\beta,0})_{\ast}\mathscr{L}_{\beta}.
\end{equation}

Let $q$ be the positive integer in \cite[Proposition 2.15]{Sj95} such that $L^{q}|_{X^{ss}_{0}}$ descends to a holomorphic line bundle $\mathbb{L}_{0}$ over the quotient $X_{0}$.
Note that $\Pi_{0}^{\ast}\mathbb{L}_{0}=L^{q}|_{X^{ss}_{0}}$ and then
$i_{\beta,0}^{\ast}\circ\Pi_{0}^{\ast}\mathbb{L}_{0}
=L^{q}|_{X^{s}_{\beta}}$.
By the commutativity of \eqref{a-b}, we get $\Pi_{0}\circ i_{\beta,0}=f_{\beta,0}\circ\Pi_{\beta}$, and thus
\begin{equation}\label{p-f-0}
\Pi_{\beta}^{\ast}\circ f_{\beta,0}^{\ast}\mathbb{L}_{0}=L^{q}|_{X^{s}_{\beta}}.
\end{equation}
From \cite[Lemma 2.13, (ii)]{Sj95}, the following isomorphism is valid
\begin{equation}\label{p-f-f}
(\Pi_{\beta})^{G}_{\ast}\mathscr{O}\bigl(\Pi_{\beta}^{\ast}\circ f_{\beta,0}^{\ast}
\mathbb{L}_{0}\bigr)\cong\mathscr{O}\bigl(f_{\beta,0}^{\ast}
\mathbb{L}_{0}\bigr).
\end{equation}
Combining \eqref{p-f-0} with \eqref{p-f-f} derives the isomorphism
$$
(\Pi_{\beta})^{G}_{\ast}\bigl(\mathscr{O}(L^{q}|_{X^{s}_{\beta}})\bigr)
\cong\mathscr{O}\bigl(f_{\beta,0}^{\ast}
\mathbb{L}_{0}\bigr).
$$
This implies that $L^{q}|_{X^{s}_{\beta}}$ descends to a holomorphic line bundle $\mathbb{L}_{\beta}\rightarrow X_{\beta}$ satisfying $f_{\beta,0}^{\ast}\mathbb{L}_{0}\cong\mathbb{L}_{\beta}$.

As $0$ is a critical value of $\Phi$, it is a rather restrictive condition that the sheaf $\mathscr{L}_{0}$ is locally free.
Naturally, it is reasonable to ask whether the Riemann--Roch numbers of $\mathscr{L}_{0}$ and $\mathscr{L}_{\beta}$ are equal, see also \cite[Page 126, (i)]{Sj95}.
Inspired by \cite[Lemma 5.8]{Tel00}, using an algebro-geometric argument, we will prove the following:
\begin{thm}\label{L}
For any integer $i\geq0$, we have a canonical isomorphism
$$
H^{i}(X_{0}, \mathscr{L}_{0})\cong H^{i}(X_{\beta}, \mathscr{L}_{\beta}).
$$
\end{thm}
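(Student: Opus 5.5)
The plan is to imitate the proof of Corollary \ref{inv-euler}, with $\mathscr{O}$ replaced by the twisted sheaves. We prove
$$
f^{\natural}_{\beta,0}:\mathscr{L}_{0}\stackrel{\simeq}\longrightarrow(f_{\beta,0})_{\ast}\mathscr{L}_{\beta}
\quad\text{and}\quad
R^{i}(f_{\beta,0})_{\ast}\mathscr{L}_{\beta}=0\ (i>0).
$$
Granting both, the Leray spectral sequence $E^{s,t}_{2}=H^{s}(X_{0},R^{t}(f_{\beta,0})_{\ast}\mathscr{L}_{\beta})\Rightarrow H^{s+t}(X_{\beta},\mathscr{L}_{\beta})$ degenerates at $E_{2}$. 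Via $f^{\natural}_{\beta,0}$ this gives the canonical isomorphism $H^{i}(X_{0},\mathscr{L}_{0})\cong H^{i}(X_{\beta},\mathscr{L}_{\beta})$. Both statements are local on $X_{0}$. Over $X_{0}\setminus B$ the map $f_{\beta,0}$ is biholomorphic (Theorem \ref{m1}), and $X^{s}_{\beta}\cap\Pi_{0}^{-1}(V)=\Pi_{0}^{-1}(V)$ there, so both statements are trivial. It therefore suffices to work over small Stein neighbourhoods $V$ of points $[G\cdot b]\in B$ with $\Pi_{0}^{-1}(V)$ Stein.

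\textbf{Step 1: $f^{\natural}_{\beta,0}$ is an isomorphism.} The restriction map $\mathfrak{r}(V)$ in \eqref{r-v} is injective, because $X^{s}_{\beta}\cap\Pi_{0}^{-1}(V)$ is a dense open subset of $\Pi_{0}^{-1}(V)$ (the complement of $X^{s}_{\beta}$ is analytic). For surjectivity, take a $G$-invariant section $s$ on $X^{s}_{\beta}\cap\Pi_{0}^{-1}(V)$. The complement $\Pi_{0}^{-1}(V)\setminus X^{s}_{\beta}$ consists of points whose orbit closures contain only closed orbits $G\cdot b$ and approach them from the ``$\zeta$-side'' $W^{-}_{b}$ (Proposition \ref{v=gw}). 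I would try to show that $s$ extends holomorphically, in one of two ways:
\begin{itemize}
\item[(a)] by Riemann extension, when this complement has codimension $\ge 2$;
\item[(b)] in general, via the holomorphic slice theorem at $b$, where the question becomes a linear one for the $G_{b}$-representation on the slice $\mathbb{C}^{m}$ with $G^{0}_{b}=\mathbb{C}^{\ast}$.
\end{itemize}
In the linear model, invariants of $L$ restricted to the $\beta$-semistable locus are ratios of weight-homogeneous monomials. Because the $\mathbb{C}^{\ast}$-weight of $L$ at $b$ is $0$ (as $\Phi(b)=0$), I expect these to be polynomial, mirroring the $\mathscr{O}$ case $(f_{\beta,0})_{\ast}\mathscr{O}_{X_{\beta}}=\mathscr{O}_{X_{0}}$ used in Theorem \ref{par-des}.

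\textbf{Step 2: vanishing of higher direct images.} This is where I would follow Teleman's algebraic argument. First reduce to $\mathbb{L}$: choose $q$ as above. By \eqref{p-f-0}, $\mathbb{L}_{\beta}=f_{\beta,0}^{\ast}\mathbb{L}_{0}$, and $\mathbb{L}_{0}$ is locally trivial on $V$. Hence $R^{i}(f_{\beta,0})_{\ast}\mathbb{L}_{\beta}=R^{i}(f_{\beta,0})_{\ast}\mathscr{O}_{X_{\beta}}\otimes\mathbb{L}_{0}=0$ by Theorem \ref{par-des} and the projection formula. For $\mathscr{L}$ itself, use the $\mathbb{Z}/q$-graded (Cox-type) decomposition: pass to the cyclic cover $\widetilde{X}=$ total space of $L^{\ast}$ minus zero section, with the $G\times\mathbb{C}^{\ast}$-action. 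Then $\bigoplus_{k=0}^{q-1}\mathscr{L}^{k}_{0}$ is the direct image of $\mathscr{O}$ of the K\"{a}hler quotient of $\widetilde{X}$ by $G\times\mathbb{C}^{\ast}$ for the lifted moment map, and similarly for $\beta$. The shift $0\to\beta$ lifts to a shift for this enlarged torus action. Applying Theorem \ref{par-des} (rationality of the shift map, and $f_{\ast}\mathscr{O}=\mathscr{O}$) to the enlarged quotient, and taking $\mathbb{C}^{\ast}$-isotypic pieces of weight one, would yield both $f_{\ast}\mathscr{L}_{\beta}=\mathscr{L}_{0}$ and $R^{i}f_{\ast}\mathscr{L}_{\beta}=0$. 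This could also replace Step 1.

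The main obstacle is making this lift rigorous. The enlarged quotients are only finite quotients of the relevant spaces, and they are not compact. I would handle this by working locally over Stein $V$ using Theorem \ref{par-des}'s local proof (Boutot's theorem plus a comparison of resolutions), which never used compactness. Alternatively, one could use the linear slice model with Kempf-type descent, checking that the isotypic components are preserved by the flow-limit map defining $f_{\beta,0}$.
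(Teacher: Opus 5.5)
Your architecture is the paper's: the isomorphism $\mathscr{L}_{0}\cong(f_{\beta,0})_{\ast}\mathscr{L}_{\beta}$, then vanishing of $R^{>0}(f_{\beta,0})_{\ast}\mathscr{L}_{\beta}$, then Leray. Both halves, however, are argued differently.

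\textbf{The isomorphism.} The paper needs neither codimension $\geq 2$ nor a slice model. Given a $G$-invariant $\hat s$ on $X^{s}_{\beta}\cap\Pi_{0}^{-1}(V)$, the power $\hat s^{q}$ is a section of $\mathbb{L}_{\beta}=f_{\beta,0}^{\ast}\mathbb{L}_{0}$. Since $X_{0}$ is normal, it is the pullback of a section $\hat t$ of $\mathbb{L}_{0}$. Hence $\langle\hat s,\hat s\rangle^{q}=\langle\hat t,\hat t\rangle^{(q)}$ is locally bounded near the analytic set $X^{ss}_{0}\setminus X^{s}_{\beta}$. Riemann's extension theorem for \emph{bounded} functions then extends $\hat s$ in any codimension, and invariance follows by averaging. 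You already had these ingredients in your Step 2. Your option (a) on its own fails exactly when $\dim W^{+}_{b}=1$, because the complement is then a divisor.

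\textbf{The vanishing.} The paper argues formally: $(f_{\beta,0})_{\ast}\circ(\Pi_{\beta})^{G}_{\ast}=(\Pi_{0})^{G}_{\ast}\circ(i_{\beta,0})_{\ast}$, with all three functors exact. This is much shorter than your route, but it never uses $\Phi(b)=0$. Moreover, exactness of a direct image along an open embedding is not automatic: on $\mathbb{C}^{2}\setminus\{0\}$ the surjection $(z_{1},z_{2}):\mathscr{O}^{2}\to\mathscr{O}$ pushes forward onto the maximal ideal. The same formal argument would also give vanishing for the descent of $\mathscr{O}_{X}\otimes\mathbb{C}_{\theta}$ with $\theta$ nontrivial on $G^{0}_{b}$, where it is false. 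For instance, take $\mathbb{C}^{\ast}$ acting on $\mathbb{P}^{4}$ with weights $(1,1,0,-1,-1)$. A suitable $\theta$ restricts to $\mathscr{O}(-2)$ on the exceptional $\mathbb{P}^{1}$ of the small resolution $X_{\beta}\to X_{0}$, so $R^{1}\neq 0$. Your route, which makes the weight-zero action of $G^{0}_{b}$ on $L_{b}$ the operative input, is the one that actually explains the vanishing.

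To turn your sketch into a proof, two things need fixing.
\begin{itemize}
\item[(1)] In the enlarged construction you must divide $\widetilde X$ by $G$ only and keep the residual fibre $\mathbb{C}^{\ast}$. The $G\times\mathbb{C}^{\ast}$-invariants are just $\mathscr{O}_{X_{0}}$ again. What you want is that $\mathscr{L}_{0}$ is the weight-one isotypic part of the direct image of $\mathscr{O}$ of $\widetilde X|_{X^{ss}_{0}}/\!\!/G$; there all weights $k\in\mathbb{Z}$ occur, not only $0\le k<q$.
\item[(2)] The crux is not compactness, since the argument of Theorem \ref{par-des} is local over $X_{0}$. It is that the lifted map $\widetilde X|_{X^{s}_{\beta}}/G\to\widetilde X|_{X^{ss}_{0}}/\!\!/G$ is still a proper modification between spaces with rational singularities. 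This holds precisely because Kostant's formula \eqref{k} together with $\Phi(b)=0$ makes $G^{0}_{b}$ act trivially on $L_{b}$. With nonzero weight, the lifted quotient would be geometric near $b$ and the lifted map merely an open embedding. Once this is checked, projection onto the weight-one part for the compact circle commutes with $R^{i}(f_{\beta,0})_{\ast}$, and you are done.
\end{itemize}

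More economically, your slice picture handles both halves at once. Let $\chi$ be the character of $G_{b}$ on $L_{b}$. Take $\Pi_{0}^{-1}(V)\cong G\times_{G_{b}}S$ and $X^{s}_{\beta}\cap\Pi_{0}^{-1}(V)\cong G\times_{G_{b}}(S\setminus\{x^{+}=0\})$; the latter is checked via Proposition \ref{v=gw} and the proof of Theorem \ref{m2}. Then the cokernel of $f^{\natural}_{\beta,0}$ and $R^{i}(f_{\beta,0})_{\ast}\mathscr{L}_{\beta}$ for $i>0$ are the $\chi$-isotypic parts of $H^{1}$ and $H^{i+1}$ of $S$ with supports in $\{x^{+}=0\}$. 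These are spanned by monomials with a negative exponent in every positive-weight coordinate, so they carry strictly negative $G^{0}_{b}$-weight. Since $\chi|_{G^{0}_{b}}$ is trivial, they vanish.
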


The idea of the proof is to show that $\mathscr{L}_{0}$ is isomorphic to $(f_{\beta,0})_{\ast}\mathscr{L}_{\beta}$ and the higher direct images $R^{>0}(f_{\beta,0})_{\ast}\mathscr{L}_{\beta}$ are vanishing, and then we can use the Leray spectral sequence theorem.
The main difficulty lies in the fact that $\mathscr{L}_{0}$ is not locally free and thus the projection formula does not works.

We shall prove the following:
\begin{lem}\label{lem-iso}
The sheaf morphism \eqref{la=lb} is an isomorphism.
\end{lem}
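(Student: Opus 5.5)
The isomorphism is local on $X_{0}$, so it suffices to show that for every open $V\subset X_{0}$ (in fact for $V$ running through a basis of Stein neighbourhoods) the restriction map $\mathfrak{r}(V)$ in \eqref{r-v} is bijective. Injectivity is the easy half. The set $X^{s}_{\beta}\cap\Pi_{0}^{-1}(V)$ contains $X^{s}_{0}\cap\Pi_{0}^{-1}(V)$ by Lemma \ref{ep-in}. Moreover $X^{s}_{0}$ is the complement of the analytic set $X^{sss}_{0}=\Pi_{0}^{-1}(B)$, which is nowhere dense in $X^{ss}_{0}$ by Lemma \ref{B-dense}. Hence $X^{s}_{\beta}\cap\Pi_{0}^{-1}(V)$ is open and dense in the connected-component-wise irreducible manifold $\Pi_{0}^{-1}(V)$, and a holomorphic section of $L$ vanishing there vanishes identically by the identity theorem.

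For surjectivity, start from a $G$-invariant section $s\in\mathscr{L}(X^{s}_{\beta}\cap\Pi_{0}^{-1}(V))^{G}$ and extend it over $\Pi_{0}^{-1}(V)$. The complement $\Pi_{0}^{-1}(V)\setminus X^{s}_{\beta}$ lies in $X^{sss}_{0}$, so it suffices to extend across $X^{sss}_{0}\setminus X^{s}_{\beta}$. First I would try Hartogs: if this set has codimension $\geq2$ in $X$, the extension is automatic. In general it need not have codimension $\geq2$: by Theorem \ref{m2}, when $f_{\beta,0}$ is divisorial the unstable locus $W^{-}_{b}$-directions may form a divisor. So the main argument must use invariance instead. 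Near a point $b\in\Phi^{-1}(0)$ with $[G\cdot b]\in B$, apply the holomorphic slice theorem (\cite[Theorem 1.12]{Sj95}). This gives a $\Pi_{0}$-saturated neighbourhood of the form $G\times_{G_{b}}S$, with $S$ a $G_{b}$-stable Stein slice, linearizable as a $G_{b}$-representation in which $G^{0}_{b}\cong\mathbb{C}^{\ast}$ (Proposition \ref{G-b}) acts with positive weights on $W^{+}$, negative weights on $W^{-}$, and trivially on the fixed directions. Trivialize $L$ $G_{b}$-equivariantly on $S$; then $L$ has some character $\chi$ of $G^{0}_{b}$ on the fibre over $b$.

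Invariant sections of $L$ on $G\times_{G_{b}}S$ then correspond to $G_{b}$-semi-invariant functions of weight $\chi$ on $S$ (resp.\ on $S\cap X^{s}_{\beta}$). By Proposition \ref{v=gw}, $S\cap X^{s}_{\beta}$ is, near $b$, the set where the $W^{+}$-coordinates are not all zero. Expand such a function in a Laurent/Taylor series in the $W^{\pm}$-coordinates, with coefficients holomorphic in the fixed directions. The $\mathbb{C}^{\ast}$-weight condition forces each monomial $u^{I}w^{J}$ ($u\in W^{+}$, $w\in W^{-}$) to satisfy $\langle a,I\rangle-\langle c,J\rangle=\chi$ with positive weights $a,c$. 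A function holomorphic off $\{u=0\}$ can only have poles along $u=0$ if $\dim W^{+}=1$ (otherwise Hartogs applies). In that case negative powers of $u$ must be compensated by powers of $w$, and the key point is the sign of $\chi$. Here I will use that $0$ lies in $\Phi(\overline{G\cdot x})$ and that the moment map is given by Kostant's formula \eqref{k}: the weight of $G^{0}_{b}$ on $L_{b}$ equals (up to the factor $2\pi$ and sign) $\langle\Phi(b),v\rangle=0$. So $\chi$ is trivial on $G^{0}_{b}$, and functions of weight $0$ with poles along $u=0$ must still be invariant monomials $u^{I}w^{J}$ with $\langle a,I\rangle=\langle c,J\rangle$. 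These have $I\geq0$ as soon as $J\geq0$, and $J\geq0$ is forced because the function is holomorphic in $w$ near $w=0$. Hence $s$ extends holomorphically over the slice, and then to $\Pi_{0}^{-1}(V)$ by $G$-equivariance, which gives surjectivity.

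The main obstacle is precisely this local extension step: making rigorous that the character of $G^{0}_{b}$ on $L_{b}$ vanishes because $\Phi(b)=0$, and controlling convergence of the series expansion (equivalently, arguing via the Stein slice that semi-invariant holomorphic functions on $S\setminus\{u=0\}$ extend). I would first try to phrase this as: $s^{q}$ corresponds to a holomorphic function on $f_{\beta,0}^{-1}(V)$ times a trivialization of $\mathbb{L}_{\beta}=f_{\beta,0}^{\ast}\mathbb{L}_{0}$. By normality of $X_{0}$ and $(f_{\beta,0})_{\ast}\mathscr{O}_{X_{\beta}}=\mathscr{O}_{X_{0}}$ (Theorem \ref{par-des}), such a function descends to $V$, so $s^{q}$ extends. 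Then $s$ itself extends, since it is a $q$-th root of the extended section $s^{q}$ that is bounded near $\Pi_{0}^{-1}(V)\setminus X^{s}_{\beta}$, and Riemann extension for bounded sections applies. This second route avoids the weight computation, and I would pursue it first.
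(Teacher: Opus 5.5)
Your second route is the paper's proof. Injectivity comes from density of $X^{s}_{\beta}$ in $\Pi_{0}^{-1}(V)$. Surjectivity comes from viewing $s^{q}$ as a section of $\mathbb{L}_{\beta}=f_{\beta,0}^{\ast}\mathbb{L}_{0}$, descending its coefficient function through $(f_{\beta,0})_{\ast}\mathscr{O}_{X_{\beta}}=\mathscr{O}_{X_{0}}$ (the paper uses the argument of Ueno's Proposition 1.13 here), and then applying the Riemann extension theorem to $s$, which is locally bounded because $\langle s,s\rangle^{q}$ extends continuously; $G$-invariance of the extension follows by density, as in your injectivity step.

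Your observation that $\Pi_{0}^{-1}(V)\setminus X^{s}_{\beta}$ lies in the nowhere dense analytic set $X^{sss}_{0}$ already gives the thinness needed, so the paper's detour through Heinzner--Migliorini is dispensable. The slice/weight computation is also unnecessary, and its motivating aside is reversed: near $b$ the $\beta$-unstable locus has codimension $\dim_{\mathbb{C}}f_{\beta,0}^{-1}([G\cdot b])+1$, so it is a divisor when that fibre is a point, not when $f_{\beta,0}$ is divisorial.
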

\begin{proof}
It suffices to show that the morphism of the stalks
\begin{equation}\label{stalk-iso}
(f^{\natural}_{\beta,0})_{x}:(\mathscr{L}_{0})_{x}\longrightarrow
\bigl((f_{\beta,0})_{\ast}\mathscr{L}_{\beta}\bigr)_{x}
\end{equation}
is an isomorphism at each point $x$ of $X_{0}$.
Let $\sigma_{x}$ be a germ of $(\mathscr{L}_{0})_{x}$.
By definition, the germ $\sigma_{x}$ can be represented by a pair $(\Pi^{-1}_{0}(V), \sigma)$,
where $V$ is an open neighborhood of $x$ in $X_{0}$ and $\sigma$ is a $G$-invariant section of $L$ over $\Pi^{-1}_{0}(V)$.
Consider the restriction map \eqref{r-v}.
Suppose $\sigma|_{X^{s}_{\beta}\cap\Pi^{-1}_{0}(V)}=0$, we claim $\sigma=0$.
The local holomorphic trivialization of $L$ implies that, for each $p$ in $\Pi^{-1}_{0}(V)$, there exists an open neighborhood $W_{p}$ of $p$ in $X^{ss}_{0}$ such that
$L|_{W_{p}}$ is biholomorphic to the product space $W_{p}\times\mathbb{C}$.
Put $W^{\prime}_{p}=W_{p}\cap\Pi^{-1}_{0}(V)$, which is open in $X^{ss}_{0}$.
Then we have
$$
\Pi^{-1}_{0}(V)=\bigcup_{p\in\Pi^{-1}_{0}(V)}W^{\prime}_{p},
$$
and we can find a nowhere vanishing holomorphic section $e:W^{\prime}_{p}\rightarrow L$ such that $\sigma|_{W^{\prime}_{p}}=g\cdot e$ for some holomorphic function $g$ on $W^{\prime}_{p}$.
For any $p\in\Pi^{-1}_{0}(V)\setminus X^{s}_{\beta}$, since $X^{s}_{\beta}$ is open and dense in $X$ we have $W^{\prime}_{p}\cap X^{s}_{\beta}\neq\emptyset$, and thus $g|_{W^{\prime}_{p}}=0$ due to the identity theorem.
This implies $\sigma|_{W^{\prime}_{p}}=0$ for any $p\in\Pi^{-1}_{0}(V)$ and therefore $\sigma=0$.

Now we prove the surjectivity of \eqref{stalk-iso}.
Note that $X$ is a projective manifold.
On account of a result by Heinzner--Migliorini \cite[Page 168, Semistability Theorem]{HM01}, there exists a very ample line bundle $\tilde{L}$ over $X$ together with a lifting of the $G$-action such that $X^{ss}_{\beta}=X^{s}_{\beta}$ is identical with the set of $\tilde{L}$-semistable points $X^{ss}(\tilde{L})$ in the sense of Mumford.
%Here a point $x$ of $X$ is $\tilde{L}$-semistable if and only if there exist $m\in\mathbb{N}$ and a $G$-invariant holomorphic section $s\in\Gamma(X,\tilde{L}^{\otimes m})$ such that $s(x)\neq0$.
This implies that $X^{s}_{\beta}=X^{ss}(\tilde{L})$ is a Zariski open subset of $X$, and therefore its complement is Zariski closed.
Consequently, the $\Phi_{\beta}$-unstable set $X^{us}_{\beta}=X\setminus X^{s}_{\beta}$ is a complex analytic subvariety $X$.
Observe that
$$X^{ss}_{0}\setminus X^{s}_{\beta}=X^{us}_{\beta}\cap X^{ss}_{0}$$
which is a closed subset of $X^{ss}_{0}$.
Since $B$ is a closed complex analytic subset of $X_{0}$ and the strictly semistable set $X^{sss}_{0}=X^{ss}_{0}\setminus X^{s}_{0}$ is identical with the inverse image $\Pi^{-1}_{0}(B)$, we get that $X^{sss}_{0}$ is a closed complex analytic subset of $X^{ss}_{0}$.
Note that $X^{ss}_{0}$ is irreducible, we get $\mathrm{codim}_{\mathbb{C}}(X^{sss}_{0}, X^{ss}_{0})\geq1$ and therefore $X^{sss}_{0}$ is nowhere dense in $X^{ss}_{0}$.
Since $X^{s}_{0}\subset X^{s}_{\beta}$ the subset $X^{ss}_{0}\setminus X^{s}_{\beta}$ is contained in $X^{sss}_{0}$, and hence $X^{ss}_{0}\setminus X^{s}_{\beta}$ is thin\footnote{Recall that a closed subset $A$ of a complex space $X$ is called \emph{thin} in $X$, if every point $p$ has an open neighborhood $U$ such that $A\cap U$ is contained in a nowhere dense analytic subset of $U$.} in $X^{ss}_{0}$.

Let $q$ be the positive integer in \cite[Proposition 2.15]{Sj95}.
Then $L^{q}|_{X^{ss}_{0}}$ and $L^{q}|_{X^{s}_{\beta}}$ descend to the holomorphic line bundles $\mathbb{L}_{0}\rightarrow X_{0}$ and $\mathbb{L}_{\beta}\rightarrow X_{\beta}$ satisfying $\mathbb{L}_{\beta}\cong f^{\ast}_{\beta,0}\mathbb{L}_{0}$.
Consider the natural morphism of sheaves
\begin{equation}\label{L-q-b-0}
f^{\natural}_{\beta,0}:\mathscr{O}(\mathbb{L}_{0})\longrightarrow
(f_{\beta,0})_{\ast}\mathscr{O}(\mathbb{L}_{\beta}).
\end{equation}
Let $W$ be an open subset in $X_{0}$, if $\mathbb{L}_{0}|_{W}$ admits a holomorphic trivialization then so is the bundle $\mathbb{L}_{\beta}|_{f^{-1}_{\beta,0}(W)}$.
From definition, for any $x\in X_{0}$, we have
$$
\mathscr{O}(\mathbb{L}_{0})_{x}=\varinjlim_{x\in U}\Gamma(U, \mathbb{L}_{0})
$$
and
$$
((f_{\beta,0})_{\ast}\mathscr{O}(\mathbb{L}_{\beta}))_{x}=\varinjlim_{x\in U}
\Gamma(f^{-1}_{\beta, 0}(U), \mathbb{L}_{\beta}).
$$
Here $U$ runs over all open neighborhoods of $x$ in $X_{0}$.
Let $s_{x}$ be a germ of $((f_{\beta,0})_{\ast}\mathscr{O}(\mathbb{L}_{\beta}))_{x}$ with a representative $(V, s)$, where $V=f^{-1}_{\beta, 0}(U)$ and $U$ is an open neighborhood of $x$ in $X_{0}$ such that $\mathbb{L}_{0}|_{U}$ is biholomorphic to the product space $U\times\mathbb{C}$.
It follows that $\mathbb{L}_{\beta}|_{V}$ is biholomorphic to $V\times\mathbb{C}$.
Let $e:U\rightarrow\mathbb{L}_{0}$ be a nowhere vanishing holomorphic section and then $\tilde{e}=e\circ(f_{\beta, 0}|_{V})$ is a nowhere vanishing holomorphic section of $\mathbb{L}_{\beta}$ on $V$.
Moreover, there exists a holomorphic function $g$ on $V$ such that $s=g\cdot\tilde{e}$.
Particularly, we can choose $U$  so small such that $g$ is bounded on $V$.
Observe that $f_{\beta,0}:X_{\beta}\rightarrow X_{0}$ is a proper modification of normal complex spaces.
The argument in the proof of \cite[Proposition 1.13]{Ue75} shows that there exists a holomorphic function $h$ on $U$ such that $g=h\circ(f_{\beta, 0}|_{V})$.
Set $t=h\cdot e$ which is holomorphic section of $\mathbb{L}_{0}$ on $U$.
Denote by $t_{x}=[(U, t)]$ the germ of $t$ at $x$, then we get $(f^{\natural}_{\beta, 0})_{x}(t_{x})=s_{x}$, i.e., \eqref{L-q-b-0} is surjective.

Let $\hat{s}_{x}$ be a germ of $((f_{\beta,0})_{\ast}\mathscr{L}_{\beta})_{x}$ with a representative $(U^{\prime}, \hat{s})$.
Observe that
$$
(f_{\beta,0})_{\ast}\mathscr{L}_{\beta}(U^{\prime})=
\mathscr{L}(\Pi^{-1}_{\beta}(f^{-1}_{\beta,0}(U^{\prime})))^{G}
=
\mathscr{L}(X^{s}_{\beta}\cap\Pi_{0}^{-1}(U^{\prime}))^{G}
$$
and $\hat{s}^{q}$ is a $G$-invariant section of $L^{q}|_{X^{s}_{\beta}\cap\Pi^{-1}_{0}(U^{\prime})}$
which gives rise to a germ $(\hat{s}^{q})_{x}$ in $((f_{\beta,0})_{\ast}\mathscr{O}(\mathbb{L}_{\beta}))_{x}$.
Because the morphism of the stalks
\begin{equation*}
(f^{\natural}_{\beta,0})_{x}:\mathscr{O}(\mathbb{L}_{0})_{x}\longrightarrow ((f_{\beta,0})_{\ast}\mathscr{O}(\mathbb{L}_{\beta}))_{x}
\end{equation*}
is surjective, there exists a germ $\hat{t}_{x}$ in $\mathscr{O}(\mathbb{L}_{0})_{x}$ such that $(f^{\natural}_{\beta,0})_{x}(\hat{t}_{x})=(\hat{s}^{q})_{x}$.
Let $(U^{\prime\prime}, \hat{t})$ be a representative of $\hat{t}_{x}$.
Here $U^{\prime\prime}$ is an open neighborhood of $x$ in $X_{0}$ and $\hat{t}$ is a $G$-invariant holomorphic section of $L^{q}|_{\Pi^{-1}_{0}(U^{\prime\prime})}$ satisfying the condition
$$
\hat{t}|_{X^{s}_{\beta}\cap\Pi^{-1}_{0}(W)}=\hat{s}^{q}|_{X^{s}_{\beta}\cap\Pi^{-1}_{0}(W)}
$$
for some open neighborhood $W$ of $x$ with $W\subset U^{\prime}\cap U^{\prime\prime}$.
This implies that the function
$$
\langle \hat{s}^{q}, \hat{s}^{q}\rangle^{(q)}|_{X^{s}_{\beta}\cap\Pi^{-1}_{0}(W)}
=(\langle \hat{s}, \hat{s}\rangle)^{q}|_{X^{s}_{\beta}\cap\Pi^{-1}_{0}(W)}
$$
can be extended to
$\langle \hat{t}, \hat{t}\rangle^{(q)}|_{\Pi^{-1}_{0}(W)}.$
Put
$$
C=\Pi^{-1}_{0}(W)\setminus(X^{s}_{\beta}\cap\Pi^{-1}_{0}(W))
=\Pi^{-1}_{0}(W)\cap(X^{ss}_{0}\setminus X^{s}_{\beta}),
$$
which is a thin subset in $\Pi^{-1}_{0}(W)$.
As a result, the function $\langle \hat{s}^{q}, \hat{s}^{q}\rangle^{(q)}|_{X^{s}_{\beta}\cap\Pi^{-1}_{0}(W)}$ is bounded near $C$, i.e., for any $\tilde{x}\in C$ there exists an open neighborhood $\tilde{V}$ of $\tilde{x}$ in $\Pi^{-1}_{0}(W)$ such that $\langle \hat{s}^{q}, \hat{s}^{q}\rangle^{(q)}$ is bounded on $\tilde{V}\setminus C$; and so is the function $\langle \hat{s}, \hat{s}\rangle|_{X^{s}_{\beta}\cap\Pi^{-1}_{0}(W)}$.

Combining the Riemann Extension Theorem on complex manifolds (cf. \cite[Page 132]{GR84}) with the local holomorphic trivialization of $L|_{\Pi^{-1}_{0}(W)}$ derives that $\hat{s}|_{X^{s}_{\beta}\cap\Pi^{-1}_{0}(W)}$ has a unique holomorphic extension $\tilde{s}$ on $\Pi^{-1}_{0}(W)$.
It is not a priori clear that $\tilde{s}$ is $G=T^{\mathbb{C}}$-invariant.
However, the following argument shows the $G$-invariance of $\tilde{s}$.
Note that a holomorphic local section of $L$ on $\Pi^{-1}_{0}(W)$ is $G=T^{\mathbb{C}}$-invariant if and only if it is $T$-invariant.
Using an averaging argument, we can define
$$
\bar{s}(\tilde{x})=\int_{T}t\cdot\tilde{s}(x)dt
$$
for any $\tilde{x}\in\Pi^{-1}_{0}(W)$.
Then $\bar{s}$ becomes a $T$-invariant holomorphic local section of $L$ on $\Pi^{-1}_{0}(W)$ satisfying $\bar{s}|_{X^{s}_{\beta}\cap\Pi^{-1}_{0}(W)}=\tilde{s}$.
The fact $\bar{s}|_{X^{s}_{\beta}\cap\Pi^{-1}_{0}(W)}=
\tilde{s}|_{X^{s}_{\beta}\cap\Pi^{-1}_{0}(W)}$ derives $\bar{s}=\tilde{s}$ since the identity theorem.
The pair $(W, \tilde{s})$ gives rise to a germ $\tilde{s}_{x}$ of $(\mathscr{L}_{0})_{x}$ such that $(f^{\natural}_{\beta,0})_{x}(\tilde{s}_{x})=\hat{s}_{x}$ since $(W, \hat{s}|_{W})$ is a representative of $\hat{s}_{x}$ .
This implies that the morphism of stalks \eqref{stalk-iso} is surjective and therefore we are led to the conclusion that the sheaf morphism \eqref{la=lb} is an isomorphism.
\end{proof}

Consider the following commutative square of holomorphic maps
\begin{equation}\label{com-b-0}
\vcenter{
\xymatrix@=1cm{
  X^{s}_{\beta} \ar[d]_{\Pi_{\beta}}
  \ar[r]^{i_{\beta, 0}} & X^{ss}_{0} \ar[d]^{\Pi_{0}} \\
  X_{\beta}
  \ar[r]^{f_{\beta, 0}} & X_{0}}
  }
\end{equation}
Denote by $\mathfrak{S}^{G}_{\mathrm{coh}}(X^{s}_{\beta})$ (resp. $\mathfrak{S}^{G}_{\mathrm{coh}}(X^{ss}_{0})$) the category of coherent analytic $G$-sheaves on $X^{s}_{\beta}$ (resp. $X^{ss}_{0}$).
Since $X^{s}_{\beta}$ is a $G$-invariant open subset of $X^{ss}_{0}$, the direct image functor
\begin{equation}\label{i-functor}
(i_{\beta,0})_{\ast}:\mathfrak{S}^{G}_{\mathrm{coh}}(X^{s}_{\beta})
\longrightarrow \mathfrak{S}^{G}_{\mathrm{coh}}(X^{ss}_{0})
\end{equation}
is exact.
Let $\mathfrak{S}_{\mathrm{coh}}(X_{\beta})$ and $\mathfrak{S}_{\mathrm{coh}}(X_{0})$ be the categories of coherent analytic sheaves on $X_{\beta}$ and $X_{0}$ respectively.
Then we have the invariant direct image functors
$$
(\Pi_{\beta})^{G}_{\ast}:\mathfrak{S}^{G}_{\mathrm{coh}}(X^{s}_{\beta})
\longrightarrow
\mathfrak{S}_{\mathrm{coh}}(X_{\beta})
$$
and
$$
(\Pi_{0})^{G}_{\ast}:\mathfrak{S}^{G}_{\mathrm{coh}}(X^{ss}_{0})
\longrightarrow
\mathfrak{S}_{\mathrm{coh}}(X_{0}).
$$

\begin{prop}\label{pi-exact}
The functors $(\Pi_{\beta})^{G}_{\ast}$ and $(\Pi_{0})^{G}_{\ast}$ are exact.
\end{prop}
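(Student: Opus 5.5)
We need to show that the invariant direct image functors $(\Pi_{\beta})^{G}_{\ast}$ and $(\Pi_{0})^{G}_{\ast}$ are exact on coherent analytic $G$-sheaves. Both $\Pi_{\beta}:X^{s}_{\beta}\to X_{\beta}$ and $\Pi_{0}:X^{ss}_{0}\to X_{0}$ are analytic Hilbert quotients (Theorem \ref{main-k-q}(i)), so one argument covers both. Let $\Pi:V\to W$ be either of them.

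First, left exactness. Taking invariants is left exact, and for any open $U\subset W$ we have $(\Pi^{G}_{\ast}\mathscr{F})(U)=\mathscr{F}(\Pi^{-1}(U))^{G}$. Hence $(\Pi)^{G}_{\ast}$ is left exact as a functor of sheaves. By Roberts \cite{Rob86}, it sends coherent $G$-sheaves to coherent sheaves. It therefore remains to show that surjections are preserved, and this can be checked on stalks.

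For right exactness, let $\mathscr{F}\to\mathscr{G}\to 0$ be a surjection of coherent $G$-sheaves with kernel $\mathscr{K}$. Fix $w\in W$. Since $\Pi$ is locally Stein, we may choose a Stein neighborhood $U$ of $w$ with $\Pi^{-1}(U)$ Stein and $G$-invariant. By Cartan's Theorem B on $\Pi^{-1}(U)$, $H^{1}(\Pi^{-1}(U),\mathscr{K})=0$, so
$$
\mathscr{F}(\Pi^{-1}(U))\longrightarrow\mathscr{G}(\Pi^{-1}(U))
$$
is surjective. This map is $G$-equivariant. The key step is to pass to $G$-invariants. Since $G=T^{\mathbb{C}}$, a holomorphic section is $G$-invariant iff it is $T$-invariant. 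Given an invariant $s\in\mathscr{G}(\Pi^{-1}(U))^{G}$, lift it to some $\tilde s\in\mathscr{F}(\Pi^{-1}(U))$ and average over the compact torus,
$$
\bar s=\int_{T}t\cdot\tilde s\,dt,
$$
using Haar measure. The average $\bar s$ is again a holomorphic section: the $T$-action is continuous in the Fréchet topology of sections over a Stein space, and the integral is a limit of finite averages in that topology, which is closed. Also $\bar s$ is $T$-invariant, hence $G$-invariant. Equivariance of the surjection gives that $\bar s$ maps to the average of $s$, which is $s$ itself. Thus invariants surject on $\Pi^{-1}(U)$, and letting $U$ shrink gives surjectivity on stalks at $w$.

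The main obstacle is justifying the averaging in the sheaf setting. One needs the $T$-action on $\mathscr{F}(\Pi^{-1}(U))$ to be continuous with respect to the canonical Fréchet topology, and the integral to land in the space of sections. I would handle this with the Reynolds-operator framework for analytic Hilbert quotients, as in Heinzner's and Roberts' work, where this exactness of invariants is established for reductive groups. Alternatively, one can reduce via the holomorphic slice theorem to the local model $G\times_{G_{x}}S$, where the claim becomes exactness of $G_{x}$-invariants for the reductive group $G_{x}$, a standard fact.
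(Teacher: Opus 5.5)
Your proposal is correct and follows essentially the same route as the paper. Both arguments take a Stein neighbourhood with Stein preimage (from the locally Stein property), apply Cartan's Theorem B on that preimage to lift sections, and average over the compact torus $T$ to make the lift $G$-invariant. The differences are minor: the paper represents stalk elements by sections over a fixed Stein set $U_\lambda$ via Theorem A, while you shrink the Stein neighbourhood (legitimate, since the preimage of a Stein open subset of $U_\lambda$ is again Stein), and you say more than the paper does about why the $T$-average is again a holomorphic section.
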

\begin{proof}
By the definition of analytic Hilbert quotient, the morphism $$\Pi_{\beta}:X^{s}_{\beta}\longrightarrow X_{\beta}=X^{s}_{\beta}/G$$
is locally Stein, which means that there exists an open covering $\mathscr{U}=\{U_{\lambda}\}$ of $X_{\beta}$ by Stein subspaces such that $V_{\lambda}:=\Pi^{-1}_{\beta}(U_{\lambda})$ is a Stein subspace of $X^{s}_{\beta}$ for all $\lambda$.
Let
$$
\xymatrix@C=1cm{
  0 \ar[r] & \mathscr{F} \ar[r]^{\phi} & \mathscr{G}\ar[r]^{\psi} & \mathscr{H} \ar[r] & 0 }
$$
be a short exact sequence of coherent analytic $G$-sheaves $X^{s}_{\beta}$.
As the functor $(\Pi_{\beta})^{G}_{\ast}$ is left exact, for any $x\in X_{\beta}$, we obtain an exact sequence of stalks
\begin{equation}\label{ex-pi-beta}
\xymatrix@C=1.4cm{
  0 \ar[r] & ((\Pi_{\beta})^{G}_{\ast}\mathscr{F})_{x} \ar[r]^{(\Pi_{\beta})^{G}_{\ast}(\phi)_{x}} & ((\Pi_{\beta})^{G}_{\ast}\mathscr{G})_{x}\ar[r]^{(\Pi_{\beta})^{G}_{\ast}(\psi)_{x}} & ((\Pi_{\beta})^{G}_{\ast}\mathscr{H})_{x}. }
\end{equation}
To show the exactness of $(\Pi_{\beta})^{G}_{\ast}$, we only need to verify that $(\Pi_{\beta})^{G}_{\ast}(\psi)_{x}$ in \eqref{ex-pi-beta} is surjective.
Observe that $x\in U_{\lambda}$ for some open Stein subspace $U_{\lambda}\in\mathscr{U}$ and the sheaf $(\Pi_{\beta})^{G}_{\ast}\mathscr{H}|_{U_{\lambda}}$ is a coherent $\mathscr{O}_{U_{\lambda}}$-modules.
It follows from the Theorem A for Stein spaces \cite[\S\,0.37, Theorem]{Fi76} that the stalk $((\Pi_{\beta})^{G}_{\ast}\mathscr{H})_{x}$ is generated by the germs of the sections over $U_{\lambda}$.
As a result, for any element $s_{x}$ in $((\Pi_{\beta})^{G}_{\ast}\mathscr{H})_{x}$,
there exists a section
$$
\tilde{s}\in\Gamma(U_{\lambda}, (\Pi_{\beta})^{G}_{\ast}\mathscr{H})=
\Gamma(V_{\lambda}, \mathscr{H})^{G}
$$
satisfying $\tilde{s}_{x}=s_{x}$.
Since $V_{\lambda}$ is a Stein subspace of $X^{s}_{\beta}$, the following sequence is exact
$$
\xymatrix@C=1cm{
  0 \ar[r] & \Gamma(V_{\lambda}, \mathscr{F}) \ar[r]^{\phi(V_{\lambda})} & \Gamma(V_{\lambda}, \mathscr{G})\ar[r]^{\psi(V_{\lambda})} & \Gamma(V_{\lambda}, \mathscr{H}) \ar[r] & 0. }
$$
This implies $\tilde{s}=\psi(V_{\lambda})(\hat{s})$ for some $\hat{s}\in\Gamma(V_{\lambda}, \mathscr{G})$.
Recall that $G=T^{\mathbb{C}}$.
Averaging $\hat{s}$ over $T$ defines a $G$-invariant section $\bar{s}\in\Gamma(V_{\lambda}, \mathscr{G})^{G}$ such that $\psi(V_{\lambda})(\bar{s})=\tilde{s}$, and therefore $\bar{s}$ defines a germ $\bar{s}_{x}$ in $((\Pi_{\beta})^{G}_{\ast}\mathscr{H})_{x}$ such that
$(\Pi_{\beta})^{G}_{\ast}(\psi)_{x}(\bar{s}_{x})=\tilde{s}_{x}=s_{x}$.
Consequently, we are led to the conclusion that the functor $(\Pi_{\beta})^{G}_{\ast}$ is exact and we can prove the exactness of the functor $(\Pi_{0})^{G}_{\ast}$ using the same argument.
\end{proof}

To finish the proof of Theorem \ref{L}, we need the following lemma.
\begin{lem}\label{hd=0}
The higher direct image of $\mathscr{L}_{\beta}$ along the morphism $f_{\beta,0}$ is vanishing, i.e., we have
$R^{j}(f_{\beta,0})_{\ast}\mathscr{L}_{\beta}=0$, for all $j>0$.
\end{lem}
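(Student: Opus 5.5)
The plan is to reduce the vanishing of $R^{j}(f_{\beta,0})_{\ast}\mathscr{L}_{\beta}$ to a statement on the level of $G$-sheaves upstairs, using the exactness of invariant direct images (Proposition \ref{pi-exact}). First, the question is local on $X_{0}$. Fix $x\in X_{0}$ and a Stein neighbourhood $U$ of $x$ with $\Pi_{0}^{-1}(U)$ Stein. Then the stalk of $R^{j}(f_{\beta,0})_{\ast}\mathscr{L}_{\beta}$ at $x$ is the direct limit of $H^{j}(f_{\beta,0}^{-1}(U),\mathscr{L}_{\beta})$. By the commutativity of \eqref{com-b-0}, $\Pi_{\beta}^{-1}(f_{\beta,0}^{-1}(U))=X^{s}_{\beta}\cap\Pi_{0}^{-1}(U)$. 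Since $\Pi_{\beta}$ is locally Stein and $(\Pi_{\beta})^{G}_{\ast}$ is exact, I would show, via a \v{C}ech comparison with a covering by Stein sets $\Pi_{\beta}^{-1}(U_{\lambda})$ and $T$-averaging, that
\begin{equation*}
H^{j}(f_{\beta,0}^{-1}(U),\mathscr{L}_{\beta})\cong H^{j}\bigl(X^{s}_{\beta}\cap\Pi_{0}^{-1}(U),\mathscr{L}\bigr)^{G}.
\end{equation*}
Equivalently, $R^{j}(f_{\beta,0})_{\ast}\mathscr{L}_{\beta}\cong(\Pi_{0})^{G}_{\ast}R^{j}(i_{\beta,0})_{\ast}(\mathscr{L}|_{X^{s}_{\beta}})$. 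The key point is that taking $G$-invariants commutes with cohomology because $T$ is compact, so averaging is an exact projector.

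It therefore suffices to prove that $H^{j}(X^{s}_{\beta}\cap\Pi_{0}^{-1}(U),\mathscr{L})^{G}=0$ for $j>0$, possibly after shrinking $U$. The space $Y:=\Pi_{0}^{-1}(U)$ is Stein, and the complement $Y\setminus X^{s}_{\beta}=Y\cap X^{us}_{\beta}$ is a $G$-invariant analytic subset, by the Heinzner--Migliorini argument already used in Lemma \ref{lem-iso}. Local cohomology gives the sequence
\begin{equation*}
H^{j}(Y,\mathscr{L})\to H^{j}(Y\setminus C,\mathscr{L})\to H^{j+1}_{C}(Y,\mathscr{L}),\qquad C:=Y\cap X^{us}_{\beta}.
\end{equation*}
The first term vanishes for $j>0$ because $Y$ is Stein. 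So the task becomes the vanishing of the $G$-invariant part of $H^{j+1}_{C}(Y,\mathscr{L})$ for $j\ge1$, with $j=0$ compatible with Lemma \ref{lem-iso}. To compute this I would use the holomorphic slice theorem at the closed orbit $G\cdot b$ over $x$. Locally $Y\cong G\times_{G_{b}}S$ with $S$ a $G_{b}$-stable affine slice, so the problem reduces to the linear action of $G_{b}$, whose identity component is $\mathbb{C}^{\ast}$ by Proposition \ref{G-b}, on a representation $W^{+}_{b}\oplus W^{-}_{b}\oplus(\text{fixed part})$. There $C$ is identified with $W^{-}_{b}\times(\text{fixed})$ (or with $W^{+}_{b}$, depending on the side), as in Proposition \ref{v=gw}.

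In this linear model the local cohomology $H^{\ast}_{W^{-}\times F}(W^{+}\oplus W^{-}\oplus F,\mathscr{O})$ is concentrated in degree $\operatorname{codim}=\dim W^{+}$. Its $\mathbb{C}^{\ast}$-weights are all strictly on one side, namely negative sums of the weights of $W^{+}$, shifted by the weight of the linearisation of $L$ at $b$. The invariant part is zero precisely when the shift is less than the sum of the absolute values of the $W^{+}$-weights. This is the analogue of Teleman's criterion \cite[Lemma 5.8]{Tel00}. It is the main obstacle: one has to show that the $\mathbb{C}^{\ast}$-weight of $L$ at $b$ is controlled by the position of $0$ relative to $\beta$. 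Since $\Phi(b)=0$, the weight is $0$ after the shifting trick, while the weights on $W^{+}$ are all positive. That should make the invariant part vanish, and in particular this is the place where $\Phi(b)=0$ is really used. The finite group $\Gamma_{b}$ is then handled by taking invariants. Finally, assembling over $G\times_{G_{b}}$ via the isomorphism $H^{\ast}(G\times_{G_{b}}S,\cdot)^{G}=H^{\ast}(S,\cdot)^{G_{b}}$ gives the lemma.
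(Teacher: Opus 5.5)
Your route is genuinely different from the paper's, and on the decisive point yours is the sound one.

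The paper proceeds in three steps:
\begin{enumerate}
\item It observes that $(f_{\beta,0})_{\ast}\circ(\Pi_{\beta})^{G}_{\ast}=(\Pi_{0})^{G}_{\ast}\circ(i_{\beta,0})_{\ast}$.
\item It uses exactness of $(\Pi_{\beta})^{G}_{\ast}$ to identify the derived functors of the left-hand side with $R^{j}(f_{\beta,0})_{\ast}\mathscr{L}_{\beta}$.
\item It disposes of the right-hand side by asserting that $(i_{\beta,0})_{\ast}$ is exact because $X^{s}_{\beta}$ is open.
\end{enumerate}
You reach the same identification: the stalk at $x$ is $\varinjlim_{U}H^{j}(X^{s}_{\beta}\cap\Pi_{0}^{-1}(U),\mathscr{L})^{G}$. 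But you treat the higher direct images of $i_{\beta,0}$ as what they are, namely local cohomology along $C=X^{ss}_{0}\cap X^{us}_{\beta}$. You then kill only their $G$-invariant part.

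This is unavoidable, because push-forward along an open embedding is only left exact. In the slice model, $C=G\times_{G_{b}}(W^{-}_{b}\times\text{fixed part})$ has codimension $p=\dim W^{+}_{b}$. Hence $R^{p-1}(i_{\beta,0})_{\ast}\mathscr{O}\neq0$ along $C$ when $p\geq2$, and for $p=1$ the sheaf $(i_{\beta,0})_{\ast}\mathscr{O}$ is not even coherent.

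Moreover, the paper's argument uses nothing specific to $L$, so it would give the same vanishing for $\mathscr{L}$ twisted by any character of $G$. That is false. In the conifold model ($\mathbb{C}^{\ast}$ on $\mathbb{C}^{4}$ with weights $(1,1,-1,-1)$), the exceptional curve is a $\mathbb{P}^{1}$ with normal bundle $\mathscr{O}(-1)^{\oplus2}$. A twist of weight $\pm2$ with the appropriate sign then has $R^{1}f_{\ast}\neq0$.

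Your key observation is that, by Kostant's formula and $\Phi(b)=0$, the group $T^{0}_{b}$ acts trivially on $L_{b}$, while every class in $H^{p}_{C}$ has weight at most $-\sum w_{i}<0$. This is exactly the input the paper's proof lacks. A small correction: those weights also receive contributions from the $W^{-}_{b}$-coordinates, but these have the same sign, so the conclusion stands.

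Two things still have to be supplied before you have the lemma in the generality in which it is used.

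\textbf{Higher-rank stabilisers.} You get $G^{0}_{b}\cong\mathbb{C}^{\ast}$ from Proposition~\ref{G-b}, which needs $0$ in the relative interior of a wall. But Theorem~\ref{L} concerns an arbitrary singular nondegenerate $X_{0}$, i.e.\ $0\in\mathrm{int}\,\Delta\cap\partial\Delta_{i}$ as in Theorem~\ref{par-des}. At a face of codimension $\geq2$, $T_{b}$ can have rank $>1$. The $\beta$-unstable part of the slice is then a union of coordinate subspaces, and its local cohomology is spread over several degrees. Your one-degree computation must be replaced by a stratification of $C$ (with the same weight inequality on each stratum) or by a toric \v{C}ech computation.

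There is also a uniform alternative. Since $T^{0}_{b}$ acts trivially on $L_{b}$, over small $U$ the sheaf $\mathscr{L}_{\beta}$ is a $\Gamma_{b}$-isotypic summand of the push-forward (under the finite quotient by $\Gamma_{b}$) of the structure sheaf of $(S_{U}\setminus C_{S})/\!\!/(T^{0}_{b})^{\mathbb{C}}$, where $C_{S}=C\cap S_{U}$. So $R^{j}(f_{\beta,0})_{\ast}\mathscr{L}_{\beta}$ is locally a summand of the push-forward of $R^{j}\tilde{f}_{\ast}\mathscr{O}$, for the proper modification $\tilde{f}:(S_{U}\setminus C_{S})/\!\!/(T^{0}_{b})^{\mathbb{C}}\to S_{U}/\!\!/(T^{0}_{b})^{\mathbb{C}}$. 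This is a modification between spaces with rational singularities, so the argument of Theorem~\ref{par-des} makes it vanish.

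\textbf{From the slice neighbourhood to the linear model.} The slice theorem gives $\Pi_{0}^{-1}(U)\cong G\times_{G_{b}}S_{U}$, where $S_{U}$ is only a saturated neighbourhood of $0$ in the slice representation, not an affine slice. So the local cohomology of the whole representation does not apply verbatim. Shrink $U$ so that $S_{U}=\{s:|m_{k}(s)|<r\ \text{for all}\ k\}$ for generators $m_{k}$ of the invariant monomials. This is a complete Reinhardt domain, and you can argue with Laurent expansions. It is also cleaner to work directly with $f^{-1}_{\beta,0}(U)\cong(S_{U}\setminus C_{S})/\!\!/G_{b}$ than to pass through $G\times_{G_{b}}$.

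As written, your argument covers the case $0\in\mathrm{int}\,F$; with these two additions it proves the lemma in full.
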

\begin{proof}
Consider the composition of the functors
$$
(f_{\beta})_{\ast}\circ(\Pi_{\beta})^{G}_{\ast}:
\mathfrak{S}^{G}_{\mathrm{coh}}(X^{s}_{\beta})
\longrightarrow
\mathfrak{S}_{\mathrm{coh}}(X_{0})
$$
and
$$
(\Pi_{0})^{G}_{\ast}\circ(i_{\beta,0})_{\ast}:
\mathfrak{S}^{G}_{\mathrm{coh}}(X^{s}_{\beta})
\longrightarrow
\mathfrak{S}_{\mathrm{coh}}(X_{0}).
$$
For any open subset $U$ in $X_{0}$, by the commutativity of \eqref{com-b-0}, we obtain
\begin{eqnarray*}
% \nonumber % Remove numbering (before each equation)
  \Gamma\bigl(U, (f_{\beta,0})_{\ast}\circ(\Pi_{\beta})^{G}_{\ast}\mathscr{F}\bigr)
  &=& \Gamma\bigl(f^{-1}_{\beta,0}(U), (\Pi_{\beta})^{G}_{\ast}\mathscr{F}\bigr)\\
  &=& \Gamma\bigl(\Pi^{-1}_{\beta}(f^{-1}_{\beta,0}(U)), \mathscr{F}\bigr)^{G} \\
  &=& \Gamma\bigl(i^{-1}_{\beta,0}(\Pi^{-1}_{0}(U)), \mathscr{F}\bigr)^{G}
\end{eqnarray*}
and
\begin{eqnarray*}
% \nonumber % Remove numbering (before each equation)
  \Gamma\bigl(U, (\Pi_{0})^{G}_{\ast}\circ(i_{\beta,0})_{\ast}\mathscr{F}\bigr)
  &=& \Gamma\bigl(\Pi^{-1}_{0}(U), (i_{\beta,0})_{\ast}\mathscr{F}\bigr)^{G}\\
  &=& \Gamma\bigl(i^{-1}_{\beta,0}(\Pi^{-1}_{0}(U)), \mathscr{F}\bigr)^{G},
\end{eqnarray*}
where $\mathscr{F}$ is an arbitrary coherent analytic $G$-sheaf on $X^{s}_{\beta}$.
This implies that the functor $(f_{\beta})_{\ast}\circ(\Pi_{\beta})^{G}_{\ast}$ is identical with the functor $(\Pi_{0})^{G}_{\ast}\circ(i_{\beta,0})_{\ast}$.
Consequently, we obtain
\begin{equation}\label{7-eq-0}
R^{j}\bigl((f_{\beta,0})_{\ast}\circ(\Pi_{\beta})^{G}_{\ast}\bigr)
\mathscr{O}(L|_{X^{s}_{\beta}})
=
R^{j}\bigl((\Pi_{0})^{G}_{\ast}\circ(i_{\beta,0})_{\ast}\bigr)
\mathscr{O}(L|_{X^{s}_{\beta}})
\end{equation}
for all $j>0$.
Due to Proposition \ref{pi-exact}, the invariant direct image functor $(\Pi_{\beta})^{G}_{\ast}$ is exact, and so is the functor $(\Pi_{0})^{G}_{\ast}$.
The exactness of the functor $(\Pi_{\beta})^{G}_{\ast}$ shows
$$
R^{t}(\Pi_{\beta})^{G}_{\ast}\mathscr{O}(L|_{X^{s}_{\beta}})=0
$$
for all $t>0$, and thus the Grothendieck spectral sequence for the left hand side of \eqref{7-eq-0} degenerates at the $E_{2}$-term.
More precisely, we get
$$
R^{j}(f_{\beta,0})_{\ast}\circ(\Pi_{\beta})^{G}_{\ast}\mathscr{O}(L|_{X^{s}_{\beta}})=
E^{j,0}_{2}=E^{j,0}_{\infty}\cong R^{j}\bigl((f_{\beta,0})_{\ast}\circ(\Pi_{\beta})^{G}_{\ast}\bigr)
\mathscr{O}(L|_{X^{s}_{\beta}})
$$
which means
\begin{equation}\label{7-equ-1}
R^{j}\bigl((f_{\beta,0})_{\ast}\circ(\Pi_{\beta})^{G}_{\ast}\bigr)
\mathscr{O}(L|_{X^{s}_{\beta}})\cong
R^{j}(f_{\beta,0})_{\ast}\mathscr{L}_{\beta}.
\end{equation}
On account of the exactness of the functors \eqref{i-functor} and $(\Pi_{0})^{G}_{\ast}$, the composition functor $(\Pi_{0})^{G}_{\ast}\circ(i_{\beta,0})_{\ast}$ is exact.
This implies
\begin{equation}\label{7-eq-2}
R^{j}\bigl((\Pi_{0})^{G}_{\ast}\circ(i_{\beta,0})_{\ast}\bigr)
\mathscr{O}(L|_{X^{s}_{\beta}})=0
\end{equation}
for all $j>0$.
Collecting \eqref{7-eq-0}-\eqref{7-eq-2} derives
$R^{j}(f_{\beta,0})_{\ast}\mathscr{L}_{\beta}=0$ for all $j>0$.
\end{proof}
\begin{rem}
Taking $j=0$ in the proof of Lemma \ref{hd=0}, it seems possible to prove the isomorphism $(f_{\beta,0})_{\ast}\mathscr{L}_{\beta}\cong\mathscr{L}_{0}$.
Using the projection formula, it suffices to verify $(i_{\beta,0})_{\ast}\mathscr{O}_{X^{s}_{\beta}}\cong\mathscr{O}_{X^{ss}_{0}}$ which is an extension problem of holomorphic functions.
Note that we can not use the classical Riemann Extension Theorem directly to solve this problem.
However, using the properties of the modification $f_{\beta,0}$, we can obtain Lemma \ref{lem-iso} which gives a canonical isomorphism from $\mathscr{L}_{0}$ to $(f_{\beta,0})_{\ast}\mathscr{L}_{\beta}$.
\end{rem}

We are now in a position to give the proof of Theorem \ref{L}.
\begin{proof}[Proof of Theorem \ref{L}]
Due to Lemma \ref{lem-iso} and Lemma \ref{hd=0}, there exists a natural sheaf isomorphism
\begin{equation*}
f^{\natural}_{\beta,0}:\mathscr{L}_{0}\stackrel{\simeq}\longrightarrow
(f_{\beta,0})_{\ast}\mathscr{L}_{\beta},
\end{equation*}
and the higher direct images of $\mathscr{L}_{\beta}$ along $f_{\beta,0}$ vanishes for all $j>0$.
Using the Leray spectral sequence theorem \cite[Theorem 13.8]{Dem12} again, we get
a canonical isomorphism $H^{i}(X_{0}, \mathscr{L}_{0})\cong H^{i}(X_{\beta}, \mathscr{L}_{\beta})$ for all $i\geq0$.
As a direct consequence, the Riemann--Roch numbers of $\mathscr{L}_{0}$ and $\mathscr{L}_{\beta}$ are equal.
Since $\beta$ is a regular value, $X_{\beta}$ is a K\"{a}hler  orbifold and $\mathscr{L}_{\beta}$ is the sheaf of holomorphic sections of the holomorphic orbifold line bundle $L_{\beta}$.
As a result, the Riemann--Roch number $RR(X_{\beta}, \mathscr{L}_{\beta})$ can be computed by Kawasaki's Riemann--Roch theorem in \cite{Kaw79}.
\end{proof}

Assume that $T$ acts on $\Phi^{-1}(\beta)$ freely, then $X_{\beta}=X^{s}_{\beta}/T^{\mathbb{C}}$ is a K\"{a}hler manifold and $L|_{X^{s}_{\beta}}$ descends to a holomorphic line bundle $L_{\beta}$ over $X_{\beta}$.
According to \cite[Theorem 0.1]{Zh99}, the following refined quantization formula for bundle-valued Dolbeault cohomologies holds
\begin{equation}\label{r-q-f}
\mathrm{dim}_{\mathbb{C}}\,H^{0,i}_{\bar{\partial}}(X, L)^{T}=
\mathrm{dim}_{\mathbb{C}}\,H^{0,i}_{\bar{\partial}}(X_{\beta}, L_{\beta})
\end{equation}
for any integer $i\geq0$.
By the Dolbeault theorem, $H^{0,i}_{\bar{\partial}}(X_{\beta}, L_{\beta})$ is isomorphic to the sheaf cohomology $H^{i}(X_{\beta}, \mathscr{L}_{\beta})$.
From the identity \eqref{r-q-f} and Theorem \ref{L}, we obtain
\begin{cor}
If the $T$-action on $\Phi^{-1}(\beta)$ is free, then we have
$$
\mathrm{dim}_{\mathbb{C}}\,H^{i}(X, L)^{T}=
\mathrm{dim}_{\mathbb{C}}\,H^{i}(X_{0}, \mathscr{L}_{0})
$$
for any integer $i\geq0$.
\end{cor}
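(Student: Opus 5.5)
The statement to prove is the final corollary: if $T$ acts freely on $\Phi^{-1}(\beta)$, then $\dim_{\mathbb{C}} H^{i}(X,L)^{T}=\dim_{\mathbb{C}} H^{i}(X_{0},\mathscr{L}_{0})$ for every $i\geq0$. The plan is to chain three isomorphisms, each of which is already available in the text, and to check that they refer to the same objects.

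\textbf{Step 1 (upstairs to the regular quotient).} Since $T$ acts freely on $\Phi^{-1}(\beta)$, the quotient $X_{\beta}$ is a K\"ahler manifold. Moreover $L|_{X^{s}_{\beta}}$ descends to an honest line bundle $L_{\beta}$, so $q=1$ is admissible and $\mathscr{L}_{\beta}=\mathscr{O}(L_{\beta})$. We then invoke Zhang's refined formula \eqref{r-q-f}, which gives $\dim H^{0,i}_{\bar\partial}(X,L)^{T}=\dim H^{0,i}_{\bar\partial}(X_{\beta},L_{\beta})$. The Dolbeault isomorphism on $X$ is $T$-equivariant, because $T$ acts holomorphically and preserves the data. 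It therefore identifies $H^{0,i}_{\bar\partial}(X,L)^{T}$ with $H^{i}(X,\mathscr{L})^{T}$, and on $X_{\beta}$ it identifies the right-hand side with $H^{i}(X_{\beta},\mathscr{L}_{\beta})$.

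\textbf{Step 2 (regular quotient to the singular one).} Theorem \ref{L} gives a canonical isomorphism $H^{i}(X_{0},\mathscr{L}_{0})\cong H^{i}(X_{\beta},\mathscr{L}_{\beta})$. Here $\beta$ is chosen in the interior of a subpolytope adjacent to $0$, which is the setting of the shift desingularisation $f_{\beta,0}$ of Theorem \ref{par-des}.

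\textbf{Step 3 (consistency check).} The sheaf $\mathscr{L}_{\beta}$ in Theorem \ref{L} is defined as the invariant direct image $(\Pi_{\beta})^{G}_{\ast}\mathscr{O}(L|_{X^{s}_{\beta}})$. We must confirm that this coincides with $\mathscr{O}(L_{\beta})$, the bundle used in Zhang's formula. When $G$ acts freely on $X^{s}_{\beta}$, $\Pi_{\beta}$ is a principal $G$-bundle and $L|_{X^{s}_{\beta}}\cong\Pi_{\beta}^{\ast}L_{\beta}$. Invariant sections of a pullback bundle are exactly pullbacks of sections, which is the $q=1$ case of \cite[Lemma 2.13]{Sj95}, so the two sheaves agree. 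Composing Steps 1 and 2 then yields the stated equality of dimensions.

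The only genuinely delicate point is Step 3. Freeness on $\Phi^{-1}(\beta)$ must be upgraded to freeness of $G$ on $X^{s}_{\beta}=G\cdot\Phi^{-1}(\beta)$; this follows from $G_{x}=(T_{x})^{\mathbb{C}}$ for $x$ in the level set \cite[Proposition 1.6]{Sj95}. We also need that Zhang's $L_{\beta}$ is the same descended bundle as ours. Beyond these identifications, the argument is just the composition of the two isomorphisms.
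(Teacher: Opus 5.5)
Your proposal is correct and follows the paper's own argument: it combines Zhang's refined formula \eqref{r-q-f} with the Dolbeault isomorphism on $X_{\beta}$ and the canonical isomorphism $H^{i}(X_{0},\mathscr{L}_{0})\cong H^{i}(X_{\beta},\mathscr{L}_{\beta})$ of Theorem~\ref{L}. Your Step 3 makes explicit two identifications the paper uses without comment: that $G$ acts freely on $X^{s}_{\beta}$ because $G_{x}=(T_{x})^{\mathbb{C}}$, and that $(\Pi_{\beta})^{G}_{\ast}\mathscr{O}(L|_{X^{s}_{\beta}})=\mathscr{O}(L_{\beta})$.
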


Without the freeness condition of the $T$-action on $\Phi^{-1}(\beta)$, we obtain a holomorphic orbifold line bundle $L_{\beta}\rightarrow X_{\beta}$.
If we replace the cohomology with the Dolbeault cohomology of orbifold line bundle, the equality \eqref{r-q-f} still holds and so does the equality in the corollary.

\begin{rem}
Comparing the assertion in \cite[Lemma 5.8]{Tel00}, it seems reasonable to expect that Theorem \ref{L} holds for the case that 0 lies in the boundary $\partial\,\Delta$.
However, our argument does not work if $0\in\partial\,\Delta$ since $f_{\beta,0}$ is not a proper modification in this case.
Furthermore, a natural problem is to study the comparison problem of K\"{a}hler quotients by general actions of a compact connected Lie group $K$ and its complexification.
\end{rem}
%==================================
\appendix
%========================
\section{K\"{a}hler structures on complex spaces}\label{kah-spa}
Let $(X,\mathscr{O}_{X})$ be a reduced complex space.
Denote by $\mathscr{C}_{X,\mathbb{R}}$ the sheaf of continuous real-valued functions on $X$.
By a \emph{pluriharmonic function} on $X$, we mean a function which locally is the real part of some holomorphic function, and we define the sheaf $\mathscr{PH}_{X,\mathbb{R}}$ of pluriharmonic functions.
A continuous (resp. smooth) function $f: X\rightarrow\mathbb{R}$ is a continuous (resp. smooth) \emph{plurisubharmonic} (p.s.h.) function on $X$, if locally it is the restriction of a continuous (resp. smooth) plurisubharmonic function on an open subset in $\mathbb{C}^{N}$ for a local embedding of $X$ into $\mathbb{C}^{N}$.
Given a continuous or smooth p.s.h. function $f$, if for each smooth function $h$ and each relatively compact open subset $U$ there exists $\epsilon>0$ such that $f+t\cdot h$ is p.s.h. on $U$ for all $|t|<\epsilon$, then we call $f$ a \emph{strictly plurisubharmonic function} (in the sense of perturbations).
We denote by sheaves $\mathscr{P}^{0}_{X}$ (resp. $\mathscr{P}^{\infty}_{X}$) of continuous (resp. smooth) p.s.h.  functions, and $\mathscr{SP}^{0}_{X}$ (resp. $\mathscr{SP}^{\infty}_{X}$) of continuous (resp. smooth) strictly plurisubharmonic functions.
Put $\mathscr{K}^{1}_{X,\mathbb{R}}:
=\mathscr{C}_{X,\mathbb{R}}/\mathscr{PH}_{X,\mathbb{R}}$.
\begin{defn}({cf. \cite[Definition 55]{HS20}})\label{kahler space}
A \emph{K\"{a}hler metric} on $X$ is a global section $\kappa\in \mathscr{K}^{1}_{X,\mathbb{R}}(X)$ which can be represented by an open covering $\{U_{\alpha}\}$ of $X$ together with a family of strictly plurisubharmonic functions $\{\varphi_{\alpha}:U_{\alpha}\rightarrow\mathbb{R}\}$ such that $(\varphi_{\alpha}-\varphi_{\beta})|_{U_{\alpha}\cap U_{\beta}}$ is pluriharmonic.
We write $\kappa=\{(U_{\alpha}, \varphi_{\alpha})\}$ and a \emph{K\"{a}hler space} is a reduced complex space equipped with a K\"{a}hler metric.
\end{defn}

\begin{rem}\label{rem-ka}
Comparing to \cite[\S\,II, 1.2]{Va89}, a K\"{a}hler metric on $X$ is given by a global section of the sheaf $\mathscr{C}^{\infty}_{X,\mathbb{R}}/\mathscr{PH}_{X,\mathbb{R}}$ represented by a system of sections of $\mathscr{SP}^{\infty}_{X}$.
According to \cite[\S\,II, Theorem 1]{Va89}, if $X$ admits a K\"{a}hler metric in the sense of Definition \ref{kahler space} then it is a K\"{a}hler space in the sense of \cite[\S\,II, 1.2]{Va89}.
\end{rem}

Given a K\"{a}hler metric $\kappa=\{(U_{\alpha}, \varphi_{\alpha})\}$ on $X$, then $\kappa$ determines a \v{C}ech cohomology class in $\check{H}^{0}(X, \mathscr{K}^{1}_{X,\mathbb{R}})$.
Let $\underline{\mathbb{R}}_{X}$ be the sheaf of locally constant real-valued functions on $X$ and $\underline{\mathbb{R}}_{X}\hookrightarrow\mathscr{O}_{X}$ the embedding via multiplication by $\textbf{i}=\sqrt{-1}$.
By definition, there exist two natural short exact sequences of sheaves on $X$:
\begin{equation}\label{s-e-ph}
\xymatrix@C=0.5cm{
  0 \ar[r] & \mathscr{PH}_{X,\mathbb{R}} \ar[r]^{} & \mathscr{C}_{X,\mathbb{R}}
   \ar[r]^{} & \mathscr{K}^{1}_{X,\mathbb{R}} \ar[r] & 0 }
\end{equation}
and
\begin{equation}\label{re-part}
\xymatrix@C=0.5cm{
  0 \ar[r] & \underline{\mathbb{R}}_{X} \ar[r]^{\textbf{i}\cdot} & \mathscr{O}_{X}
   \ar[r]^{\mathrm{Re}\quad} & \mathscr{PH}_{X,\mathbb{R}} \ar[r] & 0,}
\end{equation}
where $\mathrm{Re}$ is the real part map.
By \eqref{s-e-ph} and \eqref{re-part}, we deduce a canonical connecting homomorphism in degree 0:
\begin{equation}\label{delta1}
\delta^{0}: \check{H}^{0}(X, \mathscr{K}^{1}_{X,\mathbb{R}})\longrightarrow \check{H}^{1}(X, \mathscr{PH}_{X,\mathbb{R}}).
\end{equation}
and a canonical connecting homomorphism in degree 1:
\begin{equation}\label{delta2}
\delta^{1}: \check{H}^{1}(X, \mathscr{PH}_{X,\mathbb{R}})\longrightarrow \check{H}^{2}(X, \underline{\mathbb{R}}_{X}).
\end{equation}
Composing \eqref{delta1} with \eqref{delta2} gives rise to a canonical morphism:
\begin{equation*}\label{c}
c_{1}=\delta^{1}\circ\delta^{0}: \check{H}^{0}(X, \mathscr{K}^{1}_{X,\mathbb{R}})\longrightarrow \check{H}^{2}(X, \underline{\mathbb{R}}_{X}).
\end{equation*}
For any K\"{a}hler metric $\kappa$ in $X$, we call the cohomology class $c_{1}(\kappa)\in \check{H}^{2}(X, \underline{\mathbb{R}}_{X})$ the \emph{K\"{a}hler class} of the metric $\kappa$.
%=============================
\section{Modifications and bimeromorphic maps}\label{mod-bim}
In this appendix, we give a rapid review of basic notions in bimeromorphic geometry, and for a nice reference we refer to \cite[Chapter I]{Ue75} or \cite[\S\,2.1]{MM07}.
All complex analytic spaces are assumed to be \emph{irreducible} and \emph{reduced}.

Given a complex analytic space $(X, \mathscr{O}_{X})$, the regular locus $X_{\mathrm{reg}}$ is a connected complex manifold since $X$ is irreducible.
The \emph{dimension} of $X$ is defined to be the dimension of  $X_{\mathrm{reg}}$.
Let $A$ be an irreducible analytic subset of $X$, the \emph{dimension} of $A$ is defined by
$
\mathrm{dim}\,A=\mathrm{dim}\,A_{\mathrm{reg}}.
$
If $A$ has several irreducible components $\{A_{l}\}$, then we set
$
\mathrm{dim}\,A=\max\,\{\mathrm{dim}\,A_{l}\}.
$
The \emph{codimension} of $A$ is defined by
$
\mathrm{codim}\,A=\mathrm{dim}\,X-\mathrm{dim}\,A.
$
Every analytic subset $A$ of $X$ is equipped with a natural structure of a reduced complex analytic subspace of $(X, \mathscr{O}_{X})$ with the structure sheaf $\mathscr{O}_{A}:=(\mathscr{O}_{X}/\mathscr{I}_{A})|_{A}$, where $\mathscr{I}_{A}$ is the coherent ideal sheaf of $A$.
\begin{defn}\label{mod}
A morphism of complex analytic spaces $f: (X, \mathscr{O}_{X})\rightarrow(Y, \mathscr{O}_{Y})$ is
called a \emph{proper modification}, if it is proper and surjective and if there exists an analytic subset $A\subset Y$ such that
\begin{itemize}
  \item [(i)] $A\subset Y$ and $f^{-1}(A)\subset X$ are nowhere dense analytic subsets;
  \item [(ii)] the restriction $f:X\setminus f^{-1}(A)\rightarrow Y\setminus A$ is biholomorphic.
\end{itemize}
\end{defn}

The analytic subset $A$ is called the \emph{center }of the modification and the set $\mathrm{Ex}(f):=f^{-1}(A)$ is called the \emph{exceptional locus}.
By a \emph{prime divisor} on $(X, \mathscr{O}_{X})$, we mean an irreducible and reduced complex subspace of codimension 1.
We say that a proper modification $f:X\rightarrow Y$ is \emph{divisorial} if the exceptional locus $\mathrm{Ex}(f)$  is a primer divisor,  and is \emph{small} if $\mathrm{Ex}(f)$ has codimension $\geq$2.
A special example of proper modification is the blow-up, for definition see \cite[Proposition 2.1.14]{MM07}; meanwhile, modifications are bimeromorphic maps in the following sense.

\begin{defn}\label{mero}
Let $(X, \mathscr{O}_{X})$ and $(Y, \mathscr{O}_{Y})$ be two complex analytic spaces.
A \emph{meromorphic map} $f: X\dashrightarrow Y$ is defined to be a map from $X$ into the
power set of $Y$ satisfies the following conditions:
\begin{itemize}
  \item [(i)] The graph $\mathrm{Graph}(f)=\{(x,y)\in X\times Y\,|\,y\in f(x)\}$ is an irreducible analytic
      subset in $X\times Y$.
  \item [(ii)] The projection $p_{X}: \mathrm{Graph}(f)\rightarrow X$ is a proper modification.
\end{itemize}
Additionally, if the projection $p_{Y}: \mathrm{Graph}(f)\rightarrow Y$ is also a proper modification, we call $f:
X\dashrightarrow Y$ a \emph{bimeromorphic map} and we say that $X$ and $Y$ are
\emph{bimeromorphically equivalent}.
\end{defn}

Let $(X, \mathscr{O}_{X})$ be a compact complex analytic space of pure dimension $n$.
Denote by $K(X)$ the field of meromorphic functions on $X$.
\begin{defn}\label{alg-dim}
The \emph{algebraic dimension} of $(X, \mathscr{O}_{X})$, denoted by $a(X)$, is defined to be transcendence degree of $K(X)$ over $\mathbb{C}$, i.e.,
$$
a(X)=\mathrm{trdeg}_{\mathbb{C}}\,K(X).
$$
\end{defn}
A theorem of Remmert \cite{Re56} shows that the dimension of $X$ bounds the algebraic dimension of $X$, i.e., $a(X)\leq n=\mathrm{dim}\,X$.
A compact complex analytic space $(X, \mathscr{O}_{X})$ is said to be a \emph{Moishezon space} if the algebraic dimension of $X$ is equal to its dimension.

%==============================================================

\end{document}